\documentclass[12pt,reqno]{amsart}
\usepackage[foot]{amsaddr}
\usepackage[utf8]{inputenc}
\usepackage[T1]{fontenc}
\usepackage{amsfonts}
\usepackage[all]{xy}
\usepackage[title,titletoc]{appendix}
\usepackage{graphicx}
\usepackage{amssymb}
\usepackage{amsthm}
\usepackage{stmaryrd}
\usepackage{mathtools}
\usepackage{comment}
\usepackage{esint}
\usepackage{hyperref}
\usepackage{xcolor}
\hypersetup{
   colorlinks,
   linkcolor={red!80!black},
   citecolor={blue!70!black},
   urlcolor={blue!90!black}
}
\usepackage{microtype}

\usepackage{upgreek}

\newcommand{\fcl}[1]{\llbracket{#1}\rrbracket}

\newcommand{\cB}{\mathcal{B}}
\newcommand{\cC}{\mathcal{C}}

\newcommand{\cH}{\mathcal{H}}
\newcommand{\cL}{\mathcal{L}}
\newcommand{\cN}{\mathcal{N}}
\newcommand{\cP}{\mathcal{P}}
\newcommand{\cQ}{\mathcal{Q}}

\newcommand{\cS}{\mathcal{S}}
\newcommand{\cT}{\mathcal{T}}
\newcommand{\cU}{\mathcal{U}}
\newcommand{\cD}{\mathcal{D}}

\newcommand{\cW}{\mathcal{W}}

\newcommand{\JfH}{J_\mathrm{H}f}
\newcommand{\DfH}{D_\mathrm{H}f}

\newcommand{\dcc}{d_{cc}}
\DeclareMathOperator{\rad}{rad}
\DeclareMathOperator{\height}{height}

\newcommand{\Aff}{\mathsf{Aff}}
\newcommand{\Lin}{\mathsf{Lin}}
\newcommand{\Con}{\mathsf{Con}}
\DeclareMathOperator{\mesh}{mesh}
\DeclareMathOperator{\Op}{Op}
\DeclareMathOperator{\gen}{gen}

\DeclareMathOperator{\Lip}{Lip}

\usepackage{tikz}
\usepackage{fourier}
\usepackage[maxbibnames=99,backend=biber, sorting=nyt, doi=false, url=false]{biblatex}
\usepackage{pgfornament}

\newcommand{\HH}{\mathbb{H}}

\newcommand{\R}{\mathbb{R}}
\newcommand{\Z}{\mathbb{Z}}
\newcommand{\ud}[0]{\,\mathrm{d}}

\newcommand{\zero}{\mathbf{0}}

\newcommand{\from}{\colon}

\newtheorem{thm}{Theorem}[section]

\newtheorem{lemma}[thm]{Lemma}
\newtheorem{prop}[thm]{Proposition}
\newtheorem{cor}[thm]{Corollary}
\newtheorem{defn}[thm]{Definition}
\theoremstyle{remark}

\DeclareMathOperator{\VCone}{VCone}

\DeclareMathOperator{\Kor}{Kor}
\DeclareMathOperator{\diam}{diam}

\DeclareMathOperator{\Wind}{Wind}
\DeclareMathOperator{\id}{id}
\DeclareMathOperator{\supp}{supp}
\DeclareMathOperator{\inter}{inter}
\DeclareMathOperator{\closure}{clos}

\date{\today}
\title{
Area of H\"older curves and coarea formula on the Heisenberg group}
\author{Gioacchino Antonelli}
\address[Gioacchino Antonelli]{Department of Mathematics, University of Notre Dame, Hurley Hall, 255 Hurley, Notre Dame, IN 46556, United States}
\email{gantonel@nd.edu}
\author{Robert Young}
\address[Robert Young]
{New York University, Courant Institute of Mathematical Sciences, 251 Mercer Street, 10012, New York, USA}
\email{ryoung@cims.nyu.edu}

\begin{document}

\begin{abstract}
    We prove the coarea formula for Lipschitz maps from the subriemannian $n$th Heisenberg group $\HH_n$ to $\mathbb R^{2n}$. Our result is new even when $n=1$ and provides the simplest vector-valued instance of the coarea formula in subriemannian geometry. This answers a question left open in the works of Magnani, Kozhevnikov, Magnani--Stepanov--Trevisan, and Julia--Nicolussi Golo--Vittone.

    The main difficulty of the proof is that a fiber of a $C^1_{\mathrm{H}}$ map $f\from \HH_n\to \R^{2n}$ is typically an unrectifiable curve. Its measure depends on the symplectic area of its projection to $\R^{2n}$. A bound on this area would imply the coarea formula, but examples of Kozhevnikov show that this area can be infinite or undefined. 

    To overcome this, we introduce an integral that we use to define both the symplectic area of
$\frac{1}{2}$--H\"older curves in $\R^{2n}$ and the symplectic area of projections of vertical
curves in $\mathbb H_n$. Then, we give a geometric condition for this integral to converge. This yields, in addition, new results on the existence of the signed area of $\tfrac12$--H\"older planar
curves that may be of independent interest. Finally, we use $\beta$--number estimates from the F\"assler--Orponen Dorronsoro Theorem to show that this geometric condition holds for almost every fiber.
\end{abstract}

\maketitle
\tableofcontents

\section{Introduction}

\subsection{The coarea formula}\label{Intro}
The coarea formula is a basic tool in geometric measure theory that relates the total measure of the fibers of a Lipschitz map $f\from \R^m\to\R^n$ to the Jacobian $Jf$ of that map \cite[Section 3.2]{FedererBook}. Many generalizations of the coarea formula to the setting of subriemannian manifolds are known, especially for Lipschitz maps from the $(2n+1)$--dimensional Heisenberg group $\HH_n$ to $\R^k$ with $k\leq n$ \cite[Theorem 1.1]{MagnaniAreaImpliesCoarea}, or for maps $\HH_n\to \R^{2n}$ that satisfy additional regularity properties (for example, for  $C^{1,\alpha}_{\mathrm{H}}$--regular maps in \cite{Kozhevnikov, MagnaniStepanovTrevisan}). The case of Lipschitz maps from $\HH_n$ to $\R^k$ with $k> n$ has remained open for many years.

In this paper, we prove the following coarea formula for Lipschitz maps from $\HH_n$ to $\R^{2n}$. We refer the reader to Section~\ref{sec:prel} for definitions and notation.
Recall that the Hausdorff measure $\mathcal{H}^{2n+2}$ related to a left-invariant homogeneous distance is a Haar measure on $\HH_n$, and $\mathcal{L}^{2n}$ denotes the Lebesgue measure on $\mathbb R^{2n}$. In this paper we normalize $\mathcal{H}^{2n+2}$ so that it coincides with the Lebesgue measure $\mathcal{L}^{2n+1}$ in exponential coordinates on $\HH_n\equiv \mathbb R^{2n+1}$. By \cite{Pansu, MagnaniPhD}, a Lipschitz map $f\from \HH_n\to\mathbb R^{2n}$ is Pansu differentiable almost everywhere, so the Pansu differential $Df_x\from \HH_n\to \R^{2n}$ is defined for $\cH^{2n+2}$--almost every $x\in \HH_n$  (see \eqref{eqn:PDiff}). Let $\DfH_x \from \R^{2n}\to \R^{2n}$ be the restriction of $Df_x$ to the horizontal plane. This is a linear map, and we define $\JfH(x) = \det \DfH_x$. Let $\cH^2$ be the $2$--dimensional Hausdorff measure on $\HH_n$, normalized so that if $I\subset \HH_n$ is a segment of the $z$--axis, then $\cH^2(I)$ is equal to its Euclidean length. 
\begin{thm}\label{thmCoareaLip}
    Let $\HH_n$ be the $n$th Heisenberg group endowed with a left-invariant homogeneous distance. Let $f\from \HH_n\to\mathbb R^{2n}$ be a Lipschitz map. For every measurable set $U\subset\HH_n$,
    \[
    \int_U |\JfH(x)|\ud\mathcal{H}^{2n+2}(x) = \int_{\mathbb R^{2n}}\mathcal{H}^2(f^{-1}(w)\cap U) \ud\mathcal{L}^{2n}(w).
    \]
\end{thm}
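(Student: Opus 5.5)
We follow the standard strategy for coarea formulas: reduce to a one-sided inequality on small pieces where $f$ is close to its Pansu differential, and get the other direction from a covering argument. First, the easy inequality
\[
\int_{\mathbb R^{2n}}\mathcal{H}^2(f^{-1}(w)\cap U)\ud\mathcal{L}^{2n}(w)\le C\int_U \Lip(f)^{2n}\ud\mathcal{H}^{2n+2}
\]
(Eilenberg-type inequality) follows from a covering argument. It shows that both sides of Theorem~\ref{thmCoareaLip} are finite measures in $U$, absolutely continuous with respect to $\mathcal H^{2n+2}$. Consequently, it suffices to compare their densities at $\mathcal H^{2n+2}$-almost every point.

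On the set where $\JfH=0$ we must show that almost every fiber has $\mathcal H^2$-null intersection. We would perturb $f$ to $f_\epsilon(x,t)=f(x,t)+\epsilon\pi(x)$, where $\pi$ is the horizontal projection, so that the Jacobian is nondegenerate. We then apply the nondegenerate case and let $\epsilon\to0$, using the Eilenberg inequality for the product map $(f,\pi)$ as in Federer's proof.

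On the set where $\JfH\neq0$, we use Lusin/Whitney-type approximation by $C^1_{\mathrm H}$ maps (Pansu differentiability almost everywhere) to reduce to $f$ being $C^1_{\mathrm H}$ near density points, with $\DfH$ invertible. By the implicit function theorem in $\HH_n$, each fiber is then locally a vertical curve $\gamma_w$: a continuous curve transverse to the horizontal plane, whose $\mathcal H^2$-measure equals the total $t$-variation. Write $\gamma_w(s)=(p_w(s),t_w(s))$. The horizontality constraint makes the variation of $t_w$ equal to the symplectic area swept by the projection $p_w$, which is only $\tfrac12$-Hölder in $\R^{2n}$. The density identity we aim for is
\[
\lim_{r\to0}\frac{1}{r^{2n+2}}\int \mathcal H^2(f^{-1}(w)\cap B(x,r))\ud w=c\,|\JfH(x)|.
\]
If $f$ were $C^{1,\alpha}_{\mathrm H}$, the area integral $\int p_w^*\omega$ would converge, and the result would follow by comparing with the linear map $Df_x$, whose fibers are cosets of the center.

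The main obstacle is the general $C^1_{\mathrm H}$/Lipschitz case. Here the symplectic area of $p_w$ may diverge, as in Kozhevnikov's examples, so we cannot control $\mathcal H^2(\gamma_w\cap B)$ by comparison with the linear model. Our plan is to define the area of a $\tfrac12$-Hölder curve as a limit of areas of polygonal approximations along dyadic subdivisions. We would then prove convergence, with an error bound, under a summability condition on how far the curve deviates from its linear approximants at each dyadic scale; this condition is a square-sum of $\beta$-type numbers $\sum_Q \beta(Q)^2|Q|$. Finally we show that for almost every $w$ this condition holds along $\gamma_w$, with the error small relative to $r^2$. To do so, we integrate over $w$ and bound the average of the fiberwise sums by $\sum_Q\beta_f(Q)^2\mathcal H^{2n+2}(Q)$. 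This last quantity is finite, and its local contribution is $o(r^{2n+2})$ at almost every point, by the Fässler--Orponen Dorronsoro theorem for Lipschitz maps on $\HH_n$. This Fubini-type transfer of $\beta$-number control from $\HH_n$ to individual fibers is the step we expect to be hardest.
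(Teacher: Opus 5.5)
Your outline follows the paper's overall architecture: reduction to $C^1_{\mathrm H}$ maps by Whitney extension, a density comparison, Kozhevnikov's area formula for vertical fibers, a square-summability criterion for symplectic area, and the Fässler--Orponen Dorronsoro theorem with a Vitali argument. However, two steps as you describe them do not work.

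\textbf{The degenerate case.} The fibers of $f+\epsilon\pi$ bear no relation to those of $f$. Moreover, an Eilenberg inequality with $\Lip(f)^{2n}$ on the right only gives $\mu_f(B_r(p))\lesssim r^{2n+2}$ at points where $\JfH(p)=0$ but $D_{\mathrm H}f_p\neq 0$, not $o(r^{2n+2})$. Federer's actual trick adds new variables, $(x,y)\mapsto f(x)+\epsilon y$ on $\HH_n\times\R^{2n}$, and would itself need a sharp coarea inequality on that product group. The paper avoids all this by using Magnani's inequality $\int_{\R^{2n}}\cH^2(f^{-1}(v)\cap U)\ud v\le\int_U|\JfH|$ (Theorem~\ref{thm:CoareaInequality}). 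Together with continuity of $\JfH$, this settles $\JfH=0$ at once and reduces everything to a \emph{lower} bound on fiber measures.

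\textbf{The area of the projected fiber.} This is the more serious gap: the area cannot be defined through dyadic approximations alone. Kozhevnikov's formula (Theorem~\ref{thm:S-cH2}) reads $\cH^2(K)=z(\alpha(1))-z(\alpha(0))-\limsup_{\mesh P\to 0}\hat S(\gamma_P)$, with the limsup over \emph{all} partitions. The fiber is vertical, not horizontal, so $\cH^2$ is height minus symplectic area, not the area itself.
\begin{itemize}
\item A dyadic limit $L$ therefore only yields the upper bound $\cH^2(K)\le z(\alpha(1))-z(\alpha(0))-L$. This is the direction Magnani's inequality already gives; the needed lower bound requires controlling the limsup over arbitrary fine partitions.
\item Dyadic convergence with square-summable triangle diameters does not imply that control (Lemmas~\ref{lem:gamma-i-no-area} and~\ref{lem:sigma-bound}).
\item The $\frac12$-Hölder behavior you assume is not available a priori, since vertical curves can have Hausdorff dimension larger than $2$.
\end{itemize}
Most of the paper's technical work is here. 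It interpolates between approximants by a map $G$ and compares $\gamma_P$ with the arches $a_j$ of $\partial R(\cU_P)$. It then uses $\sigma(\Lambda)<\infty$ to prove both $\sum_j\ell(a_j)^2\le\epsilon+\epsilon\sum_j\Delta z_j$ and a uniform bound on $\sum_j\Delta z_j$ (Lemmas~\ref{lem:arch-area} and~\ref{lem:z-sum}).

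\textbf{The transfer from $\beta_f$ to fibers.} The step you expect to be hardest is resolved by a choice your outline lacks. The curve's own deviation from its chords is not linearly controlled by the $L_2$ quantity $\beta_f$, only by a fractional power of it. The paper therefore uses non-interpolating points $\Lambda(v)=\alpha_{f,\cB(v)}^{-1}(w)$, which satisfy $|\Lambda(v)-\Lambda(v')|\lesssim\rad(\cB(v))\beta_f(5\cB(v))$ deterministically on every fiber (Proposition~\ref{prop:construct-Lambda}). It proves the area theorem for such approximants. After that, integrating over $w$ is a simple count using $\cL^{2n}(f(B_{2^{-i}}(q)))\lesssim 2^{-2ni}$.
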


This result is new even when $n=1$. Even in this case, Theorem~\ref{thmCoareaLip} was considered a challenging open problem since the earliest development of geometric measure theory on subriemannian Carnot groups; see, for instance, the introduction of \cite{MagnaniCoareaInequality}, \cite[Remark 1.5]{JNGV}, and the introduction of \cite{CorniReverse}. 

The main reason that the coarea formula for Lipschitz maps $\HH_n\to\mathbb R^{2n}$ poses a challenge is the lack of a good rectifiability theory for the fibers of these maps. 
In \cite{FSSCMathAnn, FSSCILip, FSIlip}, Franchi, Serapioni, and Serra Cassano introduced a theory of rectifiability for surfaces in $\HH_n$ and in Carnot groups; see \cite{PhDAntonelli} for a survey on the subject. This theory covers horizontal surfaces (surfaces which are tangent to horizontal subgroups almost everywhere) and $H$--regular surfaces (surfaces whose tangents are transverse to a horizontal subgroup). This theory is foundational to geometric measure theory in Carnot groups (see also \cite{AK00}), and it is used in the proofs of the coarea formula for Lipschitz maps $\HH_n\to \mathbb R^{k}$ with $1\leq k\leq n$ \cite[Theorem 1.1]{MagnaniAreaImpliesCoarea}, and for real-valued Lipschitz maps with an arbitrary Carnot group as domain \cite[Theorem 3.5]{MagnaniMathNachr}. 

The fibers of a Lipschitz map $f\from \HH_n\to \R^{2n}$ are typically \emph{vertical curves} which are not rectifiable in the sense above. A vertical curve satisfies a cone condition similar to the cone condition defining Lipschitz graphs or intrinsic Lipschitz graphs (see Section \ref{sec:verticalcurves}); its only possible tangent subgroup is the center of $\HH_n$, which is neither horizontal nor transverse to a horizontal subgroup. In fact, while rectifiable surfaces have integer Hausdorff dimension, vertical curves can have fractional Hausdorff dimension \cite{AY-Vertical}. Even if $f$ is $C^1_\mathrm{H}$ (i.e., $\DfH_x$ is defined everywhere and depends continuously on $x$), its fibers can have $\mathcal{H}^2$ measure $0$ or $\infty$ \cite{Kozhevnikov}. Consequently, previous results on the coarea formula have often either assumed some additional regularity on $f$ (for instance, see \cite{KarmanovaVodopyanov}, \cite{MagnaniNonHorizontal}, \cite{Kozhevnikov}, or \cite{MagnaniStepanovTrevisan}) or led to a coarea-type inequality rather than an identity: see \cite[Theorem 2.6]{MagnaniCoareaInequality}.

We solve this problem as follows. A vertical curve $\zeta\from [0,1]\to \HH_n$ with $z(\zeta(0)^{-1}\zeta(1)) > 0$ projects to a curve $\gamma$ in $\R^{2n}$. We connect the endpoints of $\gamma$ by a line segment to obtain a closed curve $\hat{\gamma}$. If $\gamma$ is Lipschitz, then $\hat{\gamma}$ corresponds to a $1$--chain $[\hat{\gamma}]$ and there is a $2$--chain $\beta$ such that $\partial \beta = [\hat{\gamma}]$. Let $\omega$ be the standard symplectic form on $\R^{2n}$; then one can show
$$
\cH^2(\zeta) = z\left(\zeta(0)^{-1}\zeta(1)\right) - \int_\beta \omega;
$$
see, e.g., Theorem~\ref{thm:S-cH2}.

In general, however, $\gamma$ may not be Lipschitz. A vertical curve has Hausdorff dimension roughly $2$, so $\gamma$ is often $\frac{1}{2}$--Hölder. We thus define the symplectic area $S(\hat{\gamma})$ of $\hat{\gamma}$ as an integral that agrees with $\int_\beta \omega$ when $\gamma$ is Lipschitz; see Section~\ref{sec:symplectic} for details. Then, by work of Kozhevnikov \cite[Cor.\ 5.4.16]{Kozhevnikov}, if $\zeta$ is a fiber of a surjective $C^1_{\mathrm{H}}$--map for which $S(\hat\gamma)$ exists, $\cH^2(\zeta) = z(\zeta(0)^{-1}\zeta(1)) - S(\hat{\gamma})$: see Theorem~\ref{thm:S-cH2}.

Since $\gamma$ can be $\frac{1}{2}$--Hölder or worse, $S(\hat{\gamma})$ may not exist; this is the case for the examples constructed in \cite{Kozhevnikov} and \cite{AY-Vertical}. We must therefore give a criterion for when $S(\hat{\gamma})$ exists and show that the criterion holds for almost every fiber of $f$. We can divide the proof into five rough parts.

\begin{enumerate}
\item As in the Euclidean case, we can reduce the coarea formula for Lipschitz maps to the coarea formula for $C^1_\mathrm{H}$ maps $f\from \HH_n\to \R^{2n}$ (Section~\ref{sec:ReductionToC1H}). 
\item A coarea inequality due to Magnani \cite{MagnaniCoareaInequality} (Theorem~\ref{thm:CoareaInequality}) implies the coarea formula when $\JfH = 0$. 
We thus consider the case that $x\in \HH_n$ and $\JfH(x) \ne 0$. After composing $f$ with a linear map, we may suppose that $\DfH$ is close to the identity on a small ball $B$ around $x$; then $f^{-1}(w)\cap B$ is a vertical curve (see Section \ref{sec:verticalcurves}) for all $w\in f(B)$.
\item Let $f^{-1}(w)\cap B$ be a vertical curve and let $K\subset f^{-1}(w)\cap B$ be a connected subset with $\diam(K)>0$. We parameterize $K$ by a map $\zeta\from [0,1]\to K$ and use affine approximations of $f$ to define a sequence $g_i$ of piecewise-linear approximations of $\gamma = \pi\circ \zeta$ (the projection of $\zeta$ onto the horizontal plane). We can bound the geometry of the $g_i$'s in terms of the $\beta$--numbers of $f$. (For any ball $B\subset \HH_n$, $\beta_f(B)$ measures how well $f|_B$ can be approximated by an affine function, see Section~\ref{sec:approximations} for details.)
\item Let $\hat g_i$ be the closed curve obtained by connecting the endpoints of $g_i$ by a line segment. We show that the limit $\lim_{i\to \infty} S(\hat g_i)$ is bounded in terms of a sum of squares of $\beta$--numbers of $f$. Furthermore, when this sum is finite, $S(\hat\gamma)$ is well-defined and equal to $\lim_{i\to \infty} S(\hat g_i)$. This is the most technically involved section of the proof, so we state the main results in Section~\ref{sec:symplectic-area} and prove them in Section~\ref{sec:area-formula}.
\item Finally, Fässler and Orponen's version of Dorronsoro's Theorem \cite{FO-Dorronsoro} gives a bound on sums of squares of $\beta$--numbers of $f$. We use this to show that for almost every $w\in \R^{2n}$, if $K\subset f^{-1}(w)\cap B$ is connected, then $\pi(K)$ has well-defined symplectic area. Estimating this symplectic area lets us prove Theorem~\ref{thmCoareaLip}. (Section~\ref{sec:density-proof})

\end{enumerate}

While this strategy proves the coarea formula for Lipschitz maps $\HH_n\to \mathbb R^{2n}$, the case of maps $\HH_n\to \mathbb R^{k}$ with $n\ge 2$ and $n+1\le k < 2n$ remains open. As with the case considered here, the fibers of a map $\HH_n\to \mathbb R^{k}$ might be unrectifiable according to the theory developed starting in \cite{FSSCMathAnn}, so it is possible that techniques in this paper will generalize to such maps.

\subsection{Area formula for \texorpdfstring{$\frac{1}{2}$--Hölder}{1/2--Hölder} curves}\label{sec:intro-area-rn}

The core of this paper is the proof of Proposition~\ref{prop:vertical-curve-area}, which gives a condition for the existence of the symplectic area $S(\gamma)$ of the projection of a vertical curve. The condition in Proposition~\ref{prop:vertical-curve-area} requires some additional notation and definitions, so we will not state it here. Nonetheless, the methods of Proposition~\ref{prop:vertical-curve-area} can be used to prove the following criterion for the existence of the signed area of a $\frac{1}{2}$--Hölder curve in $\R^2$.

For any continuous map $\gamma = (\gamma_1, \gamma_2) \from [a,b]\to \R^2$, we give a geometric meaning to the integral
\begin{equation}\label{eq:def-A-integral}
  A(\gamma) = \frac{1}{2} \int_a^b (\gamma_1 \ud \gamma_2 - \gamma_2 \ud \gamma_1),
\end{equation}
as follows. For any partition $P=\{t_1,\dots, t_k\}$ with $a=t_1 < \dots < t_k = b$, we let
\begin{equation}\label{eqn:signed-area}
  A_P(\gamma) = \frac{1}{2}\sum_{i=1}^{k-1} (\gamma_1(t_i) \gamma_2(t_{i+1}) - \gamma_2(t_i) \gamma_1(t_{i+1})),
\end{equation}
so that $A_P(\gamma)$ is the signed area of the curve connecting $\gamma(t_1), \dots, \gamma(t_k)$ by line segments. Let $\mesh(P) = \max_{i}(t_{i+1}-t_i)$. We say that $A(\gamma)$ exists if there is an $L\in \R$ such that for any $\epsilon>0$, there is a $\delta>0$ such that if $\mesh(P)<\delta$, then $|L-A_P(\gamma)|<\epsilon$. In this case, we write
\begin{equation}\label{eq:def-A}
  A(\gamma) = \lim_{\mesh(P)\to 0} A_P(\gamma) = L,
\end{equation}
and call $A(\gamma)$ the \emph{(signed) area} of $\gamma$. When $\gamma$ is Lipschitz, one can prove that $A(\gamma)$ is the usual signed area of $\gamma$ (see Section~\ref{sec:symplectic}). 
  
  It is natural to approximate $A(\gamma)$ in terms of approximations of $\gamma$. For example, for $i\ge 0$, we can let $\gamma_i$ be the piecewise-linear curve connecting $\gamma(0), \gamma(2^{-i}), \gamma(2\cdot 2^{-i}), \dots, \gamma(1)$. When $\gamma$ is $\alpha$--Hölder, the region between $\gamma_i$ and $\gamma_{i+1}$ consists of $2^i$ triangles with diameter of order $2^{-i\alpha}$, so 
  $$|A(\gamma_{i+1}) - A(\gamma_i)| \lesssim 2^i 2^{-2i\alpha}.$$
  When $\alpha > \frac{1}{2}$, this difference decays exponentially, so $\lim_i A(\gamma_i)$ exists, and in fact one can use Young integration to show that $A(\gamma) = \lim_i A(\gamma_i)$. 
  
  When $\gamma$ is $\frac{1}{2}$--Hölder, however, this argument breaks down; $\lim_i A(\gamma_i)$ may not converge, and even when it converges, $A(\gamma)$ may not exist (see Lemma \ref{lem:gamma-i-no-area}).
  
  Nonetheless, we prove the following theorem, which gives a criterion for the existence of $A(\gamma)$. 

\begin{thm}\label{thm:ExistenceAreaCurves}
Let $\gamma\from [0,1] \to \R^{2}$ be a $\frac{1}{2}$--H\"older curve.
  Let 
  $$
  \gamma_i = \overline{\gamma(0), \gamma(2^{-i}), \gamma(2\cdot 2^{-i}), \dots, \gamma(1)}
  $$
  be the curve connecting $\gamma(0), \gamma(2^{-i}), \dots, \gamma(1)$ by line segments. For all $i\geq 0$ and $0\leq j\leq 2^i-1$, let 
  $$
  \delta_{i,j}(\gamma) := \diam\left\{\gamma\big(j 2^{-i}\big), \gamma\big((2j+1) 2^{-i-1}\big), \gamma\big((j+1) 2^{-i}\big)\right\}.
  $$
  If \begin{equation}\label{eqn:CondSigmaIntro}
  \sigma(\gamma):=\sum_{i=0}^\infty \sum_{j=0}^{2^i-1} \delta_{i,j}(\gamma)^2 < \infty,
  \end{equation}
  then $A(\gamma)$ exists and $A(\gamma) = \lim_i A(\gamma_i)$.
\end{thm}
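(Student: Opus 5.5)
The plan is to prove first that $\lim_i A(\gamma_i)$ exists, and then that $A_P(\gamma)$ is close to this limit whenever $\mesh(P)$ is small. The second step rests on the fact that $\sigma(\gamma)<\infty$ makes the tails $\sigma_{\ge i}(\gamma):=\sum_{m\ge i}\sum_j \delta_{m,j}(\gamma)^2$ tend to $0$.

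\emph{Convergence of the dyadic areas.} Passing from $\gamma_i$ to $\gamma_{i+1}$ replaces, for each $j$, the segment $\overline{\gamma(j2^{-i}),\gamma((j+1)2^{-i})}$ by two segments through the midpoint value $\gamma((2j+1)2^{-i-1})$. Since $A_P$ is a sum of ``cross products'' over consecutive vertices, we get exactly
\[
A(\gamma_{i+1})-A(\gamma_i)=\sum_{j=0}^{2^i-1} \pm \operatorname{Area}(T_{i,j}),
\]
where $T_{i,j}$ is the triangle with vertices $\gamma(j2^{-i})$, $\gamma((2j+1)2^{-i-1})$, $\gamma((j+1)2^{-i})$. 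Each such triangle has area at most $\tfrac12\delta_{i,j}^2$. Hence $\sum_i |A(\gamma_{i+1})-A(\gamma_i)|\le \tfrac12\sigma(\gamma)<\infty$, and $L:=\lim_i A(\gamma_i)$ exists, with $|L-A(\gamma_i)|\le \tfrac12\sigma_{\ge i}$. More generally, the same computation shows the following. For a dyadic interval $I$ of level $m$ and any finite set of dyadic points in $I$ including its endpoints, the closed polygon through the images of these points, closed by the chord, has area at most $\tfrac12$ times the sum of $\delta^2$ over the dyadic subintervals of $I$.

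\emph{Comparison with an arbitrary partition.} Fix $P$ with $\mesh(P)<2^{-i}$. Take $N\gg i$ and compare both $A_P$ and $A(\gamma_N)$ with $A_{P\cup D_N}$, where $D_N$ is the set of dyadic points of level $N$. We will then let $N\to\infty$. Inserting points is additive: $A_{P\cup D_N}-A_P$ is the sum, over intervals $[t_k,t_{k+1}]$ of $P$, of the signed area of the closed polygon through $\gamma(t_k)$, the points $\gamma(D_N\cap(t_k,t_{k+1}))$ and $\gamma(t_{k+1})$. The difference $A_{P\cup D_N}-A(\gamma_N)$ is handled similarly, now within dyadic intervals of level $N$. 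For each $[t_k,t_{k+1}]$, decompose its interior into maximal dyadic intervals. There are at most two per level, at levels $m\ge i$. The polygon then splits into two kinds of pieces. The first kind are the polygons inside each maximal dyadic interval, bounded above via the subtree sums of $\delta^2$. The second kind are two ``staircase'' polygons, whose vertices are $\gamma(t_k)$ and the endpoints $\gamma(a_l)$ of the maximal intervals accumulating at $t_k$, and similarly at $t_{k+1}$. The key combinatorial observation is multiplicity control. A dyadic interval of level $m$ appears as a maximal interval, or as an ancestor of the vertices of a staircase, for only $O(1)$ intervals of $P$, because such an interval is contained in $[t_k,t_{k+1}]$ or is adjacent to its endpoint. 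Hence the first kind of piece contributes $O(\sigma_{\ge i})$ in total.

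\emph{Main obstacle: the staircases.} A fan triangulation of the staircase from $\gamma(t_k)$ with the crude Hölder bounds $|\gamma(a_l)-\gamma(t_k)|\lesssim 2^{-l/2}$ gives only an area bound $\lesssim 2^{-l_0}\approx t_{k+1}-t_k$. Summed over $k$, this is $O(1)$, not small, so this estimate alone fails. Instead, I will write $\gamma(a_l)-\gamma(t_k)$ and $\gamma(a_{l+1})-\gamma(a_l)$ as telescoping sums of midpoint increments along the dyadic chain containing $t_k$. Each increment is bounded by $\delta_{m,j_m}$, since $\gamma(t_k)$ is the limit of dyadic values by continuity. The fan-triangle areas are then bounded by products $\bigl(\sum_{m\ge l}\delta_{m,j_m}\bigr)\cdot\delta_{l,j_l}$. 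The aim is to bound $\sum_l$ of these by $C\sum_{m\ge i}\delta_{m,j_m}^2$ along the chain. I would first try to use the Hölder bound $\delta_{m,j}\lesssim 2^{-m/2}$ together with Cauchy–Schwarz and geometric weights, to trade part of the tail sum for decay and get a Hardy-type inequality. If this fails, I would instead re-cut the staircase into triangles whose vertices lie in a single dyadic triple, so that each has area $\le\delta^2$ directly. With this bound and the multiplicity control, $|A_P-A_{P\cup D_N}|$ and $|A_{P\cup D_N}-A(\gamma_N)|$ are both $O(\sigma_{\ge i})$. Combined with $|A(\gamma_N)-L|\le\tfrac12\sigma_{\ge N}$, this gives $|A_P(\gamma)-L|\lesssim\sigma_{\ge i}\to0$, so $A(\gamma)$ exists and equals $\lim_i A(\gamma_i)$.
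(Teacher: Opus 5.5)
\paragraph{Verdict: genuine gap in the staircase step.}
Your architecture is a legitimate alternative to the paper's interpolation $G$ and the arches of $\partial R(\cU_P)$: you refine $P$ by $D_N$, split each $[t_k,t_{k+1}]$ into maximal dyadic intervals plus staircases, and control multiplicities. The dyadic convergence, the first-kind pieces and the level-$N$ comparison are fine. The gap is exactly the step you flag as the main obstacle, and neither of your proposed resolutions can work.

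\paragraph{The Hardy-type target is false.}
Note that $\sum_l\delta_l\sum_{m\ge l}\delta_m\ge\tfrac12\bigl(\sum_l\delta_l\bigr)^2$. Taking $\delta_m=2^{-(l_0+M)/2}$ for $l_0\le m\le l_0+M$ is compatible with $\delta_m\le 2^{-m/2}$, yet gives a ratio of order $M$ against $\sum_m\delta_m^2$.

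This is not merely lossiness of the fan bound. Let $\gamma(t)=2^{-4k}\theta_k(2t)$ on $[0,\tfrac12]$ and $\gamma=0$ on $[\tfrac12,1]$, with $\theta_k$ as in Section~\ref{sec:examples}. Since $\theta_k$ is bounded and $2^{8k}$--Lipschitz, $\gamma$ has $\tfrac12$--H\"older constant $O(1)$. Moreover $A(\gamma)=0$, and $\sigma(\gamma)=2^{-8k}\sigma(\theta_k)\lesssim 2^{-8k}/k$ by \eqref{eq:sigma-theta-k}. However, the partition $P=\{0,\tfrac{a_k}{2},\tfrac{a_{2k}}{2},\tfrac{a_{3k}}{2},\tfrac{a_{4k}}{2},1\}$ (with $a_i$ as in \eqref{eq:def-ab}; these are the dyadic times at which $\gamma$ reaches the corners of its square) has $\mesh P<1$ and $|A_P(\gamma)|=2^{-8k}$. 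So your final estimate $|A_P(\gamma)-\lim_iA(\gamma_i)|\lesssim\sigma_{\ge i}$ already fails at $i=0$, for any constant depending only on the H\"older constant.

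\paragraph{The fallback cannot work either.}
Re-cutting the staircases into triangles spanned by single dyadic triples would never use the H\"older hypothesis. It would therefore contradict Lemmas~\ref{lem:gamma-i-no-area} and~\ref{lem:sigma-bound}, which give $(\tfrac12-\epsilon)$--H\"older curves with $\sigma<\infty$ and no area. The same example also refutes your ``more generally'' claim for arbitrary finite sets of dyadic points in $I$. That claim holds only when you take all of $D_N\cap I$, which is the case you actually use.

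\paragraph{The missing idea.}
What is missing is the mechanism of Lemma~\ref{lem:arch-area} and \eqref{eqn:ajepsilon}: you must accept an \emph{additive} error rather than a per-chain bound by $\sum\delta^2$.
\begin{itemize}
\item Let $H$ be the $\tfrac12$--H\"older constant of $\gamma$. Bound each staircase by the square of its perimeter, namely $\lesssim\bigl(\sum_{m\ge l_0(k)}\delta_{m,j_m(t_k)}\bigr)^2$ with $2^{-l_0(k)}\lesssim t_{k+1}-t_k$. The same applies to the triangle joining the two staircases of $[t_k,t_{k+1}]$, which your decomposition needs but omits.
\item Split the chain at level $l_0(k)+M$. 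On the first $M$ levels, Cauchy--Schwarz gives $M\sum\delta^2$, and your multiplicity control sums this to $\lesssim M\sigma_{\ge i-1}$.
\item The H\"older bound is used only on the deeper levels. Their contribution is $\lesssim H^2 2^{-l_0(k)-M}\lesssim H^2 2^{-M}(t_{k+1}-t_k)$, which sums to $H^22^{-M}$ because the partition intervals have total length $1$.
\end{itemize}
Hence $|A_P(\gamma)-\lim_iA(\gamma_i)|\lesssim M\sigma_{\ge i-1}+H^22^{-M}$ whenever $\mesh P<2^{-i}$. Choosing $M$ large and then $i$ large proves the theorem.

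This is precisely what the paper does with the arches $a_j$, using $|S(\hat a_j)|\le\ell(a_j)^2$. The resulting rate is of order $\sigma_{\ge i}\log(H^2/\sigma_{\ge i})$, not $\sigma_{\ge i}$, consistent with \eqref{eq:sgi-bound-rn} and with the example above. Your idea of combining the H\"older decay with Cauchy--Schwarz was the right ingredient, but aimed at an unattainable multiplicative bound.
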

The $\frac{1}{2}$--Hölder condition in Theorem~\ref{thm:ExistenceAreaCurves} is sharp in the sense that for every $\epsilon>0$ one can construct $(\frac{1}{2}-\epsilon)$--H\"older curves with $\sigma(\gamma)<\infty$ for which $A(\gamma)$ does not exist, see Lemma~\ref{lem:gamma-i-no-area} and Lemma~\ref{lem:sigma-bound}. 

Theorem~\ref{thm:ExistenceAreaCurves} is a special case of Proposition~\ref{prop:rn-curve-area}, which we will prove in Appendix~\ref{app:rn-curve-area}. We will also show that under the assumptions of  Theorem~\ref{thm:ExistenceAreaCurves}, we have that $\mathcal{H}^2(\gamma([0,1]))=0$, and if $\gamma$ is a closed curve, then the winding number $\Wind(\gamma,z)$ of $\gamma$ around $z$ is defined for almost every $z$ and the signed area of $\gamma$ satisfies
$$
A(\gamma) = \int_{\R^2} \Wind(\gamma,z)\ud z,
$$
see Proposition~\ref{prop:winding}. For a discussion of how our result in Theorem~\ref{thm:ExistenceAreaCurves} compares with probabilistic notions of integration, including the It\^o integral for Brownian motion, see Section~\ref{sec:Probability}.

\subsection{Acknowledgments}

G.A. has been partially supported by the NSF DMS Grant No. 2550590. The authors would like to thank D.\ Vittone and F.\ Glaudo for useful discussions around the topic of this paper.

\vspace{0.5cm}

\section{Preliminaries}\label{sec:prel}
\medskip

\subsection{The Heisenberg groups and general notation}\label{sec:H1}

In this paper, we fix $n\geq 1$ a natural number. The $n$th Heisenberg group $\HH_n$ is the unique simply connected Lie group whose Lie algebra is $$\mathfrak{h}_n:=\mathrm{Lie}(\HH_n)=\langle X_1,Y_1,\ldots,X_n,Y_n\rangle \oplus \langle Z\rangle,$$ with $[X_i,Z]=[Y_i,Z]=0$ and $[X_i,Y_i]=Z$ for every $1\leq i\leq n$. We will identify $\HH_n$ with $\mathfrak{h}_n=\mathbb R^{2n+1}$ by means of exponential coordinates. Namely, 
\[
(x_1,y_1,\ldots,x_n,y_n,z) \leftrightarrow \exp(x_1X_1+y_1Y_1+\ldots+x_nX_n+y_nY_n+zZ).
\]

We denote the coordinate functions on $\HH_n$ by $x_i,y_i,z\from \HH_n\to \R$. Let $\pi\from \HH_n \to \R^{2n}$, $\pi(a) = (x_1(a),y_1(a),\ldots,x_n(a),y_n(a))$ and let $$\omega((x_1,y_1,\ldots,x_n,y_n),(\bar x_1,\bar y_1,\ldots,\bar x_n,\bar y_n)) = \sum_{i=1}^n x_i\bar y_i-\bar x_iy_i,$$ be the standard symplectic form on $\R^{2n}$. Then $[u,v] = \omega(\pi(u),\pi(v))Z$.
We can write the multiplication using the Baker--Campbell--Hausdorff formula; for $a,b\in \mathfrak{h}_n$, we have
$$
\exp(a)\exp(b) = \exp\left(a+b+\frac{1}{2}[a,b]\right).
$$
Using the identification of $\HH_n$ with $\mathfrak{h}_n$, we write
\begin{equation}
  \label{eq:group-law}
  a\cdot b = ab=a+b+\frac{1}{2}[a,b] = a + b + \frac{1}{2}\omega(\pi(a),\pi(b)) Z,
\end{equation}
or, in coordinates,
\[
\begin{aligned}
  a b &= (x_1^a,y_1^a,\ldots,x_n^a,y_n^a,z^a)(x_1^b,y_1^b,\ldots,x_n^b,y_n^b,z^b)  \\
  &= (x_1^a+x_1^b,y_1^a+y_1^b,\ldots,x_n^a+x_n^b,y_n^a+y_n^b,z(ab)),
\end{aligned}
\]
where 
\[z(a b):=z_a+z_b+\frac{1}{2}\sum_{i=1}^n x_i^ay_i^b-x_i^by_i^a.\]
(We use $ab$ and $a\cdot b$ interchangeably.)
  For $a\in \HH_n$, we denote the one-parameter subgroup associated to $a$ by $\langle a \rangle$. For $X\in\mathfrak{h}_n$ and $t\in\mathbb R$, we denote $X^t:=\exp(tX)$.

For $q\in \HH_n$, we define the Korányi norm of $q$ by
\begin{equation}\label{eq:def-kor}
  \|q\|_{\Kor} = \sqrt[4]{|\pi(q)|^4 + 16 z(q)^2},
\end{equation}
where $|\cdot|$ denotes the Euclidean norm in $\R^{2n}$.
This gives rise to a left-invariant metric $d_{\Kor}(p,q) = \|p^{-1}q\|_{\Kor}$. Let us denote with $\dcc$ the Carnot--Carath\'eodory metric on $\HH_n$, so that $\dcc(p,q)$ is the minimal length of a horizontal curve from $p$ to $q$.
\begin{lemma}\label{lem:cc-kor}
  For any $p, q \in \HH_n$,
  $$d_{\Kor}(p,q)\le \dcc(p,q) \le 3 d_{\Kor}(p,q).$$
\end{lemma}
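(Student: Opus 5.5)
By left-invariance of both metrics, it suffices to take $p=0$ and $q=(v,t)$ with $v=\pi(q)\in\R^{2n}$ and $t=z(q)$. I would prove the two inequalities separately, both times working with a horizontal curve from $0$ to $q$.

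For the lower bound $\|q\|_{\Kor}\le \dcc(0,q)$, I would take a horizontal curve $\eta=(\gamma,z)$ of length $L$ from $0$ to $q$, with $\gamma$ Lipschitz in $\R^{2n}$ and $|\gamma'|\le 1$ after parametrizing on $[0,L]$. Horizontality gives
\[
z(q)=\tfrac12\int_0^L\omega(\gamma,\gamma')\,dt,
\]
which is the symplectic area of $\gamma$ closed by the segment back to $0$; the closing segment contributes zero since it is radial. Moreover $|v|\le L$.

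Write $\gamma=(\gamma^1,\dots,\gamma^n)$ with $\gamma^k$ taking values in the $k$th coordinate plane $\R^2$. Then $2|t|\le \sum_k |2A_k|$, where $A_k$ is the area of the closed curve formed by $\gamma^k$ and the chord. If $\ell_k$ is the length of $\gamma^k$ and $c_k=|v^k|$ is the chord length, the planar isoperimetric inequality for curves with a fixed chord (the region lies in a half-plane bounded by the chord's line, with a semicircle being extremal) gives roughly $|A_k|\le (\ell_k^2-c_k^2)/(4\pi)$. Summing, and using $\sum\ell_k^2\le(\sum\ell_k)^2$ together with Cauchy–Schwarz against $L$, should give
\[
16t^2\le \frac{C}{\pi^2}(L^2-|v|^2)^2,
\]
with the constant working out to about $4/\pi^2<1$. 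Then
\[
|v|^4+16t^2\le |v|^4+(L^2-|v|^2)^2\le L^4,
\]
since $a^2+(L^2-a)^2\le L^4$ for $0\le a\le L^2$. The main care here is getting the isoperimetric constant right. If it proves too delicate, I would fall back on the cruder bound $|A|\le (L^2-|v|^2)/4$, obtained by placing the curve in the ellipse with foci at the endpoints and major axis $L$; this still suffices, since the ellipse area is $\pi L\sqrt{L^2-c^2}/4$, which I need to check against the target.

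For the upper bound $\dcc(0,q)\le 3\|q\|_{\Kor}$, I would build an explicit path: first the horizontal segment from $0$ to $(v,0)$, of length $|v|$, and then from $(v,0)$ to $q$ a horizontal loop whose projection is a circle of area $|t|$ in the $x_1y_1$-plane. This loop has length $2\sqrt{\pi|t|}$, and its projection returns to its start, so it only changes $z$, by exactly $t$ with the correct orientation (at that point the group law's correction term is handled by left-translating the loop). This gives
\[
\dcc\le |v|+2\sqrt{\pi|t|}.
\]
With $|v|\le K$ and $\sqrt{|t|}\le K/2$, where $K=\|q\|_{\Kor}$, this yields $\dcc\le (1+\sqrt\pi)K\approx 2.77K\le 3K$.

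The main obstacle is the sharp constant $1$ in the lower bound, which requires a correct chord-isoperimetric estimate rather than a crude one. The upper bound is routine.
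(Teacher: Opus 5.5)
Your upper bound is correct and is essentially the paper's argument: go horizontally to $\pi(q)$, then realize the vertical displacement by a closed horizontal loop. The paper uses the commutator square $[\pm X_1^{\sqrt{|z|}},Y_1^{\sqrt{|z|}}]$, of length $4\sqrt{|z|}$, which gives the constant $1+2=3$. Your circle gives $1+\sqrt{\pi}$.

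The lower bound has a genuine gap. The chord-isoperimetric estimate $|A_k|\lesssim \ell_k^2-c_k^2$ is false, and not just in its constant. For a nearly straight curve, the area cut off by the chord is of order $c\sqrt{\ell^2-c^2}$, which is much larger than $\ell^2-c^2$.

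Concretely, in $\HH_1$ take the horizontal lift of $\overline{(0,0),(c/2,h),(c,0)}$ with $0<h\ll c$. Then $L^2-|v|^2=4h^2$ while $|t|=ch/2$, so $16t^2=4c^2h^2$. This is not bounded by any constant times $(L^2-|v|^2)^2=16h^4$. Hence your intermediate inequality $16t^2\le C(L^2-|v|^2)^2$ fails, and the chain through $|v|^4+(L^2-|v|^2)^2\le L^4$ cannot be completed.

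The ellipse fallback does not yield $(L^2-|v|^2)/4$ either. It yields $|A|\le\frac{\pi}{4}L\sqrt{L^2-c^2}$, which has the right shape. But the real target is $16t^2\le L^4-|v|^4=(L^2-|v|^2)(L^2+|v|^2)$, and inserting the ellipse bound into it would require $\pi^2L^2\le L^2+|v|^2$, which is false.

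A direct isoperimetric proof would need two further ingredients. First, a sharp Dido-type bound such as $4|A_k|\le\sqrt{\ell_k^4-c_k^4}$; one can check it on circular arcs, the Dido extremals. Second, a way to combine the planes, e.g. via $\sum_k\ell_k^2\le L^2$ and the reverse Minkowski inequality $\sum_k\sqrt{a_k^2-b_k^2}\le\sqrt{(\sum_k a_k)^2-(\sum_k b_k)^2}$ for $a_k\ge b_k\ge 0$.

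The paper sidesteps all of this. Since $d_{\Kor}$ is a metric, and the increment $z(\eta(s)^{-1}\eta(s+h))$ of a horizontal curve is $o(h^2)$ for a.e.\ $s$, horizontal curves have the same length in $d_{\Kor}$ as in $\dcc$. So for a $\dcc$-geodesic $\eta$ from $\zero$ to $p$ one gets $d_{\Kor}(\zero,p)\le\ell_{\Kor}(\eta)=\dcc(\zero,p)$.
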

\begin{proof}
  It suffices to consider the case $q=\zero$. If $p\in \HH_n$, then there is a horizontal geodesic $\gamma$ from $\zero$ to $p$ with length $\dcc(\zero,p)$. The length of a horizontal curve is the same in the Korányi and Carnot--Carathéodory metrics, so $d_{\Kor}(\zero,p)\le \dcc(\zero,p)$.

  There is a horizontal line of length $|\pi(p)|$ from $\zero$ to $\pi(p)$, and since
  $$\pi(p)^{-1} p = Z^{z(p)} = \left[\pm X_1^{\sqrt{|z(p)|}}, Y_1^{\sqrt{|z(p)|}}\right],$$ we have $\dcc(\pi(p), p)\le 4\sqrt{|z(p)|} \le 2 d_{\Kor}(\zero,p)$. Therefore,
  \[
  \begin{aligned}
  \dcc(\zero,p) &\le \dcc(\zero, \pi(p)) + \dcc(\pi(p), p) \le |\pi(p)| + 4\sqrt{|z(p)|} \\
  &\le 3 d_{\Kor}(\zero,p),
  \end{aligned}
  \]
  as desired.
\end{proof}

Throughout this paper, we will write $d$ for $d_{\Kor}$. 

For $r>0$, let $S_r\from \HH_n\to \HH_n$ be the scaling automorphism:
\[
S_r(x_1,y_1,\ldots,x_n,y_n,z):=(rx_1,ry_1,\ldots,rx_n,ry_n,r^2z).
\]
A continuous distance $D$ on $\HH_n$ is \emph{homogeneous} if $$D(S_r(a),S_r(b))=rD(a,b)$$ for every $a,b\in \HH_n$ and $r>0$. It is a standard fact that $d=d_{\Kor}$ is a left-invariant homogeneous distance and that it is bilipschitz equivalent to any left-invariant homogeneous distance on $\HH_n$. Throughout the paper we work without loss of generality with the Kor\'anyi distance.

The Hausdorff dimension of $\HH_n$ with respect to $d$ is $2n+2$. Let $\mathcal{H}^\ell$ denote the $\ell$--dimensional Hausdorff measure with respect to $d$. We denote by $\mathcal{L}^\ell$ the $\ell$--dimensional Lebesgue measure on $\mathbb R^\ell$. In this paper we normalize the Hausdorff measure $\mathcal{H}^2$ so that 
\begin{equation}\label{eq:h2-normalize}
  \mathcal{H}^2(\{Z^a : a\in [0,1]\})=1
\end{equation}
and normalize $\mathcal{H}^{2n+2}$ to be equal to the Lebesgue measure $\cL^{2n+1}$ on $\HH_n\equiv \mathbb R^{2n+1}$. When we integrate over $\HH_n$ or $\R^{2n}$, it is with respect to the appropriate Lebesgue measure unless otherwise specified.

Let $U$ be a measurable subset of $\HH_n$. We say that $f\from U\to\mathbb R^{2n}$ is {\em Pansu differentiable at $x\in U$} if there exists a homogeneous (commuting with $S_r$ for every $r>0$) homomorphism $Df_x\from\HH_n\to\mathbb R^{2n}$ such that
\begin{equation}\label{eqn:PDiff}
\lim_{U\ni y\to x}\frac{|f(y)-f(x)-Df_x(x^{-1}\cdot y)|}{d(x,y)}=0.
\end{equation}
If $Df_x$ exists, then $Df_x( Z)= Df_x([X_1,Y_1])=[Df_x(X_1),Df_x(Y_1)]=0$. With a slight abuse of notation, we will denote with $Df_x$ also the linear map $ Df_x\circ\iota\from \mathbb R^{2n}\to\mathbb R^{2n}$, where $\iota\from\{z=0\}\hookrightarrow \mathbb \HH_n$. If $L\from\mathbb R^{2n}\to\mathbb R^{2n}$ is linear, we denote with $\|L\|_{\mathrm{Op}}$ the usual operator norm:
\[
\|L\|_{\mathrm{Op}}:=\sup_{v:|v|=1} |L(v)|.
\]

Let $\Omega\subset \HH_n$ be an open set. We say that $f\from\Omega\to\mathbb R^{2n}$ is {\em of class $C^1_{\mathrm H}$ in $\Omega$} if the map $x\mapsto Df_x$ is continuous on $\Omega$. By \cite[Theorem 5.3.7]{Kozhevnikov}, fibers of nonsingular $C^1_{\mathrm H}$ maps are curves.
\begin{thm}[{\cite[Theorem 5.3.7]{Kozhevnikov}}]
  Let $f\from \HH_n \to \R^{2n}$ be a $C^1_{\mathrm{H}}$ map and let $q\in \HH_n$ be a point such that $Df_q$ is nonsingular. There is a neighborhood $W$ of $q$ such that $f^{-1}(f(q))\cap W$ is a simple curve.
\end{thm}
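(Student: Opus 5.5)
The plan is to show that, near $q$, the fiber satisfies a vertical cone condition, which makes it a graph over the $z$--direction, and then to use a degree argument to show that this graph is defined on a whole interval.

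\textbf{Normalization and uniform linearization.} Composing $f$ with the linear map $(Df_q)^{-1}$ does not change the fibers, so we may assume $Df_q=\id$ on $\R^{2n}$ and $f(q)=0$. Fix a small $\epsilon>0$. By continuity of $x\mapsto Df_x$ there is a ball $B=B(q,r_0)$ on which $\|Df_x-\id\|_{\Op}<\epsilon$.

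The first key step is a uniform first-order estimate on a smaller ball $B'$:
\[
|f(b)-f(a)-Df_a(\pi(a^{-1}b))|\le C\epsilon\, d(a,b)\qquad\text{for } a,b\in B'.
\]
To prove it, join $a$ to $b$ by a horizontal geodesic $c$ of length at most $3d(a,b)$, using Lemma~\ref{lem:cc-kor}; it stays in $B$ if $B'$ is small. Along a horizontal curve, $f\circ c$ is $C^1$ with $(f\circ c)'(t)=Df_{c(t)}(\pi(c'(t)))$. Comparing with $Df_a$, we get
\[
f(b)-f(a)=Df_a\Bigl(\int \pi(c')\Bigr)+O\bigl(\epsilon\cdot\mathrm{length}(c)\bigr),
\]
and $\int\pi(c')=\pi(a^{-1}b)$. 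This step, the passage from pointwise Pansu differentiability to this uniform mean-value estimate, is where the $C^1_{\mathrm H}$ hypothesis is really used. I expect it to be the main technical point: one has to check that $f$ is differentiable along horizontal curves with the stated derivative, which follows from \eqref{eqn:PDiff} applied at each point of $c$.

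\textbf{Cone condition.} Suppose $a,b\in B'$ and $f(a)=f(b)$. Since $\|Df_a-\id\|_{\Op}<\epsilon$, the estimate above gives
\[
|\pi(a^{-1}b)|\le C'\epsilon\, d(a,b).
\]
Hence $a^{-1}b$ lies in a narrow cone around the $Z$ axis, so $|z(a^{-1}b)|\gtrsim d(a,b)^2$. In exponential coordinates, $z(b)-z(a)=z(a^{-1}b)+\tfrac12\omega(\pi(a),\pi(b))$. Working in coordinates centred at $q$, the correction term is $O(r_0\,|\pi(a^{-1}b)|)=O(r_0\epsilon\, d(a,b))$, which is dominated by $d(a,b)^2$ only when $d(a,b)\gtrsim r_0\epsilon$. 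To remove this restriction, I would instead parametrize by the left-invariant quantity $s\mapsto z(q^{-1}\cdot)$ along cosets $q\exp(v)Z^s$, as in the next step. The cone condition then shows that the map $p\mapsto s(p)$ is injective on $f^{-1}(0)\cap W$ for a suitable neighbourhood $W$, and that its inverse is uniformly continuous, with $d(a,b)\lesssim |s(a)-s(b)|^{1/2}$.

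\textbf{Existence via degree.} For $|s|\le c\,r^2\epsilon^2$, consider the map $F_s(v)=f(q\exp(v)Z^s)$ on $\{|v|\le r\}\subset\R^{2n}$. By the uniform estimate and the Lipschitz bound $|f(qZ^s)|\lesssim\sqrt{|s|}\le c' r\epsilon$, we have $|F_s(v)-v|\le C''\epsilon r<r$ on $|v|=r$. So $F_s$ is homotopic to the identity on the sphere $\{|v|=r\}$ through maps avoiding $0$, and Brouwer degree theory gives some $v_s$ with $F_s(v_s)=0$.

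Set $W=\{q\exp(v)Z^s : |v|<r,\ |s|<c\,r^2\epsilon^2\}$. By injectivity from the cone condition, the point $q\exp(v_s)Z^s$ is the unique point of $f^{-1}(0)\cap W$ with parameter $s$. Moreover $s\mapsto q\exp(v_s)Z^s$ is continuous, again by the cone condition, and injective. Therefore $f^{-1}(f(q))\cap W$ is the image of an interval under a continuous injection, that is, a simple curve.
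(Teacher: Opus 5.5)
The paper does not prove this statement; it imports it from Kozhevnikov. Your first three ingredients are sound, and they coincide with tools the paper does use: your mean-value estimate is Lemma~\ref{lem:ControlF}, your cone condition is Corollary~\ref{cor:fibers-are-vertical}, and your slice-wise degree argument is the one in the proof of Lemma~\ref{lem:sandwich}. The gap is the last step, the claimed injectivity of $s(p)=z(q^{-1}p)$ on $f^{-1}(0)\cap W$, and that claim is false.

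Switching to $z(q^{-1}\cdot)$ changes nothing, because it \emph{is} the $z$-coordinate centred at $q$. For $a,b$ on the fiber you still have
\[
s(b)-s(a)=z(a^{-1}b)+\tfrac12\,\omega\bigl(\pi(q^{-1}a),\pi(a^{-1}b)\bigr).
\]
At scale $d=d(a,b)$ the first term is $\approx d^2$. Your own estimate only gives $|\pi(a^{-1}b)|\lesssim m(3d)\,d$, where $m$ is the modulus of continuity of $Df$. For a $C^1_{\mathrm H}$ map $m(d)/d$ can be unbounded, so the second term can win at small scales at any fiber point with $\pi(q^{-1}a)\neq 0$.

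Here is a concrete example in $\HH_1$.
\begin{itemize}
\item Take $f_0(x,y,z)=(x-\beta z,y)$. Then $D(f_0)_{\zero}=\id$ and the fiber through $\zero$ is $\{(\beta t,0,t)\}$.
\item At $a=(R,0,H)$ with $R=\beta H$, add $p\mapsto(0,-\phi(a^{-1}p))$, where $\phi(x,y,z)=\chi(x)\chi(y)\,k(z-\tfrac12 xy)$. Here $\chi$ is a cutoff at scale $\epsilon R$, and $k$ is supported in $[-2\sigma,2\sigma]$ with $\sigma\approx\epsilon^2R^2$, $|k'|\lesssim 1/R$, $k(0)=0$, and $k'=-4/R$ on $[-\sigma,\sigma]$.
\item Since $X(z-\tfrac12xy)=-y$ and $Y(z-\tfrac12xy)=0$, this perturbation has horizontal derivative $O(\epsilon)$.
\item Near $a$, the fiber is essentially $t\mapsto a\cdot(x'(t),k(t),t)$. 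Along it the intrinsic height $t=z(a^{-1}\cdot)$ increases, while the $z$-coordinate $H+t+\tfrac12Rk(t)$ equals $H-t$ for $|t|\le\sigma$.
\end{itemize}
Repeat this at heights $H_k\to0$ with $\epsilon_k\to0$. The result is a $C^1_{\mathrm H}$ map with $Df_{\zero}=\id$ for which $s$ is non-injective on the fiber in every neighbourhood of $\zero$. Worse, whenever the cap $cr^2\epsilon^2$ of your box equals some $H_k$, the fiber exits through the top, re-enters, and exits again. Then $f^{-1}(0)\cap W$ is not even connected. So neither uniqueness of $v_s$ nor continuity of $s\mapsto q\exp(v_s)Z^s$ holds. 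Your Step~2 is routine; this topological step is the real difficulty. Note that Lemma~\ref{lem:sandwich} compares $z$-coordinates with intrinsic heights only up to a macroscopic error $2\kappa$, and takes the $1$--manifold structure from this very theorem.

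The repair is to stop comparing heights through a global coordinate and use the intrinsic order $\prec$ instead. On the fiber near $q$, the cone condition makes $\prec$ a total order (Lemma~\ref{lem:vertical-properties}) with $z(a^{-1}b)\approx d(a,b)^2$ for $a\prec b$. Its graph is closed, so on a compact piece the order topology agrees with the metric topology.

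Now run your degree argument in boxes $\{a\exp(v)Z^t\}$ centred at each fiber point $a$. There the slice parameter is exactly $z(a^{-1}\cdot)=t$. Given $a\prec b$, take $t=\min\{\tfrac12 z(a^{-1}b),t_0\}$ to get a fiber point $c$ with $a\prec c\prec b$. This works because $|\omega(\pi(a^{-1}c),\pi(a^{-1}b))|\lesssim\epsilon^2 z(a^{-1}b)$.

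Hence a compact order interval $\{g_-\preceq p\preceq g_+\}$ of the fiber is a compact, separable, gap-free linear order, and therefore an arc. Its diameter is controlled by Lemma~\ref{lem:compact-intersections-v2}. Finally, choose $W$, rather than a coordinate box, so that $f^{-1}(f(q))\cap W$ is exactly an open order interval.
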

Consequently
\begin{cor}\label{cor:nonsingular}
  Let $U\subset \HH_n$ be an open set and let $f\from \HH_n \to \R^{2n}$ be a $C^1_{\mathrm{H}}$ map such that $Df_q$ is nonsingular for any $q\in U$. Then, for any $w\in f(U)$, $f^{-1}(w)\cap U$ is a $1$--manifold that is closed in the relative topology on $U$.
\end{cor}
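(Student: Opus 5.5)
We prove Corollary~\ref{cor:nonsingular} by applying Kozhevnikov's Theorem 5.3.7 at every point of the fiber, and then doing a short topological argument.

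Fix $w\in f(U)$ and set $F=f^{-1}(w)\cap U$.

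\emph{Closedness.} Since $f$ is continuous, $f^{-1}(w)$ is closed in $\HH_n$. Hence $F$ is closed in the relative topology of $U$.

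\emph{Local structure.} Take any $q\in F$. By hypothesis $Df_q$ is nonsingular, so Kozhevnikov's theorem gives a neighborhood $W_q$ of $q$ such that $f^{-1}(f(q))\cap W_q=f^{-1}(w)\cap W_q$ is a simple curve. Shrinking $W_q$ to $W_q\cap U$, we may assume $W_q\subset U$, so $F\cap W_q=f^{-1}(w)\cap W_q$.

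Two things are needed to conclude that $q$ has a neighborhood in $F$ homeomorphic to an open interval:
\begin{itemize}
\item[(i)] the simple curve should be understood as an \emph{open} arc, i.e.\ the homeomorphic image of an open interval, relatively open in $f^{-1}(w)$;
\item[(ii)] $q$ should be an interior point of that arc, not an endpoint.
\end{itemize}

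\emph{Handling (i) and (ii).} The most natural reading of the theorem is that the fiber near $q$ is a simple curve passing through $q$, so that $f^{-1}(w)\cap W_q$ is homeomorphic to an open interval containing $q$. If the statement is instead read as giving a closed arc $\zeta\colon[0,1]\to W_q$, shrink $W_q$ to a smaller open neighborhood $W'_q$ of $q$ that excludes the endpoints $\zeta(0),\zeta(1)$ whenever they differ from $q$. Then $f^{-1}(w)\cap W'_q$ is a relatively open subset of the arc, hence a disjoint union of open subarcs, and the component containing $q$ is an open subarc. Because $\zeta^{-1}(W'_q)$ is open in $[0,1]$, this component is relatively open in $F$.

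It remains to rule out $q$ being an endpoint of the arc. Suppose it were. Every point $q'$ of the arc near $q$ also lies in $U$, so $Df_{q'}$ is nonsingular and $q'$ has its own arc neighborhood. The danger is a fiber that genuinely terminates at $q$. I do not see how to exclude this from the quoted statement alone. I would therefore rely on the fact that Kozhevnikov's construction produces the fiber as a vertical curve passing through $q$, parametrized by the $z$-coordinate on both sides of $q$, together with the degree/surjectivity of $f$ on small balls, which forces the fiber to extend on both sides.

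\emph{Conclusion.} Once (i) and (ii) hold, every point of $F$ has a neighborhood in $F$ homeomorphic to $\R$. The space $F$ is Hausdorff and second countable as a subspace of $\HH_n$, so $F$ is a topological $1$--manifold without boundary, and it is closed in $U$.

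\emph{Main obstacle.} The expected difficulty is point (ii): confirming that the local curve given by the theorem is an open arc with $q$ in its interior. Everything else is formal.
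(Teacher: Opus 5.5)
Your proposal is correct and matches the paper, which obtains this corollary immediately from Kozhevnikov's Theorem 5.3.7 applied at each point of the fiber, with relative closedness coming from the continuity of $f$. Like your ``natural reading,'' the paper takes the cited theorem to give a simple arc with $q$ in its interior, so your point (ii) is covered by the citation. If you want (ii) self-contained, combine three ingredients, each of which relies only on Lemma~\ref{lem:ControlF}: the degree step from the proof of Lemma~\ref{lem:sandwich}, which produces fiber points just above and just below $q$ inside a thin cylinder; the cone estimate from the proof of Corollary~\ref{cor:fibers-are-vertical}; and the monotonicity in Lemma~\ref{lem:vertical-properties}. Together these rule out $q$ being an endpoint.
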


Furthermore, $C^1_{\mathrm{H}}$ maps satisfy the following lemma. If $B$ is a ball, we let $\rad(B)$ be its radius and for $\rho>0$, we let $\rho B$ be the ball of radius $\rho \rad(B)$ with the same center.
\begin{lemma}\label{lem:ControlF}
  Let $f\from \HH_n\to \R^{2n}$ be a $C^1_{\mathrm{H}}$ map, let $c>0$, and let $B=B_R(p)\subset \HH_n$ be a ball such that $\|Df_q - \id\|_{\mathrm{Op}}<c$ for all $q\in 5B$. Then for every $x,y\in B$,
  \begin{equation}\label{eq:ControlFPrel}
    |(f(x)-\pi(x))-(f(y)-\pi(y))|\le 3c d(x,y).
  \end{equation}
\end{lemma}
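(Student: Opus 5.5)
Set $h := f - \pi$, viewed as a map $\HH_n\to\R^{2n}$. The map $\pi$ is a homogeneous homomorphism $\HH_n\to\R^{2n}$, so its Pansu differential is $\pi$ itself at every point. Its horizontal restriction is therefore $\id$. It follows that $h$ is $C^1_{\mathrm H}$, with $Dh_q = Df_q - \id$, and so $\|Dh_q\|_{\mathrm{Op}}<c$ for every $q\in 5B$. The claim \eqref{eq:ControlFPrel} then becomes a mean value inequality: $|h(x)-h(y)|\le 3c\,d(x,y)$ for $x,y\in B$.

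Fix $x,y\in B$. By Lemma~\ref{lem:cc-kor} there is a horizontal curve $\eta\from[0,\ell]\to\HH_n$, parametrized by arc length, from $x$ to $y$, with $\ell=\dcc(x,y)\le 3d(x,y)$. First check that $\eta$ stays in $5B$. Every point $\eta(t)$ satisfies
\[
d(\eta(t),p)\le d(x,p)+d(x,\eta(t))\le R+\dcc(x,\eta(t))\le R+\ell.
\]
Also $\ell\le 3d(x,y)\le 6R$, so this only gives $d(\eta(t),p)\le 7R$. That is not good enough. The fix is a sharper bound using the midpoint: $\min(t,\ell-t)\le \ell/2\le 3R$, so measuring from whichever endpoint is closer gives $d(\eta(t),p)\le R+3R=4R<5R$. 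So $\eta([0,\ell])\subset 5B$. This containment is exactly where the factor $5$ in the hypothesis gets used.

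Next, differentiate along the curve. Since $\eta$ is horizontal and Lipschitz, for a.e.\ $t$ we have $\eta(t)^{-1}\eta(t+s)=\exp(s\,v(t))+o(s)$ in the metric $d$, where $v(t)$ is horizontal with $|\pi(v(t))|=|\dot{(\pi\circ\eta)}(t)|\le 1$. Pansu differentiability of $h$ at $\eta(t)$, in the form \eqref{eqn:PDiff}, then gives
\[
\frac{d}{dt}h(\eta(t))=Dh_{\eta(t)}(v(t)).
\]
One way to justify this is via the chain rule for Pansu differentiable maps along horizontal curves. Alternatively, note that $h\circ\eta$ is Lipschitz, because $h$ is locally Lipschitz (being $C^1_{\mathrm H}$) and $\eta$ is Lipschitz, and compute its derivative at points where both objects are differentiable. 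Either way, $|\tfrac{d}{dt}h(\eta(t))|\le c$ for a.e.\ $t$. Integrating,
\[
|h(x)-h(y)|\le c\,\ell\le 3c\,d(x,y).
\]

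The main technical point is the chain-rule step. I expect it to go through cleanly because Pansu differentiability at each point of a continuous-derivative map gives the required first-order expansion uniformly along $\eta$. If needed, I would instead avoid pointwise differentiation entirely. To do so, subdivide $\eta$ into short pieces and apply \eqref{eqn:PDiff} with a uniform error, which uniform continuity of $q\mapsto Dh_q$ on the compact set $\eta([0,\ell])$ provides. Summing the errors gives $c\ell+o(1)$.
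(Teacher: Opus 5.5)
Your proposal is correct and is essentially the paper's proof: subtract $\pi$ (the paper also subtracts the constant $f(p)-\pi(p)$), join $x$ and $y$ by a horizontal curve of length at most $3d(x,y)$ via Lemma~\ref{lem:cc-kor}, note that this curve stays in $5B$, and integrate the horizontal differential along it to get $|h(x)-h(y)|\le c\,\ell\le 3c\,d(x,y)$. Your midpoint argument giving $d(\eta(t),p)<4R$ supplies the justification for the containment in $5B$, which the paper asserts without comment (the naive bound only gives $7R$), and your discussion of the chain rule along $\eta$ likewise spells out a step the paper takes for granted.
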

\begin{proof}
  For $x\in \HH$, let
  $$\tilde f(x)=f(x)-\pi(x)-(f(p)-\pi(p)).$$
  Then $\tilde{f}(p) = 0$ and $\|D\tilde{f}_q\|_{\mathrm{Op}}<c$ for all $q\in 5B$. By Lemma~\ref{lem:cc-kor}, if $x,y \in B$, there is a horizontal curve $\gamma$ from $x$ to $y$ with $\ell(\gamma) \le 3d(x,y) \le 6R$. Then $\gamma\subset 5B$, so 
  $$|\tilde{f}(y) - \tilde{f}(x)| = \left|\int_0^1 D\tilde{f}_{\gamma(t)}(\gamma'(t))\ud t\right| \le c \ell(\gamma) \le 3 c d(x,y),$$
  as desired.
\end{proof}

Finally, in this paper we will frequently use the notation
\[
A \lesssim B,
\]
to mean that there is a universal constant $C>0$ such that $A\leq CB$. Similarly for $A\gtrsim B$. When we write $A\approx B$ we mean that $B\lesssim A \lesssim B$. Moreover, when we write $A\lesssim_{\lambda} B$, we mean that the constant $C$ might depend on $\lambda$. Similarly for $\gtrsim_{\lambda}$, and $\approx_\lambda$.

\subsection{General facts on vertical curves}\label{sec:verticalcurves}
 For $\lambda>0$, let
\begin{equation}\label{eqn:Vcone}
  \begin{split}
    \VCone_\lambda & := \left\{q\in \HH_n : |z(q)| \ge \lambda |\pi(q)|^2\right\} \\ 
    & = 
  \bigg\{(x_1,y_1,\ldots,x_n,y_n,z)\in \HH_n : |z| \ge \lambda \sum_{i=1}^n(x_i^2+y_i^2)\bigg\}.
  \end{split}
\end{equation}
This is a homogeneous cone centered on $\langle Z\rangle$; as $\lambda$ increases, the cone converges to $\langle Z\rangle$. A subset $E\subset \HH_n$ is a \emph{$\lambda$--vertical curve} if $E\subset p \VCone_\lambda$ for all $p\in E$. Notice that as $\lambda$ increases, $\VCone_\lambda$ becomes smaller and $\lambda$--vertical curves become closer to vertical lines. We should understand $\lambda$ as a measure of the \emph{verticality} of the curve.

Our interest in vertical curves stems from the following consequence of Corollary~\ref{cor:nonsingular} and  Lemma~\ref{lem:ControlF}. 
\begin{cor}\label{cor:fibers-are-vertical}
  Let $\lambda > 0$. There is a $c>0$ with the following property. 
  Let $B \subset \HH_n$ be an open ball and let $f\from \HH_n \to \R^{2n}$ be a $C^1_{\mathrm{H}}$ map such that $\|Df_q - \id\|_{\mathrm{Op}} < c$ for all $q\in 5B$. Then, for any $w\in f(B)$, $B\cap f^{-1}(w)$ is a $\lambda$--vertical $1$--manifold that is relatively closed in $B$. If $\lambda = 2$, then $c = \frac{1}{20}$ suffices.
\end{cor}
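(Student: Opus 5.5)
The plan is to prove the two assertions separately: the $1$--manifold and relative closedness part from Corollary~\ref{cor:nonsingular}, and the $\lambda$--verticality from the estimate \eqref{eq:ControlFPrel} of Lemma~\ref{lem:ControlF}.

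\emph{Manifold structure.} Take $c<1$. For every $q\in 5B\supset B$, the bound $\|Df_q-\id\|_{\mathrm{Op}}<c<1$ makes $Df_q=\id+(Df_q-\id)$ invertible, by the Neumann series. So $Df_q$ is nonsingular on the open set $B$. Corollary~\ref{cor:nonsingular}, applied with $U=B$, then shows that for every $w\in f(B)$ the set $f^{-1}(w)\cap B$ is a $1$--manifold that is relatively closed in $B$.

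\emph{Verticality.} Fix $w\in f(B)$ and points $x,y\in f^{-1}(w)\cap B$, and set $q=x^{-1}y$. We must show $y\in x\VCone_\lambda$, that is, $|z(q)|\ge \lambda|\pi(q)|^2$.
\begin{itemize}
\item Since $f(x)=f(y)$, Lemma~\ref{lem:ControlF} gives $|\pi(y)-\pi(x)|\le 3c\,d(x,y)$.
\item By the group law \eqref{eq:group-law}, $\pi(q)=\pi(y)-\pi(x)$. Write $a=|\pi(q)|$ and $b=|z(q)|$, so that $d(x,y)=\|q\|_{\Kor}=(a^4+16b^2)^{1/4}$.
\item Raising the previous inequality to the fourth power gives $a^4\le 81c^4(a^4+16b^2)$, that is, $a^4(1-81c^4)\le 1296\,c^4b^2$.
\item For $81c^4<1$ this rearranges to
\[
a^2\le \frac{36c^2}{\sqrt{1-81c^4}}\,b .
\]
\end{itemize}
Now choose $c$ small enough that $36c^2/\sqrt{1-81c^4}\le 1/\lambda$, which is possible because the left-hand side tends to $0$ as $c\to 0$. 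Then $b\ge\lambda a^2$, i.e.\ $q\in\VCone_\lambda$. Since $x,y$ were arbitrary points of the fiber in $B$, the fiber is $\lambda$--vertical.

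\emph{The case $\lambda=2$.} With $c=\frac1{20}$ we have $36c^2=0.09$ and $81c^4=81/160000$, so $\sqrt{1-81c^4}>0.99$. Hence the constant is below $0.1\le\frac12=1/\lambda$, and $c=\frac{1}{20}$ suffices.

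The argument involves no real obstacle. The only points needing care are that the homogeneous Korányi norm is what turns a horizontal displacement that is small relative to $d$ into the quadratic cone condition, and that $\pi(x^{-1}y)=\pi(y)-\pi(x)$.
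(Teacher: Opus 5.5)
Your proposal is correct and takes essentially the same approach as the paper: nonsingularity of $Df_q$ plus Corollary~\ref{cor:nonsingular} for the manifold part, then Lemma~\ref{lem:ControlF} combined with the Korányi norm for the cone condition. The only difference is algebraic. You raise the estimate to the fourth power directly, whereas the paper uses $\sqrt[4]{a^4+16b^2}\le a+2\sqrt{b}$ and absorbs the $\frac12 d(p,q)$ term; your route yields a slightly better constant ($36c^2$ in place of $144c^2$).
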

\begin{proof}
  We suppose that $c<\frac{1}{6}$. Then $Df_q$ is nonsingular for $q\in B$, so $B\cap f^{-1}(w)$ is a $1$--manifold by Corollary~\ref{cor:nonsingular}.
  If $p,q\in B$ and $f(p) = f(q)$, then $|\pi(p)-\pi(q)|\le 3c d(p,q)$ by Lemma~\ref{lem:ControlF}. Then
  \begin{multline*}
    d(p,q) = \sqrt[4]{|\pi(p) - \pi(q)|^4 + 16 z(p^{-1}q)^2} \le |\pi(p) - \pi(q)| + 2\sqrt{|z(p^{-1}q)|} \\
    \le \frac{1}{2} d(p,q) + 2\sqrt{|z(p^{-1}q)|}.
  \end{multline*}
  Therefore, 
  $$|z(p^{-1}q)| \ge \frac{1}{16} d(p,q)^2 \ge \frac{1}{144 c^2} |\pi(p^{-1}q)|^2.$$
  If $144c^2\lambda < 1$, this implies $q\in p\VCone_\lambda$, as desired. 
\end{proof}

We define
\[
\VCone_\lambda^+ :=\VCone_\lambda\cap\{z\geq 0\}, \qquad \VCone_\lambda^- :=\VCone_\lambda\cap\{z\leq 0\}.
\]
Define the following relation.
\begin{equation}\label{eq:def-order}
    g \prec h \qquad \text{if $z(g^{-1}h) > 0$}.
\end{equation}

Lemmas~\ref{lem:vertical-properties}--\ref{lem:Biholder} are direct analogues of lemmas that were proved in \cite{AY-Vertical} for the first Heisenberg group. The proofs in $\HH_1$ work for $\HH_n$ as well.

\begin{lemma}[{\cite[Lemma 3.1(3) \& Proposition 3.2]{AY-Vertical}}]\label{lem:vertical-properties}
 Let $\lambda>0$, and let $E\subset \HH_n$ be a $\lambda$--vertical curve. Then:
  \begin{enumerate}
  \item\label{Item3} If $\lambda > \frac{1}{4}$, or if $E$ is connected, then \eqref{eq:def-order} defines a total order on $E$. Moreover, if $E$ is the image of an injective map $\gamma\from I\to\HH_n $, then $\gamma$ is either monotone increasing in the sense that
  \begin{equation}\label{eqn:First}
  \gamma(s) \prec \gamma(t) \quad \forall s<t,
  \end{equation}
  or it is monotone decreasing in the sense that
  \begin{equation}\label{eqn:Second}
  \gamma(s) \succ \gamma(t) \quad \forall s<t.
  \end{equation}
  \item  Let $\lambda>0$ and let $E$ be a connected $\lambda$--vertical curve. Then $E$ is homeomorphic to an interval and thus $E$ is parametrized by an increasing $\lambda$--vertical map $\gamma\from I \to\HH_n$.
\end{enumerate}
\end{lemma}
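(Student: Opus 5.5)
The plan is to prove the two items in order, working in $\HH_n$ with the explicit group law \eqref{eq:group-law}. For item (1), the key quantitative input is an "almost additivity" of the $z$--coordinate along a vertical curve. For $p,q,r\in E$ with $q\in p\VCone_\lambda$ and $r\in q\VCone_\lambda$, I will write
\[
z(p^{-1}r)=z(p^{-1}q)+z(q^{-1}r)+\tfrac12\omega\big(\pi(p^{-1}q),\pi(q^{-1}r)\big).
\]
The cone condition gives $|\pi(p^{-1}q)|\le \lambda^{-1/2}|z(p^{-1}q)|^{1/2}$, and similarly for $q^{-1}r$. Hence the error term is at most $\frac{1}{2\lambda}|z(p^{-1}q)|^{1/2}|z(q^{-1}r)|^{1/2}\le \frac{1}{4\lambda}\big(|z(p^{-1}q)|+|z(q^{-1}r)|\big)$.

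\emph{Transitivity when $\lambda>\frac14$.} If $p\prec q\prec r$, both $z$'s are positive. The error is then strictly smaller than their sum, so $z(p^{-1}r)>0$, i.e.\ $p\prec r$. Antisymmetry follows from $z(q^{-1}p)=-z(p^{-1}q)$. Totality for distinct points follows from the cone condition: $z(p^{-1}q)=0$ would force $\pi(p)=\pi(q)$, hence $p=q$. So $\prec$ is a strict total order.

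\emph{Monotonicity of an injective parametrization $\gamma\from I\to\HH_n$.} The set $\{(s,t):s<t\}$ is connected, and $(s,t)\mapsto z(\gamma(s)^{-1}\gamma(t))$ is continuous and never zero on it, by injectivity and the cone condition. Therefore its sign is constant, which gives \eqref{eqn:First} or \eqref{eqn:Second}.

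\emph{Connected $E$ with arbitrary $\lambda>0$.} Here I cannot use the error estimate directly. Instead I argue via a continuity argument. For fixed $p\in E$, the set $E\setminus\{p\}$ is split by the sign of $z(p^{-1}\cdot)$ into the two relatively open pieces $E\cap p(\VCone_\lambda^+\setminus\{\zero\})$ and $E\cap p(\VCone_\lambda^-\setminus\{\zero\})$. Locally near the diagonal the curve lies in a small ball, where the error estimate can be made effective: rescaling by $S_r$ preserves cones, so I need a local version using that nearby points have small $z$--gaps. I would then propagate transitivity along $E$ by connectedness.

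For item (2), I would use the total order to identify $E$ with an interval. First, the order topology coincides with the subspace topology, since order intervals $\{q: p\prec q\prec r\}$ are intersections of $E$ with open translated half-cones. Next, $E$ is connected, separable and has no endpoints issues beyond at most two extremes. The classical characterization of the real line (a connected, separable, linearly ordered space) then gives a homeomorphism $E\cong I$, and its inverse, oriented to increase, is the desired $\gamma$.

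The main obstacle is transitivity for connected $E$ with small $\lambda\le\frac14$, where the $\omega$ error term can dominate. My first attempt would be to show that the relation "$p\prec q$" is locally transitive on connected components of small diameter, and then extend globally using connectedness and the constant-sign argument above. I will follow the argument of \cite{AY-Vertical}, which carries over verbatim since only $\omega$ and Cauchy--Schwarz enter.
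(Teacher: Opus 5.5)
Your argument for $\lambda>\tfrac14$, the antisymmetry and totality check, and the constant-sign argument for injective parametrizations are all correct; the last one does not even use transitivity. The genuine gap is transitivity of $\prec$ on a connected $E$ when $\lambda\le\tfrac14$. This is the real content of item (1), and item (2) depends on it.

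The localization you sketch cannot work. The cone condition and the error term $\tfrac12\omega(\pi(p^{-1}q),\pi(q^{-1}r))$ scale in exactly the same way under the dilations $S_t$, so restricting to a small ball gains nothing. Concretely, take $\lambda\le\tfrac18$ and the points $\zero$, $q=(1,0,\lambda)$, $r=(0,-1,-\lambda)$ (coordinates $(x_1,y_1,z)$, all others zero). They form a $\lambda$--vertical set with $z(q)=\lambda$, $z(q^{-1}r)=\tfrac12-2\lambda\ge 2\lambda=\lambda|\pi(q^{-1}r)|^2$, and $z(r^{-1}\zero)=\lambda$. That is, $\zero\prec q\prec r\prec\zero$, and applying $S_t$ with $t\to 0$ produces such cycles at every scale. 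So connectedness must be used topologically, and ``propagate by connectedness'' is exactly the missing idea.

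One way to supply it uses the sets $A_p=E\cap p\VCone_\lambda^+$ and $B_p=E\cap p\VCone_\lambda^-$.
\begin{itemize}
\item They are relatively closed in $E$, with $A_p\cup B_p=E$ and $A_p\cap B_p=\{p\}$.
\item Hence $A_p$ is connected: a separation $A_p=U\sqcup V$ with $p\in U$ would give the separation $E=V\sqcup(U\cup B_p)$.
\item Now suppose $p\prec q\prec r$. The connected set $A_q$ does not contain $p$, because $p\in B_q\setminus\{q\}$. It also meets $A_p\setminus\{p\}$ at $q$.
\item As you observed, $A_p\setminus\{p\}$ and $B_p\setminus\{p\}$ are complementary relatively open subsets of $E\setminus\{p\}$. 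Therefore $A_q\subset A_p\setminus\{p\}$, and in particular $p\prec r$.
\end{itemize}

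There is also a smaller gap in item (2). Describing order intervals as traces on $E$ of the open sets $\{g : z(p^{-1}g)>0,\ z(r^{-1}g)<0\}$ only shows that order-open sets are open in $E$. For the converse you must show that order neighbourhoods of a point $q$ are metrically small.
\begin{itemize}
\item If $q$ is not an endpoint of $E$, then $A_q$ and $B_q$ are connected and have more than one point. So $q$ is not isolated in either, and there are $p\prec q\prec r$ in $E$ arbitrarily close to $q$.
\item Then $\{s\in E: p\prec s\prec r\}\subset p\VCone_\lambda^+\cap r\VCone_\lambda^-$. By the first part of Lemma~\ref{lem:compact-intersections-v2}, this set has diameter at most $C\,d(p,r)$.
\item Endpoints are handled the same way, using one-sided rays.
\end{itemize}
Without this step, the characterization of connected separable linearly ordered spaces only yields a continuous bijection from $E$ onto an interval, not a homeomorphism.
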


\begin{lemma}[{\cite[Lemma 3.3]{AY-Vertical}}]

\label{lem:compact-intersections-v2}
  Let $\lambda>0$. There is a $C>0$ such that for all $p,q\in \HH_n$, if $q\in p\VCone_\lambda^+$, then
  $$\diam\left(p\VCone_\lambda^+\cap q \VCone_\lambda^-\right)\le C d(p,q).$$
  
  In particular, if $E$ is a connected $\lambda$--vertical curve, $p,q\in E$ with $p\prec q$, and $I\subset E$ is the subinterval of $E$ from $p$ to $q$, then $\diam I\le C d(p,q)$. 
\end{lemma}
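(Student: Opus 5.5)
The plan is to reduce by left translation and dilation to $p=\zero$ and $d(\zero,q)=1$. Both $\VCone^\pm_\lambda$ and the Korányi distance are invariant under left translation; the cones are homogeneous and $S_r$ scales $d$ by $r$. It then suffices to show that the set
$$K_q:=\VCone_\lambda^+\cap q\VCone_\lambda^-$$
lies in a ball $B(\zero,C)$, with $C$ independent of $q$ subject to $q\in\VCone^+_\lambda$ and $\|q\|_{\Kor}=1$.

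Write $q=(v,t)$ with $v=\pi(q)$ and $t=z(q)\ge \lambda|v|^2$. Since $\|q\|=1$, we get $t\le 1/4$ and $|v|\le 1$.

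Now take $a=(u,s)\in K_q$. The first condition $a\in \VCone_\lambda^+$ gives $s\ge \lambda|u|^2$. The second condition $q^{-1}a\in\VCone^-_\lambda$ means
$$z(q^{-1}a)=s-t-\tfrac12\omega(v,u)\le -\lambda|u-v|^2.$$
Hence
$$\lambda|u|^2\le s\le t+\tfrac12\omega(v,u)-\lambda|u-v|^2\le \tfrac14+\tfrac12|v||u|-\lambda|u-v|^2.$$
Dropping the last term, this yields $\lambda|u|^2\le \tfrac14+\tfrac12|u|$, so $|u|\le C_1(\lambda)$ by the quadratic inequality. It then follows that $0\le s\le \tfrac14+\tfrac12 C_1$, so $s$ is bounded as well. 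Therefore $\|a\|_{\Kor}\le C_2(\lambda)$, and $\diam K_q\le 2C_2$. Undoing the normalization gives $\diam(p\VCone^+_\lambda\cap q\VCone^-_\lambda)\le C\,d(p,q)$.

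For the second claim, let $E$ be connected and $\lambda$--vertical with $p\prec q$ in $E$. By Lemma~\ref{lem:vertical-properties}, $E$ is totally ordered by $\prec$ and parametrized by an increasing map. So any $x$ in the subinterval $I$ from $p$ to $q$ satisfies $p\preceq x\preceq q$. Verticality gives $x\in p\VCone_\lambda$, and $z(p^{-1}x)\ge0$ places it in $p\VCone^+_\lambda$. Likewise $x\in q\VCone_\lambda$ with $z(q^{-1}x)\le 0$ places it in $q\VCone^-_\lambda$. Also $q\in p\VCone^+_\lambda$. Thus $I\subset p\VCone^+_\lambda\cap q\VCone^-_\lambda$, and the first part applies.

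The only mildly delicate point is the sign convention for $\omega$ in $z(q^{-1}a)$. It is harmless, since I only use $|\omega(v,u)|\le|v||u|$. The computation itself is an elementary quadratic bound, so I expect no real obstacle.
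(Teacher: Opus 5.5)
Your proof is correct. The paper gives no proof of this lemma. It imports the statement from \cite[Lemma 3.3]{AY-Vertical}, where it is stated for $\HH_1$, and adds only the remark that the $\HH_1$ proofs carry over to $\HH_n$. Your argument is therefore a self-contained replacement rather than a variant of an argument in the paper.

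Your approach works as follows. You reduce, via left translation and the dilations $S_r$ (which preserve $\VCone^\pm_\lambda$), to $p=\zero$ and $\|q\|_{\Kor}=1$. You then combine $s\ge \lambda|u|^2$ with
\[
z(q^{-1}a)=s-t-\tfrac12\omega(v,u)\le -\lambda|u-v|^2
\]
to get the explicit bound $\lambda|u|^2\le \tfrac14+\tfrac12|u|$. The only property of the symplectic form you use is $|\omega(v,u)|\le |v||u|$, which holds because $\omega(v,u)=\langle Jv,u\rangle$ with $J$ orthogonal. So your proof makes the paper's ``works in $\HH_n$ as well'' remark explicit, with a constant depending only on $\lambda$.

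One small omission: normalizing to $d(\zero,q)=1$ requires $q\neq p$. The case $q=p$ is trivial, since $\VCone^+_\lambda\cap\VCone^-_\lambda=\{\zero\}$ and so the intersection is $\{p\}$. You should say this explicitly.

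Your deduction of the second statement from the total order in Lemma~\ref{lem:vertical-properties} is also correct. The key step is that $x\preceq q$ gives $z(q^{-1}x)=-z(x^{-1}q)\le 0$, and together with $q\VCone_\lambda\ni x$ this puts $x$ in $q\VCone^-_\lambda$.
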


\begin{lemma}[{\cite[Lemma 3.5]{AY-Vertical}}]\label{lem:Biholder}
  There is a $0<\theta<1$ depending on $\lambda$ such that the following holds. 
  Let $E\subset \HH_n$ be a connected $\lambda$--vertical curve.
  Let $v, w\in E$ such that $v\prec w$ and let $0 < \epsilon < 1$. There is an $N\in \mathbb{N}$ depending on $n$, $\lambda$, and $\epsilon$ such that there is a sequence $v=q_0\prec q_1\prec \ldots\prec q_k=w$ with $k\leq N$ that satisfies
  \begin{equation}\label{eqn:biHolder-upper}
    d(q_i,q_{i+1})\leq \epsilon d(v,w)\qquad \text{for $i=0,\dots,k-1$}
  \end{equation}
  and
  \begin{equation}\label{eqn:biHolder-lower}
    d(q_i,q_j)\geq \theta \epsilon d(v,w)\qquad \text{for all $i\ne j$.}
  \end{equation}
\end{lemma}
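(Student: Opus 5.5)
We must prove Lemma~\ref{lem:Biholder}. The plan is a greedy subdivision along $E$: walk from $v$ to $w$ inside the interval $I\subset E$ between them, choosing consecutive points at a fixed scale $\eta\approx \epsilon d(v,w)$. We then use Lemma~\ref{lem:compact-intersections-v2} for the upper bound, and the vertical cone geometry for the lower bound and the count $N$. Throughout, $D:=d(v,w)$. We parametrize $I$ by an increasing map (Lemma~\ref{lem:vertical-properties}), so that $\prec$ agrees with the parameter order.

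\emph{Construction.} Let $\eta=\epsilon D/C'$, with $C'\ge 1$ depending only on $\lambda$ and fixed below. Set $q_0=v$. Given $q_i$, we let $q_{i+1}$ be the first point of $I$ after $q_i$ with $d(q_i,q_{i+1})=\eta$, if such a point exists. If not, we stop and set $q_{i+1}=w$. By continuity such a first point exists whenever some later point of $I$ is at distance $\ge\eta$. When we stop, every point after $q_i$, including $w$, is within $\eta$ of $q_i$. Lemma~\ref{lem:compact-intersections-v2} gives $\diam[q_i,q_{i+1}]\le C d(q_i,q_{i+1})\le C\eta$. Choosing $C'\ge C$ then yields \eqref{eqn:biHolder-upper}.

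\emph{Lower bound.} If $i<j$ and neither $q_j$ is the final point $w$, then $q_j$ lies after $q_{i+1}$. We compare $z$-coordinates using the cone condition. For $p\prec q$ in $E$ we have $z(p^{-1}q)\ge\lambda|\pi(p^{-1}q)|^2$, so $d(p,q)^2\approx_\lambda z(p^{-1}q)$. The key point is that $z(p^{-1}q)$ is roughly additive along $E$. For $p\prec q\prec r$,
\[
z(p^{-1}r)=z(p^{-1}q)+z(q^{-1}r)+\tfrac12\omega(\pi(p^{-1}q),\pi(q^{-1}r)),
\]
and the error term satisfies $\le \tfrac12|\pi(p^{-1}q)||\pi(q^{-1}r)|\le \frac{1}{2\lambda}\sqrt{z(p^{-1}q)z(q^{-1}r)}$. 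This alone is not monotone for small $\lambda$. So I would instead argue via Lemma~\ref{lem:compact-intersections-v2} applied in contrapositive form. If $d(q_i,q_j)$ were tiny, then $q_{i+1}\in[q_i,q_j]$ would satisfy $d(q_i,q_{i+1})\le C d(q_i,q_j)$. Hence $d(q_i,q_j)\ge \eta/C$. The final point $w$ needs care, since $d(q_{k-1},w)$ may be small. In that case we drop $q_{k-1}$ and connect $q_{k-2}$ directly to $w$. Then $d(q_{k-2},w)\ge \eta/C$ by the same argument, and the upper bound at most doubles, which is absorbed into $C'$. This gives $\theta=1/(CC')$ up to a constant.

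\emph{Counting.} This is the main obstacle. The points lie in $\diam I\le CD$, and they are pairwise $\theta\epsilon D$-separated. Left invariance and homogeneity give a doubling property in $\HH_n$, so any ball of radius $CD$ contains at most $N(n,\lambda,\epsilon)$ points that are $\theta\epsilon D$-separated, by a volume comparison with $\mathcal H^{2n+2}$. This bounds $k\le N$.

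I expect the only delicate step to be the endpoint adjustment and verifying the separation for all pairs $i\ne j$. For the general pairs, the contrapositive use of Lemma~\ref{lem:compact-intersections-v2} handles it.
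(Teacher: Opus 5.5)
Your argument is correct. The paper does not prove this lemma itself. It quotes it from \cite{AY-Vertical}, where it is proved in $\HH_1$, with the remark that the proof carries over to $\HH_n$. So there is no in-text argument to compare against. What you give is the standard argument for quasi-arcs: Lemma~\ref{lem:compact-intersections-v2} says exactly that $E$ has bounded turning, and bounded turning together with the doubling property of $(\HH_n,d)$ is all you need.

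A few places can be tightened.
\begin{enumerate}
\item The upper bound needs no diameter estimate. Rule steps have length exactly $\eta\le\epsilon D$, and after merging the last step you have $d(q_{k-2},w)\le 2\eta$. So $C'=2$ and $\theta=1/(2C)$ suffice.
\item The digression on approximate additivity of $z$ can be deleted. Your contrapositive use of Lemma~\ref{lem:compact-intersections-v2} (with $q_{i+1}\in[q_i,q_j]$ and $d(q_i,q_{i+1})=\eta$) already handles every pair.
\item State termination of the greedy loop before the final count. Your separation argument applies at every finite stage: the points already chosen by the rule are pairwise $\eta/C$-separated inside $I$, and $\diam I\le CD$. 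The packing bound $\bigl(2C^2C'/\epsilon+1\bigr)^{2n+2}$ therefore forces the loop to stop after boundedly many steps. This same bound then also gives $k\le N$.
\item It is cleanest to always discard the last rule point when the loop stops without landing exactly on $w$. The discarded point is never $v$, because $D>\eta$ means the loop cannot stop at $q_0$. The counting step is, as you see, routine rather than the main obstacle.
\end{enumerate}
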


The following lemmas are consequences of these bounds.
\begin{lemma}\label{lem:vertical-diameter-v2}
  Let $\lambda>0$ and let $\zeta\from I\to \HH_n$ be a vertical curve which is monotone increasing in the sense of Lemma~\ref{lem:vertical-properties}. Then for all $s,t\in I$ with $s<t$,
  $$\diam \zeta([s,t])^2 \approx_\lambda d(\zeta(s),\zeta(t))^2\approx_\lambda z(\zeta(s)^{-1} \zeta(t)).$$
\end{lemma}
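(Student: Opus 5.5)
We need two comparisons: $\diam\zeta([s,t])\approx_\lambda d(\zeta(s),\zeta(t))$, and $d(\zeta(s),\zeta(t))^2\approx_\lambda z(\zeta(s)^{-1}\zeta(t))$. Squaring the first then gives the statement. Write $p=\zeta(s)$ and $q=\zeta(t)$, so that $p\prec q$ by monotonicity; in particular $z(p^{-1}q)>0$.

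\emph{Diameter versus distance.} The lower bound $\diam\zeta([s,t])\ge d(p,q)$ holds because both endpoints lie in $\zeta([s,t])$. For the upper bound, note that $\zeta([s,t])$ is a connected subset of the $\lambda$--vertical curve $\zeta(I)$, and it is exactly the subinterval from $p$ to $q$. This uses that $\zeta$ is injective and monotone, so the order $\prec$ from Lemma~\ref{lem:vertical-properties} agrees with the parameter order. The ``in particular'' part of Lemma~\ref{lem:compact-intersections-v2} then gives $\diam\zeta([s,t])\le C\,d(p,q)$ with $C=C(\lambda)$. If one prefers to avoid identifying the subinterval, argue directly: every $r=\zeta(u)$ with $s\le u\le t$ satisfies $p\prec r\prec q$. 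Together with $\lambda$--verticality this puts $r\in p\VCone_\lambda^+\cap q\VCone_\lambda^-$, and the first part of Lemma~\ref{lem:compact-intersections-v2} applies, since verticality gives $q\in p\VCone^+_\lambda$.

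\emph{Distance versus height.} Let $g=p^{-1}q$. Since $q\in p\VCone_\lambda$ and $z(g)>0$, we have $z(g)\ge\lambda|\pi(g)|^2$. By \eqref{eq:def-kor}, $d(p,q)^4=|\pi(g)|^4+16z(g)^2$. This yields the lower bound $d(p,q)^4\ge 16 z(g)^2$, that is, $d(p,q)^2\ge 4z(g)$. It also yields the upper bound $d(p,q)^4\le(\lambda^{-2}+16)z(g)^2$, so $d(p,q)^2\le\sqrt{\lambda^{-2}+16}\,z(g)$.

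Combining the two steps proves the lemma. The only point needing care is the first step, namely confirming that the parametrized arc $\zeta([s,t])$ is the subinterval between $p$ and $q$ to which Lemma~\ref{lem:compact-intersections-v2} applies. Injectivity and monotonicity of $\zeta$ handle this, and the direct cone-intersection argument above handles it as well. The rest is a direct computation with the Korányi norm.
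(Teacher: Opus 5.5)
Your proposal is correct and follows essentially the same approach as the paper. The paper bounds $\diam\zeta([s,t])$ above and below by $d(\zeta(s),\zeta(t))$ using Lemma~\ref{lem:compact-intersections-v2}, then compares $d$ with $\sqrt{z(g)}$ for $g=\zeta(s)^{-1}\zeta(t)\in\VCone_\lambda^+$ via the Kor\'anyi norm, obtaining the same constants $2$ and $(16+\lambda^{-2})^{1/4}$ that you do.
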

\begin{proof}
  If $C$ is as in Lemma~\ref{lem:compact-intersections-v2}, then  
  $$d(\zeta(s),\zeta(t))\le \diam \zeta([s,t]) \le C d(\zeta(s),\zeta(t)).$$
  Let $g=\zeta(s)^{-1}\zeta(t)$ so that $g\in \VCone_\lambda^+$. Then $z(g)\ge 0$ and $|\pi(g)|^2 \le \lambda^{-1}z(g)$. By \eqref{eq:def-kor}, 
  $$2\sqrt{z(g)} \le d(\zeta(s),\zeta(t)) = \sqrt[4]{|\pi(g)|^4 + 16 z(g)^2} \le (16 + \lambda^{-2})^{\frac{1}{4}}\sqrt{z(g)}.$$
  This proves the lemma.
\end{proof}

\begin{lemma}\label{lem:chains}
  For any $c>0$ and $\lambda>0$, there is an $N>0$ with the following property. Let $\zeta\from [a,b]\to \HH_n$ be a $\lambda$--vertical curve, and let $a \le s_1 < \dots < s_k \le b$ be such that $\diam(\zeta([s_{j-1}, s_j])) \ge c \diam(\zeta([a,b]))$ for all $j$. Then $k \le N$. 
  
  (By Lemma~\ref{lem:vertical-diameter-v2}, we can replace $\diam(\zeta([s_{j-1}, s_j]))$ (resp.\ $\diam(\zeta([a,b]))$) by $d(\zeta(s_{j-1}),\zeta(s_j))$ (resp.\ $d(\zeta(a),\zeta(b))$), up to possibly changing $c$.)
\end{lemma}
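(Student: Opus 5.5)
The statement to prove is Lemma~\ref{lem:chains}. The idea is a volume-counting (packing) argument. Lemma~\ref{lem:vertical-diameter-v2} converts diameters into increments of the $z$-coordinate, and for a monotone vertical curve these increments are additive along the curve. There are no heights attached directly to the curve, so we use the quantity $z(\zeta(s)^{-1}\zeta(t))$ instead.

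By Lemma~\ref{lem:vertical-properties}, we may assume $\zeta$ is monotone increasing. If it is decreasing, we reverse the parametrization, which only reverses the order of the points $s_j$. If it is not injective, we reduce to its image, which is a connected $\lambda$--vertical curve. Write $p_j=\zeta(s_j)$ and $D=\diam\zeta([a,b])$.

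For consecutive points, Lemma~\ref{lem:vertical-diameter-v2} gives
\[
z(p_{j-1}^{-1}p_j)\approx_\lambda \diam \zeta([s_{j-1},s_j])^2 \ge c^2D^2,
\]
and likewise $z(\zeta(a)^{-1}\zeta(b))\lesssim_\lambda D^2$.

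The key step is to compare the $z$-increment across a chain with the sum of the individual increments. By the group law \eqref{eq:group-law},
\[
z(p_1^{-1}p_k)=\sum_j z(p_{j-1}^{-1}p_j)+\tfrac12\sum_{i<j}\omega(\pi(g_i),\pi(g_j)),\qquad g_j=p_{j-1}^{-1}p_j.
\]
The cross terms are not obviously small, so I would avoid this expansion.

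Instead I would argue as follows. Consider the pairwise-separated points $p_1,p_3,p_5,\dots$. For $i<j$, both odd, the subarc from $p_i$ to $p_j$ contains the arc $[s_i,s_{i+1}]$, whose diameter is at least $cD$. Lemma~\ref{lem:compact-intersections-v2} then gives
\[
d(p_i,p_j)\ge C^{-1}\diam\zeta([s_i,s_j])\ge C^{-1}cD.
\]
So these are roughly $k/2$ points, all lying in a set of diameter $D$ and pairwise separated by at least $C^{-1}cD$. We then need that such a separated set has bounded cardinality.

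The ambient space $\HH_n$ is doubling, which would give a bound depending on $n$; that is acceptable if constants may depend on $n$. To get a bound depending only on $\lambda$ and $c$, I would use instead that the image is a vertical curve and that its bi-Hölder structure comes from Lemma~\ref{lem:Biholder}. Apply Lemma~\ref{lem:Biholder} with $v=\zeta(a)$, $w=\zeta(b)$, and $\epsilon$ a small multiple of $c/C$, where $C$ is the constant of Lemma~\ref{lem:compact-intersections-v2}. This yields at most $N(n,\lambda,\epsilon)$ points $q_0\prec\dots\prec q_m$, and each subarc between consecutive $q$'s has diameter at most $C\epsilon\, d(v,w)<cD/2$.

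Each arc $\zeta([s_{j-1},s_j])$ has diameter at least $cD$, so it cannot lie inside a single such subarc. Hence each of these arcs contains some $q_l$ in its interior portion, or at least spans a point $q_l$. Since the arcs $[s_{j-1},s_j]$ overlap only at endpoints, each $q_l$ is counted at most twice, so $k-1\le 2(m+1)\le 2N+2$.

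The main obstacle is the routine bookkeeping. One step is checking that the order along the parametrization matches the order $\prec$. Another is handling endpoints $s_1>a$ and $s_k<b$, which is harmless since the arcs still lie inside $[a,b]$. The last is making sure the constants depend only on $c$ and $\lambda$ (and $n$).
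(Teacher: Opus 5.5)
Your final argument is correct and essentially matches the paper's proof. The paper likewise uses Lemma~\ref{lem:Biholder} to cut $\zeta([a,b])$ into at most $N$ consecutive pieces $\zeta([t_{i-1},t_i])$ of diameter less than $c\diam\zeta([a,b])$, and then argues by pigeonhole that if $k>m$, some pair $s_{j-1},s_j$ lies in one piece, which is your counting of subdivision points. Your discarded packing argument via pairwise-separated points would also have worked, since the $N$ from Lemma~\ref{lem:Biholder} depends on $n$ anyway.

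The one slip is the aside about non-injective $\zeta$. Passing to the image does not rescue that case: a path running back and forth along a vertical segment violates the lemma. So injectivity (monotonicity) has to be read as part of the hypothesis, as the paper tacitly does with ``without loss of generality, $\zeta$ is monotone increasing.''
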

\begin{proof}
  Without loss of generality, suppose that $\zeta$ is monotone increasing in the sense of Lemma~\ref{lem:vertical-properties}.  
  
  By Lemma~\ref{lem:Biholder} and Lemma~\ref{lem:vertical-properties}, there is a sequence $a = t_0 < t_1< \dots < t_m = b$ such that 
  $$
  \diam \zeta([t_{i-1}, t_{i}]) < c \diam \zeta([a,b]),
  $$
  for $i = 1,\dots, m$, and $m$ is bounded by some $N\in \mathbb{N}$ which depends only on $n$, $\lambda$, and $c$.

  By way of contradiction, suppose that $k > m$. By the pigeonhole principle, there are $i$ and $j$ such that $\{s_{j-1},s_j\} \subset [t_{i-1},t_i]$. Then 
  $$\diam \zeta([s_{j-1},s_j]) \le \diam \zeta([t_{i-1},t_i]) < c \diam \zeta([a,b]),$$
  which is a contradiction. Therefore, $k \le N$, as desired.
\end{proof}

\subsection{Symplectic area}\label{sec:symplectic}

In this section, we define the symplectic area of a curve in $\R^{2n}$ and prove some basic properties. We first recall the symplectic area of a Lipschitz curve.
\begin{defn}\label{def:Symplectic}
  Let $a<b$ and let $\gamma \from [a,b] \to \R^{2n}$ be a Lipschitz curve. Let
  \begin{equation}\label{eq:def-zeta}
    \zeta = \frac{1}{2} \sum_i (x_i \ud y_i - y_i \ud x_i),
  \end{equation}
    
  so that $\ud \zeta = \omega$ is the standard symplectic form. We define 
  \begin{equation}\label{eqn:SymplecticArea0}
  \hat{S}(\gamma) = \int_\gamma \zeta.
  \end{equation}
\end{defn}

We extend $\hat{S}$ to a class of continuous curves as follows.
For points $p_1,\dots, p_k\in \R^{2n}$ let $\overline{p_1,\dots,p_k}$ denote the piecewise-linear curve that connects $p_1,\dots,p_k$ by line segments.

\begin{defn}\label{def:LevySimplecticArea}
  Let $a<b$ and let $\gamma\from [a,b] \to \R^{2n}$ be a continuous map. For any partition $P=\{t_1,\dots,t_k\}$ with $a=t_1<\dots<t_k=b$, we let 
  $$\gamma_P = \overline{\gamma(t_1),\dots, \gamma(t_{k})},$$
  parameterized so that $\gamma_P(t_i) = \gamma(t_i)$ for all $i$. Then
  \begin{equation}\label{eqn:SymplecticArea}
    \hat{S}(\gamma_P) = \sum_{i=1}^{k-1} \sum_{j=1}^n \frac{x_j(\gamma(t_i)) y_j(\gamma(t_{i+1})) - y_j(\gamma(t_i)) x_j(\gamma(t_{i+1}))}{2}.
  \end{equation}
    
  Let $\mesh(P)=\max_i \{t_{i+1}-t_i\}$. We say that \(\lim_{\mesh(P)\to 0} \hat{S}(\gamma_P)=L\) exists if for any $\epsilon>0$, there is a $\delta>0$ such that if $\mesh(P)<\delta$, then $|L - \hat{S}(\gamma_P)|<\epsilon$. In this case, we write
  $$
  S(\gamma) = \lim_{\mesh(P)\to 0} \hat{S}(\gamma_P),
  $$
  and call $S(\gamma)$ the \emph{symplectic area} of $\gamma$.
\end{defn}

Then the following lemma holds.
\begin{lemma}\label{lem:symplectic-consistent}
  Let $\gamma\from I\to\mathbb R^{2n}$ be a Lipschitz curve. Then the symplectic area $S(\gamma)$ is finite and $S(\gamma) = \hat{S}(\gamma)$.
\end{lemma}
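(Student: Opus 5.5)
We reduce the claim to a quantitative estimate comparing $\hat S(\gamma)$ with $\hat S(\gamma_P)$ for an arbitrary partition $P$, and show that the difference tends to zero with $\mesh(P)$. Let $\gamma\from[a,b]\to\R^{2n}$ be $L$--Lipschitz and fix a partition $P=\{t_1<\dots<t_k\}$. The curve $\gamma_P$ is again Lipschitz, since it is piecewise linear, so $\hat S(\gamma_P)=\int_{\gamma_P}\zeta$ is defined. A direct computation on each segment shows that $\int_{\overline{p,q}}\zeta=\frac12\omega(p,q)$, so this integral agrees with the sum \eqref{eqn:SymplecticArea}. It therefore suffices to prove
$$|\hat S(\gamma)-\hat S(\gamma_P)|\le C(L,|b-a|)\,\mesh(P),$$
since this gives $\lim_{\mesh(P)\to0}\hat S(\gamma_P)=\hat S(\gamma)$. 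In particular the limit exists, is finite, and equals $\hat S(\gamma)$.

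To get this bound, split the difference over the intervals $[t_i,t_{i+1}]$. On each one, let $\sigma_i=\gamma|_{[t_i,t_{i+1}]}$ and let $\ell_i$ be the segment from $\gamma(t_i)$ to $\gamma(t_{i+1})$. The concatenation $\sigma_i*\bar\ell_i$ is a closed Lipschitz loop, and the quantity to control is $\int_{\sigma_i}\zeta-\int_{\ell_i}\zeta$. To bound it, translate so that $\gamma(t_i)=0$. This changes $\zeta$ by an exact form: indeed $\zeta(x+p)$ differs from $\zeta(x)$ by $\frac12\omega(p,\cdot)$ applied to the differential, which is $d$ of a linear function. So the integral over the closed loop is unchanged by the translation.

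After translation, $|\gamma(t)|\le L(t_{i+1}-t_i)$ on $[t_i,t_{i+1}]$, and the same bound holds on $\ell_i$. Since $|\zeta_x|\lesssim|x|$, the integral of $\zeta$ over the closed loop is bounded by $\lesssim L(t_{i+1}-t_i)\cdot 2L(t_{i+1}-t_i)$. Summing over $i$ gives
$$|\hat S(\gamma)-\hat S(\gamma_P)|\lesssim L^2\sum_i (t_{i+1}-t_i)^2\le L^2\,|b-a|\,\mesh(P),$$
as required. Alternatively, one can apply the exact-form argument globally, comparing $\gamma$ and $\gamma_P$ directly via $\int_\gamma\zeta=\frac12\int_a^b\omega(\gamma,\gamma')\,dt$ and estimating $|\gamma-\gamma_P|\le 2L\,\mesh(P)$. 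However, the difference in derivatives is not small pointwise, so the local loop argument above is cleaner.

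The main point requiring care is the translation step, which is where one has to check that $\hat S$ of a closed loop is translation invariant. Writing $\zeta_{x+p}=\zeta_x+\frac12\sum_j(p_{x_j}dy_j-p_{y_j}dx_j)$, the extra term is the differential of the linear function $\frac12\omega(p,x)$. Its integral over a closed Lipschitz loop therefore vanishes, by the fundamental theorem of calculus applied to the absolutely continuous function $t\mapsto\frac12\omega(p,\gamma(t))$. Everything else is a routine estimate.
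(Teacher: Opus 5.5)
Your proof is correct, but it takes a different route from the paper's.

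The paper argues softly and globally. It writes $\hat S(\gamma_P)=\int_{\gamma_P}\zeta$ as $\frac12\int_I\omega(\gamma_P,\gamma_P')\ud s$ and passes to the limit using two facts: $\gamma_P\to\gamma$ uniformly, and $\gamma_P'\to\gamma'$ in $L_1$ by dominated convergence. The a.e.\ convergence behind the second fact comes implicitly from Lebesgue differentiation, since $\gamma_P'$ is the average of $\gamma'$ over the partition interval.

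So the global comparison you set aside does work. It never needs pointwise smallness of $\gamma_P'-\gamma'$, only $L_1$ convergence, via the splitting $\omega(\gamma_P,\gamma_P')-\omega(\gamma,\gamma')=\omega(\gamma_P-\gamma,\gamma_P')+\omega(\gamma,\gamma_P'-\gamma')$.

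Your argument instead works loop by loop:
\begin{itemize}
\item You decompose the error into the closed loops formed by $\gamma|_{[t_i,t_{i+1}]}$ and its chord.
\item You translate each loop to the origin, using that $\int\zeta$ over a closed loop is translation invariant.
\item You apply the cone bound $|\int\zeta|\le\frac12\sup|x|\cdot(\text{length})$. This is exactly the estimate behind Lemma~\ref{lem:symplectic-area-props-v2}(3), and the same philosophy the paper later uses for the arches $\hat a_j$.
\end{itemize}

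What your route buys is an explicit rate, $|\hat S(\gamma)-\hat S(\gamma_P)|\le L^2(b-a)\mesh(P)$, with no appeal to Lebesgue differentiation or to a sequential dominated-convergence argument for a limit over partitions. You also verify explicitly that $\int_{\overline{p,q}}\zeta=\frac12\omega(p,q)$, which the paper takes for granted. The paper's version is shorter but purely qualitative.
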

\begin{proof}
Let $\gamma_P$ be the piecewise-linear curve introduced in Definition~\ref{def:LevySimplecticArea}. Then
    \[\hat{S}(\gamma_P) =
    \int_{\gamma_P}\zeta = \frac{1}{2}\sum_{i=1}^n\int_I \left((\gamma_P)_{x_i}(s)(\gamma_P)_{y_i}'(s)- (\gamma_P)_{y_i}(s)(\gamma_P)_{x_i}'(s)\right)\ud s.
    \]
    Now notice that as $\mesh(P)\to 0$ we have that $\gamma_P\to \gamma$ uniformly and $\gamma_P'\to \gamma'$ in $L_1$ by the dominated convergence theorem. Thus, as $\mesh(P)\to 0$ we have
    \[
    \int_{\gamma_P}\zeta\to\int_\gamma \zeta = \frac{1}{2}\sum_{i=1}^n\int_I \left(\gamma_{x_i}(s)\gamma_{y_i}'(s)- \gamma_{y_i}(s)\gamma_{x_i}'(s)\right)\ud s<\infty,
    \]
    as desired.
\end{proof}

Since $\hat{S}(\gamma) = S(\gamma)$ for Lipschitz curves, we will write $S(\gamma)$ for the symplectic area of $\gamma$, regardless of whether $\gamma$ is Lipschitz or not. Then $S(\gamma)$ satisfies properties similar to $\hat{S}(\gamma)$.

\begin{lemma}\label{lem:symplectic-area-props-v2}   Let $I\subset \R$ be a closed interval. The following properties hold:
  \begin{enumerate}
  \item Let $I,J\subset \R$ be compact intervals and let $\gamma\from I\to\mathbb R^{2n}$ be a continuous curve. If $S(\gamma)$ exists and if $\alpha\from J\to I$ is a monotone nondecreasing homeomorphism, then $S(\gamma\circ \alpha) = S(\gamma)$.
  \item If $\gamma_1$ and $\gamma_2$ are continuous curves, $S(\gamma_1)$ and $S(\gamma_2)$ exist, and $\gamma_1\diamond \gamma_2$ is their concatenation, then $S(\gamma_1\diamond \gamma_2) = S(\gamma_1)+S(\gamma_2)$.
  \item Let $\gamma\from I\to\mathbb R^{2n}$ be a closed curve with finite length. Then 
  \[
  |S(\gamma)|\le \ell(\gamma)^2,
  \]
  where $\ell(\gamma)$ is the length of $\gamma$.
  \item Let $\gamma,\gamma_i\from I\to\mathbb R^{2n}$ be curves with finite length, and assume that $\gamma_i\to\gamma$ uniformly on $I$, and that $\sup_{i\in\mathbb N}\ell(\gamma_i)<\infty$. Then the symplectic area of $\gamma$ exists and
  \[
  \lim_{i\to\infty}S(\gamma_i)=S(\gamma).
  \]
\item If $\alpha\from I\to \R^{2n}$ is a closed curve, $v\in \R^{2n}$, and $\beta(t) = \alpha(t) + v$, then $S(\alpha) = S(\beta)$.
  \end{enumerate}
\end{lemma}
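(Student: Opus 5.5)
We prove the five items one at a time, working throughout with the discrete sums $\hat S(\gamma_P)$ of \eqref{eqn:SymplecticArea} and with Lemma~\ref{lem:symplectic-consistent} for Lipschitz curves.

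For (1), let $\alpha\from J\to I$ be a nondecreasing homeomorphism. Partitions $Q$ of $J$ correspond bijectively to partitions $P=\alpha(Q)$ of $I$, and $\hat S((\gamma\circ\alpha)_Q)=\hat S(\gamma_{\alpha(Q)})$ because the sum depends only on the ordered list of points. By uniform continuity of $\alpha$ on the compact interval $J$, $\mesh(Q)\to0$ forces $\mesh(\alpha(Q))\to0$. Hence the limit defining $S(\gamma\circ\alpha)$ exists and equals $S(\gamma)$.

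For (2), let $\gamma_1$ be defined on $[a,c]$ and $\gamma_2$ on $[c,b]$, and let $P$ be a partition of $[a,b]$. Insert the junction point $c$ into $P$ to get $P'$. The only change in $\hat S$ comes from replacing one term $\tfrac12\omega(p,q)$ by $\tfrac12\omega(p,r)+\tfrac12\omega(r,q)$, where $r=\gamma(c)$ and $p,q$ are the images of the two partition points adjacent to $c$. This difference is bounded by $|p-r||q|+|r||q-r|$ up to constants, which tends to $0$ with the mesh by uniform continuity and boundedness of $\gamma$. The sum for $P'$ splits exactly as $\hat S(\gamma_{1,P_1})+\hat S(\gamma_{2,P_2})$, which gives the additivity.

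For (3) and (5), the key observation is that for a closed curve the discrete sum is translation invariant. Translating every point by $v$ changes $\sum_i \omega(p_i,p_{i+1})$ by $\omega\bigl(v,\sum_i(p_{i+1}-p_i)\bigr)+\omega\bigl(\sum_i(p_i-p_{i+1}),v\bigr)=0$, since the telescoping sum vanishes when $p_k=p_1$. This proves (5) for the sums, and hence for the limits.
\begin{itemize}
\item For (3), first reparametrize $\gamma$ by arc length, which is legitimate by (1). The curve is then Lipschitz and $S=\hat S$ by Lemma~\ref{lem:symplectic-consistent}.
\item Using (5), translate so that $\gamma(a)=0$. Then $|\gamma|\le\ell(\gamma)$ everywhere, and $|\zeta(\gamma')|\le \tfrac12|\gamma||\gamma'|$ since the $2$-form $\omega$ has norm $1$.
\item Integrating gives $|S(\gamma)|\le\tfrac12\ell(\gamma)^2$.
\end{itemize}
The curves in (3) and (5) need not be Lipschitz in the given parametrization, which is why we pass through arc length.

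Item (4) is the main obstacle, because it is a limit interchange. The natural route is Lipschitz, but a limit of finite-length curves need not be Lipschitz in its own parametrization, so we reparametrize first. Set $L=\sup_i\ell(\gamma_i)$.

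1. Since length is lower semicontinuous under uniform convergence, $\ell(\gamma)\le L$.
2. Reparametrize by normalized arc length, $\gamma=\tilde\gamma\circ\phi$, where $\phi$ is continuous and nondecreasing and $\tilde\gamma$ is $L$-Lipschitz on $[0,1]$.
3. Let $P$ be a partition. Then $\hat S(\gamma_P)=\hat S(\tilde\gamma_{\phi(P)})$, after discarding repeated points, which contribute zero since $\omega(p,p)=0$.
4. As $\mesh(P)\to0$, the uniform continuity of $\phi$ gives $\mesh(\phi(P))\to0$, so the proof of Lemma~\ref{lem:symplectic-consistent} yields convergence to $\hat S(\tilde\gamma)$. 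Hence $S(\gamma)$ exists.
5. For convergence, apply the same reparametrization to each $\gamma_i$ to write $S(\gamma_i)=\int_0^1\zeta(\tilde\gamma_i')$ with $\tilde\gamma_i$ uniformly $L$-Lipschitz.

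The remaining difficulty is that $\tilde\gamma_i$ need not converge to $\tilde\gamma$. We therefore avoid reparametrizing in the limit and instead use the approximating sums directly. The first step is the error bound, for any partition $P$,
\[
|\hat S(\gamma_{i,P})-S(\gamma_i)|\lesssim L\cdot\max_j \ell\bigl(\gamma_i|_{[t_j,t_{j+1}]}\bigr).
\]
This holds because the closed curve formed by $\gamma_i$ on $[t_j,t_{j+1}]$ together with the chord between its endpoints has area at most the square of its length by (3), and these areas sum.

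To make the right-hand side small uniformly in $i$, note that after passing to a subsequence the measures $|\gamma_i'|\,dt$ converge weakly to some measure $\mu$ of mass at most $L$. We then choose $P$ to avoid the atoms of $\mu$ and fine enough that every interval has $\mu$-mass below $\epsilon$. We also need the analogous bound for $\gamma$ itself. Finally, $\hat S(\gamma_{i,P})\to\hat S(\gamma_P)$ for fixed $P$ by uniform convergence. Combining these three estimates gives $\lim_i S(\gamma_i)=S(\gamma)$, first along the subsequence and then for the full sequence by the standard subsequence argument.

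The atom issue is the delicate point. If $\mu$ has an atom, the lengths of $\gamma_i$ on small intervals need not become uniformly small. We handle this by showing that the contribution of short loops concentrating near an atom still tends to zero. The tool is the quadratic bound (3) applied to small sub-loops, combined with the fact that their diameter tends to zero while their total length stays bounded.
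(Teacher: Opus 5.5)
Items (1), (2), (3) and (5) are fine and are essentially the paper's arguments. The gap is in (4), at the point you yourself flag as delicate. Your bound $|\hat S((\gamma_i)_P)-S(\gamma_i)|\lesssim L\max_j\ell(\gamma_i|_{[t_j,t_{j+1}]})$ is only useful if the $\gamma_i$ have small length on every partition interval, uniformly in $i$. But choosing $P$ so that every interval has $\mu$-mass below $\epsilon$ is impossible as soon as $\mu$ has an atom of mass at least $\epsilon$, and this really happens.

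For example, take $\gamma\equiv 0$. Let $\gamma_i$ run $N_i$ times around a circle of radius $r_i\to 0$ through the origin during $[\tfrac12,\tfrac12+\tfrac1i]$, and be constant elsewhere, with $2\pi r_iN_i=1$. Then $\gamma_i\to\gamma$ uniformly with $\ell(\gamma_i)=1$, and $\mu=\delta_{1/2}$. For any fixed $P$ and large $i$, some partition interval carries $\gamma_i$-length at least $\tfrac12$. Your final paragraph only asserts that such concentrating loops contribute little, and the tool it names cannot show this. The quadratic bound (3) bounds these loops by a quantity of order $1$, whereas their actual areas are $O(r_i)$.

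The missing ingredient is the diameter-times-length bound $|S(\beta)|\le\tfrac12\diam(\beta)\,\ell(\beta)$ for a closed rectifiable curve $\beta$. This is your proof of (3) with the estimate $|\beta|\le\ell(\beta)$ replaced by $|\beta|\le\diam\beta$, after translating $\beta(a)$ to the origin. With it, no weak limits or subsequences are needed. The loop formed by $\gamma_i|_{[t_j,t_{j+1}]}$ and its chord has diameter at most $\sup_{|s-t|\le\mesh P}|\gamma(s)-\gamma(t)|+2\|\gamma_i-\gamma\|_\infty$, and these loops have total length at most $2L$. Hence $|S(\gamma_i)-\hat S((\gamma_i)_P)|$ is small for all large $i$ once $P$ is fine, and your three-term estimate closes.

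The paper avoids the issue by interchanging the limits the other way. It compares $\hat S(\gamma_P)$ with $\hat S((\gamma_i)_P)$ on the same partition. The difference is bounded by $\|\gamma-\gamma_i\|_\infty$ times a constant depending only on the lengths (and sup norms) of the curves, uniformly in $P$. Because of this uniformity, the bound survives letting $\mesh P\to 0$ with $i$ fixed, which identifies $S(\gamma)$ with $\lim_i S(\gamma_i)$ via a subsequence argument. How the length of $\gamma_i$ is distributed along the parameter never enters.
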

\begin{proof}
  Items (1) and (2) directly follow from Definition \ref{def:LevySimplecticArea}.
    
  Let us prove Item (3). Up to reparametrization, and using Item (1), we can assume $\gamma\from [0,\ell]\to \mathbb R^{2n}$ is Lipschitz and $|\gamma'(t)|=1$ a.e., where $\ell=\ell(\gamma)$. Since $\gamma$ is closed, $S(\gamma)=S(\gamma-a)$ for every $a\in\mathbb R^{2n}$, as an immediate consequence of \eqref{eqn:SymplecticArea0}. We translate $\gamma$ so that $\gamma(0) = 0$. Then
  \begin{equation}\label{eq:cone-ineq}
    |S(\gamma)|=\frac{1}{2}\left|\int_0^\ell \sum_{i=1}^n \left(\gamma_{x_i}(s)\gamma_{y_i}'(s) - \gamma_{y_i}(s)\gamma_{x_i}'(s)\right)\ud s\right| \leq \int_0^\ell |\gamma'(s)||\gamma(s)|\ud s.
  \end{equation}
  Since $\gamma(0) = 0$, $|\gamma(s)| \le \ell$ for all $s$, so $|S(\gamma)|\le \ell^2$, as desired.

  Next, we prove Item (4). Suppose that $\gamma_i\to \gamma$ uniformly and $L$ is such that $\ell(\gamma_i) < L$. Since the length functional is lower semicontinuous, we have $\ell(\gamma) < L$ as well. We first claim that if $S(\gamma_i) \to A$, then $S(\gamma) = A$.  

  Suppose that $S(\gamma_i) \to A$ and let $\epsilon > 0$. Let $i$ be such that $\|\gamma - \gamma_i\|_\infty < \epsilon$ and $|S(\gamma_i) - A| < \epsilon$. Let $\delta > 0$ be such that if $\mesh(P)<\delta$, then $|S((\gamma_{i})_P)-S(\gamma_{i})|<\epsilon$, where $(\gamma_i)_P$ is defined as in \eqref{eqn:SymplecticArea}.
  
  Let $P$ be a partition of $I$ with $\mesh(P)<\delta$. By our choice of $\delta$ and $i$, $|S((\gamma_i)_P) - A| < 2\epsilon$. Using the triangle inequality, it is readily shown that 
  \[
    |S(\gamma_P)-S((\gamma_i)_P)|\lesssim \|\gamma-\gamma_i\|_{L_\infty(I)}\left(\ell(\gamma)+\ell(\gamma_i)\right) \le 2L\epsilon.
    \]
  Therefore, by the triangle inequality, $|S(\gamma_P)- A| \le (2L+2)\epsilon$ for any $P$ with $\mesh(P)<\delta$. Letting $\epsilon$ go to zero, we get $\lim_{\mesh(P) \to 0}S(\gamma_P) = A$, as desired.

  Next, let $M = \sup_{i,t} |\gamma_i(t)|$; since $\gamma_i \to \gamma$ uniformly, we have $M < \infty$. By \eqref{eq:cone-ineq}, $|S(\gamma_i)| \le \ell(\gamma_i) M \le LM$. That is, $(S(\gamma_i))_i$ is a bounded sequence. By the above, any convergent subsequence of $(S(\gamma_i))_i$ converges to $S(\gamma)$, so in fact, $\lim_{i\to \infty} S(\gamma_i) = S(\gamma)$.

  Finally, to prove (5), note that if $\gamma$ is a Lipschitz curve and $\lambda(t) = v + \gamma(t)$, then $\hat{S}(\gamma) = \hat{S}(\lambda)$. It follows that for any partition $P$, $\hat{S}(\alpha_P) = \hat{S}(\beta_P)$, so $S(\alpha) = S(\beta)$, as desired.
\end{proof}

Finally, the following identity relates the symplectic area of a curve to the product formula in the Heisenberg group.
\begin{lemma}\label{lem:discrete-area-z}
  Let $v_1,\dots, v_k\in \HH_n$ and let $w_i=v_1\dots v_i$ for $i=1,\dots,k$. Let $\gamma= \overline{\pi(w_1),\dots,\pi(w_k)}$. Then 
  $$
  z(w_k) = S(\gamma) + \sum_{i=1}^k z(v_i).
  $$
\end{lemma}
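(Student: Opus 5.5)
The plan is to induct on $k$, using the group law \eqref{eq:group-law} and the explicit formula \eqref{eqn:SymplecticArea} for the symplectic area of a piecewise-linear curve. For $k=1$ we have $w_1=v_1$, and $\gamma$ is the constant curve at $\pi(w_1)$. The sum in \eqref{eqn:SymplecticArea} is then empty, so $S(\gamma)=0$ and $z(w_1)=z(v_1)$.

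For the inductive step, set $\gamma' = \overline{\pi(w_1),\dots,\pi(w_{k-1})}$. Then $\gamma$ is the concatenation of $\gamma'$ with the segment $\overline{\pi(w_{k-1}),\pi(w_k)}$. By Lemma~\ref{lem:symplectic-area-props-v2}(2), or directly from \eqref{eqn:SymplecticArea} since both curves are piecewise linear, we get
\[
S(\gamma) = S(\gamma') + \tfrac12\,\omega\bigl(\pi(w_{k-1}),\pi(w_k)\bigr).
\]
Here we use that the symplectic area of a segment from $p$ to $q$ is $\tfrac12\sum_j (x_j(p)y_j(q)-y_j(p)x_j(q)) = \tfrac12\omega(p,q)$.

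Next, since $w_k = w_{k-1}v_k$, the group law \eqref{eq:group-law} gives
\[
z(w_k) = z(w_{k-1}) + z(v_k) + \tfrac12\,\omega\bigl(\pi(w_{k-1}),\pi(v_k)\bigr).
\]
Because $\pi$ is additive, $\pi(v_k)=\pi(w_k)-\pi(w_{k-1})$. Bilinearity and antisymmetry of $\omega$ then give $\omega(\pi(w_{k-1}),\pi(v_k)) = \omega(\pi(w_{k-1}),\pi(w_k))$, since $\omega(p,p)=0$. Combining this with the inductive hypothesis $z(w_{k-1}) = S(\gamma') + \sum_{i<k} z(v_i)$ yields the claim.

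There is no real obstacle. The only point needing care is that the segment term from the concatenation is exactly $\tfrac12\omega$ with the same sign convention as in the BCH formula. Both come from the same expression $\sum_j x_j^a y_j^b - x_j^b y_j^a$, so they match.
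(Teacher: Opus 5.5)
Your proposal is correct and follows the paper's proof: induction on the number of factors, using the group law \eqref{eq:group-law} for $z(w_{j+1}) = z(w_j) + z(v_{j+1}) + S(\overline{\pi(w_j),\pi(w_{j+1})})$ and additivity under concatenation from Lemma~\ref{lem:symplectic-area-props-v2}.(2). You also spell out the identity $\omega(\pi(w_{k-1}),\pi(v_k)) = \omega(\pi(w_{k-1}),\pi(w_k))$ and the sign check, which the paper leaves to the reader.
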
 
\begin{proof}
  We will show by induction that
  \begin{equation}\label{eq:discrete-area-inductive}
    z(w_j) = S(\overline{\pi(w_1),\dots,\pi(w_j)}) + \sum_{i=1}^j z(v_i)
  \end{equation}
  for all $j\ge 1$. This is trivial for $j=1$. If \eqref{eq:discrete-area-inductive} holds for some $j\ge 1$, one can use the definition of the group operation \eqref{eq:group-law} to verify that 
    \[
    z(w_{j+1}) = z(w_j v_{j+1}) = z(w_j) + z(v_{j+1}) + S(\overline{\pi(w_j),\pi(w_{j+1})}).
  \]
  By Lemma~\ref{lem:symplectic-area-props-v2}.(2), this implies \eqref{eq:discrete-area-inductive} for $j+1$.
\end{proof}

\subsection{Area formula for vertical fibers}\label{sec:AreaKoz}
The work of Kozhevnikov \cite{Kozhevnikov} implies the following area formula
for fibers of $C^1_{\mathrm{H}}$ maps $\HH_n\to\mathbb R^{2n}$.

\begin{thm}\label{thm:S-cH2}
  Let $\HH_n$ be endowed with an arbitrary left-invariant homogeneous distance. There exists $\eta<1$ such that the following holds. Let $F\from \HH_n\to\mathbb R^{2n}$ be a $C^1_{\mathrm{H}}$ map, and $B\subset\HH_n$ a ball such that $\|DF_q-\mathrm{id}\|_{\mathrm{Op}}<\eta$ for every $q\in 5B$. Let $w\in\mathbb R^{2n}$, and let $K$ be a connected subset of $F^{-1}(w)\cap B$ with $\diam(K)>0$. 
  
  Then there exists a monotone increasing (in the sense of Lemma~\ref{lem:vertical-properties}) parameterization $\alpha\from [0,1]\to\HH_n$ of $K$. Calling $\gamma:=\pi\circ \alpha$, we also have \begin{equation}\label{eqn:PreAreaFormulaVerticalFibers}
  \cH^2(K)=z(\alpha(1))-z(\alpha(0))-\limsup_{\mesh(P)\to 0}\hat S(\gamma_P),
  \end{equation}
  where $\hat S(\gamma_P)$ is defined in \eqref{eqn:SymplecticArea}. In particular, if $S(\gamma)$ exists,
  \begin{equation}\label{eqn:AreaFormulaVerticalFibers}
  \begin{aligned}
  \cH^2(K)&=z(\alpha(1))-z(\alpha(0))-S(\gamma) \\
  &=z(\alpha(0)^{-1}\alpha(1))-S(\hat\gamma),
  \end{aligned}
  \end{equation}
  where $\hat\gamma$ is the closed curve obtained by joining the endpoints of $\gamma$ with a line segment.
\end{thm}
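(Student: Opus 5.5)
The plan is to derive the statement from Kozhevnikov's work, after first producing the parameterization from the vertical-curve machinery. Choose $\eta\le\frac1{20}$, so that Corollary~\ref{cor:fibers-are-vertical} (the case $\lambda=2$) makes $F^{-1}(w)\cap B$ a $2$--vertical $1$--manifold, relatively closed in $B$. Then $K$ is a connected $2$--vertical curve. By Lemma~\ref{lem:vertical-properties}(2), $K$ is homeomorphic to an interval and is parametrized by an increasing map. Since $K$ has positive diameter, the interval is nondegenerate.

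We also need this interval to be compact, i.e.\ $K$ contains its endpoints, in order to rescale to $[0,1]$. If $K$ is not closed, I would apply the argument to $\bar K$, which is still a connected vertical curve by continuity of the cone condition. By Lemma~\ref{lem:compact-intersections-v2}, $\bar K\setminus K$ consists of at most two endpoints, which carry no $\cH^2$ mass. So we may assume $K$ is compact and $\alpha\colon[0,1]\to K$ is a monotone increasing homeomorphism.

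For the measure identity \eqref{eqn:PreAreaFormulaVerticalFibers}, I would invoke Kozhevnikov's area formula \cite[Cor.~5.4.16]{Kozhevnikov} for fibers of $C^1_{\mathrm H}$ maps with nonsingular differential. It expresses $\cH^2(K)$ as the limit of $\sum_i z(\alpha(t_i)^{-1}\alpha(t_{i+1}))$ over partitions with mesh tending to $0$, possibly as a $\liminf$ if only that is available. The link to our formula is Lemma~\ref{lem:discrete-area-z}. Set $v_1=\alpha(0)$ and $v_{i+1}=\alpha(t_i)^{-1}\alpha(t_{i+1})$, so that $w_i=\alpha(t_i)$. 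Then
\[
z(\alpha(1))=\hat S\bigl(\overline{\pi\alpha(0),\pi\alpha(0),\pi\alpha(t_2),\dots,\pi\alpha(1)}\bigr)+z(\alpha(0))+\sum_i z(\alpha(t_i)^{-1}\alpha(t_{i+1})).
\]
The degenerate first segment contributes nothing, so
\[
\sum_i z\bigl(\alpha(t_i)^{-1}\alpha(t_{i+1})\bigr)=z(\alpha(1))-z(\alpha(0))-\hat S(\gamma_P).
\]
Taking the limit (or $\liminf$) of the left side over partitions of vanishing mesh gives $\cH^2(K)=z(\alpha(1))-z(\alpha(0))-\limsup\hat S(\gamma_P)$.

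The main obstacle is matching Kozhevnikov's hypotheses and normalization to the present setting. He may work with a specific homogeneous distance, which is why $\eta$ must be allowed to depend on the chosen distance. His normalization of $\cH^2$ must be reconciled with \eqref{eq:h2-normalize}, and he may only give a $\liminf$ over partitions, or partitions adapted to a particular scale. My first step there would be to exploit the structure of the fiber. By Lemma~\ref{lem:vertical-diameter-v2}, each summand satisfies $z(\alpha(t_i)^{-1}\alpha(t_{i+1}))\approx \diam\alpha([t_i,t_{i+1}])^2$, so these sums are comparable to Hausdorff-type coverings of $K$. The local blow-up of $K$ at each point is the vertical axis, because $DF$ is nonsingular and $C^1_{\mathrm H}$. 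Hence the density of $\cH^2$ along $K$ is $1$ in the $z$-increment, which is the content I would cite from \cite{Kozhevnikov}.

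For the last claim, if $S(\gamma)$ exists then the $\limsup$ equals $S(\gamma)$, which gives the first line of \eqref{eqn:AreaFormulaVerticalFibers}. For the second line, $\hat\gamma$ is the concatenation of $\gamma$ with the segment $\sigma$ from $\pi\alpha(1)$ to $\pi\alpha(0)$. By Lemma~\ref{lem:symplectic-area-props-v2}(2), $S(\hat\gamma)=S(\gamma)+S(\sigma)$, where $S(\sigma)=\frac12\omega(\pi\alpha(1),\pi\alpha(0))$. The group law \eqref{eq:group-law} gives
\[
z(\alpha(0)^{-1}\alpha(1))=z(\alpha(1))-z(\alpha(0))-\tfrac12\omega\bigl(\pi\alpha(0),\pi\alpha(1)\bigr)=z(\alpha(1))-z(\alpha(0))+S(\sigma).
\]
Substituting this into the first line yields the second.
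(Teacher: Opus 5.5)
Your proposal is correct and follows the paper's proof. Both take $\eta=\frac{1}{20}$ from Corollary~\ref{cor:fibers-are-vertical} with $\lambda=2$ and the parameterization from Lemma~\ref{lem:vertical-properties}(2). Both then cite \cite[Cor.~5.4.16]{Kozhevnikov}; the paper also uses his Remarks 5.3.9 and 5.4.19 to check the hypotheses and converts his group-law convention, under which the term reads $+4\liminf\hat S(\gamma_P)$. Both finish with the same group-law computation for the second line of \eqref{eqn:AreaFormulaVerticalFibers}.

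Your use of Lemma~\ref{lem:discrete-area-z}, which rewrites Kozhevnikov's formula as a $\liminf$ of sums of vertical increments $z(\alpha(t_i)^{-1}\alpha(t_{i+1}))$, is just a convention-independent way of doing that same conversion. Replacing a non-closed $K$ by $\bar K$ handles a compactness point that the paper leaves implicit.
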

\begin{proof}
    Let $\lambda=2$ and $\eta=c$ as in Corollary~\ref{cor:fibers-are-vertical}, then $F^{-1}(w)\cap B$ is a $2$--vertical curve for every $w\in\mathbb R^{2n}$. Thus every connected subset $K\subset F^{-1}(w)\cap B$ with $\diam(K)>0$ is parameterized by a monotone increasing $2$--vertical map $\alpha\from [0,1]\to\HH_n$ by Lemma~\ref{lem:vertical-properties}(2).

    Hence, taking into account \cite[Remark 5.3.9]{Kozhevnikov}, $K$ satisfies the two properties in \cite[Notation 5.3.8]{Kozhevnikov}. Thus, applying \cite[Corollary 5.4.16 and Remark 5.4.19]{Kozhevnikov}, we have
    \[
    \begin{aligned}
    \cH^2(K)&=z(\alpha(1))-z(\alpha(0))+\liminf_{\mesh(P)\to 0}(-\hat S(\gamma_P)) \\
    &=z(\alpha(1))-z(\alpha(0))-\limsup_{\mesh(P)\to 0}\hat S(\gamma_P),
    \end{aligned}
    \]
    as desired\footnote{In the statement of \cite[Corollary 5.4.16]{Kozhevnikov} the term $-\limsup_{\mesh(P)\to 0}\hat S(\gamma_P)$ in \eqref{eqn:PreAreaFormulaVerticalFibers} is replaced by $+4\liminf_{\mesh(P)\to 0}\hat S(\gamma_P)$. This happens because of the different definition of the group operation on $\HH_n$ in \cite{Kozhevnikov}: compare \cite[Pages 19-20]{Kozhevnikov} with our \eqref{eq:group-law}.}. In particular, if $S(\gamma)$ exists, we have 
    \[
    \limsup_{\mesh(P)\to 0} \hat S(\gamma_P) = \lim_{\mesh(P)\to 0} \hat S(\gamma_P) = S(\gamma).
    \]
    Hence, by using \eqref{eqn:PreAreaFormulaVerticalFibers}, \eqref{eq:group-law}, and Lemma \ref{lem:symplectic-area-props-v2}(2),  we conclude
    \[
    \begin{aligned}
        \cH^2(K) &= z(\alpha(1))-z(\alpha(0))-S(\gamma) \\
        & = z(\alpha(0)^{-1}\alpha(1))+\frac{1}{2}\omega(\pi(\alpha(0)),\pi(\alpha(1)))-S(\gamma)\\
        &=z(\alpha(0)^{-1}\alpha(1))-S(\overline{\pi(\alpha(1)), \pi(\alpha(0))})-S(\gamma)\\
        &=z(\alpha(0)^{-1}\alpha(1))-S(\hat\gamma),
    \end{aligned}
    \]
    as desired.
\end{proof}

\section{Reduction to \texorpdfstring{$C^1_{\mathrm{H}}$}{C\^{}1\_{}H} maps}\label{sec:ReductionToC1H}

In this section we begin the proof of Theorem~\ref{thmCoareaLip} by showing that if Theorem~\ref{thmCoareaLip} holds for $C^1_{\mathrm{H}}$ maps, then it holds for Lipschitz maps. Here and in the following sections, we fix some $n>0$ and let $\HH = \HH_n$.

That is, we prove Theorem~\ref{thmCoareaLip} modulo the following result.
\begin{prop}\label{propCoareaC1h}
    Let $f\from \HH\to\mathbb R^{2n}$ be a $C^1_{\mathrm{H}}$ function. Then for every measurable set $U\subset\HH$ we have
    \[
    \int_U |\JfH(x)|\ud x = \int_{\mathbb R^{2n}}\mathcal{H}^2(f^{-1}(v)\cap U)\ud v.
    \]
\end{prop}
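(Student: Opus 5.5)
The proof of Proposition~\ref{propCoareaC1h} will follow the five-step outline of the introduction, with the reduction to Lipschitz maps already excluded. First we split $U$ into $U_0=U\cap\{\JfH=0\}$ and $U_1=U\cap\{\JfH\ne0\}$. On $U_0$ the coarea inequality of Magnani (Theorem~\ref{thm:CoareaInequality}) bounds $\int\mathcal{H}^2(f^{-1}(v)\cap U_0)\ud v$ by a constant times $\int_{U_0}|\JfH|=0$, so both sides vanish. Since both sides of the identity are countably additive measures in $U$, it then suffices to prove the identity for $U$ contained in a small ball $B$ around a point $x$ with $\JfH(x)\ne0$. Composing $f$ with the linear map $(\DfH_x)^{-1}$ multiplies the left side by $|\det(\DfH_x)^{-1}|$. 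The right side changes by the same factor under the linear change of variables in $\R^{2n}$. So we may assume $\|Df_q-\id\|_{\mathrm{Op}}<\eta$ on $5B$, with $\eta$ as in Theorem~\ref{thm:S-cH2}. By Corollary~\ref{cor:fibers-are-vertical}, every fiber $f^{-1}(w)\cap B$ is then a $2$--vertical $1$--manifold.

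Next we prove the formula on $B$ for sets of a special form: we show that for a.e.\ $w$, every connected piece $K$ of $f^{-1}(w)\cap B$ has projection $\gamma=\pi\circ\alpha$ whose symplectic area $S(\gamma)$ exists. Theorem~\ref{thm:S-cH2} then gives $\cH^2(K)=z(\alpha(0)^{-1}\alpha(1))-S(\hat\gamma)$. To obtain existence, we approximate $\gamma$ by the piecewise-linear curves $g_i$ built from affine approximations of $f$ on dyadic-scale balls along the fiber, using Lemma~\ref{lem:Biholder} and Lemma~\ref{lem:chains} to produce well-spaced subdivisions. Then we apply Proposition~\ref{prop:vertical-curve-area}: if a fiber-wise sum of squared $\beta$--numbers $\sum_{Q\cap K\ne\emptyset}\beta_f(Q)^2$ is finite, then $S(\hat\gamma)$ exists, equals $\lim_i S(\hat g_i)$, and is controlled by that sum. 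To see that the sum is finite for a.e.\ $w$, we integrate over $w$. Each ball $Q$ of radius $r$ meets fibers $f^{-1}(w)$ only for $w$ in a set of measure $\lesssim r^{2n}$, so
\[
\int_{\R^{2n}}\sum_{Q\cap f^{-1}(w)\ne\emptyset}\beta_f(Q)^2\,\mathrm{d}w\lesssim\sum_Q \beta_f(Q)^2 r_Q^{2n},
\]
and the right side is at most $|Q|\beta^2/r^2$ summed over $Q$. With the normalization built into the $\beta$--numbers, the F\"assler--Orponen Dorronsoro theorem bounds this by $\int_{2B}|\nabla_{\mathrm H} f|^2<\infty$. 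So the fiber sums are finite for a.e.\ $w$.

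Finally, to identify the total measure, we use the explicit formula to compute $\int\cH^2(f^{-1}(w)\cap B')\ud w$ for small subballs or cylinders $B'$ and compare with $\int_{B'}|\JfH|$. Equivalently, we show that the measure $\mu(E)=\int\cH^2(f^{-1}(w)\cap E)\ud w$ is absolutely continuous with density $|\JfH|$. Absolute continuity follows from the Magnani inequality. For the density, we compare $f$ on a tiny ball with its affine approximation $A$, for which the fibers are vertical lines and the formula is exact. The fiber lengths $z(\alpha(0)^{-1}\alpha(1))$ of $f$ and of $A$ differ by $o(r^2)$, and the correction $S(\hat\gamma)$ is bounded by the local $\beta$--sum, which is $o(r^{2})$ on average over $w$ at Lebesgue points of the Dorronsoro square function. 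Differentiating $\mu$ then gives density $|\JfH|$ a.e.

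The main obstacle is the second step, Proposition~\ref{prop:vertical-curve-area}. There we must show that a $\tfrac12$--H\"older-type curve with square-summable $\beta$--numbers has a genuine mesh-limit symplectic area, not merely a convergent dyadic approximation. This requires controlling arbitrary partitions by the multiscale approximations, in the spirit of Theorem~\ref{thm:ExistenceAreaCurves}.
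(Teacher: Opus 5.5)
Your architecture is the paper's. Magnani's inequality handles $\{\JfH=0\}$ and gives $\mu_f\ll\cH^{2n+2}$. You normalize to $\DfH_p=\mathrm{id}$. A fiberwise $\beta$-sum (the paper's $T_{w,B}$) controls $S(\hat\gamma)$ through Propositions~\ref{prop:construct-Lambda} and~\ref{prop:vertical-curve-area}. Fubini together with the F\"assler--Orponen inequality gives $r^{-2n-2}\int^* T_{w,B_r(p)}\,\mathrm{d}w\to 0$ for a.e.\ $p$; your Lebesgue-point argument is an acceptable substitute for the paper's Vitali argument. Finally you compare with $\hat f(q)=f(p)+\pi(p^{-1}q)$. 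One bookkeeping remark: your $\beta$ chain is consistent only if your $\beta$ is the unnormalized $L_2$ deviation. With the paper's scale-invariant $\beta_f$, the fiber sum must be $\sum\rad(Q)^2\beta_f(Q)^2$, which is $T$.

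\textbf{The gap is in the last step.} The claim that ``the fiber lengths of $f$ and of $A$ differ by $o(r^2)$'' presupposes that $f^{-1}(w)\cap B$ is a single arc running from near the bottom of $\partial B$ to near the top. It need not be. Along a $2$--vertical curve, $z$ is not monotone: $z(q)-z(p)=z(p^{-1}q)+\frac12\omega(\pi(p),\pi(q)-\pi(p))$, and the last term is linear in the horizontal displacement while the cone condition is only quadratic. So the fiber can leave and re-enter $B$, and $f^{-1}(w)\cap B$ can be disconnected with no a priori bound on the number of components.

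Summing $\cH^2(K_j)=\height(K_j)-S(\hat\gamma_{K_j})$ over the components does not close the argument. Heights of disjoint sub-arcs are not additive along a vertical curve. The estimate of Proposition~\ref{prop:levy-beta-fiber} is per arc, so its error accumulates with the number of arcs.

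The missing ingredient is Lemma~\ref{lem:sandwich}. For $q\in B$ and $w=f(q)$, it gives connected arcs $E_q\subset f^{-1}(w)\cap B\subset F_q\subset f^{-1}(w)\cap 2B$ whose heights are within $cR^2$ of $\height(V_q)$. Its proof has three steps:
\begin{itemize}
\item confine the fiber to a thin cylinder around $\pi^{-1}(w)$;
\item use a degree argument to show the fiber meets every horizontal slice of that cylinder;
\item use the order $\prec$ to show exactly one component inside a truncated cylinder joins its two caps, with height $2s+O(\kappa)$.
\end{itemize}
Monotonicity of $\cH^2$, together with Theorem~\ref{thm:S-cH2} applied to $E_q$ and $F_q$, then gives the two-sided estimate. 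This is why the $\beta$-sum must be taken over $f^{-1}(w)\cap 2B$, and why the values $w\notin f(B)$ are treated separately.

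\textbf{A second, smaller point.} Proposition~\ref{prop:vertical-curve-area} does not bound $S(\hat\gamma)$ linearly by the $\beta$-sum. It gives $|S(\hat\gamma)|\lesssim\diam(K)^2\,\nu\bigl(T/\diam(K)^2\bigr)$ with $\nu(t)\approx\max\{t,t\log t^{-1}\}$. The logarithm comes from the endpoint paths $\psi$, the images under $G$ of the vertical sides of $[0,1]^2$.

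Hence ``$o(r^2)$ on average over $w$'' does not follow directly from the vanishing density of $\int^*T$. You need $\nu$ to be concave and increasing with $\nu(t)\to0$ as $t\to 0$, and then Jensen on $E=B_{2r}(f(p))\subset\R^{2n}$:
\[
r^{-2n-2}\int^*_E r^2\nu\bigl(r^{-2}\tau(v)\bigr)\,\mathrm{d}v\lesssim\nu\Bigl(\cL(E)^{-1}\int^*_E r^{-2}\tau(v)\,\mathrm{d}v\Bigr)\to 0.
\]
Since $w\mapsto T_{w,B}$ need not be measurable, all these averages should be upper integrals, as in the paper.
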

Proposition~\ref{propCoareaC1h} is a direct consequence of Proposition \ref{prop:density}, which we will prove in Sections \ref{sec:CoAreaC1H} and \ref{sec:density-proof}.

We recall two results from the literature. The first is a coarea inequality due to Magnani \cite{MagnaniCoareaInequality}. We state it for maps from $\HH$ to $\R^{2n}$, but it holds more generally for Lipschitz maps between Carnot groups.
\begin{thm}[{\cite[Theorem 2.6]{MagnaniCoareaInequality}}]\label{thm:CoareaInequality}
    Let $f\from \HH\to \mathbb{R}^{2n}$ be a Lipschitz map. Then for every measurable set $U\subset \mathbb H$
\begin{equation}\label{eqn:CoareaInequality}
     \int_U |\JfH(x)|\ud x \geq \int_{\mathbb R^{2n}}\mathcal{H}^2(f^{-1}(v)\cap U)\ud v.
    \end{equation}
\end{thm}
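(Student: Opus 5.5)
The coarea inequality of Theorem~\ref{thm:CoareaInequality} is quoted from \cite{MagnaniCoareaInequality}. If I were to reprove it, I would use the classical Federer scheme: a crude Eilenberg-type inequality combined with a blow-up at points of Pansu differentiability.

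\textbf{Step 1 (Eilenberg inequality).} I would first show, for every set $A\subset\HH$, that $v\mapsto \cH^2(f^{-1}(v)\cap A)$ has upper integral at most $C(n)\Lip(f)^{2n}\cH^{2n+2}(A)$. Here $\cH^{2n+2}(A)$ is computed with outer measure. To prove it, cover $A$ by balls $B_{r_j}$ with $r_j<\delta$ and $\sum r_j^{2n+2}\lesssim \cH^{2n+2}(A)+\epsilon$. Then $\cH^2_\delta(f^{-1}(v)\cap A)\lesssim \sum_j r_j^2\,\mathbf 1_{f(B_{r_j})}(v)$. Integrate in $v$, using $\cL^{2n}(f(B_{r_j}))\lesssim \Lip(f)^{2n}r_j^{2n}$ and Fatou as $\delta\to0$.

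Consequently $\mu(U):=\int^*\cH^2(f^{-1}(v)\cap U)\ud v$ defines an outer measure on $\HH$ with $\mu\lesssim \cH^{2n+2}$. By Borel regularity and a standard Vitali-type differentiation argument in the doubling space $(\HH,d)$, it then suffices to prove a pointwise density bound. Namely, for $\cH^{2n+2}$--a.e.\ $x$,
$$\limsup_{r\to0}\frac{\mu(B_r(x))}{\cH^{2n+2}(B_r(x))}\le |\JfH(x)|.$$
Measurability of $v\mapsto \cH^2(f^{-1}(v)\cap U)$ is standard for compact $U$ by upper semicontinuity of Hausdorff contents, and it extends by regularity.

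\textbf{Step 2 (blow-up).} Fix a point $x$ of Pansu differentiability and $\epsilon>0$. Let $L(y)=f(x)+Df_x(x^{-1}y)$; for small $r$ we have $|f-L|\le\epsilon r$ on $B_r(x)$. Rescale by $S_{1/r}$ and left-translate $x$ to $\zero$. The problem becomes: for $F$ with $|F-L_0|\le\epsilon$ on $B_1(\zero)$, where $L_0=Df_x$ is a homogeneous homomorphism, show
$$\int\cH^2(F^{-1}(v)\cap B_1)\ud v\le |\det \DfH_x|\,\cH^{2n+2}(B_1)+o_\epsilon(1).$$

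\emph{Degenerate case.} If $\det\DfH_x=0$, then $F(B_1)$ lies in the $\epsilon$--neighborhood of a lower-dimensional subspace intersected with a bounded ball, so $\cL^{2n}(F(B_1))\lesssim\epsilon$. By Step 1 applied at unit scale, each fiber satisfies $\cH^2(F^{-1}(v)\cap B_1)\lesssim 1$ for a.e.\ $v$; strictly, I would apply the covering bound fiberwise on a fine cover. The integral is therefore $O(\epsilon)$.

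\emph{Nondegenerate case.} Each fiber $F^{-1}(v)\cap B_1$ lies in the set $\{y\in B_1: |L_0(y)-v|\le\epsilon\}$. This is an $O(\epsilon)$--thin tube around the left coset $L_0^{-1}(v)\cap B_1$ of $\langle Z\rangle$, whose $\cH^2$ measure equals its $z$--length. Covering such a thin tube by Korányi balls of radius $\sim\sqrt{\epsilon}$ stacked along the vertical direction gives
$$\cH^2_{C\sqrt\epsilon}(F^{-1}(v)\cap B_1)\le (1+o(1))\,\cH^2(L_0^{-1}(v)\cap B_{1+C\epsilon}).$$
Integrating over $v$ and applying Fubini to the linear map $\DfH_x$ in the horizontal coordinates yields $|\det\DfH_x|\,\cH^{2n+2}(B_{1+C\epsilon})(1+o(1))$.

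\textbf{Main obstacle.} The delicate point is that Step 2 controls only the Hausdorff \emph{content} $\cH^2_{C\sqrt{\epsilon} r}$ at a scale tied to $r$, not the measure $\cH^2$ itself. I would handle this as in Federer's proof of \cite[2.10.25]{FedererBook}. Cover $U$ by countably many small disjoint balls at good points, chosen by a Vitali argument so that they exhaust $U$ up to a $\cH^{2n+2}$--null set. The null remainder contributes nothing to $\mu$ by Step 1. Summing the content bounds over the balls and using $\cH^2_\delta\uparrow\cH^2$ with monotone convergence then gives \eqref{eqn:CoareaInequality}. Care is needed so that the content scale tends to $0$ uniformly over the cover; this is arranged by restricting to points where the differentiability modulus is uniform on a large compact set, via Lusin/Egorov, and letting that set exhaust $U$.
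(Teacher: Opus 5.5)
The paper does not prove this statement. It quotes it from Magnani, who proves it for Lipschitz maps between Carnot groups, and uses it as a black box. Your sketch is therefore an independent route, and its final architecture is the right one: Step 1 disposes of null sets, and because the blow-up only controls contents, you bound $\int^*\cH^2_\delta(f^{-1}(v)\cap U)\ud v$ over a Vitali fine cover and then let $\delta\to0$. Two side remarks on the structure. The density reduction at the end of Step 1 is never actually used. Lusin/Egorov is also unnecessary: radius thresholds in a fine cover may depend on the center, so it suffices to take balls inside an open $V\supset U$, centered at Pansu-differentiability points that are also Lebesgue points of $|\JfH|$. However, both estimates inside Step 2 fail as written.

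\emph{Stacked Kor\'anyi balls lose a factor $2$.} A ball of radius $R$ has diameter $2R$, but it meets every vertical line in a segment of height at most $R^2/2$. Indeed $B_R(\zero)\cap\pi^{-1}(w)=\{(w,z):|z|<\frac14\sqrt{R^4-|w|^4}\}$, and left translations permute vertical lines. A vertical segment of height $h$ has diameter $2\sqrt h$, so with the paper's normalization $\cH^2_\delta=\frac14\inf\sum_i\diam(E_i)^2$. Covering by subsegments then costs exactly $1$ per unit height, while stacked balls cost $\frac14(2R)^2=R^2$ per height $R^2/2$, i.e.\ $2$ per unit height. Your covering therefore gives $(2+o(1))\,\cH^2(L_0^{-1}(v)\cap B_{1+C\epsilon})$ and, in the end, only $\int\cH^2(f^{-1}(v)\cap U)\ud v\le 2\int_U|\JfH|$.

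The fix is to cover by pieces shaped like the tangent fiber. Let $A=\DfH_x$ and $\eta=\|A^{-1}\|\epsilon$. The fiber lies in $\{y\in B_1:|\pi(y)-A^{-1}v|\le\eta\}$. By the group law, two points $p,q$ of this set with $|z(p)-z(q)|\le h$ satisfy $|\pi(p^{-1}q)|\le 2\eta$ and $|z(p^{-1}q)|\le h+\eta$. Hence horizontal slabs of height $h\gg\eta$ have $\diam^2\le 4(h+\eta)(1+o(1))$. Taking, say, $h=\sqrt\epsilon$ gives $\cH^2_{3\epsilon^{1/4}}(F^{-1}(v)\cap B_1)\le(1+o(1))\,\cH^2(L_0^{-1}(v)\cap B_{1+C\epsilon})+O(\sqrt\epsilon)$. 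The additive term integrates to $o(1)$ over the bounded set $F(B_1)$.

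\emph{Degenerate case.} Step 1 is an integral bound and gives no pointwise control of fibers. Moreover, the conclusion $\int\cH^2(F^{-1}(v)\cap B_1)\ud v=O(\epsilon)$ is false. Let $\psi\from\R^{2n}\to\R^{2n}$ be $\Z^{2n}$--periodic, Lipschitz, and map part of each unit cell diffeomorphically onto a fixed ball $B'$. Then $F(y)=\epsilon\,\psi(\pi(y)/\epsilon)$ satisfies $|F-0|\lesssim\epsilon$ with $L_0=0$. Yet for $v\in\epsilon B'$, a set of measure $\approx\epsilon^{2n}$, the fiber $F^{-1}(v)\cap B_1$ contains $\approx\epsilon^{-2n}$ vertical segments of height $\gtrsim 1$. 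So the integral is $\approx 1$, not small.

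At the level of contents the case does work, with a different rate. Cover $B_1$ by $\approx\rho^{-2n-2}$ balls $B_j$ of radius $\rho\ge\epsilon$. Each $F(B_j)$ lies in the $\epsilon$--neighborhood of a subset of a hyperplane of diameter $\lesssim\rho$, so $\cL^{2n}(F(B_j))\lesssim\rho^{2n-1}\epsilon$. Therefore $\int\cH^2_{2\rho}(F^{-1}(v)\cap B_1)\ud v\lesssim\epsilon/\rho$, which is $o(1)$ for $\rho=\sqrt\epsilon$.

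With both repairs, your argument becomes a complete, elementary proof tailored to $\HH_n\to\R^{2n}$, where the blown-up fibers are cosets of the center whose $\cH^2$ is simply their length. The paper's citation gives the general Carnot-group inequality without any of this work.
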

The second result is an extension theorem proved in \cite{FSSCMathAnn}.
\begin{thm}[{\cite[Theorem 6.8]{FSSCMathAnn}}]\label{thmWithney}
    Let $H(\HH,\mathbb R)$ be the space of homogeneous homomorphisms $\HH\to\mathbb R$ endowed with the distance coming from the operator norm. Let $F\subset \HH$ be a closed set and let $f\from F\to\mathbb R$ and $k\from F\to H(\HH,\mathbb R)$ be continuous functions. For every compact set $K\subset F$ and $\delta>0$ define 
    \begin{equation}\label{eqn:rhokwhit}
    \rho_K(\delta):=\sup\left\{\frac{|f(q)-f(p)-k(p)(p^{-1}\cdot q)|}{d(p,q)}:p,q\in K, 0<d(p,q)<\delta\right\}.
    \end{equation}
    If for every compact $K\subset F$ we have that $\rho_K(\delta)\to 0$ as $\delta\to 0$, then there is $\tilde f\from \HH\to\mathbb R$ such that $\tilde f\in C^1_{\mathrm{H}}(\HH;\mathbb R)$ and 
    \[
    \tilde f|_F = f, \qquad D\tilde f|_F=k.
    \]
\end{thm}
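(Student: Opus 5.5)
The plan is to adapt the classical Whitney extension construction to $(\HH,d)$. Because $\HH$ is a doubling metric space, the open set $\HH\setminus F$ admits a Whitney-type covering by balls $B_j=B(x_j,r_j)$ with the following properties:
\begin{itemize}
\item $r_j\approx d(x_j,F)$;
\item the dilated balls $2B_j$ have bounded overlap;
\item every $x\notin F$ lies in some $B_j$.
\end{itemize}
Subordinate to this covering we build a partition of unity $\{\varphi_j\}$ on $\HH\setminus F$ with $\supp\varphi_j\subset 2B_j$, $\sum_j\varphi_j=1$, and $|X\varphi_j|\lesssim r_j^{-1}$ for every left-invariant horizontal field $X$. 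To get the last bound, take each $\varphi_j$ smooth in exponential coordinates, obtained by rescaling a fixed bump through $S_{r_j}$ and a left translation, and then normalize. For each $j$ choose $p_j\in F$ with $d(x_j,p_j)\le 2d(x_j,F)$.

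For $p\in F$ define the affine approximation
\[
L_p(x)=f(p)+k(p)(p^{-1}x).
\]
Since $k(p)$ is a homogeneous homomorphism, it is a linear function of $\pi$. Hence $L_p$ is smooth, and its horizontal gradient is the constant $k(p)$. Set $\tilde f=f$ on $F$, and $\tilde f=\sum_j\varphi_jL_{p_j}$ on $\HH\setminus F$. Off $F$ this is a locally finite sum of smooth functions, so $\tilde f$ is smooth there. Using $\sum_jX\varphi_j=0$, for any fixed index $i$ with $x\in 2B_i$ we get
\[
X\tilde f(x)=\sum_j\varphi_j(x)\,k(p_j)(X)+\sum_jX\varphi_j(x)\bigl(L_{p_j}(x)-L_{p_i}(x)\bigr).
\]

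The core estimate concerns $a\in F$, $x$ close to $a$, and any $j$ with $x\in 2B_j$. Then $d(x,p_j)$, $d(a,p_j)$ and $r_j$ are all $\lesssim d(x,a)$. Take a compact neighbourhood $K$ of $a$ intersected with $F$, so that $K$ contains all the relevant $p_j$. Using left-invariance and the homomorphism property of $k(p)$, one shows
\[
|L_{p_j}(x)-L_a(x)|\le |f(p_j)-L_a(p_j)|+|k(p_j)-k(a)|\,d(p_j,x)\le\bigl(\rho_K(Cd(x,a))+\omega_k(Cd(x,a))\bigr)\,d(x,a),
\]
where $\omega_k$ is the modulus of continuity of $k$ on $K$.

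Two consequences follow.
\begin{enumerate}
\item Summing against $\varphi_j$ gives $|\tilde f(x)-L_a(x)|=o(d(x,a))$ for $x\notin F$. For $x\in F$ the same bound is just the hypothesis $\rho_K\to0$. Hence $\tilde f$ is Pansu differentiable at $a$ with $D\tilde f_a=k(a)$.
\item In the formula for $X\tilde f$, the first sum tends to $k(a)(X)$ by continuity of $k$. The second sum is bounded by the number of overlapping balls times $r_j^{-1}\cdot o(d(x,a))$, which is $o(1)$ because $r_j\approx d(x,F)$. Here one must be careful: $r_j^{-1}$ is controlled by $d(x,F)^{-1}$, not by $d(x,a)^{-1}$. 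So one should compare $L_{p_j}$ with $L_{p_i}$, where $p_i$ lies within $C\,d(x,F)$ of $x$, rather than with $L_a$. This yields the bound $(\rho_K+\omega_k)(C d(x,F))\cdot d(x,F)$, and dividing by $r_j$ leaves $o(1)$.
\end{enumerate}
Thus $D\tilde f$ is continuous up to $F$. Continuity within $F$ is simply continuity of $k$, so $\tilde f\in C^1_{\mathrm H}$.

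I expect the main obstacle to be this derivative estimate near $F$. It needs a Whitney covering whose radii are comparable to the distance to $F$ in the Korányi metric, together with cutoffs whose horizontal gradients scale like $r_j^{-1}$. It also needs the comparison of the two affine maps $L_{p_j}$ and $L_{p_i}$, which is where the noncommutative term $k(p)(p^{-1}x)$ must be expanded via \eqref{eq:group-law}. Fortunately $k(p)$ annihilates the vertical direction, so only $\pi$ enters and the Euclidean-style computation goes through.
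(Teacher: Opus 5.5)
The paper does not prove this statement; it imports it from Franchi--Serapioni--Serra Cassano. Their argument is the Heisenberg adaptation of the classical Whitney construction you describe: Whitney balls in $\HH\setminus F$ with radii comparable to the distance to $F$, a partition of unity whose horizontal derivatives are of order $r_j^{-1}$, and the local first-order models $L_p(x)=f(p)+k(p)(p^{-1}x)$, which are affine in $\pi(x)$. Your sketch is correct, including the key point that in the derivative estimate you compare $L_{p_j}$ with $L_{p_i}$ (with $p_i$ at distance $\lesssim d(x,F)$ from $x$) rather than with $L_a$, via the identity $L_{p}(x)-L_{p'}(x)=\bigl(f(p)-L_{p'}(p)\bigr)+\bigl(k(p)-k(p')\bigr)(p^{-1}x)$.
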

\subsection{Proof of Theorem \ref{thmCoareaLip}}

The following proof is inspired by \cite[Theorem 3.5]{MagnaniMathNachr}. Let $f\from \HH=\HH_n\to\mathbb R^{2n}$ be a Lipschitz function. By Theorem \ref{thm:CoareaInequality} it is enough to show that for every compact set $U\subset \HH$,
\begin{equation}\label{eqn:Sought}
    \int_U |\JfH(x)|\ud x \leq \int_{\mathbb R^{2n}}\mathcal{H}^2(f^{-1}(v)\cap U)\ud v.
    \end{equation}
Let $U\subset \HH$ be a compact set. By the Pansu--Rademacher Theorem \cite{Pansu, MagnaniPhD}, $f$ is differentiable almost everywhere. Let $Df\from \HH\to H(\HH,\mathbb R^{2n})$ be its differential. Let $\epsilon>0$. By using Lusin and Egoroff's theorems we can find a compact (since $U$ is compact) set $F_\epsilon\subset U$ such that $\cH^{2n+2}(U\setminus F_\epsilon)<\epsilon$, $Df$ is continuous on $F_\epsilon$, and
\[
    \lim_{q\to p, q\in F_\epsilon} \frac{|f(q)-f(p)-Df(p)(p^{-1}\cdot q)|}{d(p,q)}=0,
\]
where the last limit is uniform for $p\in F_\epsilon$. We thus apply Theorem~\ref{thmWithney} to each component of $f$ with $F:=F_\epsilon$ and $k:=Df|_{F_\epsilon}$. This produces a function $\tilde f_\epsilon\from \HH\to\mathbb R^{2n}$ that is $C^1_{\mathrm{H}}$--regular such that 
\begin{equation}\label{eqn:Fepsilon}
  \tilde f_\epsilon = f \text{ and } D\tilde f_\epsilon=Df \qquad \text{on $F_\epsilon$.}
\end{equation}
Therefore $J_Hf = J_H\tilde{f}_\epsilon$ on $F_\epsilon$. Further, for any $v\in \R^{2n}$, if $x\in \tilde f_\epsilon^{-1}(v)\cap F_\epsilon$, then $x\in U$ and $f(x) = \tilde f_{\epsilon}(x) = v$. That is, $\tilde f_\epsilon^{-1}(v)\cap F_\epsilon \subset f^{-1}(v)\cap U$.

By Proposition~\ref{propCoareaC1h} and the fact that $f$ is Lipschitz,
\begin{align*}
  \int_{U} |J_{\mathrm{H}}f(x)|\ud x & = \int_{F_\epsilon} |J_{\mathrm{H}}\tilde{f}_\epsilon(x)|\ud x + \int_{U\setminus F_\epsilon} |J_{\mathrm{H}}f(x)|\ud x\\
  & \le \int_{\mathbb R^{2n}} \cH^2(\tilde f_\epsilon^{-1}(v)\cap F_\epsilon)\ud v + \Lip(f)^{2n} \epsilon \\
  & \le \int_{\mathbb R^{2n}} \cH^2(f^{-1}(v)\cap U)\ud v + \Lip(f)^{2n} \epsilon.
\end{align*}
Letting $\epsilon$ go to zero, we get 
\[
\int_U |J_{\mathrm H}f(x)|\ud x \le \int_{\mathbb R^{2n}} \cH^2(f^{-1}(v)\cap U)\ud v,
\]
as desired.

\section{The density of the coarea measure}\label{sec:CoAreaC1H}

As we saw in the previous section, the coarea formula for Lipschitz maps in Theorem \ref{thmCoareaLip} can be reduced to the following proposition.
\begin{prop}\label{prop:density}
  Let $f\from \HH = \HH_n\to \R^{2n}$ be a $C^1_{\mathrm{H}}$ map. Let $\mu_f$ be the measure such that
  $$
  \mu_f(U) := \int_{\R^{2n}} \cH^2(U\cap f^{-1}(w))\ud w
  $$
  for any measurable $U\subset \HH$.
  For almost every $p\in \HH$,
  $$\lim_{r\to 0} \frac{\mu_f(B_r(p))}{\cH^{2n+2}(B_r(p))} = |\JfH(p)|.$$
  Therefore, by the Radon--Nikodym theorem, $\mu_f(U) = \int_U |\JfH(p)|\ud x$ for any measurable $U\subset \HH$.
\end{prop}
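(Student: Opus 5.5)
We split according to whether $\JfH(p)$ vanishes, and in the nonsingular case we reduce to a symplectic-area estimate on almost every fiber.

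\emph{Singular points and reduction.} By Magnani's coarea inequality (Theorem~\ref{thm:CoareaInequality}), $\mu_f(U)\le\int_U|\JfH|$ for every measurable $U$. So $\mu_f$ is absolutely continuous with respect to $\cH^{2n+2}$, and by Lebesgue differentiation its density is at most $|\JfH(p)|$ almost everywhere. This settles the points where $\JfH(p)=0$, and it leaves only the lower bound
$$\liminf_{r\to0}\frac{\mu_f(B_r(p))}{\cH^{2n+2}(B_r(p))}\ge|\JfH(p)|$$
at almost every $p$ with $\JfH(p)\neq0$. At such a $p$, set $L=\DfH_p$ and $g=L^{-1}\circ f$.

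\emph{Normalization.} For $w\in\R^{2n}$ we have $g^{-1}(w)=f^{-1}(Lw)$. The change of variables $w\mapsto Lw$ gives $\mu_f=|\det L|\,\mu_g$. Hence it suffices to show that $\mu_g$ has density at least $1$ at $p$. Continuity of $Dg$ gives $\|Dg_q-\id\|_{\mathrm{Op}}<\eta$ on $5B_r(p)$ for small $r$. Then, by Corollary~\ref{cor:fibers-are-vertical} and Theorem~\ref{thm:S-cH2}, each component $K$ of $g^{-1}(w)\cap B_r(p)$ satisfies
$$\cH^2(K)=z(\alpha(0)^{-1}\alpha(1))-\limsup_P\hat S(\gamma_P).$$
By Lemma~\ref{lem:ControlF}, $\pi$ and $g$ differ by $o(r)$ on $B_r(p)$. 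Therefore, for $w$ in a set of measure $(1-o(1))\cL^{2n}(\pi(B_r(p)))$, the fiber crosses $B_r(p)$ with total vertical extent $z$ equal, up to $o(r^2)$, to the $z$-length of the slice $B_r(p)\cap\pi^{-1}(\cdot)$. Integrating these vertical extents over $w$ recovers $\cH^{2n+2}(B_r(p))(1-o(1))$. The vertical-curve lemmas (Lemmas~\ref{lem:vertical-diameter-v2} and~\ref{lem:chains}) control the endpoint errors at $\partial B_r(p)$ and the number of components that matter.

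\emph{Main obstacle: the symplectic-area term.} It remains to show
$$\int_{\R^{2n}}\sum_K\Bigl|\limsup_P\hat S(\gamma_P^K)\Bigr|\ud w=o(r^{2n+2})$$
for almost every $p$. My plan follows the outline in the introduction. I will build dyadic piecewise-linear approximations $g_i$ of $\gamma=\pi\circ\alpha$ using affine approximations of $g$ on balls of scale $2^{-i}r$ along the fiber. The difference $S(\hat g_{i+1})-S(\hat g_i)$ consists of triangles whose area is bounded by $\beta_g(B)^2\rad(B)^2$. A Theorem~\ref{thm:ExistenceAreaCurves}-type argument, the one I expect to be hardest, should show that when $\sum\beta^2\rad^2$ over the balls meeting the fiber is finite, $S(\gamma)$ exists and is bounded by that sum.

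Integrating over $w$, each ball $B$ is met by fibers with $w$ ranging over a set of measure $\approx\rad(B)^{2n}$. The total error is therefore bounded by $\sum_{B\subset B_{Cr}(p)}\beta_g(B)^2\rad(B)^{2n+2}$. The Fässler--Orponen Dorronsoro theorem bounds this sum by $\int_{B_{Cr}(p)}$ of a square function. For $C^1_{\mathrm H}$ maps the square function is $o(1)$ in average at Lebesgue points of the relevant square-function density. Applying the theorem to $g$ localized, or to $g$ minus its affine approximation at $p$, gives $o(r^{2n+2})$. The hard analytic core is the convergence criterion for $S$ under square-summable $\beta$-numbers, including the verification that the limsup in Theorem~\ref{thm:S-cH2} is controlled by $\lim_i S(\hat g_i)$.
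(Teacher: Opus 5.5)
Your outline is the paper's: normalize by $(D_{\mathrm H}f_p)^{-1}$, apply Theorem~\ref{thm:S-cH2} to a connected arc of the fiber whose height is within $o(r^2)$ of the vertical slice (Lemma~\ref{lem:sandwich}), build dyadic approximations from affine approximations of $g$, integrate in $w$ (Lemma~\ref{lem:fubini}), and finish with Theorem~\ref{thm:dorronsoro}. The one-sided reduction via Lebesgue differentiation is fine. Your density step is also fine. In fact, applying Theorem~\ref{thm:dorronsoro} to $\chi\,(g-A)$, with $A(q)=g(p)+D_{\mathrm H}g_p(\pi(q)-\pi(p))$ and $\chi$ a cutoff at scale $r$, gives $o(r^{2n+2})$ at every $p$ by the $C^1_{\mathrm H}$ regularity of $g$, with no covering argument.

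\textbf{The gap.} The gap is the step you defer, and the mechanism you indicate for it does not suffice. Summability of the areas of the pieces between $g_i$ and $g_{i+1}$ only makes $S(\hat g_i)$ converge. Theorem~\ref{thm:S-cH2}, however, involves $\limsup_{\mesh P\to0}\hat S(\gamma_P)$ over arbitrary partitions. Theorem~\ref{thm:ExistenceAreaCurves} passes from dyadic to arbitrary partitions only because of its $\frac12$--H\"older hypothesis. Without that hypothesis, Lemmas~\ref{lem:gamma-i-no-area} and~\ref{lem:sigma-bound} give curves with $\sigma(\gamma)<\infty$ and $A(\cD_k\gamma)=0$ for all $k$ but no area. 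A fiber has no a priori $\frac12$--H\"older parameterization. Write $\Delta z_j=z(\zeta(t_{j-1})^{-1}\zeta(t_j))$. The available substitute is $\ell(a_j)\lesssim d(\zeta(t_{j-1}),\zeta(t_j))\approx\sqrt{\Delta z_j}$ for the arches $a_j$, and it helps only if $\sum_j\Delta z_j$ is bounded uniformly in $P$. That is circular: Theorem~\ref{thm:S-cH2} and Lemma~\ref{lem:discrete-area-z} give exactly $\cH^2(K)=\liminf_{\mesh P\to0}\sum_j\Delta z_j$, and for general vertical curves this can be infinite.

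\textbf{What is missing.} You need the ideas of Section~\ref{sec:area-formula}, in three steps.
\begin{itemize}
\item Compare $\gamma_P$ with the curve $\alpha_P$ that $G$ traces along the lower boundary of $R(\cU_P)$. Its area tends to $\Phi-S(\psi)$ by the absolute convergence of $\sum_v S(\theta_v)$ (Lemmas~\ref{lem:boundary-arches}--\ref{lem:lim-dyadic}).
\item Show $\sum_j\ell(a_j)^2\le\epsilon+\epsilon\sum_j\Delta z_j$ using only the tail of $\sigma(\Lambda)$ (Lemma~\ref{lem:arch-area}).
\item Close the loop with the group-law identity $\sum_j\Delta z_j=z(\zeta(1))-z(\zeta(0))-\hat S(\gamma_P)$ from Lemma~\ref{lem:discrete-area-z}. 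This lets you absorb the term $\epsilon\sum_j\Delta z_j$ (Lemma~\ref{lem:z-sum}).
\end{itemize}
That identity is the Heisenberg input that replaces the H\"older hypothesis, and nothing in your sketch plays its role.

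\textbf{Two smaller corrections.} First, the argument does not give a bound linear in the $\beta$--sum. The endpoints $g_i(0)$ and $g_i(1)$ move with $i$, so the side paths contribute $\ell(\psi)^2\lesssim\sigma\log(\diam(K)^2/\sigma)$. Integrating in $w$ therefore needs the concave function $\nu$ and Jensen's inequality, as in the paper. Second, your formula pairs $z(\alpha(0)^{-1}\alpha(1))$ with $\hat S(\gamma_P)$. The correct pairings are $z(\alpha(1))-z(\alpha(0))$ with $\hat S(\gamma_P)$, or $z(\alpha(0)^{-1}\alpha(1))$ with $S(\hat\gamma)$. Only the closed-curve area $S(\hat\gamma)$ is small.
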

Note that the function $w\mapsto \cH^2(U\cap f^{-1}(w))$ is $\cL^{2n}$--measurable for every measurable set $U\subset \HH$ by the results in \cite[¶2.10.26]{FedererBook}. We also have $\mu_f\ll \cH^{2n+2}$ due to Theorem \ref{thm:CoareaInequality}.

In the next two subsections, we will state some estimates that will be necessary to prove Proposition~\ref{prop:density}, leaving some proofs to later sections. Then, in Section~\ref{sec:density-proof}, we will use these estimates to prove Proposition~\ref{prop:density}.

As noted in the introduction, if $f\from \HH\to \R^{2n}$ is a $C^1_{\mathrm{H}}$ map with $\JfH(p)\ne 0$ for all $p$, the fibers of $f$ are \emph{vertical curves} in the sense of \cite{Kozhevnikov} or \cite{AY-Vertical}. While vertical curves are $C^0$--close to vertical lines, there can be a large difference between the measure of a vertical curve and the measure of a nearby vertical line. By Theorem~\ref{thm:S-cH2}, this difference is determined by the symplectic area of the projection of the curve to $\R^{2n}$. Thus, to prove Proposition~\ref{prop:density}, we need to bound the symplectic area of projections of fibers of $f$.

Let $w\in \mathbb R^{2n}$ and $B\subset \HH$ be a ball such that $f^{-1}(w)\cap B$ is nonempty. In general, $f^{-1}(w)\cap B$ may be disconnected; we let $K\subset f^{-1}(w)\cap B$ be a connected subset with $\diam(K)>0$. In Section~\ref{sec:approximations}, we will show that we can approximate the projection of $K$ by a sequence of piecewise-linear curves $g_i$ that converge uniformly. Furthermore, we can control the geometry of the approximations in terms of the $\beta$--numbers of $f$, which bound how well $f$ is approximated by an affine map. In Section~\ref{sec:symplectic-area}, we state a geometric criterion for when a sequence $(g_i)_i$ of curves satisfies $S(\lim g_i) = \lim_i S(g_i)$. In Section~\ref{sec:density-proof}, we will show that for almost any $w\in \R^{2n}$ and any choice of $K$ as above, this criterion is satisfied, and we will use it to prove Proposition~\ref{prop:density}.

\subsection{Approximations of \texorpdfstring{$f$ and $f^{-1}(w)$}{f and f\^{}-1(w)}}\label{sec:approximations}
In this section, we approximate the projection of $K$ by a sequence of piecewise-linear curves $g_i$. The main goal of this section is to prove Proposition~\ref{prop:construct-Lambda}, which lets us construct sequences of points that approximate $K$ and bound their geometry, but before we can state the proposition, we will need to introduce some notation.

The $g_i$'s will be based on a sequence of nested partitions of $K$ (similar to dyadic intervals), which we call a \emph{dyadic patchwork}. We will write this patchwork as a map that associates each vertex of a rooted tree $\cT$ to a subinterval of $K$.

We treat a \emph{rooted tree} $\cT$ as a collection of vertices with a distinguished root $v_0$ and a \emph{parent map} $\cP\from \cT\setminus\{v_0\}\to \cT$ such that for every $v\in \cT\setminus\{v_0\}$, there is some $i\ge 0$ such that $\cP^i(v)=v_0$. For $v\in \cT$, we denote the \emph{children} of $v$ by $\cC(v)=\cP^{-1}(v)$. For $i\ge 0$, we denote the \emph{$i$th-generation descendants} of $v$ by $\cC^i(v)= \cP^{-i}(v)$ and let $\cT^i=\cC^i(v_0)$. (In particular, $\cT^0=\{v_0\}$.) Let $\gen\from \cT\to \Z$ be the function such that $\gen(v) = i$ for all $v\in \cT^i$. For $k\ge 0$, let $\cT^{>k} = \bigcup_{i>k}\cT^i$ (and likewise for $\cT^{\ge k}$, etc.).
The tree structure induces a partial order on $\cT$, with $\cP(v)\succ v$ for all $v\in \cT\setminus\{v_0\}$ and $v_0=\max \cT$.

\begin{defn}\label{def:patchwork}
For $\mu > 1$, a \emph{$\mu$--dyadic patchwork} for $K$ is a tuple $$(\cT, \{I_v\}_{v\in \cT})$$ such that:
\begin{enumerate}
\item $\cT$ is a rooted tree with degree at most $\mu$.
\item Each set $I_v$ is a closed subinterval of $K$ with distinct endpoints, and if $v_0=\max \cT$, then $I_{v_0} = K$.
\item For each $v\in \cT$, the intervals $\{I_w\}_{w\in \cC(v)}$ partition $I_v$, i.e., $I_v=\bigcup_{w\in \cC(v)} I_w$ and the $I_w$'s overlap only at their endpoints.
\item For all $v\in \cT^i$,
  \begin{equation}\label{eq:patch-diam}
    \diam I_v\in [\mu^{-1} 2^{-i}\diam K, \mu 2^{-i}\diam K].
  \end{equation}
\end{enumerate}
\end{defn}
This is essentially a set of Christ cubes for $K$ (see \cite[Sec.\ 3]{ChristA-Tb-Theorem} or \cite[Appendix I]{DavidWaveletsAndSingular}), with the additional condition that the intervals $I_v$ are connected. 

Let $\zeta\from [0,1]\to K$ be a monotone increasing parameterization of $K$ (see Lemma \ref{lem:vertical-properties}) and let $\gamma= \pi\circ \zeta$. For each $v\in \cT$, let $J_v = \zeta^{-1}(I_v)\subset [0,1]$. This is a closed interval, and for any $i\ge 0$, the set of intervals $\{J_v:v\in \cT^i\}$ forms a partition of $[0,1]$. For any $v\in \cT$, let $m_v\in [0,1]$ be the midpoint of $J_v$. If $\gen(w) = \gen(w')$ and $J_w \cap J_{w'}\ne \emptyset$, then we say that $w$ and $w'$ are \emph{weak neighbors} (note that $w$ may equal $w'$).

If $C>0$ and $\Lambda \from \cT \to \R^{2n}$ is a map such that for any $v\in \cT$, 
\begin{equation}\label{eq:approx-points}
  |\Lambda(v) - \gamma(m_v)| \le C 2^{-\gen(v)} \diam(K),
\end{equation}
then we call $\Lambda$ a set of \emph{$C$--approximating points} or just \emph{approximating points} for $\cT$. This depends \emph{a priori} on the choice of $\zeta$, but by \eqref{eq:patch-diam}, we have $\diam(\pi(I_v))\le \mu 2^{-\gen(v)} \diam K$, so a different choice of $\zeta$ would only change the choice of $C$.

We will construct $\Lambda$ so that the geometry of $\Lambda$ is bounded by the $\beta$--numbers of $f$, which measure how well $f$ can be approximated by affine maps. We say that $\lambda\from \HH\to \R^{2n}$ is \emph{affine} if there is a linear map $h \from \R^{2n}\to \R^{2n}$ and a $w_0\in \R^{2n}$ such that $\lambda(p) =h(\pi(p)) + w_0$ for all $p$. Let $\Aff$ be the set of affine functions. (These are sometimes called \emph{vertical affine} functions.)

For $x\in \HH$ and $r>0$, we let $B_r(x)$ be the open metric ball of radius $r$ around $x$. If $B$ is a ball, we let $\rad(B)$ be its radius and for $\rho>0$, we let $\rho B$ be the ball of radius $\rho \rad(B)$ with the same center. For a map $\phi\from \HH \to \R^{2n}$ and a ball $B\subset \HH$, let
\begin{equation}\label{eqn:BetaPhiAlpha}
\beta_\phi(B) = \inf_{\alpha\in \Aff} \sqrt{\fint_{B} \frac{|\phi(p) - \alpha(p)|^2}{\rad(B)^2} \ud p}.
\end{equation}
This is a scale-invariant measure of how well $\phi$ can be approximated by an affine function on $B$. We will denote $\beta_\phi(p,r):=\beta_\phi(B_r(p))$.

Now we can state the following proposition.
\begin{prop}\label{prop:construct-Lambda}
  There are $c_0, C, \mu, \rho>0$ with the following property. Let $f\from \HH\to \R^{2n}$ be a $C^1_{\mathrm{H}}$ map and let $B\subset \HH$ be a ball such that $\|\DfH_q - \id\|_{\mathrm{Op}} < c_0$ for all $q\in \rho B$. Let $K$ be a connected subset of $B\cap f^{-1}(w)$ with $\diam(K)>0$. Then there is a $\mu$--dyadic patchwork $(\cT, \{I_v\}_{v\in \cT})$, a parameterization $\zeta\from [0,1]\to K$, and a set of $C$--approximating points $\Lambda$ such that for any $v, v'\in \cT$, if $v'$ is a weak neighbor of $v$ or of a child of $v$, then
  \begin{equation}\label{eq:children-neighbors}
    |\Lambda(v) - \Lambda(v')| \lesssim \rad(\cB(v)) \beta_f(5 \cB(v)),
  \end{equation}
  where for $v\in \cT$,
  \begin{equation}\label{eq:def-cbv}
    \cB(v) := B_{\mu 2^{-\gen(v)}\diam K}(\zeta(m_v)).
  \end{equation}
    
\end{prop}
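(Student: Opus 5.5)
The plan has three parts: build the patchwork geometrically from the vertical-curve lemmas, define $\Lambda(v)$ as the point where a best affine approximation of $f$ on $\cB(v)$ takes the value $w$, and compare these affine maps on overlapping balls.

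\emph{Step 1: the patchwork.} By Corollary~\ref{cor:fibers-are-vertical} (with $c_0\le\frac1{20}$ and $\rho\ge5$), $K$ is a connected $2$--vertical curve. By Lemma~\ref{lem:vertical-properties} it has a monotone increasing parameterization $\zeta\from[0,1]\to K$. I build the tree recursively.
\begin{itemize}
\item Set $I_{v_0}=K$.
\item Given $I_v$ with $v\in\cT^i$ and $\diam I_v\approx 2^{-i}\diam K$, apply Lemma~\ref{lem:Biholder} to its endpoints with a fixed $\epsilon$. This gives at most $N$ points cutting $I_v$ into subintervals whose endpoint distances lie between $\theta\epsilon\, d$ and $\epsilon\, d$.
\item By Lemma~\ref{lem:vertical-diameter-v2}, the diameters of the subintervals are comparable to these distances. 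Greedily merge consecutive subintervals until each has diameter in $[\mu^{-1}2^{-i-1}\diam K,\ \mu 2^{-i-1}\diam K]$.
\end{itemize}
Lemma~\ref{lem:chains} bounds the number of children by a constant, which fixes $\mu$. This step is routine bookkeeping with the constants of \S\ref{sec:verticalcurves}.

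\emph{Step 2: approximating points.} For $v\in\cT$ with $r_v=\rad\cB(v)$, let $\alpha_v(p)=h_v(\pi(p))+w_v$ be an affine map (nearly) minimizing $\beta_f(\cB(v))$. I use the ball $\cB(v)$ itself, not $5\cB(v)$, so that the balls of neighbors and children fit inside $5\cB(v)$.
\begin{itemize}
\item By Lemma~\ref{lem:ControlF}, $f-\pi$ is $3c_0$--Lipschitz on the relevant ball, and $\cB(v)\subset\rho B$ after enlarging $\rho$ (since $\zeta(m_v)\in B$ and $r_v\lesssim\diam K\le 2\rad B$).
\item Hence $\pi+$const is an admissible competitor, and $\alpha_v$ is $L^2$--close to $f$, so $\|h_v-\id\|_{\Op}\lesssim c_0$. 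This uses the standard fact that an affine map's coefficients are controlled by its $L^2$ average over a ball.
\item So $h_v$ is invertible, and I set $\Lambda(v)=h_v^{-1}(w-w_v)$, the point where $\alpha_v$ takes the value $w$.
\item The function $f-\alpha_v$ is $O(1)$--Lipschitz on $\cB(v)$ with $L^2$ average $\lesssim r_v$, so it is bounded by $\lesssim r_v$ in sup norm on $\frac12\cB(v)$. In particular $|w-\alpha_v(\zeta(m_v))|\lesssim r_v$, and inverting $h_v$ gives \eqref{eq:approx-points} with a uniform $C$.
\end{itemize}

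\emph{Step 3: the neighbor estimate \eqref{eq:children-neighbors}.} Suppose $v'$ is a weak neighbor of $v$ or of a child of $v$. Then $r_{v'}\in\{r_v,r_v/2\}$, and the centers satisfy $d(\zeta(m_v),\zeta(m_{v'}))\le \diam I_v+2\diam I_{v'}\lesssim 2r_v$, so $\cB(v')\subset 3\cB(v)\subset 5\cB(v)$.
\begin{itemize}
\item Since $\cB(v)$ and $\cB(v')$ have measure comparable to $5\cB(v)$, taking the infimum over a bigger ball gives $\beta_f(\cB(v)),\beta_f(\cB(v'))\lesssim\beta_f(5\cB(v))$.
\item By the triangle inequality in $L^2(\cB(v'))$, $\fint_{\cB(v')}|\alpha_v-\alpha_{v'}|^2\lesssim r_v^2\beta_f(5\cB(v))^2$.
\item Because $\alpha_v-\alpha_{v'}$ is affine in $\pi(p)$ and $\pi(\cB(v'))$ contains a Euclidean ball of radius $\approx r_v$, this $L^2$ bound upgrades to $\sup|\alpha_v-\alpha_{v'}|\lesssim r_v\beta_f(5\cB(v))$ on $\pi$ of a fixed multiple of $\cB(v')$.
\item By Step 2, $\Lambda(v)$ lies within $O(r_v)$ of $\pi(\zeta(m_{v'}))$, so it is in this region. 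Then $|\alpha_{v'}(\Lambda(v))-w|=|\alpha_{v'}(\Lambda(v))-\alpha_v(\Lambda(v))|\lesssim r_v\beta_f(5\cB(v))$.
\item Applying $h_{v'}^{-1}$, which has norm $\le2$, gives $|\Lambda(v)-\Lambda(v')|\lesssim r_v\beta_f(5\cB(v))$, which is \eqref{eq:children-neighbors}.
\end{itemize}

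\emph{Main obstacle.} The delicate part is the constant bookkeeping. The ball on which $\alpha_v$ is fitted must be small enough that every relevant $\cB(v')$ sits inside $5\cB(v)$ with comparable measure. At the same time, $\rho B$ must contain all these balls, so that the $L^2$-to-sup-norm upgrades and the near-identity bound on $h_v$ hold uniformly, independent of $f$, $B$ and $K$.
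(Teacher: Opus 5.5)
Your outline is the paper's: a patchwork built from the vertical-curve lemmas (Lemma~\ref{lem:patchwork-exist}), $\Lambda(v)=\alpha_{f,\cB(v)}^{-1}(w)$ with the near-identity control of Lemma~\ref{lem:bilipschitz-affine-v2}, and a comparison of the affine approximants of $v$ and $v'$ inside $5\cB(v)$. Steps 1 and 2 are fine, with one bookkeeping caveat. Bound the number of children by the $N(\epsilon)$ of Lemma~\ref{lem:Biholder}, with $\epsilon$ fixed before $\mu$. Applying Lemma~\ref{lem:chains} with the window ratio $\mu^{-2}/2$ instead gives a bound that grows with $\mu$, which is circular.

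The real problem is the second bullet of Step 3. A triangle inequality in $\widehat{L}_2(\cB(v'))$ through $f$ needs a bound on $\|f-\alpha_v\|_{\widehat{L}_2(\cB(v'))}$. But $\alpha_v$ is the best fit to $f$ on $\cB(v)$ only, and $\cB(v')$ is in general not contained in $\cB(v)$: the two balls are only known to meet, possibly in a set of small measure. Neither $\beta_f(\cB(v))$ nor your bound $\beta_f(\cB(v)),\beta_f(\cB(v'))\lesssim\beta_f(5\cB(v))$ controls $f-\alpha_v$ outside $\cB(v)$. So the displayed estimate does not follow from what you wrote.

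The fix is to compare both maps with a common reference approximant $\alpha^*=\alpha_{f,5\cB(v)}$. Since $\cB(v)\subset 5\cB(v)$ with comparable measure,
\[
\|\alpha_v-\alpha^*\|_{\widehat{L}_2(\cB(v))}\le\|f-\alpha_v\|_{\widehat{L}_2(\cB(v))}+\|f-\alpha^*\|_{\widehat{L}_2(\cB(v))}\lesssim r_v\,\beta_f(5\cB(v)).
\]
Because $\alpha_v-\alpha^*$ is affine, Lemma~\ref{lem:affine-facts} upgrades this to a sup bound on $\pi(M\cB(v))$ for any fixed $M$. The same argument with $\cB(v')\subset 5\cB(v)$ controls $\alpha_{v'}-\alpha^*$. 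Hence $\|\alpha_v-\alpha_{v'}\|_{L_\infty(\pi(M\cB(v)))}\lesssim_M r_v\,\beta_f(5\cB(v))$. This is precisely \eqref{eqn:CompareAlphaPhiB} of Lemma~\ref{lem:beta-compare}, which the paper applies to the pairs $5\cB(v)\supset\cB(v)$ and $5\cB(v)\supset\cB(v')$.

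With this in place, the rest of your Step 3 goes through. By Step 2, $\Lambda(v)\in\pi(M\cB(v))$ for a universal $M$, so $|\alpha_{v'}(\Lambda(v))-w|\lesssim r_v\,\beta_f(5\cB(v))$, and $\|h_{v'}^{-1}\|_{\Op}\le 2$ gives \eqref{eq:children-neighbors}. The paper instead evaluates at $\Lambda(v')$ and inverts $\alpha_{f,\cB(v)}$; the two arguments are symmetric.
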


Given such a map $\Lambda$ and any $i\ge 0$, we can define a piecewise-linear curve $g_i = g_{\Lambda, i}\from [0,1]\to \R^{2n}$ as follows. We label the elements of $\cT^i$ by $w_1,\dots, w_{k}$ so that the intervals $J_{w_1},J_{w_2},\dots$ are in increasing order, let $s_j = m_{w_j}$ for $j=1,\dots, k$, and let $s_0=0$, $s_{k+1}=1$. Let $g_i$ be the piecewise-linear curve with
\begin{equation}\label{eq:def-gi-Lambda}
  g_i(s_j)= \begin{cases}
    \Lambda(w_1) & j=0\\
    \Lambda(w_j) & j = 1,\dots, k\\
    \Lambda(w_k) & j = k+1
  \end{cases}
\end{equation}
such that $g_i$ is affine on each interval $[s_{j-1},s_j]$. When $i=0$, we have $\cT^0=\{v_0\}$, so $g_0$ is the constant curve with value $\Lambda(v_0)$.

In the rest of this section, we will explain the proof of Proposition~\ref{prop:construct-Lambda}; in the next section, we will use the bounds in Proposition~\ref{prop:construct-Lambda} to bound the symplectic area of the $g_i$'s.

The first step is to construct the dyadic patchwork. We suppose that $c_0 < \frac{1}{20}$. Then, by Corollary~\ref{cor:fibers-are-vertical}, $K$ is a $2$--vertical curve. In Appendix~\ref{app:vertical-curves}, we will prove the following.
\begin{lemma}\label{lem:patchwork-exist}
  For any $\lambda>0$, there is a $\mu>0$ such that if $K$ is a $\lambda$--vertical curve, then it admits a $\mu$--dyadic patchwork.
\end{lemma}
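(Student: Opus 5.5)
Parameterize $K$ by a monotone increasing $\lambda$--vertical map $\zeta\from[0,1]\to K$, using Lemma~\ref{lem:vertical-properties}(2). I would build the patchwork by recursive subdivision and let $\cT$ be the resulting tree. Set $I_{v_0}=K$. Given an interval $I_v=\zeta([a,b])$ with $\diam I_v\approx_\mu 2^{-i}\diam K$, I would cut it into a bounded number of consecutive subintervals, each of diameter comparable to $2^{-i-1}\diam K$. These subintervals are the children of $v$. Properties (2) and (3) of Definition~\ref{def:patchwork} then hold by construction. The work is in keeping the diameters inside the fixed window \eqref{eq:patch-diam} at every generation, with no drift of constants, and in bounding the number of children.

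To cut an interval I would use that $t\mapsto \diam\zeta([a,t])$ is continuous and nondecreasing, and by Lemma~\ref{lem:vertical-diameter-v2} comparable to $d(\zeta(a),\zeta(t))$. I would proceed greedily. Starting at $s_0=a$, let $s_{j+1}$ be the first time $t>s_j$ with $\diam\zeta([s_j,t])=2^{-i-1}\diam K$. Stop when the remaining piece $\zeta([s_j,b])$ has diameter less than $2^{-i-1}\diam K$, and merge that leftover into the previous piece. If there is no previous piece, i.e.\ $\diam I_v<2^{-i-1}\diam K$ already, then $v$ gets a single child equal to $I_v$.

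Each non-final child then has diameter exactly $2^{-i-1}\diam K$. The merged final child has diameter at most $2\cdot 2^{-i-1}\diam K$ by the triangle inequality. A single inherited child has diameter $\diam I_v\le 2^{-i-1}\diam K$. Hence the upper bound in \eqref{eq:patch-diam} holds at generation $i+1$ with constant $2$, independently of $i$, so it never accumulates.

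For the degree bound, each non-final child has diameter $2^{-i-1}\diam K\ge (2\mu)^{-1}\diam I_v$, using the upper bound for $v$. Lemma~\ref{lem:chains} with $c=(2\mu)^{-1}$ then bounds the number of children by some $N(\lambda,\mu)$. I would take $\mu\ge \max(N,2)$ so the degree condition holds.

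The main obstacle is the lower bound. Children that inherit $I_v$ unchanged have diameter below $2^{-i-1}\diam K$, and this can repeat down the tree, so I must show the ratio $\diam I_v/(2^{-i}\diam K)$ cannot drop too far. Follow a single unchanged chain: the diameter stays fixed while the target $2^{-i}\diam K$ halves each generation. So once a piece is inherited with diameter $\ge \mu^{-1}2^{-i}\diam K$, within about $\log_2\mu$ further generations the target falls below its diameter, and it gets split again.

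The danger is a piece that is freshly created small. This cannot happen in the greedy scheme, because every non-final piece is created at exactly the target size and the final one is merged. The only small pieces are therefore inherited ones, and their diameter is at least half the size at which they were last split. Thus $\diam I_v\ge \tfrac12 2^{-i}\diam K$ at the generation where the piece stops splitting, and as the target shrinks thereafter the ratio only improves. I would verify this invariant carefully by induction and conclude $\diam I_v\ge \tfrac14 2^{-i}\diam K$, then take $\mu\ge 4$. I expect this inductive bookkeeping to be the delicate part.

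Finally, Lemma~\ref{lem:compact-intersections-v2} justifies treating the diameter of each subinterval as comparable to the distance between its endpoints wherever that is needed. Each $I_w$ has distinct endpoints because its diameter is positive.
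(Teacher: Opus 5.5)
Your proposal is correct, but it builds the patchwork differently from the paper. The paper only subdivides an interval $I_v=[g,h]_K$ when $\diam I_v>\mu 2^{-i-1}\diam K$. It then cuts at a ``good subdivision'' from Lemma~\ref{lem:Biholder}, whose consecutive gaps satisfy $\theta(2C)^{-1}d(g,h)\le d(q_j,q_{j+1})\le (2C)^{-1}d(g,h)$. Lemma~\ref{lem:compact-intersections-v2} converts these distance bounds into $\mu^{-2}\diam I_v\le\diam I_{c_j}\le\tfrac12\diam I_v$. The bound $k\le N$ on the number of pieces comes directly from Lemma~\ref{lem:Biholder}, intervals that are already small get a single child, and $\mu=\max\{\sqrt2\,C\theta^{-1/2},N\}$.

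You instead cut greedily at exact diameter $2^{-i-1}\diam K$, using continuity of $t\mapsto\diam\zeta([s,t])$, and you obtain the degree bound afterwards from Lemma~\ref{lem:chains}. This gives a sharper window and makes your ``main obstacle'' disappear. The root has diameter exactly $\diam K$, and every freshly created piece (non-final or merged) has diameter at least the current target. So by induction every generation-$i$ piece satisfies $2^{-i}\diam K\le\diam I_v<2^{1-i}\diam K$. In particular $\diam I_v>2^{-i-1}\diam K$ always, so the single-child branch never fires and you need no bookkeeping along inherited chains.

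One small fix: as written, your degree bound is circular. Taking $c=(2\mu)^{-1}$ makes $N$ depend on $\mu$, and you then require $\mu\ge N$. Use the constant-$2$ upper bound you already proved instead. Non-final children have diameter $2^{-i-1}\diam K>\tfrac14\diam I_v$, so Lemma~\ref{lem:chains} with $c=\tfrac14$ gives a bound $N=N(\lambda,n)$ on the number of children. Then any $\mu\ge\max\{N+1,2\}$ works.
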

We take $\mu$ so that Lemma~\ref{lem:patchwork-exist} holds with $\lambda=2$, so that $K$ admits a $\mu$--dyadic patchwork.

Next, we construct affine approximations of $f$ and use them to construct $\Lambda$.
For a function $\phi \from \HH\to \R^{2n}$ and a ball $D=B_r(p)$, let $\widehat{L}_2(D)$ be the Hilbert norm
$$
\|g\|_{\widehat{L}_2(D)}^2 = \fint_{D} g^2 \ud \cL,
$$
corresponding to the uniform probability measure on $D$. Here $\cL$ is the Lebesgue $(2n+1)$-dimensional measure on $\HH\equiv \mathbb R^{2n+1}$. Let
$$
\langle g, h\rangle_D = \fint_{D} g h \ud \cL,
$$
denote the inner product on ${\widehat{L}_2(D)}$. We define  $\alpha_{\phi,D}\from \R^{2n}\to \R^{2n}$ to be the unique affine function such that $\alpha_{\phi,D}\circ \pi$ is the orthogonal projection of $\phi$ to $\Aff\subset \widehat{L}_2(D)$. That is,
$$
\|\phi - \alpha_{\phi,D}\circ \pi\|_{\widehat{L}_2(D)} \le \|\phi - \alpha\circ \pi\|_{\widehat{L}_2(D)},
$$
for every affine function $\alpha\from \R^{2n}\to \R^{2n}$, and thus
\begin{equation}\label{eqn:BetaPhiAlpha-approx}
\beta_\phi(D) = \frac{\|\phi - \alpha_{\phi,D}\circ\pi\|_{\widehat{L}_2(D)}}{\rad(D)}.
\end{equation}

We first note that if $\DfH$ is close to the identity near $D$, then $\alpha_{f,D}$ is invertible.
\begin{lemma}\label{lem:bilipschitz-affine-v2}
  Let $f\from \HH\to \R^{2n}$ be a $C^1_{\mathrm{H}}$ map, let $c>0$, and let $D=B_R(p)\subset \HH$ be a ball such that $\|\DfH_q - \id\|_{\mathrm{Op}}<c$ for all $q\in 5D$. For every $x\in D$,
  \begin{equation}\label{eq:ControlAlpha2}
    |(\alpha_{f,D}(\pi(x))-\pi(x))-(f(p)-\pi(p))|\lesssim cR.
  \end{equation}
  Consequently,
  \begin{equation}\label{eq:ControlAlpha3}
    |\alpha_{f,D}(\pi(p))-f(p)|\lesssim cR,
  \end{equation}
  and if $c$ is sufficiently small, then $\alpha_{f,D}$ is 2--bilipschitz, i.e.,
\begin{equation}\label{eqnBilipControl}
    \frac{1}{2}|x-y|\leq |\alpha_{f,D}(x)-\alpha_{f,D}(y)| \leq 2|x-y|,
  \end{equation}
  for every $x,y\in \mathbb R^{2n}$.
\end{lemma}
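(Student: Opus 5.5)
The idea is to compare $f$ with the affine map $\tilde\alpha(x) := \pi(x) + f(p) - \pi(p)$ on $D$, using Lemma~\ref{lem:ControlF}, and then use that $\alpha_{f,D}\circ\pi$ is the $\widehat{L}_2(D)$--orthogonal projection of $f$ onto $\Aff$. By Lemma~\ref{lem:ControlF} applied to $D=B_R(p)$, for every $x\in D$ we have $|f(x) - \tilde\alpha(x)| \le 3c\, d(x,p) \le 3cR$. Note that $\tilde\alpha\in\Aff$. Since orthogonal projection onto the closed subspace $\Aff$ (finite dimensional) fixes $\tilde\alpha$ and is $1$--Lipschitz in $\widehat{L}_2(D)$, we get
\[
\|\alpha_{f,D}\circ\pi - \tilde\alpha\|_{\widehat{L}_2(D)} \le \|f-\tilde\alpha\|_{\widehat{L}_2(D)} \le 3cR.
\]

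The main step is to upgrade this $L_2$ bound on the affine function $h := \alpha_{f,D}\circ\pi - \tilde\alpha$ to a pointwise bound on $D$. Write $h(x) = L(\pi(x)-\pi(p)) + b$ with $L$ linear and $b\in\R^{2n}$. By left-invariance and homogeneity, translating by $p^{-1}$ and scaling by $S_{1/R}$ maps $D$ to the unit ball $B_1(\zero)$ and transforms $h$ into an affine function $\tilde h(y) = R L\pi(y) + b$ on $B_1(\zero)$; the normalized measure is preserved. On the finite-dimensional space of affine maps $y\mapsto M\pi(y)+b$, the norms $\|\cdot\|_{\widehat{L}_2(B_1(\zero))}$ and $\sup_{B_1(\zero)}|\cdot|$ are equivalent with a constant depending only on $n$, because $\pi(B_1(\zero))$ contains a Euclidean neighborhood of $0$, so the $L_2$ norm is a genuine norm. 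Hence $\sup_D|h| \lesssim cR$, and also $|b|\lesssim cR$ and $\|L\|_{\Op} R\lesssim cR$, i.e.\ $\|L\|_{\Op}\lesssim c$. The pointwise bound $\sup_D|h|\lesssim cR$ is exactly \eqref{eq:ControlAlpha2}. Taking $x=p$ gives \eqref{eq:ControlAlpha3}, since $\tilde\alpha(p)=f(p)$.

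For the bilipschitz property, the linear part of $\alpha_{f,D}$ is $\id + L$ with $\|L\|_{\Op}\le C' c$. If $C'c\le \frac12$, then $|(\id+L)v|$ lies between $\frac12|v|$ and $\frac32|v|$, which yields \eqref{eqnBilipControl}.

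The only delicate point is the norm-equivalence step: it must be done at unit scale, and the constant must be independent of $p$ and $R$. This is handled by the left-translation and dilation invariance of $d$ and of Lebesgue measure, since dilations scale Lebesgue measure by a constant that cancels in the average. Orthogonal projection also commutes with these changes of variables, because $\Aff$ is invariant under them.
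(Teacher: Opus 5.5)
Your proof is correct and follows the paper's argument. Like you, the paper subtracts the affine map $\pi + f(p) - \pi(p)$ (it works with $\tilde f = f - \pi - (f(p)-\pi(p))$) and then uses Lemma~\ref{lem:ControlF}, the contraction property of the orthogonal projection onto $\Aff$, and the equivalence of the $\widehat{L}_2(D)$ and $L_\infty(D)$ norms on affine maps. The only difference is in the bilipschitz step: the paper bounds the linear part by projecting onto the linear maps, which are orthogonal to constants by the symmetry of $D$, whereas you read $\|L\|_{\Op}\lesssim c$ directly off the unit-scale equivalence with the coefficient norm. Your version is slightly cleaner, since it makes the rescaling explicit and centers the linear part at $\pi(p)$.
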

We defer the proof to Appendix~\ref{app:vertical-curves}.

For $v\in \cT$ and $\cB(v)$ defined as in Proposition~\ref{prop:construct-Lambda}, if $\alpha_{f,\cB(v)}$ is invertible, we define
\begin{equation}\label{eq:def-lambda}
  \Lambda(v) = \alpha_{f,\cB(v)}^{-1}(w)\in \R^{2n}.
\end{equation}
It remains to show that $\Lambda$ satisfies the desired bounds. 

We need two lemmas. First, we note the following equivalences.
\begin{lemma}\label{lem:affine-facts}
  Let $D$ be a ball in $\HH$. For any affine function $a\from \mathbb R^{2n}\subset \HH\to \R$ and any $\rho>0$,
  \begin{equation}\label{eqn:EquivalentNorms}
  \|a\|_{\widehat{L}_2(D)} \approx \|a\|_{L_\infty(D)}
  \end{equation}
  and
  \begin{equation}\label{eqn:EquivalentNorms2}
  \|a\|_{\widehat{L}_2(D)} \approx_\rho \|a\|_{\widehat{L}_2(\rho D)}.
  \end{equation}
\end{lemma}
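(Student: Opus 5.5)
The plan is to reduce both equivalences to a normalized situation by invariance, and then to use finite-dimensionality of the space of affine functions plus compactness. An affine function here has the form $a(q)=h(\pi(q))+c$ with $h\from\R^{2n}\to\R$ linear and $c\in\R$, so the space $\Aff_1$ of such functions has dimension $2n+1$.

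\textbf{Normalization.} Let $D=B_r(p)$. Left translation by $p$ and the dilation $S_r$ preserve $\cL$ up to the constant factor $r^{2n+2}$, which cancels in the averages $\fint$. They map the unit ball $B_1(\zero)$ onto $D$, since $d$ is left-invariant and homogeneous. Moreover, $\pi(p\cdot S_r(q))=\pi(p)+r\pi(q)$, so $a\circ L_p\circ S_r$ is again affine. Thus the map $a\mapsto a\circ L_p\circ S_r$ is a linear bijection of $\Aff_1$ to itself. It carries the $\widehat{L}_2(D)$ norm to the $\widehat{L}_2(B_1(\zero))$ norm and the $L_\infty(D)$ norm to the $L_\infty(B_1(\zero))$ norm. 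For \eqref{eqn:EquivalentNorms2}, it likewise carries $\rho D$ to $B_\rho(\zero)$. It therefore suffices to treat $D=B_1(\zero)$, where any constants we obtain will be independent of $p$ and $r$.

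\textbf{First equivalence.} The inequality $\|a\|_{\widehat{L}_2}\le\|a\|_{L_\infty}$ is trivial, since $\widehat{L}_2$ is taken with respect to a probability measure. For the reverse, both $\|\cdot\|_{\widehat{L}_2(B_1)}$ and $\|\cdot\|_{L_\infty(B_1)}$ are norms on the finite-dimensional space $\Aff_1$. For $\widehat{L}_2$ this holds because a nonzero affine function vanishes only on a hyperplane in $\R^{2n+1}$, which is $\cL$-null, while $B_1$ has positive measure. All norms on a finite-dimensional space are equivalent, which gives \eqref{eqn:EquivalentNorms}. The constants depend only on $n$, which is fixed.

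\textbf{Second equivalence.} For fixed $\rho$, both $\|\cdot\|_{\widehat{L}_2(B_1)}$ and $\|\cdot\|_{\widehat{L}_2(B_\rho)}$ are norms on $\Aff_1$ by the same hyperplane argument. Finite-dimensionality again gives equivalence with constants depending on $\rho$ (and $n$), which is \eqref{eqn:EquivalentNorms2}.

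If explicit constants are wanted, one can write $a=h\circ\pi+c$ and use that the pushforward of normalized $\cL$ on $B_1$ under $\pi$ is a probability measure on the Euclidean unit ball. This measure is symmetric and has a density bounded below near $0$. By symmetry, $\|a\|_{\widehat{L}_2}^2=c^2+\fint|h\circ\pi|^2\gtrsim c^2+|h|^2\gtrsim\|a\|_{L_\infty}^2$.

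The only point needing care is that the normalization genuinely sends affine functions to affine functions and preserves the averaged norms. This follows from the identity for $\pi(p\cdot S_r(q))$ above, so I expect no real obstacle.
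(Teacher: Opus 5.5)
Your proposal is correct and follows essentially the same route as the paper. Both reduce to the unit ball $B_1(\zero)$ via a left translation composed with a dilation, which preserves affineness and the averaged norms, and then invoke equivalence of norms on the finite-dimensional space of affine functions. Your extra verifications (that $\|\cdot\|_{\widehat{L}_2(B_1)}$ is a genuine norm because nonzero affine functions vanish only on a null hyperplane, and the explicit symmetric-density computation) are correct refinements of steps the paper leaves implicit.
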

\begin{proof}
  Let $\Aff$ be the subspace of affine functions and let $B_1=B_1(\zero)$ be the unit ball. Then $\|\cdot \|_{\widehat{L}_2(B_1)}$, $\|\cdot \|_{\widehat{L}_2(\rho B_1)}$, and $\|\cdot \|_{L_\infty(B_1)}$ are all norms on $\Aff$. Since $\Aff$ is finite-dimensional, all three norms are equivalent, with constant depending on $\rho$. 

  Let $g\from \HH\to \HH$ be the translation and dilation such that $g(B_1) = D$. For any $a\in \Aff$, we have
  $\|a\|_{L_\infty(D)} = \|a\circ g\|_{L_\infty(B_1)}$, $\|a\|_{\widehat{L}_2(D)} = \|a\circ g\|_{\widehat{L}_2(B_1)}$, and $\|a\|_{\widehat{L}_2(\rho D)} = \|a\circ g\|_{\widehat{L}_2(\rho B_1)}.$ Since $a\circ g \in \Aff$, this implies that $L_\infty(D)$, $\widehat{L}_2(D)$, and $\widehat{L}_2(\rho D)$ are also equivalent with a constant depending on $\rho$.
\end{proof}

Second, we note the following bound on $\beta_\phi$.
\begin{lemma}\label{lem:beta-compare}
  Let $\rho \in (0,1)$. Suppose that $D, D'\subset \HH$ are balls such that $D'\subset D$ and $\rad(D') \ge \rho \rad(D)$. For any $\phi\from \HH \to \R^{2n}$, we have
  \begin{equation}\label{eqn:CompareBeta}
  \beta_\phi(D') \lesssim_\rho \beta_\phi(D)
  \end{equation}
  and
  \begin{equation}\label{eqn:CompareAlphaPhiB}
  \|\alpha_{\phi,D} - \alpha_{\phi, D'}\|_{L_\infty(\pi(D))} \lesssim_\rho \beta_\phi(D) \rad(D).
  \end{equation}
\end{lemma}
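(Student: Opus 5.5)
The plan is to exploit the fact that $\alpha_{\phi,D}$ and $\alpha_{\phi,D'}$ are the best affine approximations of $\phi$ on $D$ and $D'$ respectively. We compare everything through $\widehat{L}_2(D')$, using that $D'$ occupies a definite fraction of the volume of $D$. Since $\cH^{2n+2}$ is a Haar measure and balls scale homogeneously, $|D'|/|D| = (\rad(D')/\rad(D))^{2n+2}\ge \rho^{2n+2}$. Hence for any function $g$,
$$\|g\|_{\widehat{L}_2(D')}^2 = \frac{1}{|D'|}\int_{D'} g^2 \le \rho^{-(2n+2)}\|g\|_{\widehat{L}_2(D)}^2.$$

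For \eqref{eqn:CompareBeta}, we use that $\beta_\phi(D')$ is an infimum over $\Aff$, so we may test it with the competitor $\alpha_{\phi,D}\circ\pi$:
$$\beta_\phi(D')\le \frac{\|\phi-\alpha_{\phi,D}\circ\pi\|_{\widehat{L}_2(D')}}{\rad(D')}\le \frac{\rho^{-(n+1)}\|\phi-\alpha_{\phi,D}\circ\pi\|_{\widehat{L}_2(D)}}{\rho\,\rad(D)} = \rho^{-(n+2)}\beta_\phi(D),$$
where the last equality is \eqref{eqn:BetaPhiAlpha-approx}.

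For \eqref{eqn:CompareAlphaPhiB}, let $a=(\alpha_{\phi,D}-\alpha_{\phi,D'})\circ\pi$, which is affine. By the triangle inequality in $\widehat{L}_2(D')$,
$$\|a\|_{\widehat{L}_2(D')}\le \|\phi-\alpha_{\phi,D}\circ\pi\|_{\widehat{L}_2(D')}+\|\phi-\alpha_{\phi,D'}\circ\pi\|_{\widehat{L}_2(D')}.$$
The first term is at most $\rho^{-(n+1)}\beta_\phi(D)\rad(D)$ by the volume comparison. The second term equals $\beta_\phi(D')\rad(D')$, which is $\lesssim_\rho \beta_\phi(D)\rad(D)$ by the first part. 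So $\|a\|_{\widehat{L}_2(D')}\lesssim_\rho\beta_\phi(D)\rad(D)$.

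It remains to upgrade this to an $L_\infty$ bound on $D$ (which is the same as $\|\alpha_{\phi,D}-\alpha_{\phi,D'}\|_{L_\infty(\pi(D))}$), and this is the only point needing care. The ball $D'$ need not be concentric with $D$, so Lemma~\ref{lem:affine-facts} does not apply verbatim. Write $D'=B_{r'}(p')$. Since $D'\subset D$ and $\rad(D')\ge\rho\rad(D)$, we have $D\subset B_{2\rad(D)}(p')\subset (2/\rho)D'$. Applying Lemma~\ref{lem:affine-facts} on $D'$ with dilation factor $2/\rho$ (componentwise, since $a$ is $\R^{2n}$-valued) gives
$$\|a\|_{L_\infty(D)}\le\|a\|_{L_\infty((2/\rho)D')}\approx\|a\|_{\widehat{L}_2((2/\rho)D')}\approx_\rho\|a\|_{\widehat{L}_2(D')}.$$
Combined with the previous bound, this yields \eqref{eqn:CompareAlphaPhiB}.
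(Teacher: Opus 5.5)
Your proposal is correct and follows essentially the same route as the paper: you compare $\widehat{L}_2(D')$ with $\widehat{L}_2(D)$ via the volume ratio, test $\beta_\phi(D')$ against $\alpha_{\phi,D}\circ\pi$, use the triangle inequality in $\widehat{L}_2(D')$, and upgrade to $L_\infty$ using $D\subset 2\rho^{-1}D'$ and Lemma~\ref{lem:affine-facts}. The differences are only cosmetic: you pass through $\widehat{L}_2((2/\rho)D')$ where the paper compares $L_\infty(2\rho^{-1}\pi(D'))$ with $L_\infty(\pi(D'))$, and you track the explicit constants $\rho^{-(n+1)}$ and $\rho^{-(n+2)}$, which the paper leaves implicit.
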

\begin{proof}

  Since $\rho\rad(D)\leq \rad(D')$, we have
  \begin{equation}\label{eqn:L2EstimatesProofRad}
  \|g\|_{\widehat{L}_2(D')} = \cL(D')^{-\frac{1}{2}} \|g\|_{L_2(D')} \lesssim_{\rho} \cL(D)^{-\frac{1}{2}} \|g\|_{L_2(D)} = \|g\|_{\widehat{L}_2(D)},
  \end{equation}
  for any function $g$. In particular, using the latter inequality and \eqref{eqn:BetaPhiAlpha-approx}, we have
  \begin{multline}\label{eqn:InequalityRad}
  \rad(D')\beta_\phi(D') = \|\phi-\alpha_{\phi,D'}\circ \pi\|_{\widehat{L}_2(D')} 
  \leq \|\phi-\alpha_{\phi,D}\circ \pi\|_{\widehat{L}_2(D')}  \\
  \lesssim_{\rho} \|\phi-\alpha_{\phi,D}\circ \pi\|_{\widehat{L}_2(D)} 
  =\rad(D)\beta_\phi(D) \le \rho^{-1} \rad(D')\beta_\phi(D).
  \end{multline}
  We divide both sides by $\rad(D')$ to get \eqref{eqn:CompareBeta}.
    Finally, using \eqref{eqn:BetaPhiAlpha-approx}, \eqref{eqn:L2EstimatesProofRad}, and \eqref{eqn:InequalityRad}, we have
  \begin{equation}\label{eqn:ININ}
  \begin{aligned}
      \|\alpha_{\phi,D}\circ \pi - \alpha_{\phi,D'}\circ \pi\|_{\widehat{L}_2(D')} &\le \|\phi-\alpha_{\phi,D}\circ \pi\|_{\widehat{L}_2(D')}
      + \|\phi-\alpha_{\phi,D'}\circ \pi\|_{\widehat{L}_2(D')} \\
      &\lesssim_{\rho} \rad(D)\beta_\phi(D) + \rad(D')\beta_\phi(D')\\
      &\lesssim_{\rho} \rad(D)\beta_\phi(D).
      \end{aligned}
  \end{equation}
  Since $D'\subset D$ and $\rad(D')\geq\rho\rad(D)$ we have $D\subset 2\rho^{-1}D'$. Thus $\pi(D)\subset \pi(2\rho^{-1}D')=2\rho^{-1}\pi(D')$, and by Lemma~\ref{lem:affine-facts}, we have
  \[
  \begin{aligned}
  \|\alpha_{\phi,D} - \alpha_{\phi,D'}\|_{L_\infty(\pi(D))} &\le
    \|\alpha_{\phi,D} - \alpha_{\phi,D'}\|_{L_\infty(2\rho^{-1}\pi(D'))}\\
  &\lesssim_{\rho}
  \|\alpha_{\phi,D} - \alpha_{\phi,D'}\|_{L_\infty(\pi(D'))}  \\
  &\lesssim_{\rho} \|\alpha_{\phi,D}\circ \pi - \alpha_{\phi,D'}\circ \pi\|_{\widehat{L}_2(D')}.
  \end{aligned}
  \]
  Using the latter inequality and \eqref{eqn:ININ} we have
  \[
    \|\alpha_{\phi,D} - \alpha_{\phi,D'}\|_{L_\infty(\pi(D))} \lesssim_{\rho} \rad(D)\beta_\phi(D),
  \]
  as desired.
\end{proof}

Finally, we prove Proposition~\ref{prop:construct-Lambda}.
\begin{proof}[Proof of Proposition~\ref{prop:construct-Lambda}]
  Let $\mu$ be as in Lemma~\ref{lem:patchwork-exist}. Suppose that $c_0 < \frac{1}{20}$ and that $c_0$ is small enough that the conclusion of Lemma~\ref{lem:bilipschitz-affine-v2} holds. Let $\rho = 10\mu + 5$. 

  Let $f\in C^1_{\mathrm{H}}(\HH, \R^{2n})$ and let $B\subset \HH$ be a ball such that $\|\DfH_q - \id\|_{\mathrm{Op}} < c_0$ for all $q\in \rho B$. Let $R=\rad(B)$. Since $c_0 < \frac{1}{20}$, if $K$ is a connected subset of $B\cap f^{-1}(w)$ with $\diam(K)>0$, then $K$ is a $2$--vertical curve, so it admits a monotone increasing parameterization $\zeta\from [0,1]\to K$ and a $\mu$--dyadic patchwork $(\cT, \{I_v\}_{v\in \cT})$. 

  Let $\cB(v)$ be as in \eqref{eq:def-cbv}.
  In order to define $\Lambda$, we need to check that $\alpha_{f,\cB(v)}$ is invertible. Let $v\in \cT$. Then, since $\zeta(m_v)\in B$ and
  $$\rad(\cB(v)) = \mu 2^{-\gen(v)}\diam K \le \mu \diam K \le 2\mu R,$$
  we have $\cB(v) \subset (2\mu + 1) B$ and $5\cB(v) \subset \rho B$. Thus, by Lemma~\ref{lem:bilipschitz-affine-v2}, $\alpha_{f,\cB(v)}$ is invertible, so for all $v\in \cT$, we define $\Lambda(v) = \alpha_{f,\cB(v)}^{-1}(w)$ as in \eqref{eq:def-lambda}.

  We claim that $\Lambda$ is a set of approximating points for $\cT$. Let $v\in \cT$. By \eqref{eq:patch-diam}, $I_v \subset \cB(v)$. Let $q = \zeta(m_v)$ be the center of $\cB(v)$; then $f(q)=w$.  By \eqref{eq:ControlAlpha3},
  $$|\alpha_{f,\cB(v)}(\pi(q)) - w| = |\alpha_{f,\cB(v)}(\pi(q)) - f(q)| \lesssim c_0 \rad(\cB(v)).$$
  We suppose that $c_0$ is small enough that $|\alpha_{f,\cB(v)}(\pi(q)) - w| \le \frac{1}{2}\rad(\cB(v))$.
  Since $\alpha_{f,\cB(v)}$ is $2$--bilipschitz by \eqref{eqnBilipControl}, 
  \begin{equation}\label{eqn:ApproximatingRad}
  |\gamma(m_v) - \Lambda(v)| = |\pi(q) - \alpha_{f,\cB(v)}^{-1}(w)| \le \rad(\cB(v)),
  \end{equation}
  so $\Lambda$ is a set of approximating points for $\gamma$. Furthermore, $\Lambda(v) \in \pi\left(\cB(v)\right)$ for all $v$.

  Finally, we check \eqref{eq:children-neighbors}. Let $v'\in \cT$ be a weak neighbor of $v$ or a weak neighbor of a child of $v$. Let $D=\cB(v)$ and let $D'=\cB(v')$. Then $\frac{1}{2}\rad(D) \le \rad(D')\le \rad(D)$ and 
  $$D \cap D' \supset I_v\cap I_{v'}\ne \emptyset,$$
  so $D' \subset 3D$. 

  Since $\alpha_{f,D}$ is $2$--bilipschitz, and $\alpha_{f,D}(\Lambda(v))=\alpha_{f,D'}(\Lambda(v'))=w$,
  \begin{align*}
    |\Lambda(v) -\Lambda(v')| & \le 2|\alpha_{f,D}(\Lambda(v))-\alpha_{f,D}(\Lambda(v'))| \\
    & = 2 |\alpha_{f,D'}(\Lambda(v'))-\alpha_{f,D}(\Lambda(v'))|.
  \end{align*}
  Since $\Lambda(v') \in \pi(D') \subset \pi(5D)$,
  \begin{align*}
    |\Lambda(v) -\Lambda(v')| & \lesssim \|\alpha_{f,D'} - \alpha_{f,D}\|_{L_\infty(\pi(5D))} \\
    & \le \|\alpha_{f,D'} - \alpha_{f,5D}\|_{L_\infty(\pi(5D))} + \|\alpha_{f,5D} - \alpha_{f,D}\|_{L_\infty(\pi(5D))} \\
    & \lesssim \rad(D)\beta_f(5D),
  \end{align*}
  where the last inequality follows from Lemma~\ref{lem:beta-compare}.
\end{proof}

\subsection{Symplectic areas of fibers}\label{sec:symplectic-area}
Given a map $\zeta\from [0,1]\to \HH$ that positively parameterizes a $2$--vertical curve $K$, a $\mu$--dyadic patchwork $\cT$ for $K$, and a set $\Lambda$ of $C$--approximating points for $\cT$, \eqref{eq:def-gi-Lambda} gives a sequence of curves $g_i=g_{\Lambda,i}$ that converges uniformly to $\gamma = \pi\circ \zeta$. In this section, we give a criterion for $S(g_i)$ to converge to $S(\gamma)$. 

(Note that while the construction in Section~\ref{sec:approximations} relies on $\zeta$ parameterizing a fiber of a $C^1_{\mathrm{H}}$ map, our criterion applies to any $2$--vertical curve and any set of approximating points.)

For $i\ge 0$ and $v\in \cT^i$, let
\begin{equation}\label{eq:def-delta-lambda}
  \delta_\Lambda(v) := \diam(g_{i}(J_v) \cup g_{i+1}(J_v)),
\end{equation}
and let
\begin{equation}\label{eq:def-sigma-lambda}
  \sigma(\Lambda) = \sum_{v\in \cT} \delta_\Lambda(v)^2.
\end{equation}
For any curve $g$, let $\hat{g}$ be the closed curve obtained by connecting the endpoints of $g$ by a line segment. 
\begin{prop}\label{prop:vertical-curve-area}
  With notation as above, if $\sigma(\Lambda)<\infty$, then
  $S(\hat{\gamma})$ exists and 
  $S(\hat{\gamma}) = \lim_{i\to \infty} S(\hat{g}_i)$. Furthermore, 
  \begin{equation}\label{eq:sgamma-bound}
    |S(\hat{\gamma})| \lesssim_{\mu,C} \max\left\{\sigma(\Lambda), \sigma(\Lambda) \log \frac{(\diam K)^2}{\sigma(\Lambda)}\right\}.
  \end{equation}
  
  If, in addition, $g_i(0)=g_i(1)$ for all $i$, then $|S(\hat{\gamma})| \lesssim_{\mu,C} \sigma(\Lambda)$.
\end{prop}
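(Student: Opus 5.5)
The plan is to compare the areas of consecutive approximations and to control the difference by a telescoping bound plus a cancellation argument.

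\textbf{Step 1: the increment between generations.} For each $i$, I will write $S(\hat g_{i+1})-S(\hat g_i)$ as a sum over $v\in\cT^i$ of symplectic areas of small closed polygons $Q_v$. The polygon $Q_v$ is built from $g_{i+1}|_{J_v}$, the reversed $g_i|_{J_v}$, and short segments at the ends. I will use Lemma~\ref{lem:symplectic-area-props-v2}(2),(5) together with the translation invariance of closed curves.

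Each $Q_v$ has a bounded number of edges, since the degree is at most $\mu$. Its diameter is at most $\delta_\Lambda(v)$, so Lemma~\ref{lem:symplectic-area-props-v2}(3) gives $|S(Q_v)|\lesssim_\mu \delta_\Lambda(v)^2$. Summing over all generations gives $\sum_i|S(\hat g_{i+1})-S(\hat g_i)|\lesssim \sigma(\Lambda)$, up to a correction from the closing segments. Hence $\lim_i S(\hat g_i)$ exists.

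The closing segment of $\hat g_i$ moves at the endpoints by amounts controlled by $\delta_\Lambda$ of the extreme vertices. Its contribution is a triangle of area at most $\lesssim \diam(K)\cdot\delta$. This is where the logarithmic term will come from, and it disappears when $g_i(0)=g_i(1)$.

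\textbf{Step 2: the logarithm.} I split the sum at the generation $i_0$ with $2^{-2i_0}(\diam K)^2\approx\sigma(\Lambda)$. For $i\le i_0$, I bound each endpoint triangle by $\lesssim \diam(K)\,2^{-i}\diam K$ times the relevant $\delta$. Cauchy--Schwarz then gives at most $\sigma$ per generation, so these generations contribute at most $i_0\,\sigma\approx \sigma\log\frac{(\diam K)^2}{\sigma}$. For $i>i_0$, the terms are geometrically small in $2^{-i}\diam K\cdot\sqrt\sigma$ and sum to $\lesssim\sigma$. Together these give \eqref{eq:sgamma-bound}, as long as I also check $|S(\hat g_0)|=0$ (the curve $g_0$ is constant).

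\textbf{Step 3: identifying the limit with $S(\hat\gamma)$.} This is the main obstacle. The difficulty is that convergence of $S(\hat g_i)$ says nothing directly about arbitrary partitions $P$ with small mesh.

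Given $P$, I fix a generation $i$ with $2^{-i}$ comparable to a target scale. For each $J_v$ with $v\in\cT^i$, I compare $\gamma_P$ with $g_i$ on $J_v$. To do this I refine $P$ inside each $J_v$ by inserting the points $m_w$ of descendants, and telescope as in Step 1 down to the scale of $P$.

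The difference $\hat S(\gamma_P)-S(\hat g_i)$ then decomposes into three kinds of terms:
\begin{itemize}
\item polygons $Q_w$ with $\gen(w)\ge i$, bounded by the tail $\sum_{\gen w\ge i}\delta_\Lambda(w)^2\to0$;
\item at the finest level, triangles between $\gamma(t_j)$ and nearby approximating points $\Lambda(w)$, using \eqref{eq:approx-points};
\item mismatch triangles where partition points of $P$ fall inside $J_w$.
\end{itemize}

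The concern is the last two kinds. Each contributes $\lesssim(2^{-\gen w}\diam K)^2$, but their number is large. I would control them using the vertical-curve geometry, through Lemma~\ref{lem:vertical-diameter-v2} and Lemma~\ref{lem:chains}, which limits how many partition intervals can cluster at one scale. I would also compare against the tail of $\sigma$, since at the finest level $\delta_\Lambda(w)$ dominates $\diam\gamma(J_w)$ up to $C$.

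If this direct comparison fails, my fallback is a stopping-time choice of generation adapted to each partition interval. I would choose the stopping generation so that the intervals of $P$ and the $J_w$ are comparable, and charge each error term to a distinct $w$ in the tail of $\sigma(\Lambda)$. This yields $|\hat S(\gamma_P)-\lim_i S(\hat g_i)|\to0$ as $\mesh(P)\to0$.
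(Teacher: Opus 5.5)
\textbf{Main gap: Step 3.} Write $\Delta z_j=z(\zeta(t_{j-1})^{-1}\zeta(t_j))$.

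Your error terms of types (b) and (c) are each bounded by the square of the scale at which $P$ meets the tree. Their total is therefore comparable to $\sum_j\diam\zeta([t_{j-1},t_j])^2\approx\sum_j\Delta z_j$, by Lemma~\ref{lem:vertical-diameter-v2}. This quantity does not go to zero: for a vertical segment it equals $\height(K)$ for every $P$, even though the true errors vanish. For a general vertical curve it need not even stay bounded: if $\cH^2(K)=\infty$, it tends to $\infty$ as $\mesh P\to 0$. Pushing the finest level $m$ generations below the scale of $P$ only improves the leftover to roughly $2^{-2m}\sum_j\Delta z_j$, so you still need a uniform bound on $\sum_j\Delta z_j$.

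Lemma~\ref{lem:chains} cannot supply that bound. It controls overlap at a single scale and holds for every vertical curve, including those of dimension $>2$. Your fallback of charging each error to a distinct $w$ in the tail of $\sigma(\Lambda)$ also fails. The distance $|\gamma(t_j)-\Lambda(w)|$ is controlled only by $C2^{-\gen(w)}\diam K$, not by $\delta_\Lambda(w)$. Reaching $\gamma(t_j)$ from the tree requires the whole chain $g_l(t_j)$, $l\ge n$, whose length is a plain sum, not a square sum, of $\delta_\Lambda$'s.

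The paper closes this in three steps.
\begin{enumerate}
\item Arches $a_j$ taken from $\partial R(\cU_P)$ join $\gamma(t_{j-1})$ to $\gamma(t_j)$, and their concatenation $\alpha_P$ satisfies $S(\alpha_P)\to\Phi-S(\psi)$.
\item Split $\ell(a_j)$ into its first $m$ generations, handled by Cauchy--Schwarz and charged with bounded multiplicity to the tail of $\sigma(\Lambda)$, and the rest, which is $\lesssim 2^{-m}\sqrt{\Delta z_j}$. This gives $\sum_j\ell(a_j)^2\le\epsilon+\epsilon\sum_j\Delta z_j$ (Lemma~\ref{lem:arch-area}).
\item The idea you are missing: $\sum_j\Delta z_j$ is bounded uniformly using the group law (Lemma~\ref{lem:z-sum}). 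Lemma~\ref{lem:discrete-area-z} gives $\sum_j\Delta z_j=z(\zeta(1))-z(\zeta(0))-S(\gamma_P)$. Moreover $S(\gamma_P)=S(\alpha_P)-\sum_j S(\hat a_j)$, where $\hat a_j$ is the arch closed by its chord, $S(\alpha_P)$ is bounded, and $\sum_j|S(\hat a_j)|\le 1+\tfrac12\sum_j\Delta z_j$ for fine $P$. So the last term can be absorbed.
\end{enumerate}
This third step is exactly where $\sigma(\Lambda)<\infty$ rules out fibers of dimension $>2$, and nothing in your plan provides it.

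\textbf{Secondary problem: Step 2.} With a per-generation closing correction of size $\diam(K)\,\delta$, the sum over $i\le i_0$ is only $\lesssim\diam(K)\sqrt{i_0\sigma}\approx 2^{i_0}\sqrt{i_0}\,\sigma$, not $i_0\sigma$. The claim of \emph{at most $\sigma$ per generation} does not follow.

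The corrections should be treated together. They add up to the area of a single closed polygon $\hat\psi$, formed by the endpoint paths $i\mapsto g_i(0)$ and $i\mapsto g_i(1)$ together with the chord between $\gamma(0)$ and $\gamma(1)$. Hence $|S(\hat\psi)|\le\ell(\hat\psi)^2\lesssim\ell(\psi)^2\lesssim i_0\sigma$, using Cauchy--Schwarz on $\ell(\psi)$ after splitting at $i_0$. Equivalently, since $g_0$ is constant, the long side satisfies $|g_i(1)-g_i(0)|\le\ell(\psi)\lesssim\sqrt{i_0\sigma}$ rather than $\diam K$.

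With that fix, Steps 1--2 match the paper's Lemma~\ref{lem:sgi-converge} and its final estimate on $\psi$.
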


We will prove Proposition~\ref{prop:vertical-curve-area} in Section~\ref{sec:area-formula}.
\section{Proof of Proposition~\ref{prop:density}}\label{sec:density-proof}

In this section, we use the results stated in Sections~\ref{sec:approximations} and \ref{sec:symplectic-area} to prove Proposition~\ref{prop:density}, i.e., if $f\from \HH\to \R^{2n}$ is a $C^1_{\mathrm{H}}$ map and
$$\mu_f(U) = \int_{\R^{2n}} \cH^2(f^{-1}(w)\cap U)\ud w,$$
then for almost every $p\in \HH$,
$$\lim_{r\to 0} \frac{\mu_f(B_r(p))}{\cH^{2n+2}(B_r(p))} = |\JfH(p)|.$$

Our strategy is to relate the quantity $\sigma(\Lambda)$ in Proposition~\ref{prop:vertical-curve-area} to the bounds from the Dorronsoro inequality (Theorem~\ref{thm:dorronsoro}). We first introduce a new function $T(\Gamma)$ that describes the $\beta$--numbers of $f$ on a subset $\Gamma\subset \HH$. 

For a metric space $X$ and $r>0$, an \emph{$r$--net} $N\subset X$ is a subset such that for any $p,q\in N$, $d(p,q) > r$. For every $i \in \Z$, we fix a maximal $2^{-i}$--net $\cN_i$ on $\HH$. By the maximality of $\cN_i$,
$$\bigcup_{p\in \cN_i} B_{2^{-i}}(p) = \HH.$$

Given a subset $\Gamma\subset \HH$ and $i\in \Z$, let
\begin{equation}\label{eq:def-Q-Gamma}
  Q_{\Gamma, i} = \{v\in \cN_i : \Gamma \cap B_{2^{-i}}(v) \ne \emptyset\}.
\end{equation}
Let $A = 10\mu + 1$. If $\diam(\Gamma)=0$, we define $T(\Gamma)=0$. Otherwise, 
\begin{equation}\label{eq:def-T}
  T(\Gamma) = T(\Gamma;f) := \sum_{i=-\lfloor \log_2(\diam \Gamma)\rfloor}^\infty \sum_{q\in Q_{\Gamma, i}} 2^{-2i} \beta_f(q, A 2^{-i})^2.
\end{equation}

We claim the following proposition holds. 
\begin{prop}\label{prop:levy-beta-fiber}
  Let $c_0$ and $\rho$ be as in Proposition~\ref{prop:construct-Lambda}. Let $B\subset \HH$ be a ball and let $f\from \HH\to \R^{2n}$ be a $C^1_{\mathrm{H}}$ map such that $\|\DfH_q - \id\|_{\mathrm{Op}} < c_0$ for all $q\in \rho B$.

  Let $w\in \R^{2n}$ be a point such that $T_{w,B} := T(B\cap f^{-1}(w)) < \infty$. Let $K\subset B\cap f^{-1}(w)$ be a connected subset with $\diam(K)>0$. Let $\zeta$ parameterize $K$, and let $\gamma= \pi\circ \zeta$. 
  Let $\hat\gamma$ be the closed curve obtained by connecting the endpoints of $\gamma$ by a line segment. Then the symplectic area $S(\gamma)$ exists and
  \begin{equation}\label{eq:levy-beta-fiber-small-T}
    |S(\hat{\gamma})| \lesssim \max\left\{T_{w,B}, T_{w,B} \log \frac{(\diam K)^2}{T_{w,B}}\right\}.
  \end{equation}
\end{prop}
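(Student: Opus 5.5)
The plan is to combine Proposition~\ref{prop:construct-Lambda} with Proposition~\ref{prop:vertical-curve-area}. The only new ingredient is the comparison $\sigma(\Lambda)\lesssim T(K)\le T_{w,B}$.

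First, I apply Proposition~\ref{prop:construct-Lambda} to $K$. This gives a $\mu$--dyadic patchwork $(\cT,\{I_v\})$, a monotone parameterization $\zeta$, and $C$--approximating points $\Lambda$ satisfying \eqref{eq:children-neighbors}. By Lemma~\ref{lem:symplectic-area-props-v2}(1), the symplectic area does not depend on the monotone parameterization, so I may work with this $\zeta$.

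Next I bound $\delta_\Lambda(v)$ for $v\in\cT^i$. The set $g_i(J_v)\cup g_{i+1}(J_v)$ is contained in the convex hull of the points $\Lambda(v')$, where $v'$ ranges over weak neighbors of $v$ (generation $i$) and weak neighbors of children of $v$ (generation $i+1$). The reason is that on $J_v$ the piecewise-linear curves $g_i,g_{i+1}$ interpolate between midpoints of intervals meeting $J_v$. The endpoint conventions at $s_0=0$ and $s_{k+1}=1$ only repeat values and are harmless. By \eqref{eq:children-neighbors} and the triangle inequality,
\[
\delta_\Lambda(v)\lesssim \rad(\cB(v))\,\beta_f(5\cB(v)).
\]
The number of such $v'$ is bounded by a constant depending on $\mu$, because the tree has degree $\le\mu$ and the intervals are linearly ordered. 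Hence
\[
\sigma(\Lambda)\lesssim \sum_{v\in\cT}\rad(\cB(v))^2\beta_f(5\cB(v))^2 .
\]

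The main step is to dominate this sum by $T(K)$, which in turn is at most $T(B\cap f^{-1}(w))$; monotonicity of $T$ in $\Gamma$ holds as long as the lower summation index is handled. Take $v\in\cT^i$ and set $r=\mu2^{-i}\diam K$. Choose $j$ with $2^{-j}\approx r$, say $2^{-j}\le 5r<2^{-j+1}$, so that $j\ge -\lfloor\log_2\diam K\rfloor-O(\log\mu)$. The center $\zeta(m_v)\in K$ lies in some $B_{2^{-j}}(q)$ with $q\in Q_{K,j}$. Then $5\cB(v)\subset B_{A2^{-j}}(q)$ with comparable radii, since $A=10\mu+1$, and Lemma~\ref{lem:beta-compare} gives $\beta_f(5\cB(v))\lesssim\beta_f(q,A2^{-j})$.

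Each pair $(q,j)$ is charged by only boundedly many $v$. Indeed, the $v$ charged to it have generation $i$ with $2^{-i}\approx2^{-j}/\diam K$, so boundedly many generations are involved. Within one generation, the centers $\zeta(m_v)$ lie in $B_{2^{-j}}(q)$ while the intervals $I_v$ have diameter $\approx 2^{-j}$. By Lemma~\ref{lem:chains}, applied to the subarc of $K$ spanned by these intervals, which has diameter $\lesssim 2^{-j}$ by Lemma~\ref{lem:compact-intersections-v2}, there are boundedly many of them.

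The scales with $j$ slightly below $-\lfloor\log_2\diam K\rfloor$ (there are only $O(\log\mu)$ of them) are not in the sum defining $T$. I would absorb them by enlarging the ball once more via Lemma~\ref{lem:beta-compare}, comparing to the first allowed scale with the constant $A$. This index bookkeeping is where I expect the most friction, and if needed I would adjust by shrinking, which Lemma~\ref{lem:beta-compare} permits up to constants.

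Thus $\sigma(\Lambda)\lesssim T(K)\lesssim T_{w,B}<\infty$. Proposition~\ref{prop:vertical-curve-area} then gives existence of $S(\hat\gamma)$ and the bound \eqref{eq:sgamma-bound}. Since the segment joining the endpoints is Lipschitz, Lemma~\ref{lem:symplectic-area-props-v2}(2) gives existence of $S(\gamma)$ as well.

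Finally, the function $x\mapsto\max\{x,x\log(D^2/x)\}$ is nondecreasing in $x>0$: for $x<D^2$ the derivative of $x\log(D^2/x)$ is $\log(D^2/x)-1$, and otherwise the maximum is $x$. It is also subadditive up to constants under scaling, since $\phi(cx)\lesssim_c\phi(x)$. Replacing $\sigma(\Lambda)$ by its upper bound $\lesssim T_{w,B}$ therefore yields \eqref{eq:levy-beta-fiber-small-T}.
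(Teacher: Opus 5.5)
Your proposal is correct and follows essentially the same route as the paper. You bound $\delta_\Lambda(v)$ by the convex hull of $\Lambda$ over weak neighbors via \eqref{eq:children-neighbors}, charge each $v$ to a net point using Lemma~\ref{lem:beta-compare} with a Lemma~\ref{lem:chains} multiplicity bound to get $\sigma(\Lambda)\lesssim T(B\cap f^{-1}(w))$, and conclude with Proposition~\ref{prop:vertical-curve-area}. The index friction you anticipate disappears if you choose the net scale $j=i_0+i$ with $i_0=-\lfloor\log_2\diam K\rfloor$, as the paper does, instead of $2^{-j}\approx 5\rad(\cB(v))$: then $A=10\mu+1$ is exactly large enough that $5\cB(v)\subset B_{A2^{-i_0-i}}(N(v))$, and the resulting sum starts at the lower summation index of $T(K)\le T(B\cap f^{-1}(w))$.
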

An alternate version of \eqref{eq:levy-beta-fiber-small-T} will sometimes be convenient. Let 
$$\nu(t) = \begin{cases}
  0 & t = 0\\
  t (2 + \log(t^{-1})) & 0 < t \le 1\\
  t + 1 & t > 1.
\end{cases}$$
Then $\nu$ is an increasing concave function with $\nu(t) \approx \max\{t, t \log t^{-1}\}$ for $t>0$, so we can write \eqref{eq:levy-beta-fiber-small-T} as
\begin{equation}\label{eq:alt-levy-beta}
  |S(\hat{\gamma})| \lesssim \diam(K)^2 \nu\left(\frac{T_{w,B}}{\diam(K)^2}\right).
\end{equation}
\begin{proof}[Proof of Proposition~\ref{prop:levy-beta-fiber}]
  By Proposition~\ref{prop:construct-Lambda}, there is a set of approximating points $\Lambda$ satisfying
  \begin{equation}\label{eq:ch-ne-2}
    |\Lambda(v) - \Lambda(v')| \lesssim \rad(\cB(v)) \beta_f(5 \cB(v)),
  \end{equation}
  whenever $v$ and $v'$ are weak neighbors or $v'$ is a weak neighbor of a child of $v$.

  Let $g_i = g_{\Lambda,i}$ as in \eqref{eq:def-gi-Lambda} and let $\delta_\Lambda$ be as in Section~\ref{sec:symplectic-area}. Then $g_{i}(J_v) \cup g_{i+1}(J_v)$ is contained in the convex hull of the set
  $$\{\Lambda(v') : \text{$v'$ is a weak neighbor of $v$ or a weak neighbor of a child of $v$}\},$$
  so by \eqref{eq:ch-ne-2}, 
  $$\delta_\Lambda(v) = \diam\left(g_{i}(J_v) \cup g_{i+1}(J_v)\right) \lesssim \rad(\cB(v)) \beta_f(5 \cB(v)).$$

  Let $\sigma(\Lambda) = \sum_{v\in \cT} \delta_\Lambda(v)^2$ as in \eqref{eq:def-sigma-lambda} and let $\Gamma = B\cap f^{-1}(w)$ so that $T_{w,B}=T(\Gamma)$.
  By Proposition~\ref{prop:vertical-curve-area}, it suffices to show that $\sigma(\Lambda) \lesssim T(\Gamma)$.

  Let $i_0 = -\lfloor \log_2\diam(K)\rfloor$. For each $i\ge 0$ and $v\in \cT^i$, the maximality of $\cN_{i_0 + i}$ implies that there is a $N(v)\in \cN_{i_0+i}$ such that $\zeta(m_v) \in B_{2^{-i_0 - i}}(N(v))$; note that $N(v) \in Q_{\Gamma, i_0 + i}$. 
  Then 
  $$\rad(\cB(v)) = \mu 2^{-i}\diam K \le 2 \mu 2^{-i_0 - i},$$
  so
  \begin{equation}\label{eq:5cbv}
    5\cB(v) \subset B_{(10\mu + 1)2^{-i_0 - i}}(N(v)) \subset B_{A 2^{-i_0 - i}}(N(v)).
  \end{equation}
  Therefore, by Lemma~\ref{lem:beta-compare},
  $$\delta_\Lambda(v)\lesssim \rad(\cB(v)) \beta_f(5 \cB(v)) \lesssim 2^{-i_0 - i} \beta_f(N(v), A 2^{-i_0 - i}).$$
  
  Thus,
  $$
  \sigma(\Lambda) = \sum_{v\in \cT} \delta_\Lambda(v)^2 \lesssim \sum_{i=0}^\infty \sum_{v\in \cT^i} 2^{-2(i_0 + i)} \beta_f(N(v), A 2^{-i_0 - i})^2.
  $$
  Since $\cT^i$ is a finite set and $N(v) \in Q_{\Gamma, i_0 + i}$ for all $v\in \cT^i$, we can rearrange this sum to get
  \begin{equation}\label{eq:sigma-lambda-first}
    \sigma(\Lambda) \lesssim \sum_{i=0}^\infty \sum_{q\in Q_{\Gamma,i_0+i}} \#N^{-1}(q) \cdot 2^{-2(i_0 + i)} \beta_f(q, A 2^{-i_0 - i})^2 
  \end{equation}

  Suppose that $q\in Q_{\Gamma, i_0 + i}$ and let $N^{-1}(q) = \{v_1,\dots, v_k\}$. For each $j$, let $s_j$ be the lower endpoint of $I_{v_j}$; we order the $v_j$ so that $s_1 < \dots < s_k$. Then, using \eqref{eq:patch-diam},
  $$
  \diam(\zeta([s_j,s_{j+1}]))\ge \diam I_{v_j} \ge \frac{1}{2}\mu^{-1} 2^{- i_0 - i}.
  $$
  By \eqref{eq:patch-diam} and \eqref{eq:5cbv},
  $$I_{v_j} \subset 5\cB(v_j) \subset B_{A 2^{- i_0 - i}}(q),$$
  so $\diam \bigcup_j I_{v_j} \le 2 A 2^{- i_0 - i}$. By  Lemma~\ref{lem:chains}, there is an $N$ depending only on $\mu$, $A$, and $n$ such that $k\le N$. That is, $\#N^{-1}(q)\lesssim 1$. By \eqref{eq:sigma-lambda-first}, substituting $j=i_0+i$,
  $$\sigma(\Lambda) \lesssim \sum_{j=i_0}^\infty \sum_{q\in Q_{\Gamma, j}} 2^{-2j} \beta_f(q, A 2^{- j})^2 = T(\Gamma),$$
  as desired.
\end{proof}

We will use Proposition~\ref{prop:levy-beta-fiber} to prove the following lemmas. First, we bound the difference between the measure of $f^{-1}(w)\cap B$ and the measure of a vertical line in terms of $T$.
\begin{lemma} \label{lem:error-bound-pointwise}
  Let $c>0$. There are $\epsilon \in (0,1)$ and $\rho > 1$ with the following property. Let $B = B_R(p) \subset \HH$ be a ball and let $f\from \HH\to \R^{2n}$ be a $C^1_{\mathrm{H}}$ map such that $\|\DfH_q - \id\|_{\mathrm{Op}} < \epsilon$ for all $q\in \rho B$.

  Let $\hat{f}(q) = f(p) + \pi(p^{-1}q)$.
  Then for any $w\in \R^{2n}$, if $T_{w,2B}=T(f^{-1}(w)\cap 2B)<\infty$, then
  \begin{equation}\label{eq:error-equation-f-pi}
    |\cH^2(f^{-1}(w)\cap B) - \cH^2(\hat{f}^{-1}(w)\cap B)| \lesssim R^2 \nu\left(R^{-2} T_{w,2B}\right) + c R^2.
  \end{equation}
\end{lemma}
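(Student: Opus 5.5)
The plan is to compare the fiber $f^{-1}(w)\cap B$ with the fiber $\hat f^{-1}(w)\cap B$ of the affine model. The latter is a left coset of the center, namely the vertical line $\{q : \pi(q)=\pi(p)+w-f(p)\}$, so its $\cH^2$ measure is the length of a $z$--interval. We fix $\epsilon$ small in terms of $c$ and take $\rho$ large enough that Corollary~\ref{cor:fibers-are-vertical}, Theorem~\ref{thm:S-cH2}, and Proposition~\ref{prop:levy-beta-fiber} apply on $2B$. Then $f^{-1}(w)\cap 2B$ is a $\lambda$--vertical $1$--manifold, relatively closed in $2B$, with $\lambda$ as large as we like.

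\textbf{Step 1: reduce to a single component.} Since the fiber is $\lambda$--vertical with $\lambda$ large, it lies in a thin cone $q\VCone_\lambda$ around any of its points. Using Lemma~\ref{lem:ControlF}, we get $|\pi(q)-\pi(p)-(w-f(p))|\le 3\epsilon\, d(p,q)\lesssim \epsilon R$ for $q\in 2B$ in the fiber. So the fiber stays within horizontal distance $\epsilon R$ of the line $\hat f^{-1}(w)$. We then want to show that $f^{-1}(w)\cap B$ is contained in a single connected component $K$ of $f^{-1}(w)\cap 2B$, up to pieces of measure $\lesssim cR^2$. The argument is that a relatively closed $1$--manifold in $2B$ that is a vertical graph-like curve near a vertical line must cross the region between $z$--levels of $B$ monotonically, and its components exit through the top and bottom of $2B$. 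The total order of Lemma~\ref{lem:vertical-properties}(1), which holds for all of $f^{-1}(w)\cap 2B$ once $\lambda>\tfrac14$, should force connectedness of the portion inside a slightly smaller cylinder.

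\textbf{Step 2: compare endpoints.} Let $K_0\subset K$ be the subarc from the first to the last point of $K$ in $\closure(B)$, approximately. We apply Theorem~\ref{thm:S-cH2}:
\[
\cH^2(K_0)=z(\alpha(0)^{-1}\alpha(1))-S(\hat\gamma).
\]
Proposition~\ref{prop:levy-beta-fiber} (applied on $2B$) and \eqref{eq:alt-levy-beta} give $|S(\hat\gamma)|\lesssim R^2\nu(R^{-2}T_{w,2B})$, using monotonicity of $\nu$ and $\diam K\lesssim R$.

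It remains to show that $z(\alpha(0)^{-1}\alpha(1))$ differs from $\cH^2(\hat f^{-1}(w)\cap B)$ by $\lesssim cR^2$. Both the endpoints of $K_0$ and the endpoints of the segment $\hat f^{-1}(w)\cap B$ lie near $\partial B$, at horizontal offset $\lesssim\epsilon R$ from $p$'s line. For the Korányi ball, the $z$--extent of the slice of $B$ at horizontal offset $h$ depends continuously on $h/R$; with $h\lesssim\epsilon R$ this changes the $z$--length by $\lesssim \sqrt{\epsilon}R^2$. We also have $z(a^{-1}b)=z(b)-z(a)-\tfrac12\omega(\pi a,\pi b)$, and the $\omega$ term is $\lesssim \epsilon R\cdot R$ after translating so that $p=\zero$. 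Choosing $\epsilon$ small makes these $\le cR^2$.

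\textbf{Main obstacle.} The hardest step is the topological and boundary bookkeeping in Step 1 and at the ends of Step 2. We must ensure that $f^{-1}(w)\cap B$ is essentially one arc whose endpoints sit close to where the vertical line meets $\partial B$. Extra components, or an arc that wiggles in and out of $B$ near the boundary, could break this. The plan is to work in $2B$, where the whole fiber is a single totally ordered curve, and to parameterize it. The set of parameters landing in $B$ is then sandwiched between two intervals whose images differ only in the thin boundary layer $\{q: |d(p,q)-R|\lesssim\epsilon R\}$. Each of the two ends of that layer contributes $\cH^2$ measure $\lesssim \sqrt\epsilon R^2$, by Theorem~\ref{thm:S-cH2} applied to those short subarcs together with Lemma~\ref{lem:vertical-diameter-v2}. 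Taking $\epsilon$ small finishes the proof.
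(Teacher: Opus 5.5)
Your overall architecture matches the paper's. You confine the fiber to a thin cylinder around the line $\hat f^{-1}(w)$ and trap $f^{-1}(w)\cap B$ between two arcs whose heights are within $O(cR^2)$ of the chord $\hat f^{-1}(w)\cap B$; this is the paper's $E_q\subset f^{-1}(w)\cap B\subset F_q$ from Lemma~\ref{lem:sandwich}. You then evaluate those arcs with Theorem~\ref{thm:S-cH2} and Proposition~\ref{prop:levy-beta-fiber}.

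The genuine gap is that nothing in your argument produces a point of $f^{-1}(w)$ in $B$. Steps 1--2 start from a component $K$ that crosses $B$. So for $w$ with $f^{-1}(w)\cap B=\emptyset$ you have no lower bound at all, and the estimate would fail if the chord $\hat f^{-1}(w)\cap B$ could be long. Ruling this out needs a topological input that is absent from your proposal. After normalizing $p=\zero$, $f(\zero)=\zero$, one has $|f-\pi|<\kappa R$ on $B$. A degree argument on the horizontal disc $B\cap\{z=0\}$ then gives $B_{(1-\kappa)R}(\zero;\R^{2n})\subset f(B)$. Hence $w\notin f(B)$ forces $|w|\ge(1-\kappa)R$, and there the vertical chord of $B$ has length at most $cR^2$ by uniform continuity of the profile of the Korányi ball. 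This is the first case of the paper's proof. The same degree argument is used in Lemma~\ref{lem:sandwich} to show that the fiber meets every level $\{z=t\}$ of the cylinder.

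Your Step 1 also needs an actual argument in place of ``should force.'' The hedge ``up to pieces of measure $\lesssim cR^2$'' is not something you could control: each stray piece would carry its own symplectic-area error from Proposition~\ref{prop:levy-beta-fiber}, and you have no bound on the number of pieces. Fortunately the exact statement is true once $w\in f(B)$:
\begin{itemize}
\item Each component of $f^{-1}(w)\cap 2B$ meeting $B$ is a properly embedded open arc in $2B$ lying in the thin cylinder. This follows from Corollary~\ref{cor:fibers-are-vertical} and Lemma~\ref{lem:vertical-properties}(2).
\item Along such an arc, $z$ is monotone up to $O(\epsilon R^2)$, because $|z(g^{-1}h)-(z(h)-z(g))|=\frac12|\omega(\pi(h)-\pi(g),\pi(g))|$. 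Hence the arc runs from the bottom of $2B$ to the top.
\item Suppose $K_1\ne K_2$ were two such components, and fix $g\in K_1\cap B$. The continuous function $h\mapsto z(g^{-1}h)$ on $K_2$ changes sign, so it vanishes at some $h\in K_2$. Verticality then forces $\pi(h)=\pi(g)$ and $h=g$, a contradiction.
\end{itemize}
With these two additions, your Step 2 and the boundary-layer bookkeeping go through. One small correction: to pass from $\diam(K)^2\nu(T/\diam(K)^2)$ to $R^2\nu(R^{-2}T)$ you need that $\nu$ is concave with $\nu(0)=0$, not just monotone.
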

We will prove this lemma in Section~\ref{sec:proof-error-bound-pointwise}.

Second, we prove a bound on $T_{w,B}$. Since $\cH^2(f^{-1}(w)\cap B)$ can be infinite, $T_{w,B}$ can be infinite as well. Nonetheless, it is bounded on average. Since $T_{w,B}$ depends on a choice of dyadic patchwork, the function $w\mapsto T_{w,B}$ may not be measurable, so we recall that for $g\from\mathbb R^{2n}\to[0,+\infty]$, the \emph{upper integral} of $g$ is given by
\[
\int_{\mathbb R^{2n}}^*g :=\inf\left\{\int_{\mathbb R^{2n}}h:g\leq h,\,\,\text{$h$ measurable}\right\}.
\]

\begin{lemma}\label{lem:fubini}
  Let $f\from \HH \to \R^{2n}$ be a $C^1_{\mathrm{H}}$ map, let $L>0$, and let $B\subset \HH$ be a ball of radius $R$. There is a $c > 0$ such that if $f$ is $L$--Lipschitz on $5 B$, then 
  \begin{equation}\label{eq:fubini}
    \int^*_{\R^{2n}} T_{w,B}\ud w \lesssim_L \int_0^{c R} \int_{5B}\beta_f(v, \rho)^2 \ud v \frac{\mathrm d\rho}{\rho}.
  \end{equation}
\end{lemma}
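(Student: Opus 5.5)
Since $T_{w,B}$ is a sum of nonnegative terms, I will bound the upper integral by a countable sum of measurable majorants and then swap sum and integral. Writing $\Gamma_w = B\cap f^{-1}(w)$, note that $Q_{\Gamma_w,i}\subset \{q\in\cN_i : B_{2^{-i}}(q)\cap B\neq\emptyset\}$, and that $q\in Q_{\Gamma_w,i}$ exactly when $w\in f(B_{2^{-i}}(q)\cap B)$. Also, the lower summation index $-\lfloor\log_2 \diam\Gamma_w\rfloor$ is at least $i_B := -\lfloor\log_2 (2R)\rfloor$, because $\diam \Gamma_w\le 2R$. This gives
\[
T_{w,B}\le \sum_{i\ge i_B}\ \sum_{q\in\cN_i,\ B_{2^{-i}}(q)\cap B\ne\emptyset} 2^{-2i}\beta_f(q,A2^{-i})^2\, \mathbf 1_{f(B_{2^{-i}}(q)\cap B)}(w).
\]
The right-hand side is measurable in $w$: each set $f(\overline{B_{2^{-i}}(q)}\cap \overline B)$ is compact, and I use these slightly larger closed versions, which still dominate. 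So the upper integral is at most the integral of the right-hand side. By monotone convergence and the fact that $f$ is $L$--Lipschitz on $5B$,
\[
\cL^{2n}\big(f(B_{2^{-i}}(q)\cap B)\big)\lesssim_L 2^{-2ni}.
\]
This holds because the image of a ball of radius $r$ has Euclidean diameter $\le 2Lr$. Hence
\[
\int^*T_{w,B}\lesssim_L \sum_{i\ge i_B}\ \sum_{q} 2^{-(2n+2)i}\beta_f(q,A2^{-i})^2 .
\]

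Next I convert the net sums into integrals. For $q$ in the range above and $i\ge i_B$, take any $v\in B_{2^{-i}}(q)$ and any $\rho\in[(A+1)2^{-i},(A+2)2^{-i}]$. Then $B_{A2^{-i}}(q)\subset B_\rho(v)$ with comparable radii, so Lemma~\ref{lem:beta-compare} gives $\beta_f(q,A2^{-i})\lesssim \beta_f(v,\rho)$. Averaging over $v\in B_{2^{-i}}(q)$ (a set of measure $\approx 2^{-(2n+2)i}$) and over $\rho$ with respect to $\ud\rho/\rho$ (a set of $\ud\rho/\rho$-measure $\approx 1$) yields
\[
2^{-(2n+2)i}\beta_f(q,A2^{-i})^2\lesssim \int_{(A+1)2^{-i}}^{(A+2)2^{-i}}\int_{B_{2^{-i}}(q)}\beta_f(v,\rho)^2\ud v\frac{\ud\rho}{\rho}.
\]
Two facts control the overlaps when summing. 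First, the balls $B_{2^{-i}}(q)$, $q\in\cN_i$, have bounded overlap, since $\cN_i$ is a $2^{-i}$--net and $\HH$ is doubling. Second, the $\rho$-intervals $[(A+1)2^{-i},(A+2)2^{-i}]$ have bounded overlap across $i$. Each $v$ that appears lies within $2\cdot 2^{-i}\le 2\cdot 2^{-i_B}\le 4R$ of $B$, so $v\in 5B$. Each $\rho$ is at most $(A+2)2^{-i_B}\le 4(A+2)R$, which fixes $c=4(A+2)$. Summing therefore gives \eqref{eq:fubini}.

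The main obstacle is the measurability issue, together with checking that the ranges of $i$ and $q$ are uniform in $w$. The latter is exactly what the crude index bound $i\ge i_B$ and the inclusion $Q_{\Gamma_w,i}\subset\{q : B_{2^{-i}}(q)\cap B\ne\emptyset\}$ provide. The remaining, minor technical point is the localization: Lemma~\ref{lem:beta-compare} requires $D'\subset D$, which the choice of $\rho\ge (A+1)2^{-i}$ guarantees.
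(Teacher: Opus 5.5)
Your proof is correct and follows essentially the same route as the paper. You dominate $T_{w,B}$ by a sum over $i\ge -\lfloor\log_2(2R)\rfloor$ and net points near $B$ that is uniform in $w$, and integrate the indicators of $f(B_{2^{-i}}(q))$ using $\cL^{2n}(f(B_{2^{-i}}(q)))\lesssim_L 2^{-2ni}$. You then apply Lemma~\ref{lem:beta-compare} to spread each $\beta_f(q,A2^{-i})^2$ over $B_{2^{-i}}(q)$ times a window of scales, and conclude by bounded overlap.

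The differences are cosmetic. You get measurability by passing to compact images, where the paper uses subadditivity of the upper integral. You also use the scale window $[(A+1)2^{-i},(A+2)2^{-i}]$ instead of the paper's $[2A2^{-i},4A2^{-i}]$.
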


The following version of Dorronsoro's inequality, due to \cite{FO-Dorronsoro}, implies that the right hand side of \eqref{eq:fubini} is finite.  
\begin{thm}[{\cite[Theorem 6.1]{FO-Dorronsoro}}]\label{thm:dorronsoro}
    Let $f\in L_2(\HH;\mathbb R^{2n})$ be a map such that $\|D_{\mathrm H}f\|\in L_2(\HH)$.
    Then 
    \[
    \int_{\HH} \int_0^\infty \beta_f(B_r(x))^2\frac{\ud r}{r}\ud x \lesssim \int_{\HH} \|D_{\mathrm H}f(x)\|^2\ud x.
    \]
\end{thm}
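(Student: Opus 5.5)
The plan is a Littlewood--Paley decomposition of $f$ built from the heat semigroup of the sub-Laplacian, followed by a Schur-type summation across scales. This is the Heisenberg analogue of the Fourier proof of Dorronsoro's theorem in $\R^m$. Working componentwise, it suffices to treat scalar $f\in L_2(\HH)$ with $\nabla_{\mathrm H} f\in L_2$. Let $\Delta_{\mathrm H}=\sum_i (X_i^2+Y_i^2)$ and $P_t=e^{t^2\Delta_{\mathrm H}}$. Set $Q_t = t^2\Delta_{\mathrm H}P_t$, normalized so that
\[
f=c\int_0^\infty Q_t^2 f\,\frac{\ud t}{t}
\]
modulo constants. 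By spectral calculus for the nonnegative self-adjoint operator $-\Delta_{\mathrm H}$, one gets the square function bound
\[
\int_0^\infty \|t^{-1}\widetilde Q_t f\|_{L_2}^2\frac{\ud t}{t}\lesssim \|\nabla_{\mathrm H}f\|_{L_2}^2 .
\]
Here $\widetilde Q_t$ denotes any of $Q_t$, $t\nabla_{\mathrm H}Q_t$, $t^2\nabla_{\mathrm H}^2Q_t$, or $t^2 ZQ_t$. The vertical term is handled by writing $Z=[X_1,Y_1]$, so it is again a second-order horizontal operator. All these bounds follow from Gaussian heat-kernel estimates and Riesz transform $L_2$-boundedness on $\HH$.

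Fix a ball $B=B_r(x)$ and split $f=f_{<r}+f_{>r}$, with $f_{<r}=c\int_0^r Q_t^2f\,\frac{\ud t}{t}$. For the low-frequency part $f_{>r}$, take the vertical affine function $a(p)=f_{>r}(x)+\langle\nabla_{\mathrm H}f_{>r}(x),\pi(x^{-1}p)\rangle$. A horizontal Taylor expansion along a horizontal path of length $\lesssim r$ (Lemma~\ref{lem:cc-kor}), plus the vertical displacement of size $r^2$, gives on $B$
\[
|f_{>r}-a|\lesssim r^2\sup_{3B}\bigl(|\nabla_{\mathrm H}^2 f_{>r}|+|Zf_{>r}|\bigr).
\]
Inserting the integral $\int_r^\infty$ and using the off-diagonal decay of the kernels of $t^2\nabla^2_{\mathrm H}Q_t$ and $t^2ZQ_t$, I get
\[
r^{-1}\|f_{>r}-a\|_{\widehat L_2(B)}\lesssim \int_r^\infty \frac{r}{t}\, M_t(x)\,\frac{\ud t}{t}.
\]
Here $M_t(x)$ is a Gaussian-weighted local $\widehat L_2$-average, at scale $t$ around $x$, of $t^{-1}\widetilde Q_t f$. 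For the high-frequency part, I bound the affine approximation by zero: $r^{-1}\|f_{<r}\|_{\widehat L_2(B)}\lesssim \int_0^r \frac{t}{r}\,M'_t(x)\,\frac{\ud t}{t}$. This uses $Q_t=t\cdot(t\,\mathrm{div}_{\mathrm H}\text{-type operator})\circ\nabla_{\mathrm H}$, so that $t^{-1}Q_tf$ is controlled by a smoothed version of $\nabla_{\mathrm H} f$ at scale $t$.

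Combining the two parts, $\beta_f(B_r(x))\lesssim\int_0^\infty \min(t/r,r/t)\,\widetilde M_t(x)\frac{\ud t}{t}$. By Cauchy--Schwarz, or Schur's test with the kernel $\min(t/r,r/t)$ on $(\R_+,\frac{\ud t}{t})$, and then integration in $x$,
\[
\int_{\HH}\int_0^\infty\beta_f(B_r(x))^2\frac{\ud r}{r}\ud x\lesssim \int_0^\infty\int_{\HH}\widetilde M_t(x)^2\ud x\frac{\ud t}{t}.
\]
Since the Gaussian-weighted averages are $L_2$-bounded uniformly in $t$, this is at most $\int_0^\infty \|t^{-1}\widetilde Q_tf\|_{L_2}^2\frac{\ud t}{t}\lesssim \|\nabla_{\mathrm H}f\|_{L_2}^2$.

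I expect the main obstacle to be the low-frequency estimate. There one needs pointwise control of second horizontal derivatives and of $Z$ applied to $Q_t f$ on $B_r(x)$, with the correct decay $r/t$ and only local $L_2$ information at scale $t$. This is where nonabelian heat-kernel bounds for $\nabla^2_{\mathrm H}P_t$ and $ZP_t$ enter, together with the fact that vertical affine maps ignore $z$, so the $Z$-derivative must be absorbed into the $r^2$ error. I would first try to prove these bounds through left-invariance and dilation, reducing to $t=1$ with a fixed Schwartz kernel.
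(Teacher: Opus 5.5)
Your argument is correct in outline, but it takes a different route from the paper, which gives no proof of this statement. The paper quotes the scalar inequality \cite[Theorem 6.1]{FO-Dorronsoro} and applies it to each coordinate $f_k$. This is justified because the infimum over $\Aff$ decouples, so $\beta_f(B)^2=\sum_k\beta_{f_k}(B)^2$, while $\sum_k|\nabla_{\mathrm H}f_k|^2\le 2n\|D_{\mathrm H}f\|^2$; this is also your first step. You instead give a self-contained heat-semigroup Littlewood--Paley proof of the scalar case. That makes visible where summability over scales comes from, at the cost of importing other standard facts:
Schwartz decay of horizontal and $Z$-derivatives of the time-one heat kernel, transported to scale $t$ by dilation and left-invariance, and the identity $\langle-\Delta_{\mathrm H}f,f\rangle=\|\nabla_{\mathrm H}f\|_{L_2}^2$ on the horizontal Sobolev space.
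The low-frequency step you flag as the main obstacle is routine. For $y\in 3B$ and $t\ge r$ you have $d(y,q)\ge d(x,q)-3t$. So the kernels of $t^2\nabla^2_{\mathrm H}Q_t$ and $t^2ZQ_t$, acting on $Q_tf$, give $r\sup_{3B}\bigl(|\nabla^2_{\mathrm H}Q_t^2f|+|ZQ_t^2f|\bigr)\lesssim\frac{r}{t}M_t(x)$, with $M_t$ a scale-$t$ average of $|t^{-1}Q_tf|$ at $x$, exactly as you claim.

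Two points need tightening.

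First, in the high-frequency term the $\widehat L_2$-norm is taken over $B_r(x)$, so the average is at scale $r$, not $t$. The correct bound is $r^{-1}\|f_{<r}\|_{\widehat L_2(B_r(x))}\lesssim\int_0^r\frac{t}{r}\|t^{-1}Q_t^2f\|_{\widehat L_2(B_r(x))}\frac{\ud t}{t}$, and a scale-$t$ average centred at $x$ does not control it. The resulting dependence of $M'$ on $r$ is harmless: in the Schur step you integrate in $x$ first, and $\int_{\HH}\|g\|^2_{\widehat L_2(B_r(x))}\ud x=\|g\|^2_{L_2}$ for every $r$.

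Second, do not use the factorization $Q_t=t(\cdots)\circ\nabla_{\mathrm H}$ to dominate $t^{-1}Q_tf$ pointwise by a smoothed $|\nabla_{\mathrm H}f|$. That discards exactly the cancellation the theorem rests on, and $\int_0^\infty\bigl\|\phi_t*|\nabla_{\mathrm H}f|\bigr\|_{L_2}^2\frac{\ud t}{t}$ diverges as $t\to0$ unless $\nabla_{\mathrm H}f=0$. All you need is $\|Q_t\|_{L_2\to L_2}\le e^{-1}$ together with the spectral identity $\int_0^\infty\|t^{-1}Q_tf\|_{L_2}^2\frac{\ud t}{t}=\frac14\|\nabla_{\mathrm H}f\|_{L_2}^2$. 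Since the low-frequency kernels act on $Q_tf$, this identity also makes the Riesz-transform bounds in your square-function step unnecessary.
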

(Theorem 6.1 of \cite{FO-Dorronsoro} is stated for $g\in L_2(\HH;\R)$. Applying it to each component of $f$ gives Theorem~\ref{thm:dorronsoro}.)

This lets us bound $T(B_r(p) \cap f^{-1}(w))$ as $r$ goes to $0$.
\begin{lemma}\label{lem:error-density}
  Let $f\from \HH \to \R^{2n}$ be a $C^1_{\mathrm{H}}$ map. Then for almost every $p\in \HH$, 
  \begin{equation}\label{eq:error-density}
    \lim_{r\to 0} r^{-2n-2}\int^*_{\R^{2n}} T_{w,B_r(p)} \ud w = 0
  \end{equation}
\end{lemma}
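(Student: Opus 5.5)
We combine Lemma~\ref{lem:fubini} with Theorem~\ref{thm:dorronsoro}, localized, and then apply a Lebesgue-differentiation argument to the resulting square function. Fix a point $p$ and a small $r>0$, and set $B=B_r(p)$. Since $f$ is $C^1_{\mathrm H}$, it is locally Lipschitz, so on a fixed neighborhood $U$ of $p$ it is $L$--Lipschitz on $5B_r(p)$ for all $r\le r_0$. Lemma~\ref{lem:fubini} then gives
\[
r^{-2n-2}\int^*_{\R^{2n}} T_{w,B_r(p)}\ud w \lesssim_L r^{-2n-2}\int_{5B_r(p)} G_{cr}(v)\ud v,\qquad G_{s}(v):=\int_0^{s}\beta_f(v,\rho)^2\frac{\ud\rho}{\rho}.
\]
So it suffices to show that for a.e.\ $p$ the average of $G_{cr}$ over $5B_r(p)$ tends to $0$.

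\emph{Step 1: local integrability of $G_\infty$.} Take a compactly supported cutoff $\chi\equiv 1$ on a large ball $W$ containing $U$, and set $\tilde f=\chi f$. Then $\tilde f\in L_2$ and $D_{\mathrm H}\tilde f\in L_2$, so Theorem~\ref{thm:dorronsoro} yields $\int_\HH\int_0^\infty \beta_{\tilde f}(x,\rho)^2\frac{\ud\rho}{\rho}\ud x<\infty$. For $x\in U$ and $\rho$ small enough that $B_\rho(x)\subset W$, we have $\beta_{\tilde f}(x,\rho)=\beta_f(x,\rho)$. Hence $G_{s_0}\in L_1(U)$ for some fixed $s_0>0$.

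\emph{Step 2: density argument.} For each $s\le s_0$, the functions $G_s$ are pointwise monotone in $s$ and satisfy $G_s\downarrow 0$ as $s\to 0$ at every point where $G_{s_0}<\infty$, which is a.e.\ in $U$. Fix $s\le s_0$. For $r$ small, so that $cr\le s$, we have
\[
\fint_{5B_r(p)}G_{cr}\le\fint_{5B_r(p)}G_s .
\]
By the Lebesgue differentiation theorem on the doubling space $(\HH,d,\cL)$, the right-hand side tends to $G_s(p)$ for a.e.\ $p$. Taking a countable sequence $s_k\to 0$ and intersecting the corresponding full-measure sets, we obtain for a.e.\ $p$
\[
\limsup_{r\to0}\fint_{5B_r(p)}G_{cr}\le \inf_k G_{s_k}(p)=0 .
\]
Since $\cL(5B_r(p))\approx r^{2n+2}$, this gives \eqref{eq:error-density}. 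Finally, $\HH$ is covered by countably many such neighborhoods $U$, which handles a.e.\ $p\in\HH$.

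The main obstacle is Step 1. Theorem~\ref{thm:dorronsoro} requires global $L_2$ hypotheses, so $f$ must be cut off, and we must check that $\beta_{\tilde f}$ and $\beta_f$ agree at small scales near $p$. Once the local integrability of $G_{s_0}$ is secured, the rest is monotone convergence together with Lebesgue differentiation. The constant $c$ from Lemma~\ref{lem:fubini} may depend on $L$ but not on $r$, which is all the argument needs.
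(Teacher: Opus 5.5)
Your proof is correct, but the final step takes a different route from the paper's. Both arguments use Lemma~\ref{lem:fubini} to reduce the statement to showing that
\[
r^{-2n-2}\int_0^{cr}\int_{B_{5r}(p)}\beta_f(v,\rho)^2\ud v\frac{\ud\rho}{\rho}\to 0
\]
for a.e.\ $p$.

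The paper then argues directly with a Vitali covering. It considers $S_\epsilon=\{p:\limsup_r\tau(p,r)>\epsilon\}$ and selects disjoint balls $B_{5R_i}(p_i)$ with $R_i<\delta$ on which the localized square-function integral is large. This bounds $\epsilon\,\cH^{2n+2}(S_\epsilon)$ by $\int_0^{c\delta}\int_{2B}\beta_f^2\ud v\frac{\ud\rho}{\rho}$, which tends to $0$ with $\delta$. In effect, the paper re-proves a weak-type maximal estimate by hand.

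You instead use the pointwise monotonicity $G_{cr}\le G_s$ for $cr\le s$ to freeze the scale cutoff. You then apply the Lebesgue differentiation theorem to the single function $G_s\in L_1(U)$ and let $s\downarrow 0$ along a countable sequence. This hands the covering argument to a standard theorem, so it is shorter, though slightly less self-contained.

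You also make explicit the localization of Theorem~\ref{thm:dorronsoro} via the cutoff $\tilde f=\chi f$, using that $\beta_f(x,\rho)$ depends only on $f|_{B_\rho(x)}$. The paper passes over this point when it asserts $I(p,r)<\infty$, even though a general $C^1_{\mathrm H}$ map $f$ need not satisfy the global $L_2$ hypotheses of that theorem.
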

We will prove Lemma~\ref{lem:fubini} and Lemma~\ref{lem:error-density} in Section~\ref{sec:proof-bounds-T}. Before that, we will use the lemmas to prove Proposition~\ref{prop:density}.
\begin{proof}[Proof of Proposition~\ref{prop:density}]
  Let $f\from \HH\to \R^{2n}$ be a $C^1_{\mathrm{H}}$ map. By Lemma~\ref{lem:error-density}, almost every $p\in \HH$ satisfies \eqref{eq:error-density}, so we suppose that $p$ satisfies \eqref{eq:error-density}. We consider two cases, depending on whether $\JfH(p) = 0$.

  First, if $\JfH(p)=0$, then by Theorem~\ref{thm:CoareaInequality},
  $$\frac{\mu_f(B_r(p))}{\cH^{2n+2}(B_r(p))} \le \frac{1}{\cH^{2n+2}(B_r(p))} \int_{B_r(p)} |\JfH(x)| \ud x \le \sup_{x\in B_r(p)} |\JfH(x)|.$$
  Since $\JfH$ is continuous, 
  $$\lim_{r\to 0} \frac{\mu_f(B_r(p))}{\cH^{2n+2}(B_r(p))} = 0 = |\JfH(p)|,$$
  as desired.

  Otherwise, suppose that $\JfH(p) \ne 0$.  Then $D_{\mathrm H}f_p$ is invertible; let $$\alpha(v) = D_{\mathrm H}f_p(v)$$ and 
  let $g(q) = \alpha^{-1}(f(q))$ for all $q$. Then $D_{\mathrm H}g_{p} = \id_{\R^{2n}}$ and $\beta_f(B) \approx_{\alpha} \beta_g(B)$ for any ball $B$, so $g^{-1}(v)=f^{-1}(\alpha(v))$ and
  $$T(B \cap g^{-1}(v);g) \approx_\alpha T(B \cap f^{-1}(\alpha(v));f)$$
  for any $v\in \R^{2n}$. By substituting $w=\alpha(v)$ in  \eqref{eq:error-density}, we get
  \begin{multline}\label{eq:g-error-density}
    \lim_{r\to 0} r^{-2n-2}\int^*_{\R^{2n}} T(B_r(p) \cap g^{-1}(v);g) \ud v   \\ \approx_\alpha \lim_{r\to 0} r^{-2n-2} |\JfH(p)|^{-1} \int^*_{\R^{2n}} T(B_r(p) \cap f^{-1}(w);f) \ud w = 0.
  \end{multline}
  Similarly, $\mu_g(U) = |\JfH(p)|^{-1} \mu_f(U)$ for any measurable $U\subset \HH$.
  
  Let $c>0$. Let $\epsilon, \rho>0$ be as in Lemma~\ref{lem:error-bound-pointwise}. If $r > 0$ is sufficiently small and $B = B_r(p)$, then $\|D_{\mathrm H}g_q - \id\|_{\mathrm{Op}} < \epsilon$ for all $q\in \rho B$. For $v\in \R^{2n}$, let
  $$\tau(v) = T(g^{-1}(v)\cap 2B).$$
  Lemma~\ref{lem:error-bound-pointwise} implies that if $\hat{g}(q) = g(p) + \pi(p^{-1}q)$ and $v\in \R^{2n}$, then
  \begin{equation}\label{eq:g-error-pointwise}
    |\cH^2(g^{-1}(v)\cap B) - \cH^2(\hat{g}^{-1}(v)\cap B)| \lesssim r^2 \nu(r^{-2} \tau(v)) + c r^2.
  \end{equation} 

  Let
  $$\eta_r = \int_{\R^{2n}} \cH^2(g^{-1}(v)\cap B) \ud v - \cH^{2n+2}(B) = \mu_g(B) - \cH^{2n+2}(B).$$
  By Lemma~\ref{lem:ControlF}, if $E = B_{2r}(g(p);\R^{2n})$, then $g(B)\subset E$. That is, if $v\not \in E$, then $\hat{g}^{-1}(v)\cap B = g^{-1}(v)\cap B = \emptyset$. By the Euclidean coarea formula,
  $$\cH^{2n+2}(B) = \int_{E} \cH^2(\hat{g}^{-1}(v)\cap B) \ud v,$$
  so 
  \begin{align*}
    |\eta_r|
    &=\left|  \int_{E} \left(\cH^2(g^{-1}(v)\cap B) - \cH^2(\hat{g}^{-1}(v)\cap B) \right)\ud v \right| \\ 
    &\stackrel{\eqref{eq:g-error-pointwise}}{\lesssim} \int^*_E \left(r^2 \nu(r^{-2} \tau(v)) + c r^2\right) \ud v \\ 
    &\lesssim \int^*_E r^2 \nu(r^{-2} \tau(v)) \ud v + c r^{2n+2}.
  \end{align*}

  Since $\nu$ is concave and increasing, by using Jensen's inequality we have
  $$r^{-2n-2} |\eta_r| \lesssim \frac{1}{\cL(E)} \int^*_E \nu(r^{-2} \tau(v)) \ud v + c \le \nu\left(\frac{1}{\cL(E)} \int^*_E r^{-2}\tau(v) \ud v \right) + c.$$
  By \eqref{eq:g-error-density},
  $$\lim_{r\to 0} r^{-2n} \int^*_E r^{-2}\tau(v) \ud v = 0,$$
  so
  $$\limsup_{r\to 0} r^{-2n-2} |\eta_r| \lesssim c.$$
  This holds for any $c>0$, so $\lim_{r\to 0} r^{-2n-2} \eta_r = 0$ and
  $$\lim_{r\to 0} \frac{\mu_g(B)}{\cH^{2n+2}(B)} = \lim_{r\to 0} 1 + \frac{\eta_r}{\cH^{2n+2}(B)} = 1.$$
  Therefore,
  $$\lim_{r\to 0} \frac{\mu_f(B_r(p))}{\cH^{2n+2}(B_r(p))} = |\JfH(p)| \lim_{r\to 0} \frac{\mu_g(B_r(p))}{\cH^{2n+2}(B_r(p))}  = |\JfH(p)|,$$
  as desired.
\end{proof}

\subsection{Estimating \texorpdfstring{$\cH^2(f^{-1}(w)\cap B)$}{H\^{}2(f\^{}-1(w))}}\label{sec:proof-error-bound-pointwise}
In this subsection, we prove Lemma~\ref{lem:error-bound-pointwise}, assuming Proposition~\ref{prop:vertical-curve-area}.
Let $c>0$, let $\epsilon>0$ be a small number to be chosen later, and let $\rho>0$ be a large number to be chosen later. Let $f \from \HH\to \R^{2n}$ be a $C^1_{\mathrm{H}}$ map. After composing with translations, we may suppose that $p=\zero$, $f(\zero) = \zero$, and $\|\DfH_q - \id\|_{\mathrm{Op}} < \epsilon$ for all $q\in \rho B$, where $B:=B_R(\zero)$.  

We suppose that $\rho > 10$, so that by Lemma~\ref{lem:ControlF}, $|f(q) - \pi(q)| \lesssim \epsilon R$ for all $q\in 2B$. It follows that for any $w\in \R^{2n}$, $f^{-1}(w)\cap B$ lies in a small cylinder around $\pi^{-1}(w)$. For any connected subset $K\subset f^{-1}(w)\cap B$, we can approximate $\cH^2(K)$ using Proposition~\ref{prop:levy-beta-fiber} and Theorem~\ref{thm:S-cH2}, so we will use the following lemma to approximate $f^{-1}(w)\cap B$ by connected vertical curves. By Lemma~\ref{lem:vertical-properties}, if $\Gamma$ is a closed connected vertical curve with $0<\diam(\Gamma)<\infty$, then we can label its endpoints $g_-$ and $g_+$ so that $z(g_-^{-1}g_+)>0$. For $q\in \HH$, let $V_q = q \langle Z\rangle \cap B$. We define
$$\height(\Gamma) = z(g_-^{-1}g_+).$$

\begin{lemma}\label{lem:sandwich}
  Let $B$, $f$, and $c$ be as above. If $\epsilon >0$ is sufficiently small, then for any $q\in B$ and $w=f(q)$, there are connected subsets $E_q$ and $F_q$ of $f^{-1}(w)$ with positive diameter such that
  $$E_q \subset B\cap f^{-1}(w) \subset F_q\subset 2B\cap f^{-1}(w)
  ,
  $$
  $|\height(F_q) - \height(V_q)| < c R^2$, and $|\height(E_q) - \height(V_q)| < cR^2$. 
\end{lemma}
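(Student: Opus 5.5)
The plan is to use the fact that, for $\epsilon$ small, the whole fiber $f^{-1}(w)\cap 2B$ stays in a thin cylinder around the vertical line $\pi^{-1}(w)$, and that $V_q$ is the part of that line inside $B$. Set $\Gamma=f^{-1}(w)\cap \rho' B$ for a suitable $\rho'\in(2,\rho/5)$. By Corollary~\ref{cor:fibers-are-vertical}, $\Gamma$ is a $\lambda$--vertical $1$--manifold that is relatively closed in $\rho' B$, for any fixed large $\lambda$ once $\epsilon$ is small. By Lemma~\ref{lem:ControlF} (with $f(\zero)=\zero$), every $x\in\Gamma$ satisfies $|\pi(x)-w|\lesssim \epsilon R$. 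Also $|w|=|f(q)|\le |\pi(q)|+O(\epsilon R)\le R+O(\epsilon R)$.

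\textbf{Step 1: the component through $q$ crosses the whole region.} Let $\Gamma_q$ be the connected component of $\Gamma$ containing $q$. It is a connected $\lambda$--vertical curve, hence an interval ordered by $\prec$, by Lemma~\ref{lem:vertical-properties}. I claim it has no endpoints inside $\rho' B$ and meets $\partial(\rho' B)$ at both ends. The local structure from Corollary~\ref{cor:nonsingular} rules out endpoints in the interior. Lemma~\ref{lem:vertical-diameter-v2} shows that $z$ is strictly monotone along $\Gamma_q$ with $z$-gaps comparable to squared distances, so $\Gamma_q$ cannot accumulate inside a compact subset of $\rho' B$ without reaching the boundary. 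Running this argument in both directions from $q$ shows that $\Gamma_q$ exits every ball $2B$, $B$ through points of height $z\approx \pm$ the top and bottom of the cylinder.

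\textbf{Step 2: the whole fiber in $2B$ is a single arc.} I would argue that $f^{-1}(w)\cap 2B\subset\Gamma_q$. Suppose $x\in f^{-1}(w)\cap 2B$ lies in another component. Its component is also vertical and also runs across the cylinder. Since $\Gamma$ is $\lambda$--vertical as a set, $x\in p\VCone_\lambda$ for all $p\in\Gamma_q$. Choose $p\in\Gamma_q$ with $z(p)$ closest to $z(x)$: by the monotonicity and continuity of $z$ along $\Gamma_q$, the $z$-gap can be made $0$, while the horizontal gap stays positive unless $p=x$. This contradicts the cone condition, so the components coincide. This is the step I expect to be the main obstacle, because the cone condition must be applied to the full set $f^{-1}(w)\cap\rho' B$; Corollary~\ref{cor:fibers-are-vertical} gives exactly this, for points in a common ball on which $Df$ is controlled.

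\textbf{Step 3: choose $E_q$ and $F_q$ and compare heights.} Take $F_q$ to be the closed subarc of $\Gamma_q$ between the first and last points at which it meets $\overline{2B}$ near $B$. More precisely, take the minimal subarc containing $\Gamma_q\cap B$ and extend it slightly: since $\Gamma_q\cap B=f^{-1}(w)\cap B$ is contained in the arc between its $\prec$-infimum and supremum, and these limit points lie in $\overline B\subset 2B$, this arc lies in $2B$ by Lemma~\ref{lem:compact-intersections-v2} and thinness. Take $E_q$ to be the subarc between the first and last points whose $z$-coordinate is within $cR^2/4$ of the top and bottom of $V_q$; these points lie in $B$ because the horizontal deviation is $O(\epsilon R)$.

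For the heights, note that the endpoints of $V_q$ have $z=\pm\sqrt{R^4-|\pi(q)|^4}/4$ in Korányi coordinates, adjusted by the horizontal shift. Any point of $f^{-1}(w)$ on $\partial B$ has $\pi$ within $O(\epsilon R)$ of $\pi(q)$. The boundary $\partial B$ is a graph $z=\pm h(\pi)$, where $h$ is continuous and in fact $\tfrac12$--Hölder near the equator. Hence the endpoint heights differ from those of $V_q$ by $o_\epsilon(1)R^2$, including the $\omega$ correction in $z(g_-^{-1}g_+)$, which is $O(\epsilon R^2)$. Choosing $\epsilon$ small relative to $c$ then gives both height bounds.
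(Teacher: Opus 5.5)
Your architecture is sound, and it is genuinely different from the paper's.

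\emph{Comparison with the paper.} The paper gets the crossing property from a degree argument. On each horizontal slice of a thin cylinder $C=\pi^{-1}(B_\kappa(\pi(q)))$, $f$ is $\kappa$-close to $\pi$ on the boundary sphere, so $w$ is attained on every slice. It then takes $G_s$, the component of $\Gamma\cap\closure(C_s)$ through a point at height $0$, and uses $\prec$ to put its endpoints on the caps $z=\pm s$. It chooses $E_q,F_q$ to be $G_s$ for suitable levels $s$ near $\lambda_0$, so the height bounds follow directly from the chosen levels. You replace the degree argument by properness: a component of a relatively closed $1$--manifold in an open ball, confined to the thin cylinder, must run from the bottom cap to the top cap. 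You then use the cone condition to put all of $f^{-1}(w)\cap B$ on that single arc and take order-hulls. This avoids degree theory and gives a cleaner structural statement. The cost is that you must locate the endpoints of $E_q$ and $F_q$ by hand, and that is where your write-up is incomplete.

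\emph{What needs repair.} There are three points.

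(i) In Step~2 the intermediate value argument must be run on $\phi(p)=z(p^{-1}x)$, not on the coordinate gap $z(x)-z(p)$. If $z(p)=z(x)$, then $z(p^{-1}x)=-\tfrac12\omega(\pi(p),\pi(x)-\pi(p))$. This is \emph{linear} in the horizontal gap, so it need not violate $|z(p^{-1}x)|\ge\lambda|\pi(x)-\pi(p)|^2$. Use instead $|z(g^{-1}h)-(z(h)-z(g))|\lesssim\epsilon R^2$ for $g,h$ in the cylinder. This shows $\phi>0$ near the bottom cap and $\phi<0$ near the top, so $\phi$ has a zero, and the cone condition then forces $p=x$. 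The same comparison, not Lemma~\ref{lem:vertical-diameter-v2}, is what tells you which cap each end of $\Gamma_q$ reaches in Step~1.

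(ii) Your $E_q$ breaks down when $V_q$ is short. If $\height(V_q)<cR^2/2$, for instance when $|\pi(q)|$ is close to $R$, the levels $\pm(h(\pi(q))-cR^2/4)$ cross and lie outside $V_q$. The chosen points then need not be in $B$. You need a case split, as in the paper. In the short case, take a nondegenerate compact subarc of $\Gamma_q\cap B$ through $q$. Its height is positive and at most $\height(V_q)+o_\epsilon(1)R^2$, which is within $cR^2$ of $\height(V_q)$. In the long case, check that every point between your two endpoints lies in $B$, not just the endpoints; this follows from the ordering plus the comparison inequality.

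(iii) For $F_q$, the $\prec$-extremal points $g_\pm$ of $f^{-1}(w)\cap B$ lie on $\partial B$, but you must show that $g_+$ is on the upper sheet $z=+h$ and $g_-$ on the lower one. By Step~1, $\Gamma_q\cap B$ contains points at height at least $h(\pi(q))-o_\epsilon(1)R^2$. Since $g_+$ succeeds them, $z(g_+)\ge h(\pi(q))-o_\epsilon(1)R^2$, which rules out the lower sheet unless $h(\pi(q))$ is small. When $h(\pi(q))$ is small, both sheets are near height $0$ and the estimate holds anyway.

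With these additions your proof goes through.
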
 
\begin{proof}
  Without loss of generality, we may suppose that $c < \frac{1}{16}$. After scaling and translating, it suffices to consider the case that $B=B_1(\zero)$.
  Let $\lambda\from \R^{2n}\to \R$,
  \begin{equation}\label{eq:def-lambda-fn}
    \lambda(v) = \frac{1}{4}\sqrt{1 - \min\{1,|v|^4\}}
  \end{equation}
  so that for any $r > 0$, 
  $$B_r(\zero) = \{(v,z)\in \HH : |z| < r^2 \lambda(r^{-1} v)\}.$$
  
  Let $0<\kappa < \frac{c}{16}$ be a small number such that $|\lambda(v_1) - \lambda(v_2)| < \frac{c}{8}$ for all $v_1,v_2 \in B_1(\zero;\R^{2n})$ with $|v_1 - v_2| < \kappa$.
  Let
  $$C = \{g\in \HH : |\pi(g) - \pi(q)| < \kappa\} = \pi^{-1}(B_\kappa(\pi(q);\R^{2n})).$$
  For $0 < s \le \frac{1}{2}$, let $C_s = C\cap z^{-1}([-s,s]) \subset 2B$. If $p\in C_{\frac{1}{2}}$, then $|\pi(p)| < 1 + \kappa$, so
  $$\|p\|_{\Kor} < \sqrt[4]{(1+\kappa)^4 + 16 z(p)^2} \le \sqrt[4]{2 + 4} < 2.$$
  That is, $C_{\frac{1}{2}}\subset 2B$.

  Suppose that $g, h \in C$. Then
  \[
  \begin{aligned}
  z(g^{-1}h) &= z(h) - z(g) + \frac{1}{2} \omega(-\pi(g), \pi(h)) \\
  &= z(h) - z(g) + \frac{1}{2} \omega(\pi(h)-\pi(g), \pi(g)).
  \end{aligned}
  \]
  We have $|\pi(h)-\pi(g)| < 2\kappa$ and $|\pi(g)|< 2$, so
  \begin{equation}\label{eq:gh-height}
    |z(g^{-1}h) - (z(h)-z(g))| < 2\kappa.
  \end{equation}

  By Lemma~\ref{lem:ControlF}, and Corollary~\ref{cor:fibers-are-vertical} if $\epsilon>0$ is small enough, then $\Gamma = f^{-1}(w)\cap 2B$ is a (possibly disconnected) $2$--vertical curve and
  \begin{equation}\label{eq:displace}
    |(f(g) - \pi(g)) - (f(q) - \pi(q))| = |f(g) - (w + \pi(g) - \pi(q))| < \kappa, 
  \end{equation}
  for all $g\in 2B$. 
  In particular, if $g\in \Gamma$, then $f(g) = f(q) = w$, so $|\pi(g) - \pi(q)| < \kappa$. Thus $\Gamma \subset C$.
  Furthermore, by Lemma~\ref{lem:vertical-properties}, the relation
  $$g \prec h \qquad \text{if $z(g^{-1}h) > 0$}$$
  is a total ordering on $\Gamma$.

  By Corollary~\ref{cor:fibers-are-vertical}, $\Gamma$ is a $1$--manifold which is relatively closed in $2B$. Let $\partial_-C_s = C\cap z^{-1}(-s)$ and $\partial_+C_s = C\cap z^{-1}(s)$ be the discs capping $C_s$. Then $\Gamma\subset C$, so
  \begin{equation}\label{eq:gamma-partialcs}
    \Gamma \cap \partial C_s \subset \partial_- C_s \cup \partial_+ C_s.
  \end{equation}

  We claim that if $s\in (\frac{c}{8},\frac{1}{2}]$, then exactly one component of $\Gamma \cap C_s$ connects $\partial_-C_s$ and $\partial_+C_s$. We will call this component $G_s$ and let $E_q = G_s$ and $F_q = G_{s'}$ for suitable $s$ and $s'$.

  Suppose that $s\in (\frac{c}{8},\frac{1}{2}]$. 
  For any $t$ with $|t| < \frac{1}{2}$, we have $C\cap z^{-1}(t)\subset 2B$. Then $\pi$ sends $\partial C\cap z^{-1}(t)$ to a sphere around $w$ with radius $\kappa$. By \eqref{eq:displace}, $\pi$ is $\kappa$--close to $f$ on $\partial C\cap z^{-1}(t)$, so $w$ is inside of $f(\partial C\cap z^{-1}(t))$. Therefore, $w\in f(C\cap z^{-1}(t))$ and thus $\Gamma \cap C\cap z^{-1}(t) \ne\emptyset$.

  Let $p_0 \in \Gamma \cap C \cap z^{-1}(0)$ and let $G_s$ be the connected component of $\Gamma \cap \closure(C_s)$ that contains $p_0$. By Corollary~\ref{cor:fibers-are-vertical}, $\Gamma$ is relatively closed in $2B$, so the compactness of $\closure(C_s)$ implies that $G_s$ is compact. Since $p_0\in \inter(C_s)$, $G_s$ is a nontrivial closed interval in $\Gamma$ connecting two points on $\partial C_s$. Let $g_-$ and $g_+$ be the endpoints of $G_s$.

  By Lemma~\ref{lem:vertical-properties}, $\prec$ is a total ordering on $G_s$, so we can label $g_-$ and $g_+$ so that $g_- \prec p_0 \prec g_+$. These lie in $\Gamma\cap \partial C_s$, so by \eqref{eq:gamma-partialcs}, $\{g_-,g_+\}\subset\partial_+C_s\cup \partial_-C_s$.

  For any $p_+\in \partial_+ C_s$, \eqref{eq:gh-height} implies that
  $$z(p_0^{-1}p_+) \ge (z(p_+) - z(p_0)) - 2\kappa = s - 2\kappa > 0,$$
  so $p_0 \prec p_+$. Similarly, $p_- \prec p_0$ for all $p_-\in \partial_- C_s$. Therefore, $g_-\in \partial_- C_s$ and $g_+ \in \partial_+ C_s$.
  By \eqref{eq:gh-height}, 
  \begin{equation}\label{eq:gs-height}
    |\height(G_s) - 2s| \le 2\kappa.
  \end{equation}
  
  Finally, if $H_s\ne G_s$ is another component of $\Gamma\cap C_s$ with endpoints $h_-\prec h_+$, then either $p_0 \prec h_- \prec h_+$ or $h_- \prec h_+\prec p_0$. In the first case, $\{h_-,h_+\}\subset \partial_+ C$, and in the second case, $\{h_-,h_+\}\subset \partial_- C$. 

  Now we define $E_q$ and $F_q$. Let $\lambda_0 = \lambda(\pi(q))$ so that $\height V_q=2\lambda_0$. We first consider $E_q$. If $\lambda_0 < \frac{c}{2}$, then we take $E_q$ to be the connected component of $f^{-1}(w)\cap B$ that contains $q$. Since $\Gamma$ is a $1$--manifold, $\diam (E_q)>0$. Further, $E_q \subset B\cap f^{-1}(w)\subset B\cap C$. By our choice of $\kappa$, we have $|z(g)| \le \lambda_0 + \frac{c}{8}$ for all $g\in B\cap C$, so if $e_-$ and $e_+$ are the endpoints of $E_q$, then by \eqref{eq:gh-height},
  $$0 < \height(E_q) \le z(e_+) - z(e_-) + 2\kappa \le 2\lambda_0 + \frac{c}{4} + 2\kappa < 2\lambda_0 + c,$$
  so $|\height(E_q) - \height(V_q)| = |\height(E_q) - 2\lambda_0| < c.$

  If $\lambda_0 \ge \frac{c}{2}$, we let $E_q=G_{\lambda_0 - \frac{c}{8}}$. Then $E_q$ is a nontrivial closed interval. Moreover, if $g\in C_{\lambda_0 - \frac{c}{8}}$, then $|\pi(g) - \pi(q)| < \kappa$, so
  $$|z(g)| \le \lambda_0 - \frac{c}{8} < \lambda(\pi(g)).$$
  That is, $g\in B$. 
  Thus $E_q\subset B \cap f^{-1}(w)$, and by \eqref{eq:gs-height}
  $$|\height(E_q) - 2\lambda_0| \le \frac{c}{4}+2\kappa < c,$$
  so $E_q$ satisfies the desired properties.

  We let $F_q = G_{\lambda_0 + \frac{c}{4}}$ and claim that this satisfies the desired properties.   By \eqref{eq:gs-height}, 
  $$|\height(F_q) - 2\lambda_0| \le \frac{c}{2}+2\kappa < c.$$
  Let $s=\lambda_0 + \frac{c}{4}$ and let $g_-$ and $g_+$ be the endpoints of $F_q$, so that $z(g_\pm) = \pm s$. Suppose that $p\in B\cap f^{-1}(w)$. Then $p \in B\cap C$, so our choice of $\kappa$ implies that $|z(p)| < \lambda_0 + \frac{c}{8}$, and by \eqref{eq:gh-height},
  $$z(p^{-1} g_+) \ge z(g_+) - z(p) - 2\kappa > \frac{c}{8} - 2\kappa > 0.$$
  That is, $p \prec g_+$. Similarly, $g_- \prec p$. By Lemma~\ref{lem:vertical-properties},
  $$F_q = \{p \in f^{-1}(w) \cap 2B : g_- \prec p \prec g_+\},$$
  so $p\in F_q$. Therefore, $f^{-1}(w)\cap B\subset F_q$, as desired.
\end{proof}

Given this lemma, we prove  Lemma~\ref{lem:error-bound-pointwise} as follows.
\begin{proof}[Proof of Lemma~\ref{lem:error-bound-pointwise}]
  After a translation, we can consider the case that $p=\zero$ and $f(\zero) = \zero$, so that $\hat{f}(q) = \pi(q)$ for all $q$.
  Let $\lambda$ be as in \eqref{eq:def-lambda-fn} and let $\lambda_R\from \R^{2n}\to \R$, $\lambda_R(v)=R^2\lambda(R^{-1}v)$, so that $B_R = B_R(\zero) = \{(v,z)\in \HH : |z| < \lambda_R(v)\}$. Let $\kappa>0$ be such that $|\lambda(v_1)-\lambda(v_2)|<\frac{c}{4}$ for all $v_1,v_2\in \R^{2n}$ with $|v_1-v_2|<\kappa$. Then 
  $$|\lambda_R(v_1)-\lambda_R(v_2)|<\frac{c}{4}R^2$$
  for all $v_1,v_2\in \R^{2n}$ with $|v_1-v_2|<\kappa R$. In particular, if $|v| \ge (1-\kappa)R$, then $\lambda_R(v) \le \frac{c}{4}R^2$.

  Let $\eta$ be the constant in Theorem \ref{thm:S-cH2}. We take $\rho > \max\{2C, 10\}$, where $C$ is as in Proposition~\ref{prop:levy-beta-fiber}. By Lemma~\ref{lem:ControlF}, this implies $|f(q) - \pi(q)| \lesssim \epsilon R$ for all $q\in 2B$. We choose $\epsilon <\eta$ so that  Lemma~\ref{lem:sandwich} holds and $|f(q) - \pi(q)| < \kappa R$ for all $q\in 2B$. 

  We consider two cases, depending on whether $w\in f(B)$.
  First, suppose that $w\not \in f(B)$. Since $|f(q) - \pi(q)| < \kappa R$ for all $q\in B$,
  $$B_{(1-\kappa)R}(\zero;\R^{2n})\subset f(B),$$
  so $|w| \ge (1 - \kappa)R$. By our choice of $\kappa$, we have $\height(\pi^{-1}(w) \cap B) = 2\lambda_R(w)\le \frac{c}{2} R^2$, so
  $$|\cH^2(f^{-1}(w)\cap B) - \cH^2(\pi^{-1}(w)\cap B)| = |\cH^2(\pi^{-1}(w)\cap B)| < c R^2.$$

  Now suppose that $w=f(q)$ for some $q\in B$.
  Let $E_q$ and $F_q$ be as in Lemma~\ref{lem:sandwich} and let $\zeta_E, \zeta_F\from [0,1] \to \HH$ be monotone increasing parameterizations of $E_q$ and $F_q$. Since $E_q \subset F_q \subset f^{-1}(w)\cap 2B$, it follows from the definition of $T$ and our choice of $w$ that $T(E_q) \le T(F_q) \le T_{w,2B}<\infty$.

  Let $\gamma_E = \pi\circ \zeta_E$. By Proposition~\ref{prop:levy-beta-fiber}, $S(\gamma_E)$ exists. By Theorem~\ref{thm:S-cH2}, $\cH^2(E_q) = \height(E_q) - S(\hat{\gamma}_E)$ and $\cH^2(V_q) = \height(V_q)$,
  where $\hat{\gamma}_E$ is the closed curve obtained by connecting the endpoints of $\gamma_E$ by a line segment. Therefore, 
  \[
  \begin{aligned}
  |\cH^2(E_q) - \cH^2(V_q)| &\le |\height(E_q) - \height(V_q)| + |S(\hat{\gamma}_E)| \\
  &\le c R^2 + |S(\hat{\gamma}_E)|.
  \end{aligned}
  \]
  By the remark after Proposition~\ref{prop:levy-beta-fiber},
  $$|S(\hat{\gamma}_{E})| \lesssim \diam(E_q)^2 \nu\left(T_{w,2B}\diam(E_q)^{-2}\right).$$
  We have $\diam(E_q)\le \diam(2B) = 4R$, so 
  $$|S(\hat{\gamma}_{E})| \lesssim 16 R^2  \nu\left(\frac{1}{16}R^{-2}T_{w,2B} \right) \lesssim  R^2 \nu\left(R^{-2}T_{w,2B} \right).$$
  (The first inequality uses the fact that $\nu$ is concave and $\nu(0)=0$ and the second uses the fact that $\nu$ is nondecreasing.)

  Let $\lambda_0=\lambda_R(\pi(q))$ so that $\mathcal{H}^2(V_q)=2\lambda_0$. Then
  \begin{equation}\label{eqn:Estimatee}
  |\cH^2(E_q) - 2\lambda_0| \lesssim c R^2 +  R^2 \nu(R^{-2}T_{w,2B}).
  \end{equation}
  The same argument and inequality hold with $E_q$ replaced by $F_q$, so since $E_q \subset f^{-1}(w)\cap B \subset F_q$, 
  $$|\cH^2(f^{-1}(w)\cap B) - 2\lambda_0| \lesssim cR^2 +  R^2\nu(R^{-2} T_{w,2B}).$$
  
  Since $|w-\pi(q)|=|f(q)-\pi(q)|<\kappa R$, we have $|\lambda_R(w)-\lambda_0|<\frac{c}{4}R^2$. Therefore,
  \begin{align*}
    |\cH^2(f^{-1}(w)\cap B) & - \cH^2(\pi^{-1}(w)\cap B)| \\
    & \le |\cH^2(f^{-1}(w)\cap B) - 2\lambda_0| + 2 |\lambda_R(w)-\lambda_0| \\
    & \lesssim c R^2 +  R^2\nu(R^{-2}T_{w,2B}).
  \end{align*}
  This proves \eqref{eq:error-equation-f-pi}, as desired.
\end{proof}

\subsection{Bounds on \texorpdfstring{$T$}{T}}\label{sec:proof-bounds-T}
In this section, we prove Lemma~\ref{lem:fubini} and Lemma~\ref{lem:error-density}. 

\begin{proof}[Proof of Lemma~\ref{lem:fubini}]
  Recall that $f\from \HH \to \R^{2n}$ is a $C^1_{\mathrm{H}}$ map, $B\subset \HH$ is a ball of radius $R$, and $f$ is $L$--Lipschitz on $5B$. We claim that 
  $$
  \int^*_{\R^{2n}} T(f^{-1}(w)\cap B)\ud w \lesssim_L \int_0^{cR} \int_{5B}\beta_f(v, \rho)^2 \ud v \frac{\mathrm d\rho}{\rho},
  $$
  for some constant $c>0$ which will be specified later.

  For $i\in \Z$, let $\cN_i$ be the maximal $2^{-i}$--net used to define $T$. For $\Gamma\subset \HH$, let $Q_{\Gamma,i}$ be as in \eqref{eq:def-Q-Gamma}. For $w\in \R^{2n}$ and $i\in \Z$, recall that
  $$
  Q_{w,i}:=Q_{f^{-1}(w)\cap B,i} = \{v\in \cN_i : f^{-1}(w)\cap B \cap B_{2^{-i}}(v) \ne \emptyset\}.
  $$
  Let $i_0 = -\lfloor \log_2(2R)\rfloor$ so that $R < 2^{-i_0} \le 2R$ and, by \eqref{eq:def-T}, 
  \begin{equation}\label{eq:twb-bound}
    T_{w,B} := T(f^{-1}(w)\cap B) \le \sum_{i=i_0}^\infty \sum_{v\in Q_{f^{-1}(w)\cap B,i}} 2^{-2i} \beta_f(v, A 2^{-i})^2.
  \end{equation}
  If $v\in Q_{f^{-1}(w)\cap B,i}$ for some $i\ge i_0$, then $B_{2R}(v)$ intersects $B$, so $v \in 3B$. Therefore,
  $Q_{w,i}\subset R_{w,i}$, where
  $$R_{w,i} = \{v \in \cN_i \cap 3B : w\in f(B_{2^{-i}}(v))\}.$$

  By the subadditivity of the upper integral,
  $$\int^*_{\R^{2n}} T_{w,B} \ud w
  \stackrel{\eqref{eq:twb-bound}}{\le} \sum_{i=i_0}^\infty \int^*_{\R^{2n}} \sum_{v\in R_{w,i}} 2^{-2i} \beta_f(v, A 2^{-i})^2 \ud w.$$
  If $v\in R_{w,i}$, then $v\in \cN_i\cap 3B$ and $w\in f(B_{2^{-i}}(v))$, so by Fubini's Theorem,
  \begin{align*}
    \int^*_{\R^{2n}} T_{w,B} \ud w & \le \sum_{i=i_0}^\infty \sum_{v \in \cN_i \cap 3B} \int_{f(B_{2^{-i}}(v))} 2^{-2i} \beta_f(v, A 2^{-i})^2 \ud w \\
    & = \sum_{i=i_0}^\infty \sum_{v \in \cN_i \cap 3B} \cL\left(f(B_{2^{-i}}(v))\right) 2^{-2i} \beta_f(v, A 2^{-i})^2,
  \end{align*}
  where $\cL$ is Lebesgue measure on $\R^{2n}$.
  Since $f$ is $L$--Lipschitz on $5B$, we have $\cL(f(B_{2^{-i}}(v))) \lesssim L^{2n} 2^{-2n i}$. Therefore,
  \begin{equation}\label{eq:fubini-interm}
    \int^*_{\R^{2n}} T_{w,B} \ud w \lesssim_L \sum_{i=i_0}^\infty \sum_{v\in \cN_i\cap 3B} 2^{-(2n+2)i} \beta_f(v, A 2^{-i})^2.
  \end{equation}

  For any $v\in \HH$ and any $i\ge i_0$, if $p \in B_{2^{-i}}(v)$ and $t\in [1,2]$, then Lemma~\ref{lem:beta-compare} implies
  $$\beta_f(v, A 2^{-i}) \lesssim \beta_f(p, 2 t A 2^{-i}),$$
  and $\cH^{2n+2}(B_{2^{-i}}(v)) \approx 2^{-(2n+2)i}$,
  so
  \[
  \begin{aligned}
    2^{-(2n+2)i} \beta_f(v, A 2^{-i})^2 & 
    \lesssim 
    \int_1^2 \int_{B_{2^{-i}}(v)} \beta_f(p, 2 t A 2^{-i})^2 \ud p \frac{\ud t}{t} \\
    &\approx \int_{2 A 2^{-i}}^{4 A 2^{-i}} \int_{B_{2^{-i}}(v)} \beta_f(p, r)^2 \ud p \frac{\ud r}{r}.
    \end{aligned}
  \]
  
  Let $\lambda = \ud p \otimes \frac{\ud r}{r}$ be the corresponding measure on $\HH\times \R$. Then 
  $$2^{-(2n+2)i} \beta_f(v, A 2^{-i})^2 \lesssim \int_{B_{2^{-i}}(v)\times (2 A 2^{-i}, 4 A 2^{-i}]} \beta_f^2 \ud \lambda.$$
  Applying this to \eqref{eq:fubini-interm}, we find
  $$\int^*_{\R^{2n}} T_{w,B} \ud w \lesssim_L \sum_{i=i_0}^\infty \sum_{v\in \cN_i\cap 3B} \int_{B_{2^{-i}}(v)\times (2 A 2^{-i}, 4 A 2^{-i}]} \beta_f^2 \ud \lambda.$$
  If $v\in \cN_i\cap 3B$ and $i\ge i_0$, then $B_{2^{-i}}(v)\times (2 A 2^{-i}, 4 A 2^{-i}]$ is contained in $5B\times [0, 8AR]$. Since $\cN_i$ is a $2^{-i}$--net and $\HH$ is a doubling metric space, these sets have bounded multiplicity, and
  $$\int^*_{\R^{2n}} T_{w,B} \ud w \lesssim \int_{5B \times [0, 8 AR]} \beta_f^2\ud \lambda,$$
  as desired.
\end{proof}

We use this bound and the bounds on $\beta_f$ from Theorem~\ref{thm:dorronsoro} to prove Lemma~\ref{lem:error-density}.

\begin{proof}[Proof of Lemma~\ref{lem:error-density}]
  Let $f\from \HH \to \R^{2n}$ be a $C^1_{\mathrm{H}}$ map, and let $B=B_R(q) \subset \HH$ be a ball. We claim that
  \begin{equation}\label{eq:error-density-conclusion}
    \lim_{r\to 0} r^{-2n-2}\int^*_{\R^{2n}} T(B_r(p) \cap f^{-1}(w)) \ud w = 0
  \end{equation}
  for almost every $p \in B$.

  Let $L = \Lip(f|_{10B})$. 
  For $p\in \R^{2n}$ and $r>0$, let
  $$\tau(p, r) = r^{-2n-2}\int^*_{\R^{2n}} T(B_r(p) \cap f^{-1}(w)) \ud w.$$
  Let $c>0$ be as in Lemma~\ref{lem:fubini} and let 
  $$I(p,r) = \int_0^{cr} \int_{B_{5r}(p)}\beta_f(v, \rho)^2 \ud v \frac{\mathrm d\rho}{\rho}.$$
  Theorem~\ref{thm:dorronsoro} implies that $I(p,r) < \infty$ for any $p$ and $r$. If $p\in B$ and $0<r<R$, then $f$ is $L$--Lipschitz on $B_{5r}(p)$, so by Lemma~\ref{lem:fubini},
  \begin{equation}\label{eq:fubini-app}
    r^{2n+2} \tau(p,r) \lesssim_L I(p,r).
  \end{equation}
  
  For $\epsilon > 0,$ let
  $$S_\epsilon = \{p \in B : \limsup_{r\to 0} \tau(p,r) > \epsilon\}.$$
  We claim that $S_\epsilon$ has measure zero. For any $0 < \delta < R$ and $p\in S_\epsilon$, there is a radius $R(p,\delta)$ such that $0 < R(p,\delta) < \delta$ and $\tau(p, R(p,\delta)) > \epsilon$.

  Let $0 < \delta < \frac{R}{5}$ be small and for each $p\in S_\epsilon$, let $D_p = B_{5 R(p,\delta)}(p)$. The $D_p$'s cover $S_\epsilon$, so by the Vitali Covering Lemma, there are countably many $p_i\in S_\epsilon$ such that the $D_{p_i}$'s are disjoint and $S_\epsilon \subset \bigcup_i 5D_{p_i}$. 
  Let $R_i =  R(p_i,\delta)$, so that
  $$\cH^{2n+2}(S_\epsilon)\lesssim \sum_i \cH^{2n+2}(5D_{p_i}) \lesssim \sum_i R_i^{2n+2}.$$
  By \eqref{eq:fubini-app} and our choice of $R_i$,
  $$\sum_i I(p_i,R_i) \gtrsim_L \sum_i R_i^{2n+2} \tau(p_i,R_i) > \epsilon \sum_i R_i^{2n+2} \gtrsim \epsilon \cH^{2n+2}(S_\epsilon).$$
  
  Since $5R_i < 5 \delta < R$ for all $i$, the $D_{p_i}$'s are disjoint subsets of $2B$. Therefore,
  \[
  \begin{aligned}
  \epsilon \cH^{2n+2}(S_\epsilon)&\lesssim_L \sum_i I(p_i,R_i) = \sum_i \int_0^{cR_i} \int_{D_{p_i}}\beta_f(v, \rho)^2 \ud v \frac{\mathrm d\rho}{\rho} \\
  &\le \int_0^{c\delta} \int_{2B}\beta_f(v, \rho)^2 \ud v \frac{\mathrm d\rho}{\rho}.
  \end{aligned}
  \]
  The integral on the right is bounded by $I(q,R)$, which is finite.
  Taking the limit as $\delta \to 0$, we have
  $$\epsilon \cH^{2n+2}(S_\epsilon)\lesssim \lim_{\delta \to 0} \int_0^{c\delta} \int_{2B}\beta_f(v, \rho)^2 \ud v \frac{\mathrm d\rho}{\rho} = 0$$
  by dominated convergence, so $\cH^{2n+2}(S_\epsilon)=0$. Thus, for almost every $p\in B$, we have $p\not \in \bigcup_{k} S_{\frac{1}{k}}$, so $\lim_{r\to 0} \tau(p,r) = 0$, as desired. 
\end{proof}

\section{Area formula based on approximations}\label{sec:area-formula}

In this section, we prove Proposition~\ref{prop:vertical-curve-area}. We adopt the notation of Section~\ref{sec:symplectic-area}, so that $K\subset \HH$ is a compact connected $2$--vertical curve with $\diam K>0$, $\zeta\from [0,1]\to K$ is a monotone increasing parameterization of $K$, $\gamma = \pi\circ \zeta$, and $\cT$ is a $\mu$--dyadic patchwork for $K$.

Let $C>0$ and let $\Lambda\from \cT \to \R^{2n}$ be a set of $C$--approximating points for $\cT$ as in \eqref{eq:approx-points}. Let $g_i = g_{\Lambda,i}$ be as in \eqref{eq:def-gi-Lambda}, and let $\delta_\Lambda$ and $\sigma(\Lambda)$ be as in \eqref{eq:def-delta-lambda} and \eqref{eq:def-sigma-lambda}. We suppose that $\sigma(\Lambda)<\infty$. Throughout this section, the implicit constants in $\lesssim, \gtrsim$, and $\approx$ will depend on $\mu$, $C$, and $n$.

We first establish some notation and construct a map $G\from [0,1]\times [0,1] \to \R^{2n}$ that interpolates between the $g_i$'s.
For points $p_1,\dots, p_k$ let $\overline{p_1,\dots,p_k}$ denote the curve that connects $p_1,\dots,p_k$ by line segments. If $h_1$ and $h_2$ are paths with $h_1(1)=h_2(0)$, let $h_1\diamond h_2$ be the concatenation of $h_1$ and $h_2$. For a Lipschitz curve $\alpha$, let $\fcl{\alpha}$ represent the fundamental class of $\alpha$, viewed as a Lipschitz chain; likewise, for a polygon $U\subset \R^2$, let $\fcl{U}$ be its fundamental class. For a locally Lipschitz map $g:X\to Y$, we let $g_\sharp$ be the corresponding pushforward map, which takes Lipschitz chains on $X$ to Lipschitz chains on $Y$. The symplectic area $S(\alpha)$ is defined for any Lipschitz curve $\alpha$, and we extend it to the set of Lipschitz $1$--chains in $\R^{2n}$ by linearity.

Let $G\from [0,1]\times [0,1] \to \R^{2n}$ be the map such that for any $i\ge 0$ and any $(t, u) \in [0,1]\times [2^{-i-1}, 2^{-i}]$, 
\begin{equation}\label{eq:def-G}
  G(t, u) = g_{i+1}(t) + \frac{u - 2^{-i-1}}{2^{-i} - 2^{-i-1}} (g_i(t) - g_{i+1}(t))
\end{equation}
and $G(t,0)=\gamma(t)$ for all $t$.
That is, $G(t, 2^{-i}) = g_i(t)$ for all $i\ge 0$, and $G$ interpolates linearly between $g_i$ and $g_{i+1}$ on $[0,1]\times [2^{-i-1}, 2^{-i}]$.

We note the following bounds. 
\begin{lemma}\label{lem:nbhd-diam}
  For every $v\in \cT^i$,
  \begin{equation}\label{eq:delta-lambda-bound}
    \delta_\Lambda(v) \lesssim 2^{-i}\diam K.
  \end{equation}
  For every $(t,u) \in [0,1]\times [0,1]$,
  \begin{equation}\label{eq:G-gamma-dist}
    |G(t,u) - \gamma(t)| \lesssim u \diam(K),
  \end{equation}
  so $G$ is continuous.
\end{lemma}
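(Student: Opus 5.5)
For \eqref{eq:delta-lambda-bound}, I will work straight from the definitions of approximating points and of the patchwork. Fix $v\in\cT^i$. The set $g_i(J_v)\cup g_{i+1}(J_v)$ lies in the convex hull of the points $\Lambda(v')$, where $v'$ ranges over the weak neighbors of $v$ (generation $i$) and the weak neighbors of children of $v$ (generation $i+1$). This holds because on $J_v$ the piecewise-linear curves $g_i$ and $g_{i+1}$ only interpolate between values at midpoints $m_{v'}$ of intervals $J_{v'}$ that meet $J_v$ or its subintervals; at the ends of $[0,1]$ they are constant extensions of such values.

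For each such $v'$, \eqref{eq:approx-points} gives
\[
|\Lambda(v')-\gamma(m_{v'})|\le C2^{-\gen(v')}\diam K\le C2^{-i}\diam K .
\]
Next, $m_{v'}\in J_{v'}$ and $J_{v'}$ meets $J_v$, so $\zeta(m_{v'})$ and $\zeta(m_v)$ both lie in $I_{v'}\cup I_v$, which is connected. Hence by \eqref{eq:patch-diam}, and since $\pi$ is $1$--Lipschitz,
\[
|\gamma(m_{v'})-\gamma(m_v)|\le \diam I_{v'}+\diam I_v\le 2\mu2^{-i}\diam K .
\]
So all the relevant $\Lambda(v')$ lie within $(C+2\mu)2^{-i}\diam K$ of $\gamma(m_v)$, and the convex hull has diameter $\lesssim 2^{-i}\diam K$.

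For \eqref{eq:G-gamma-dist}, I first bound $|g_i(t)-\gamma(t)|$. Take $t\in[s_{j-1},s_j]$, so that $g_i(t)$ is a convex combination of $\Lambda(w_{j-1})$ and $\Lambda(w_j)$ (or equals one of them at the ends). The interval $[s_{j-1},s_j]$ is contained in $J_{w_{j-1}}\cup J_{w_j}$, so $t$ lies in one of these. Hence $\gamma(t)$ is within $2\mu2^{-i}\diam K$ of both $\gamma(m_{w_{j-1}})$ and $\gamma(m_{w_j})$, using again the connectedness of $I_{w_{j-1}}\cup I_{w_j}$ and \eqref{eq:patch-diam}. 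Combining this with \eqref{eq:approx-points} gives
\[
|g_i(t)-\gamma(t)|\lesssim 2^{-i}\diam K .
\]
For $u\in[2^{-i-1},2^{-i}]$, the point $G(t,u)$ is a convex combination of $g_i(t)$ and $g_{i+1}(t)$, so $|G(t,u)-\gamma(t)|\lesssim 2^{-i}\diam K\le 2u\diam K$. At $u=0$ the bound is trivial.

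For continuity of $G$, note that on each strip $[0,1]\times[2^{-i-1},2^{-i}]$ the map $G$ is continuous, and the definitions agree on the common lines $u=2^{-i}$. Thus $G$ is continuous on $[0,1]\times(0,1]$. At a point $(t_0,0)$ I will write
\[
|G(t,u)-\gamma(t_0)|\le|G(t,u)-\gamma(t)|+|\gamma(t)-\gamma(t_0)|,
\]
where the first term is $\lesssim u\diam K$ and the second tends to $0$ by continuity of $\gamma$.

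The only delicate point is the bookkeeping showing that $t$ and the relevant midpoints lie in adjacent patchwork intervals. The boundary segments $[0,s_1]$ and $[s_k,1]$ need a separate check, but they lie in $J_{w_1}$ and $J_{w_k}$ respectively, so the same bound applies.
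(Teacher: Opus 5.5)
Your proof is correct and follows essentially the paper's argument: you combine \eqref{eq:approx-points}, the patchwork bound \eqref{eq:patch-diam}, and the fact that $\pi$ is $1$--Lipschitz to get $|g_i(t)-\gamma(t)|\lesssim 2^{-i}\diam K$, and then interpolate in $u$. The only differences are that you obtain \eqref{eq:delta-lambda-bound} directly from the convex-hull containment of $g_i(J_v)\cup g_{i+1}(J_v)$ (which the paper uses later, in the proof of Proposition~\ref{prop:levy-beta-fiber}), whereas the paper deduces it from the pointwise bound on $|\gamma-g_j|$ for $j\ge i$, and that you spell out the continuity of $G$, which the paper leaves implicit.
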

\begin{proof}
  Let $i\ge 0$ and $v\in \cT^i$. If $v'$ is a weak neighbor of $v$, then $J_v\cap J_{v'}\ne \emptyset$, so \eqref{eq:patch-diam} and \eqref{eq:approx-points} imply that
  \begin{equation}\label{eqn:Lambdavv'}
  \begin{aligned}
  |\Lambda(v) - \Lambda(v')| &\le |\Lambda(v) - \gamma(m_v)| + |\gamma(m_v) - \gamma(m_{v'})| + |\Lambda(v') - \gamma(m_{v'})|\\
  &\le 2 C 2^{-i} \diam(K) + \diam(\gamma(J_v)) + \diam(\gamma(J_{v'})) \\
  &\lesssim 2^{-i} \diam(K).
  \end{aligned}
  \end{equation}

  For any $t\in [0,1]$, there are $v,v'\in \cT^i$ such that $v$ and $v'$ are weak neighbors and $g_i(t)$ lies on $\overline{\Lambda(v), \Lambda(v')} = \overline{g_i(m_v), g_i(m_{v'})}$. Thus, by \eqref{eq:patch-diam}, \eqref{eq:approx-points}, and \eqref{eqn:Lambdavv'},
  \begin{equation}\label{eq:gamma-G-intermediate}
  \begin{aligned}
    |\gamma(t) - g_i(t)| & \lesssim |\gamma(t) - \gamma(m_v)| + |\gamma(m_v) - g_i(m_v)| + |g_i(m_v) - g_i(t)|\\
    & \le \diam(\gamma(J_v)) + C 2^{-i}\diam(K) + |\Lambda(v) - \Lambda(v')| \\
    & \lesssim 2^{-i}\diam(K).
    \end{aligned}
  \end{equation}

  This proves \eqref{eq:G-gamma-dist} when $u=2^{-i}$; it is trivial when $u=0$. If $u>0$, then $u\in [2^{-i},2^{-i+1}]$ for some $i$, and $G(t,u)$ is on the line segment from $G(t,2^{-i})$ to $G(t,2^{-i+1})$. Therefore, by the triangle inequality and \eqref{eq:gamma-G-intermediate},
  $$
  |\gamma(t) - G(t,u)| \lesssim 2^{-i+1}\diam(K) \lesssim u\diam(K).
  $$
  That is, \eqref{eq:G-gamma-dist} holds for every $(t,u)\in [0,1]\times [0,1]$.

  Finally, we prove \eqref{eq:delta-lambda-bound}. Let $v\in \cT^i$. By \eqref{eq:gamma-G-intermediate} and \eqref{eq:patch-diam}, for any $j\ge i$ and $s\in J_v$,
  \begin{multline*}
    |\gamma(m_v) - g_j(s)| \le |\gamma(m_v) - \gamma(s)| + |\gamma(s) - g_j(s)| \\ \lesssim \diam(\gamma(J_v)) + 2^{-j}\diam(K) \lesssim 2^{-i} \diam(K).
  \end{multline*}
  In particular, if $y \in g_{i}(J_v) \cup g_{i+1}(J_v)$, then $|\gamma(m_v) - y| \lesssim 2^{-i}\diam(K)$. It follows that
  $$\delta_\Lambda(v) = \diam(g_{i}(J_v) \cup g_{i+1}(J_v)) \lesssim 2^{-i}\diam(K),$$
  proving \eqref{eq:delta-lambda-bound}, as desired.
\end{proof}

Equation \eqref{eq:G-gamma-dist} with $u=2^{-i}$ implies that $|g_i(t) - \gamma(t)| \lesssim 2^{-i}\diam(K)$, so $g_i$ converges uniformly to $\gamma$ as $i\to \infty$. Each curve $g_i$ has finite length, so $S(g_i)$ exists for all $i$, but $\ell(g_i)$ may be unbounded. We will show that even if $\ell(g_i)$ is unbounded, $S(g_i)$ converges to $S(\gamma)$ as $i\to \infty$. We will need several lemmas, which we will state here and prove in the following subsections.

For every $i\ge 0$ and $v\in \cT^i$, let $R_v$ be the rectangle
\begin{equation}\label{eq:def-rv}
  R_v=J_v\times [2^{-i-1}, 2^{-i}] \subset [0,1]^2
\end{equation}
and let $\theta_v := G|_{\partial R_v}$. This is a piecewise-linear closed curve constructed by connecting $g_i|_{J_v}$ and $g_{i+1}|_{J_v}$ by line segments between their endpoints.

We use the $\theta_v$'s to calculate the limit of $S(g_i)$ as $i\to \infty$. Recall that for any curve $g$, $\hat{g}$ denotes the closed curve obtained by connecting the endpoints of $g$ by a line segment.
\begin{lemma}\label{lem:sgi-converge}
  For any $v\in \cT$, 
  \begin{equation}\label{eq:ell-theta-v}
    \ell(\theta_v) \lesssim \delta_\Lambda(v) \lesssim 2^{-\gen(v)}\diam(K).
  \end{equation}
  Therefore,
  $$\sum_{v\in \cT} |S(\theta_v)| \lesssim \sum_{v\in \cT} \delta_\Lambda(v)^2 = \sigma(\Lambda)<\infty.$$
  It follows that $\sum_{v\in \cT} S(\theta_v)$ converges absolutely. Let
  \begin{equation}\label{eq:def-psi}
    \psi = (G\circ \overline{(1,0),(1,1)}) \diamond (G\circ \overline{(0,1),(0,0)}).
  \end{equation}
  Since $G$ is constant on the top edge of $[0,1]^2$, this is a curve from $G(1,0) = \gamma(1)$ to $G(0,0) = \gamma(0)$.
  Let $\Phi = \sum_{v\in \cT} S(\theta_v)$. 
  Then $\ell(\psi)<\infty$ and
  $$\lim_{i\to \infty} S(g_i) = \Phi - S(\psi).$$
\end{lemma}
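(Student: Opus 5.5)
The plan is to prove the three claims in order: the length bound on each $\theta_v$, the summability of $\sum_v|S(\theta_v)|$, and the telescoping identity for $S(g_i)$.

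\textbf{Step 1: the length of $\theta_v$.} Let $v\in\cT^i$. By definition, $\theta_v$ consists of four pieces: $g_i|_{J_v}$, $g_{i+1}|_{J_v}$, and the two vertical edges, which are the segments joining $g_i$ and $g_{i+1}$ at the endpoints of $J_v$.

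The vertical edges are segments between points of $g_i(J_v)\cup g_{i+1}(J_v)$, so each has length at most $\delta_\Lambda(v)$.

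For the horizontal pieces, note that $J_v$ contains only boundedly many breakpoints of $g_i$ and $g_{i+1}$. These breakpoints are the midpoints $m_w$ for $w\in\cT^i\cup\cT^{i+1}$ with $J_w\cap J_v\neq\emptyset$, plus possibly the points $0,1$. Their number is bounded because $\cT$ has degree at most $\mu$ and the intervals $J_w$ in one generation partition $[0,1]$. Restricted to $J_v$, each of $g_i$ and $g_{i+1}$ is therefore a polygonal path with $O(\mu)$ vertices, all lying in $g_i(J_v)\cup g_{i+1}(J_v)$. Each segment then has length at most $\delta_\Lambda(v)$, which gives $\ell(\theta_v)\lesssim\delta_\Lambda(v)$.

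The second inequality in \eqref{eq:ell-theta-v} is \eqref{eq:delta-lambda-bound} from Lemma~\ref{lem:nbhd-diam}.

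\textbf{Step 2: summability.} Since $\theta_v$ is closed, Lemma~\ref{lem:symplectic-area-props-v2}(3) gives $|S(\theta_v)|\le\ell(\theta_v)^2\lesssim\delta_\Lambda(v)^2$. Summing over $v\in\cT$ yields $\sum_v|S(\theta_v)|\lesssim\sigma(\Lambda)<\infty$, so $\sum_v S(\theta_v)$ converges absolutely.

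\textbf{Step 3: telescoping.} Here $S$ is treated as linear on Lipschitz $1$-chains. For each $i$, the rectangles $R_v$ with $v\in\cT^i$ tile the strip $[0,1]\times[2^{-i-1},2^{-i}]$. Summing the boundary chains $\fcl{\partial R_v}$ cancels the interior vertical edges and leaves $\partial([0,1]\times[2^{-i-1},2^{-i}])$. Pushing forward by $G$, which is Lipschitz on each strip because it is piecewise linear there, gives
\[
\sum_{v\in\cT^i}S(\theta_v)=S(g_{i+1})-S(g_i)+S(\text{side pieces}).
\]
The side pieces are $G$ restricted to $\{1\}\times[2^{-i-1},2^{-i}]$ and $\{0\}\times[2^{-i-1},2^{-i}]$. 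The orientation is fixed so that the signs agree with $\psi$.

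Summing over $i=0,\dots,k-1$ gives
\[
\sum_{\gen(v)<k}S(\theta_v)=S(g_k)-S(g_0)+S(\psi_k),
\]
where $\psi_k$ is $G$ restricted to the side edges with $u\in[2^{-k},1]$, plus the top edge. The top edge contributes nothing, since $g_0$ is constant and so $S(g_0)=0$ and the top edge is a point.

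\textbf{Step 4: passing to the limit.} The side segments at level $i$ have length $|g_i(1)-g_{i+1}(1)|\lesssim 2^{-i}\diam K$ by \eqref{eq:G-gamma-dist}, and similarly at $t=0$. Hence $\ell(\psi)<\infty$. Since $\psi_k\to\psi$ uniformly with bounded lengths, Lemma~\ref{lem:symplectic-area-props-v2}(4) gives $S(\psi_k)\to S(\psi)$; alternatively, the tails of the Lipschitz integral are $O(\ell\cdot\sup|\cdot|)$ and vanish. Letting $k\to\infty$ and using absolute convergence gives $\lim_k S(g_k)=\Phi-S(\psi)$, with the sign of $S(\psi)$ matching the orientation in \eqref{eq:def-psi}.

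\textbf{Main obstacle.} I expect the delicate part to be the bookkeeping in Step 3. One must check that $G\circ\partial R_v$ is exactly $\theta_v$ with a consistent orientation. One must also check that the shared vertical edges between adjacent $R_v$, at the common endpoints of $J_v$, truly cancel, which holds because $G$ is single-valued there. Finally, the endpoint segments of $g_i$ on $[0,s_1]$ and $[s_k,1]$ are constant, so they contribute zero length and pose no problem.
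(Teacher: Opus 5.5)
Your proposal is correct and follows essentially the same route as the paper. You bound $\ell(\theta_v)$ by counting its boundedly many segments, each with endpoints in $g_i(J_v)\cup g_{i+1}(J_v)$, then get $|S(\theta_v)|\lesssim\delta_\Lambda(v)^2$ from Lemma~\ref{lem:symplectic-area-props-v2}(3), decompose $G$ on $\partial([0,1]\times[2^{-k},1])$ into the chains $\fcl{\theta_v}$ to obtain $\sum_{\gen(v)<k}S(\theta_v)=S(g_k)+S(\psi_k)$, and pass to the limit with Lemma~\ref{lem:symplectic-area-props-v2}(4); the only cosmetic difference is that you telescope strip by strip, whereas the paper sums the boundary chains of all $R_v$ with $\gen(v)<k$ at once.
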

\begin{proof}
  For any $v\in \cT^i$, $\theta_v$ is a piecewise-linear curve made of up to $\mu+5$ segments (two for the top edge of $R_v$, two for the sides, up to $\mu+1$ for the bottom edge). The endpoints of each segment lie on $g_i(J_v)$ or $g_{i+1}(J_v)$, so each segment has length at most $\delta_\Lambda(v)$. Thus, $\ell(\theta_v) \le (\mu + 5)\delta_\Lambda(v)$. With \eqref{eq:delta-lambda-bound}, this proves \eqref{eq:ell-theta-v}. 

  By Lemma~\ref{lem:symplectic-area-props-v2}.(3),
  $$
  |S(\theta_v)| \leq \ell(\theta_v)^2 \lesssim \delta_\Lambda(v)^2,
  $$
  so $\sum_{v\in \cT} |S(\theta_v)| \lesssim \sigma(\Lambda)<\infty$. Therefore, $\sum_{v\in \cT}S(\theta_v)$ converges absolutely.

  For any $i>0$, $G$ sends $\partial([0,1]\times [2^{-i},1])$ to the closed curve
  $$h_i = (G\circ \overline{(1,2^{-i}),(1,1)}) \diamond (G\circ \overline{(0,1),(0,2^{-i})}) \diamond g_i.$$
  (We omit the segment $G\circ \overline{(1,1),(0,1)}$ because $G$ is constant on this segment.)
  Since   
  $$[0,1]\times [2^{-i},1] = \bigcup_{v\in \cT^{<i}}R_v,$$
  we have
  $$
  \fcl{h_i} = \partial G_\sharp\Big(\big\llbracket[0,1]\times [2^{-i},1]\big\rrbracket\Big) = \sum_{v\in \cT^{<i}} \partial G_\sharp(\fcl{R_v}) = \sum_{v\in \cT^{<i}}\fcl{\theta_v}.
  $$
  By the linearity of $S$,
  $$
  S(h_i) = \sum_{v\in \cT^{<i}}S(\theta_v).
  $$

  This lets us calculate $\lim S(g_i)$. Let
  $$\psi_i = \left(G\circ \overline{(1,2^{-i}),(1,1)}\right) \diamond \left(G\circ \overline{(0,1),(0,2^{-i})}\right),$$
  so that $h_i = \psi_i \diamond g_i$ and thus $S(h_i) = S(\psi_i) + S(g_i)$ by Lemma \ref{lem:symplectic-area-props-v2}.(2).
  By \eqref{eq:G-gamma-dist}, for all $i$ and for $a=0,1$
  $$\ell(G \circ \overline{(a,2^{-i-1}),(a,2^{-i})}) = |g_{i+1}(a) - g_i(a)| \lesssim 2^{-i}\diam K,$$
  so
  $$\ell(\psi_i) \lesssim \sum_{j=0}^{i-1} 2^{-j}\diam K \lesssim \diam K.$$
  That is, $\ell(\psi_i)$ is uniformly bounded. Since $\psi_i\to \psi$ uniformly and $\ell(\psi)<\infty$, we can use Lemma~\ref{lem:symplectic-area-props-v2} to show that
  $$
  \lim_{i\to \infty} S(g_i) = \lim_{i\to \infty} S(h_i) - \lim_{i\to \infty} S(\psi_i) = \sum_{v\in \cT}S(\theta_v) - S(\psi),
  $$
  as desired.
\end{proof}

This shows that $S(g_i)$ converges as $i\to \infty$. It remains to show that $S(\gamma) = \lim_{i\to \infty} S(g_i)$. For any partition
$P=\{t_0,\dots,t_k\}$ of $[0,1]$ with $0=t_0<\dots<t_k=1$, let $\gamma_P$ be the piecewise-linear curve connecting $\gamma(t_0),\dots, \gamma(t_{k}) \in \R^{2n}$, parameterized so that $\gamma_P(t_i) = \gamma(t_i)$ for all $i$. 
We will use $G$ to construct a curve $\alpha_P\from [0,1]\to \R^{2n}$ such that $\alpha_P(t_j) = \gamma(t_j)$ for all $j$, then prove Proposition~\ref{prop:vertical-curve-area} by comparing $\alpha_P$ and $\gamma_P$.

Let $v_0$ denote the root of $\cT$ and let
\begin{equation}\label{eq:def-cU}
  \cU = \cU_P = \{v\in \cT : J_{v}\cap P\ne \emptyset\}.
\end{equation}
For any subset $\cS\subset \cT$, let
\begin{equation}\label{eq:def-R}
  R(\cS) = \bigcup_{v\in \cS} R_v \subset [0,1]^2.
\end{equation}
(See Figure~\ref{fig:coherent} for an example.)

In Section~\ref{sec:proof-arch-lengths}, we will prove the following lemma.
\begin{lemma}\label{lem:boundary-arches}
  The set $R(\cU_P)$ is a simply-connected noncompact planar manifold, homeomorphic to a closed disc with $k+1$ boundary points removed. For each $i=1,\dots,k$, $R(\cU_P)$ has a boundary component connecting $(t_{i-1},0)$ to $(t_{i},0)$, and this component can be parameterized by a map $u=(\alpha_1,\alpha_2)\from [0,1]\to \partial R(\cU_P)$ such that $\alpha_1$ is nondecreasing and $\alpha_2(x)>0$ for all $x\in (0,1)$.
\end{lemma}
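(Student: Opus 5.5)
We need to understand the set $R(\cU_P)$ and plan the proof. The rectangles $R_v$ for $v\in\cT^i$ tile the strip $[0,1]\times[2^{-i-1},2^{-i}]$ (their $t$-projections $J_v$ partition $[0,1]$), so $R(\cT)=[0,1]\times(0,1]$. The key structural fact is that $\cU=\cU_P$ is \emph{closed upward} in the tree: if $J_v\cap P\neq\emptyset$ then $J_{\cP(v)}\supset J_v$ also meets $P$, so $\cP(v)\in\cU$. Also, for each generation $i$, the set $\cU\cap\cT^i$ consists of those $v$ whose interval $J_v$ contains some $t_j$. Since each $t_j$ lies in at most two intervals of generation $i$ (exactly two if it is a common endpoint), the slice $R(\cU)\cap([0,1]\times[2^{-i-1},2^{-i}])$ is a union of at most $2(k+1)$ blocks of the form $J\times[2^{-i-1},2^{-i}]$, where $J$ is a closed interval that is a union of consecutive $J_v$'s.

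The plan is to describe $R(\cU)$ as the region above a ``roof'' function. For each $t_j$ and each $i$, let $N_i(t_j)=\bigcup\{J_v: v\in\cT^i,\ t_j\in J_v\}$, a closed interval containing $t_j$; these are nested decreasing in $i$ and shrink to $\{t_j\}$ as $i\to\infty$, since $\diam I_v\to0$ by \eqref{eq:patch-diam} and $\zeta$ is a homeomorphism. Then $R(\cU)=\{(t,u): u>0,\ t\in N_{i(u)}(t_j)\text{ for some } j\}$, where $i(u)$ is determined by $u\in[2^{-i-1},2^{-i}]$. At the boundary heights $u=2^{-i-1}$ both adjacent generations apply, and this is consistent because nesting makes the slice at generation $i+1$ smaller. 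So in each horizontal slice $R(\cU)$ is a finite union of intervals that shrinks as $u$ decreases. Its complement in $[0,1]\times(0,1]$ is therefore a union of $k$ ``arch'' regions $W_j$ sitting over the open intervals $(t_{j-1},t_j)$, together with nothing at the ends, since $0=t_0$ and $1=t_k$ lie in $P$. Near $u=0$ the slices separate, so the intervals $(t_{j-1},t_j)$ eventually lie in distinct gaps. The region $W_j$ is bounded by the graph-like staircase $\{(t,u): u=h_j(t)\}$, where $h_j(t)=\sup\{u:(t,u)\in R(\cU)\}$ for $t\in(t_{j-1},t_j)$. Because the slices are nested, $h_j$ is positive on the open interval and tends to $0$ at the endpoints. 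It is piecewise constant, with finitely many jumps on each compact subinterval, since only finitely many generations are involved away from $t_{j-1},t_j$ by the shrinking of $N_i$.

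The boundary arch is then the graph of the staircase, with its vertical jump segments inserted, running from $(t_{j-1},0)$ to $(t_j,0)$. We parametrize it by first coordinate, traversing each vertical jump at constant $t$. This gives $\alpha_1$ nondecreasing and $\alpha_2>0$ in the interior, continuous up to the endpoints because $h_j\to0$ there. Topologically, $R(\cU)$ is then the closed square $[0,1]\times[0,1]$ with the $k$ open arch regions $W_j$ (each meeting the bottom edge exactly in $(t_{j-1},t_j)$) removed, minus the bottom edge. What remains of the bottom edge is the points $(t_j,0)$. Removing $k$ disjoint boundary-adjacent open topological discs from a closed disc leaves a closed disc, by a Schoenflies-type argument or by an explicit deformation pushing each arch region down, one at a time. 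Removing the $k+1$ points $(t_j,0)$ then gives the stated description, and simple connectivity follows.

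The main obstacle is the topological bookkeeping near $u=0$, namely showing that the arches are genuine arcs (continuity at $(t_j,0)$) and that the resulting set is a manifold with boundary near points where infinitely many rectangles accumulate. I would handle this by using the uniform shrinking of $N_i(t_j)$ to $t_j$, and by noting that every point with $u>0$ has a neighborhood meeting only finitely many $R_v$.
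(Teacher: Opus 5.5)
Your argument is correct, but it follows a different route from the paper.

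\textbf{The paper's route.} The paper proves a general statement, Lemma~\ref{lem:coherent-boundary}, valid for any semicoherent $\cS$.
\begin{itemize}
\item It regards the $R_v$ as the $2$--cells of a locally finite cellulation of $[0,1]\times(0,1]$. It observes that at a vertex of the relative boundary either two or four boundary edges meet, and semicoherence excludes the checkerboard four-edge configuration. Hence $R(\cS)$ is a planar manifold with boundary.
\item Simple connectivity comes from the upward vertical contraction.
\item It uses the classification fact that a simply connected planar manifold with a circle boundary component is compact. So every boundary component of $R(\cS)$ is a line.
\item Finally it traces such a component. Its first coordinate is monotone because each vertical line meets $\partial R(\cS)$ in at most an interval. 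Its ends are forced into $F(\cS)\times\{0\}$.
\end{itemize}

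\textbf{Your route.} You instead give an explicit global description. Using the nested intervals $N_i(t_j)$ and their uniform shrinking, you identify each slice with $F_i(\cU)$. You show that over each gap $(t_{j-1},t_j)$ the lower boundary is a staircase with finitely many steps on compact subintervals, tending to $0$ at the endpoints. You then obtain the topology of $\closure R(\cU)$ from Jordan--Schoenflies.

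\textbf{Comparison.} Your approach makes explicit two things the paper only leaves implicit when it says the general lemma ``implies'' this one: the identification $F(\cU_P)=P$, and the ``closed disc minus $k+1$ boundary points'' description. The paper's approach is local and combinatorial. It applies uniformly to all semicoherent sets, including the finite $\cU_P^{\le i}$ used in Lemma~\ref{lem:area-phi-P}, and avoids Schoenflies.

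\textbf{Things to fix.}
\begin{itemize}
\item As written, $h_j(t)=\sup\{u:(t,u)\in R(\cU)\}$ is identically $1$, since every vertical slice contains $[\tfrac12,1]$. You mean the infimum, i.e.\ $h_j(t)=2^{-i^*(t)}$ with $i^*(t)$ the first generation such that $t\notin F_i(\cU)$.
\item The general claim that removing disjoint boundary-adjacent open discs from a closed disc leaves a closed disc is false without control of the frontiers. What you need, and have, is that each frontier is an arc: the staircase graph with its vertical jumps inserted, meeting $\partial[0,1]^2$ only at $(t_{j-1},0)$ and $(t_j,0)$. Then the left side, the top, the right side and the arches together form a Jordan curve bounding $\closure R(\cU)$, and Schoenflies applies directly. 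The ``push each arch down'' deformation is not obviously available, because infinitely many steps accumulate at the endpoints.
\item The limit $h_j\to 0$ at the endpoints uses that $N_i(t_j)$ is a relative neighborhood of $t_j$ in $[0,1]$, since both generation-$i$ intervals sharing the endpoint $t_j$ are included. Positivity of $h_j$ uses the shrinking of the $N_i$, not nesting alone. State both explicitly.
\end{itemize}
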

Let $A_j$ be the boundary component connecting $(t_{j-1},0)$ and $(t_j,0)$. Then $R(\cU_P)$ is bounded by the $A_j$'s and the left, top, and right sides of $[0,1]^2$.

\begin{figure}
  \begin{center}
    \includegraphics[width=.7\textwidth]{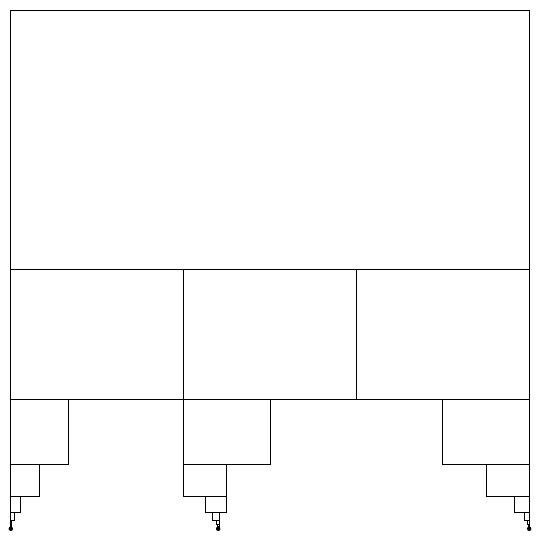}
    \caption{\label{fig:coherent}
      Example of $R(\cU_P)$ for a three-point partition. The dots mark the points in the partition, and the lower boundary of $R(\cU_P)$ can be divided into two arches connecting the dots.}
  \end{center}
\end{figure}

For each $j$, let $u_j\from [0,1]\to [0,1]^2$ parameterize $A_j$, such that $u_j(0) = (t_{j-1},0)$ and $u_j(1) = (t_{j},0)$. Let $a_j = G\circ u_j$ and let $\alpha_P = a_1\diamond \dots \diamond a_k$.

\begin{lemma}\label{lem:arch-lengths}
  For each $j$, let
  \begin{equation}\label{eq:def-cwj}
    \cW_j = \{w\in \cT : J_w\cap \{t_{j-1},t_j\} \ne \emptyset \text{ and } \diam I_w \le 2 \mu^2 \diam \zeta([t_{j-1},t_j])\}.
  \end{equation}
  Then 
  \begin{equation}\label{eq:boundary-graph}
    A_j\setminus \{(t_{j-1},0), (t_{j},0)\} \subset \bigcup_{w\in \cW_j} \partial R_w
  \end{equation}
  and 
  \begin{equation}\label{eq:alpha-length}
    \ell(a_j) \lesssim \sum_{w\in \cW_j} \delta_\Lambda(w) \lesssim  d(\zeta(t_{j-1}),\zeta(t_j)).
  \end{equation}
  Furthermore, the $\cW_j$'s satisfy the following.
  \begin{enumerate}
  \item\label{it:W-multiplicity}
    There is a $c = c(\mu,n) >0$ such that for any $v\in \cT$, $|\{j : v\in \cW_j\}|\le c$.
  \item\label{it:arch-count}
    For all $i\ge 0$ and all $j$, $|\cW_j \cap \cT^i| \le 4$.
  \end{enumerate}
\end{lemma}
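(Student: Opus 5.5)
The plan is to read off the geometry of $R(\cU_P)$ near its lower boundary directly from the tree structure. First I will describe which rectangles border $A_j$. A point $(s,u)\in A_j$ with $u\in(2^{-i-1},2^{-i}]$ lies on $\partial R_w$ for some $w\in\cT^i$, and since $A_j$ separates $R(\cU_P)$ from its complement, some rectangle touching that point is in $\cU_P$ and some adjacent one is not. The rectangles of the strip at height $i$ are the $R_v$, $v\in\cT^i$. So a boundary point of $R(\cU_P)$ at that height is either on the bottom edge of some $R_w$ with $w\in\cU_P$ all of whose children avoid $P$, or on a vertical edge shared by $w\in\cU_P$ and a neighbor $w'\notin\cU_P$. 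Using Lemma~\ref{lem:boundary-arches} (the arch projects monotonically onto $[t_{j-1},t_j]$), the relevant $w$ have $J_w$ meeting $[t_{j-1},t_j]$ and contain $t_{j-1}$ or $t_j$. I will then check that $\diam I_w\le 2\mu^2\diam\zeta([t_{j-1},t_j])$. If $w$ is on the arch, then the child (or the neighbor) just below fails to contain a partition point in the interior of $[t_{j-1},t_j]$. Hence some generation-$(\gen(w)+1)$ interval inside $J_w$ lies within $[t_{j-1},t_j]$, and by \eqref{eq:patch-diam} its diameter is $\ge\mu^{-1}2^{-\gen(w)-1}\diam K\ge (2\mu^2)^{-1}\diam I_w$. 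This gives \eqref{eq:boundary-graph}.

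Next come the counting statements. For item (\ref{it:arch-count}), the intervals $J_w$ with $w\in\cT^i$ overlap only at endpoints, so at most two contain $t_{j-1}$ and at most two contain $t_j$, which gives $\le4$. For item (\ref{it:W-multiplicity}), fix $v$. If $v\in\cW_j$, then $t_{j-1}$ or $t_j$ lies in $J_v$, and $\diam\zeta([t_{j-1},t_j])\ge(2\mu^2)^{-1}\diam I_v\gtrsim 2^{-\gen(v)}\diam K$. The partition intervals $[t_{j-1},t_j]$ are disjoint except at endpoints, and those with an endpoint in $J_v$ lie in a union of $J_v$ and at most two intervals adjacent to it. I will cap the two at the ends by intersecting with neighboring generation-$\gen(v)$ intervals, so all of them sit in a parameter interval whose image has diameter $\lesssim 2^{-\gen(v)}\diam K$. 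Lemma~\ref{lem:chains}, with Lemma~\ref{lem:vertical-diameter-v2}, then bounds their number by a constant depending on $\mu,n$. The two end intervals need a small separate argument: there are at most two of them, and they are not required to be short.

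For the length bound \eqref{eq:alpha-length}, the first step is that $a_j=G\circ u_j$ with $u_j$ running through edges of the $\partial R_w$, $w\in\cW_j$. By \eqref{eq:boundary-graph}, monotonicity of $\alpha_1$ and the fact that $\alpha_2>0$, each edge of each $\partial R_w$ is traversed at most once. Since $G$ maps $\partial R_w$ to $\theta_w$, we get $\ell(a_j)\le\sum_{w\in\cW_j}\ell(\theta_w)\lesssim\sum_{w\in\cW_j}\delta_\Lambda(w)$ by \eqref{eq:ell-theta-v}. Then Lemma~\ref{lem:nbhd-diam} gives $\delta_\Lambda(w)\lesssim 2^{-\gen(w)}\diam K\approx\diam I_w$. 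Using item (\ref{it:arch-count}), the diameters $\diam I_w$ over $w\in\cW_j$ form at most four geometric sequences per generation, each bounded by $2\mu^2\diam\zeta([t_{j-1},t_j])$. The sum is therefore $\lesssim\diam\zeta([t_{j-1},t_j])\approx d(\zeta(t_{j-1}),\zeta(t_j))$ by Lemma~\ref{lem:vertical-diameter-v2}.

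The main obstacle is the first step: making rigorous that every point of the arch lies on $\partial R_w$ for a $w$ that contains an endpoint and has the stated size bound. This needs care with vertical edges between a $\cU_P$-rectangle and a non-$\cU_P$ neighbor, and with the degenerate case where a partition point is an endpoint of some $J_w$. I will handle it by showing that the non-$\cU_P$ neighbor's interval lies strictly between $t_{j-1}$ and $t_j$. That makes the shared edge sit over a point of $J_w$ adjacent to an endpoint-containing interval, which yields the lower size comparison.
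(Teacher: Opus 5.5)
Your proposal is correct and is essentially the paper's argument: the key step for \eqref{eq:boundary-graph} is that a non-$\cU_P$ rectangle touching the arch has its parameter interval inside $(t_{j-1},t_j)$ and generation within one of an adjacent $\cU_P$ rectangle. The rest also matches the paper: the counts (\ref{it:arch-count}) and (\ref{it:W-multiplicity}) treat the endpoint-straddling intervals separately and the others via Lemma~\ref{lem:chains}, and \eqref{eq:alpha-length} is the same geometric series from the minimal generation. Two small slips in your case split: no $w\in\cU_P$ can have all children avoiding $P$ (you mean the child directly below the boundary point), and in the vertical-edge case the comparison interval is the same-generation neighbour rather than a child inside $J_w$; the paper avoids both by taking any $v\in\cU_P$, $w\notin\cU_P$ sharing the boundary point and using $|\gen(v)-\gen(w)|\le 1$.
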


We defer the proof of Lemma~\ref{lem:arch-lengths} to Section~\ref{sec:proof-arch-lengths}.

For any partition $P$, let
\begin{equation}\label{eqn:RelationPhiPAlphaP}
\phi_P = G|_{\partial R(\cU_P)} = \alpha_P \diamond \psi,
\end{equation}
where $\psi = (G\circ \overline{(1,0),(1,1)}) \diamond (G\circ \overline{(0,1),(0,0)})$ as in Lemma~\ref{lem:sgi-converge}.
By Lemma~\ref{lem:sgi-converge} and Lemma~\ref{lem:arch-lengths}, $\ell(\phi_P) = \ell(\psi) + \ell(\alpha_P) < \infty$. Thus, up to reparameterization, $\phi_P$ is a Lipschitz curve. Therefore, $S(\phi_P)$ exists and is finite by Lemma~\ref{lem:symplectic-consistent} and Lemma~\ref{lem:symplectic-area-props-v2}.(1). The following lemma is a consequence of the absolute convergence of $\sum_{v\in \cT} S(\theta_v)=\Phi$; see Section~\ref{sec:proof-dyadic}. 
\begin{lemma}\label{lem:lim-dyadic}
  For any partition $P$, we have $|S(\phi_P)| \lesssim \sigma(\Lambda)$ and
  \begin{equation}\label{eq:lim-dyadic}
    \lim_{\mesh(P)\to 0} S(\phi_P) = \Phi.
  \end{equation}
  Therefore, by \eqref{eqn:RelationPhiPAlphaP} and Lemma \ref{lem:symplectic-area-props-v2}.(2),
  $$\lim_{\mesh(P)\to 0} S(\alpha_P) = \lim_{\mesh(P)\to 0} (S(\phi_P) - S(\psi)) = \Phi - S(\psi).$$
\end{lemma}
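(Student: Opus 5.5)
The plan is to express $S(\phi_P)$ as a sum of $S(\theta_v)$ over the cells whose rectangles make up $R(\cU_P)$, and then compare with $\Phi$. The key observation is that $R(\cU_P)$ is exactly the union $\bigcup_{v\in\cU_P} R_v$. Moreover, $\cU_P$ is closed under taking parents, since $J_v\subset J_{\cP(v)}$. So for each generation $i$, $R(\cU_P)\cap([0,1]\times[2^{-i-1},2^{-i}])$ is a union of finitely many rectangles, namely those $R_v$ with $v\in\cU_P\cap\cT^i$, which is a set of at most $2(k+1)$ elements.

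First I will truncate. For $m\ge 0$, let $\cU^{<m}=\cU_P\cap\cT^{<m}$. Exactly as in the proof of Lemma~\ref{lem:sgi-converge}, the chain identity
$$\partial G_\sharp\fcl{R(\cU^{<m})}=\sum_{v\in\cU^{<m}}\fcl{\theta_v}$$
gives $S(\partial\text{-curve of }R(\cU^{<m}))=\sum_{v\in\cU^{<m}} S(\theta_v)$. As $m\to\infty$, the boundary curve of $R(\cU^{<m})$ differs from $\phi_P$ only near the points $(t_j,0)$. Concretely, it consists of pieces of the arches $A_j$ above height $2^{-m}$, together with short horizontal segments at height $2^{-m}$ inside $R_w$ for $w\in\cW_j\cap\cT^{m-1}$, and vertical pieces near $(0,\cdot)$ and $(1,\cdot)$ that belong to $\psi$.

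By Lemma~\ref{lem:arch-lengths}, these curves have uniformly bounded length $\lesssim \sum_j d(\zeta(t_{j-1}),\zeta(t_j))+\ell(\psi)$. By \eqref{eq:G-gamma-dist}, they converge uniformly, after reparametrization, to $\phi_P$: the extra horizontal pieces at height $2^{-m}$ have image of diameter $\lesssim \delta_\Lambda(w)\lesssim 2^{-m}\diam K$ and lie near $\gamma(t_j)$. Lemma~\ref{lem:symplectic-area-props-v2}(4) then gives
$$S(\phi_P)=\sum_{v\in\cU_P}S(\theta_v),$$
with the series converging absolutely. This immediately yields $|S(\phi_P)|\le\sum_{v\in\cT}|S(\theta_v)|\lesssim\sigma(\Lambda)$.

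For the limit, I write
$$\Phi-S(\phi_P)=\sum_{v\notin\cU_P}S(\theta_v),$$
so it suffices to show that this tail tends to zero as $\mesh(P)\to0$. Given $\epsilon>0$, I choose $N$ with $\sum_{v\in\cT^{\ge N}}|S(\theta_v)|<\epsilon$. The set $\cT^{<N}$ is finite, and each $J_v$ is a nondegenerate interval: $I_v$ has distinct endpoints and $\zeta$ is a homeomorphism onto $K$. Let $\delta$ be the minimum of the lengths $|J_v|$ over $v\in\cT^{<N}$. If $\mesh(P)<\delta$, then every $J_v$ with $\gen(v)<N$ contains a point of $P$, so $\cT^{<N}\subset\cU_P$ and the tail is bounded by $\epsilon$. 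The final statement about $S(\alpha_P)$ then follows from \eqref{eqn:RelationPhiPAlphaP} and additivity.

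I expect the main obstacle to be the justification of the chain identity for the infinite region $R(\cU_P)$. This requires care with the topology given by Lemma~\ref{lem:boundary-arches}, checking that interior edges cancel (shared edges of adjacent rectangles are traversed oppositely, including where a parent's bottom edge meets its children's top edges), and verifying that the boundary pieces of the truncations converge uniformly with bounded length. The length bound is where the multiplicity bound $|\cW_j\cap\cT^i|\le4$ is used.
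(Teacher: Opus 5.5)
Your proposal is correct and follows the paper's proof essentially step for step. You truncate to the finite parent-closed set $\cU_P\cap\cT^{<m}$, whose region is a closed disc by Lemma~\ref{lem:coherent-boundary}, and use additivity to get $\sum_{v\in\cU_P\cap\cT^{<m}}S(\theta_v)$ for its boundary. You then pass to the limit to obtain $S(\phi_P)=\sum_{v\in\cU_P}S(\theta_v)$, and conclude with absolute convergence and $\cT^{<N}\subset\cU_P$ for fine $P$.

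The only difference is the limiting step. The paper writes $\fcl{\phi_P}$ minus the truncated boundary as closed curves of total length at most $\sum_{v\in\cU_P\cap\cT^{\ge m}}\ell(\theta_v)\to 0$, using $|\cU_P\cap\cT^i|\le 2(k+1)$, and then applies $|S|\le\ell^2$. You instead use uniform convergence with uniformly bounded lengths and Lemma~\ref{lem:symplectic-area-props-v2}(4), which also works.
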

Next, we will show that $S(\alpha_P) - S(\gamma_P) \to 0$ as $\mesh(P)\to 0$. Recall that $\alpha_P = a_1\diamond \dots \diamond a_k$, where $a_j$ is the arch connecting $\gamma(t_{j-1})$ to $\gamma(t_j)$. Then $\hat{a}_j = a_j \diamond \overline{\gamma(t_j), \gamma(t_{j-1})}$ and
$$\fcl{\hat{a}_j} = \fcl{a_j} - \fcl{\gamma_P|_{[t_{j-1},t_j]}}.$$
Therefore,
\begin{equation}\label{eq:alpha-P-gamma-P}
\fcl{\alpha_P} - \fcl{\gamma_P} = \sum_j \fcl{\hat{a}_j}.
\end{equation}
We claim that $\sum_j S(\hat{a}_j) \to 0$.

By Lemma~\ref{lem:arch-lengths}, 
$$\ell(a_j) \lesssim d(\zeta(t_{j-1}),\zeta(t_j)),$$ so by Lemma~\ref{lem:symplectic-area-props-v2}(3) and Lemma~\ref{lem:vertical-diameter-v2},
\begin{equation}\label{eq:base-alpha-bound}
  S(\hat{a}_j) \leq \ell(a_j)^2 \lesssim d(\zeta(t_{j-1}),\zeta(t_j))^2 \lesssim z(\zeta(t_{j-1})^{-1} \zeta(t_j)),
\end{equation}
for all $j$. To show that $\sum_j S(\hat{a}_j) \to 0$ as $\mesh P\to 0$, we need to strengthen \eqref{eq:base-alpha-bound} in two ways. We will show the following two lemmas. For each $j$, let $\Delta z_j = z(\zeta(t_{j-1})^{-1} \zeta(t_j))$.

\begin{lemma}\label{lem:arch-area}
  For any $\epsilon>0$, there is a $\rho$ such that if $\mesh P<\rho$, then
  \begin{equation}\label{eq:arch-area}
    \sum_j \ell(a_j)^2 \le \epsilon + \epsilon \sum_j \Delta z_j.
  \end{equation}
\end{lemma}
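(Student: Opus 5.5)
We will bound each arch length by the finer sum from Lemma~\ref{lem:arch-lengths}, namely $\ell(a_j)\lesssim \sum_{w\in\cW_j}\delta_\Lambda(w)$, and split the sum over $\cW_j$ into a fine part and a coarse part. Fix a generation cutoff $m$, to be chosen depending on $\epsilon$. Write
\[
\sum_{w\in\cW_j}\delta_\Lambda(w) = \Sigma_j^{\ge m} + \Sigma_j^{<m},
\]
where $\Sigma_j^{\ge m}$ sums over $w\in \cW_j\cap\cT^{\ge m}$ and $\Sigma_j^{<m}$ sums over $w\in\cW_j\cap\cT^{<m}$. If $\mesh P$ is small, then every $w\in\cW_j$ satisfies $\diam I_w\le 2\mu^2\diam\zeta([t_{j-1},t_j])$. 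Since $\zeta$ is uniformly continuous, this diameter is tiny, so by \eqref{eq:patch-diam} all elements of $\cW_j$ have generation at least some $m'(\rho)$, and $m'(\rho)\to\infty$ as $\rho\to0$.

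The plan is to use Cauchy--Schwarz together with the geometric decay of $\diam I_w$ along generations. By item~\eqref{it:arch-count}, $\cW_j$ has at most $4$ elements per generation. Let $i_j$ be the smallest generation occurring in $\cW_j$; then $2^{-i_j}\diam K\lesssim d(\zeta(t_{j-1}),\zeta(t_j))$, because the diameter constraint in \eqref{eq:def-cwj} together with Lemma~\ref{lem:vertical-diameter-v2} controls every $w\in\cW_j$. Weighted Cauchy--Schwarz with weights $2^{(i-i_j)/2}$ then gives
\[
\Big(\sum_{w\in\cW_j}\delta_\Lambda(w)\Big)^2\lesssim \sum_{w\in \cW_j}2^{(\gen(w)-i_j)/2}\delta_\Lambda(w)^2 .
\]
I will try the following alternative and keep whichever works. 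The unweighted Cauchy--Schwarz over generations $\ge M$ costs a factor equal to the number of generations, which is infinite, so it must be replaced by something decaying. Use $\delta_\Lambda(w)\lesssim 2^{-\gen(w)}\diam K$ from \eqref{eq:delta-lambda-bound} for the generations $i_j\le \gen(w)<i_j+N$, and $\delta_\Lambda(w)^2$ directly for the deeper ones. Concretely, write
\[
\sum_{w}\delta_\Lambda(w) \le \sum_{\gen(w)< i_j+N}\delta_\Lambda(w) + \sum_{\gen(w)\ge i_j+N} \delta_\Lambda(w).
\]
For the first part, Cauchy--Schwarz over at most $4N$ terms bounds its square by $4N\sum_{\gen(w)<i_j+N}\delta_\Lambda(w)^2$. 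For the second part, Cauchy--Schwarz with weights $2^{\gen(w)-i_j}$ bounds its square by
\[
\Big(\sum 2^{-(\gen(w)-i_j)}\Big)\Big(\sum 2^{\gen(w)-i_j}\delta_\Lambda(w)^2\Big),
\]
and the first factor is $\lesssim 2^{-N}$. However, $2^{\gen(w)-i_j}\delta_\Lambda(w)^2\lesssim 2^{-\gen(w)-i_j}(\diam K)^2$, which only gives $\lesssim 2^{-2i_j}(\diam K)^2\lesssim\Delta z_j$ with no gain. So the deep part should instead be bounded crudely by $\lesssim 2^{-N}\cdot 2^{-i_j}\diam K$, using \eqref{eq:delta-lambda-bound} and the geometric series. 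Its square is then $\lesssim 4^{-N}\Delta z_j$, via Lemma~\ref{lem:vertical-diameter-v2}.

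Combining the two parts gives
\[
\ell(a_j)^2\lesssim N\sum_{w\in\cW_j}\delta_\Lambda(w)^2 + 4^{-N}\Delta z_j .
\]
Summing over $j$ and using the bounded multiplicity from item~\eqref{it:W-multiplicity}, every $w$ is counted at most $c$ times. This yields
\[
\sum_j\ell(a_j)^2\lesssim N\sum_{w\in\cT^{\ge m'(\rho)}}\delta_\Lambda(w)^2+4^{-N}\sum_j\Delta z_j .
\]
First choose $N$ so large that $4^{-N}$ times the implicit constant is below $\epsilon$. Then, since $\sigma(\Lambda)<\infty$, the tail $\sum_{\gen(w)\ge m'}\delta_\Lambda(w)^2\to0$, so choose $\rho$ so small that $N$ times this tail is below $\epsilon$. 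This gives \eqref{eq:arch-area}.

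The main obstacle is the claim that $2^{-i_j}\diam K\lesssim d(\zeta(t_{j-1}),\zeta(t_j))$ for the coarsest generation in $\cW_j$. The bound $\diam I_w\le 2\mu^2\diam\zeta([t_{j-1},t_j])$ in \eqref{eq:def-cwj} gives exactly this through \eqref{eq:patch-diam} and Lemma~\ref{lem:vertical-diameter-v2}. The step that still needs checking is that the generations in $\cW_j$ start at $i_j$ and that the geometric sum from $i_j+N$ onward is indeed $\lesssim 2^{-N-i_j}\diam K$.
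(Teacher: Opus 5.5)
Your final argument is correct and is essentially the paper's proof. You split $\cW_j$ at generation $n_j+m$ (your $i_j+N$), apply Cauchy--Schwarz to the at most $4m$ shallow terms, and bound the deep terms by the geometric series together with $2^{-n_j}\diam K\lesssim\sqrt{\Delta z_j}$ (the paper's \eqref{eq:nj-size-bound}, obtained exactly as you describe from \eqref{eq:patch-diam}, \eqref{eq:def-cwj}, and Lemma~\ref{lem:vertical-diameter-v2}); you then sum using the multiplicity bound and choose $\rho$ so that every $\cW_j$ lies past a generation where the tail of $\sigma(\Lambda)$ is small, as the paper does. The weighted Cauchy--Schwarz detour you abandoned is not needed.
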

One can compare this inequality to the bound $\ell(a_j)\lesssim \sqrt{\Delta z_j}$ from Lemma~\ref{lem:arch-lengths}. The stronger inequality comes from using the assumption that $\sigma(\Lambda)<\infty$.

\begin{lemma}\label{lem:z-sum}
  There is a $\tau >0$ depending on $K$ such that for any sufficiently fine partition $P$, 
  \begin{equation}\label{eq:z-sum}
    \sum_j \Delta z_j < \tau.
  \end{equation}
\end{lemma}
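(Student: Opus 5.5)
The plan is to show that $\sum_j \Delta z_j$ telescopes, up to an error that is a symplectic-area term, and then to bound that error uniformly once the partition is fine. The starting point is Lemma~\ref{lem:discrete-area-z}. Set $v_j = \zeta(t_{j-1})^{-1}\zeta(t_j)$ for $j=1,\dots,k$, so that $\zeta(t_0)v_1\cdots v_j = \zeta(t_j)$. After left-translating by $\zeta(t_0)^{-1}$, which changes neither the $\Delta z_j$ nor, by Lemma~\ref{lem:symplectic-area-props-v2}(5), the area of the closed polygon, Lemma~\ref{lem:discrete-area-z} gives
\[
z(\zeta(0)^{-1}\zeta(1)) = S(\gamma_P') + \sum_j \Delta z_j,
\]
where $\gamma_P'$ is the translated polygon. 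Hence
\[
\sum_j \Delta z_j = z(\zeta(0)^{-1}\zeta(1)) - S(\widehat{\gamma_P}),
\]
and it suffices to show that $-S(\widehat{\gamma_P})$ is bounded above, uniformly over all sufficiently fine $P$.

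Next, compare $\gamma_P$ with $\alpha_P$ using \eqref{eq:alpha-P-gamma-P}:
\[
S(\gamma_P) = S(\alpha_P) - \sum_j S(\hat a_j).
\]
By Lemma~\ref{lem:lim-dyadic}, $S(\alpha_P) = S(\phi_P) - S(\psi)$, with $|S(\phi_P)|\lesssim \sigma(\Lambda)$, and $S(\psi)$ is a fixed finite number because $\ell(\psi)<\infty$. So $|S(\alpha_P)| \le M$ for all $P$, where $M = M(K,\Lambda)$. The closing segment of $\widehat{\gamma_P}$ is the fixed segment from $\gamma(1)$ to $\gamma(0)$, so its contribution is also a fixed constant.

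For the arch terms, use Lemma~\ref{lem:symplectic-area-props-v2}(3) and then Lemma~\ref{lem:arch-area} with $\epsilon = \tfrac12$, which holds once $\mesh P<\rho$:
\[
\Big|\sum_j S(\hat a_j)\Big| \le \sum_j \ell(\hat a_j)^2 \le 4\sum_j \ell(a_j)^2 \le 2 + 2\sum_j \Delta z_j.
\]
Here I use $\ell(\hat a_j)\le 2\ell(a_j)$, since the closing chord is no longer than the arch. The factor $4$ is a problem, so I would instead apply Lemma~\ref{lem:arch-area} with $\epsilon = \tfrac18$. That gives
\[
\sum_j \Delta z_j \le C_0 + M' + \tfrac12 + \tfrac12\sum_j \Delta z_j,
\]
where $C_0$ and $M'$ are fixed constants.

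The last step is absorption, and I expect it to be the main subtlety. Moving the $\tfrac12\sum_j\Delta z_j$ term to the left is legitimate because $\sum_j\Delta z_j$ is a finite sum for each fixed $P$, so it is finite, and because $\Delta z_j\ge 0$ by monotonicity of $\zeta$. This yields $\sum_j\Delta z_j \le 2(C_0+M'+\tfrac12) =: \tau$ for every $P$ with $\mesh P$ below the threshold from Lemma~\ref{lem:arch-area}. One must check that Lemma~\ref{lem:arch-area} is proved independently of the present lemma, so the argument is not circular, and that every constant depends only on $K$, $\Lambda$, and $\sigma(\Lambda)$.
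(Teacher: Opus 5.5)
Your proposal is correct and follows essentially the same route as the paper. You write $\sum_j \Delta z_j$ via Lemma~\ref{lem:discrete-area-z} as a height difference minus a polygon area, and split $S(\gamma_P)=S(\alpha_P)-\sum_j S(\hat a_j)$ using \eqref{eq:alpha-P-gamma-P}. You then bound $|S(\alpha_P)|$ uniformly via Lemma~\ref{lem:lim-dyadic}, control the arch terms with Lemma~\ref{lem:arch-area}, and absorb half of $\sum_j\Delta z_j$. The differences are only cosmetic: you use the closed-curve form $z(\zeta(0)^{-1}\zeta(1))-S(\widehat{\gamma_P})$, and you take $\epsilon=\tfrac18$ to cover the factor from $\ell(\hat a_j)\le 2\ell(a_j)$, whereas the paper works with $z(\zeta(1))-z(\zeta(0))-S(\gamma_P)$ and uses $|S(\hat a_j)|\le \ell(a_j)^2$ with $\epsilon=\tfrac12$.
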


Like the previous lemma, this lemma relies on the assumption that $\sigma(\Lambda)<\infty$. Without this assumption, $\zeta$ could have Hausdorff dimension greater than $2$, as in \cite{AY-Vertical}, and the sum in \eqref{eq:z-sum} could be arbitrarily large. 

These lemmas imply Proposition~\ref{prop:vertical-curve-area}.
\begin{proof}[Proof of Proposition~\ref{prop:vertical-curve-area}]
  Throughout this proof, we use the bounds in Lemma~\ref{lem:symplectic-area-props-v2} without comment.
  We first show that $\lim_{\mesh P\to 0} S(\gamma_P) = \Phi - S(\psi)$, where $\psi = (G\circ \overline{(1,0),(1,1)}) \diamond (G\circ \overline{(0,1),(0,0)})$.
  By \eqref{eq:alpha-P-gamma-P}, for any $P$,
  \begin{equation}\label{eq:vca-pf-s-gamma}
    S(\gamma_P) = S(\alpha_P) - \sum_j S(\hat{a}_j).
  \end{equation}

  Let $\tau$ be as in Lemma~\ref{lem:z-sum}. By Lemma~\ref{lem:arch-area} and Lemma~\ref{lem:z-sum}, for any $\epsilon>0$, if $\mesh P$ is sufficiently small,
  $$\sum_j S(\hat{a}_j) \lesssim \sum_j \ell(a_j)^2 \le \epsilon + \epsilon \sum_j \Delta z_j \le \epsilon + \epsilon \tau.$$
  Therefore, 
  $$\lim_{\mesh P\to 0} \sum_j S(\hat{a}_j) = 0,$$
  and by Lemma~\ref{lem:lim-dyadic},
  $$
  \lim_{\mesh P\to 0} S(\gamma_P) = \lim_{\mesh P\to 0} S(\alpha_P) - \lim_{\mesh P\to 0} \sum_j S(\hat{a}_j) = \Phi - S(\psi).
  $$
  Therefore, $S(\gamma) = \Phi - S(\psi)$.

  Since $\hat{\gamma} = \gamma \diamond \overline{\gamma(1),\gamma(0)}$ and $\hat{\psi} = \psi \diamond \overline{\gamma(0),\gamma(1)}$,
  \begin{equation}\label{eq:s-gamma-s-psi}
    S(\hat{\gamma}) = \Phi - S(\psi) + S(\overline{\gamma(1),\gamma(0)}) = \Phi - S(\hat{\psi}).
  \end{equation}
  We will bound both of these terms. 

  By the definition of $\Phi$, see Lemma \ref{lem:sgi-converge},
  $$|\Phi| \le \sum_{v\in \cT} |S(\theta_v)| \lesssim \sigma(\Lambda).$$
  
  It remains to bound $S(\hat{\psi})\leq \ell(\hat\psi)^2\lesssim \ell(\psi)^2$. We will show that 
  \begin{equation}\label{eq:lgamma-target}
    \ell(\psi)^2 \lesssim \max\left\{\sigma(\Lambda), \sigma(\Lambda) \log \frac{(\diam K)^2}{\sigma(\Lambda)}\right\}.
  \end{equation}
  
  Let $\psi_1 = G\circ \overline{(1,1),(1,0)}$ and $\psi_0 = G\circ \overline{(0,1),(0,0)}$, so that $\psi = \tilde{\psi}_1 \diamond \psi_0$, where $\tilde{\psi}_1$ is the reverse of $\psi_1$. For $t=0,1$, $\psi_t$ consists of segments from $g_i(t)$ to $g_{i+1}(t)$ and
  $$\ell(\psi_t) = \sum_{i=0}^\infty |g_{i+1}(t) - g_i(t)|.$$
  By \eqref{eq:G-gamma-dist} and the triangle inequality, 
  \begin{equation}\label{eq:pointwise-psi}
    |g_{i+1}(t) - g_i(t)| \lesssim 2^{-i}\diam(K),
  \end{equation}
  for all $i$, so $\ell(\psi) \lesssim \diam(K)$. We consider two cases. If $\sigma(\Lambda) > \frac{1}{16} \diam(K)^2$, then $\ell(\psi)^2 \lesssim \diam(K)^2 \lesssim \sigma(\Lambda)$, implying \eqref{eq:lgamma-target}.

  We thus consider the case $\diam(K)^2 \ge 16 \sigma(\Lambda)$. Let
  $$i_0=\left\lfloor \frac{1}{2} \log_2 \frac{\diam(K)^2}{\sigma(\Lambda)}\right\rfloor$$
  so that $i_0\ge 2$.
  For all $i$ and for $t=0,1$, let $w_{t,i}\in \cT^i$ be the unique vertex such that $t\in J_{w_{t,i}}$. Then $|g_{i+1}(t) - g_i(t)| \le \delta_\Lambda(w_{t,i}),$
  and, using \eqref{eq:pointwise-psi},
  \begin{multline*}
    \ell(\psi_t) = \sum_{i=0}^\infty |g_{i+1}(t) - g_i(t)| \lesssim \sum_{i=0}^{i_0-1} \delta_\Lambda(w_{t,i}) + \sum_{i=i_0}^{\infty}  2^{-i}\diam(K) \\ \lesssim  \sum_{i=0}^{i_0-1} \delta_\Lambda(w_{t,i}) + 2^{-i_0} \diam(K).
  \end{multline*}
  By Cauchy--Schwarz, for any $x_i$, 
  $$\left(\sum_{i=1}^{k} x_i \right)^2 \le k\sum_{i=1}^k x_i^2,$$
  so 
  \[
  \begin{aligned}
    \ell(\psi_t)^2 &\lesssim (i_0 + 1) \left(( 2^{-i_0} \diam(K))^2 + \sum_{i=0}^{i_0-1} \delta_\Lambda(w_{t,i})^2\right)\\ 
    &\lesssim i_0 \left(\diam(K)^2 2^{-2i_0} + \sigma(\Lambda)\right) 
    \lesssim i_0 \sigma(\Lambda).
    \end{aligned}
  \]
  Therefore,
  $$\ell(\psi)^2 \le 2 \ell(\psi_0)^2 + 2 \ell(\psi_1)^2\lesssim i_0 \sigma(\Lambda) \approx \sigma(\Lambda) \log \frac{(\diam K)^2}{\sigma(\Lambda)},$$
  as desired.

  Finally, if $g_i(0)=g_i(1)$ for all $i$, then $\psi_0=\psi_1$, so $S(\psi) = S(\psi_0)-S(\psi_1) = 0$ and
  $$|S(\hat{\gamma})| = |\Phi - S(\psi)| = |\Phi| \lesssim \sigma(\Lambda).$$
\end{proof}

In the following subsections, we will prove Lemmas~\ref{lem:arch-lengths}--\ref{lem:z-sum}.

\subsection{Proof of Lemma~\ref{lem:arch-lengths}}\label{sec:proof-arch-lengths}

In this section, we prove Lemmas~\ref{lem:boundary-arches}
and~\ref{lem:arch-lengths}. We first state a generalization of Lemma~\ref{lem:boundary-arches}. A set $\cS\subset \cT$ is \emph{semicoherent} if 
\begin{itemize}
\item $\cS$ has a unique maximal element,
\item if $v \in \cS$ is not maximal, then $\cP(v)\in \cS$, where $\cP(v)$ is the parent vertex of $v$.
\end{itemize}
In particular, $\cU_P$ is a semicoherent set that contains the root $v_0$ of $\cT$. Recall that $R(\cS) = \bigcup_{v\in \cS} R_v$. By the semicoherence of $\cS$, if $(s,t)\in R(\cS)$, then $(s,t')\in R(\cS)$ for any $t'\in [t,1]$. It follows that $R(\cS)$ is contractible.

For $i\ge 0$, let
$$F_i(\cS) := \bigcup_{v\in \cT^i\cap \cS} J_v.$$
By the semicoherence of $\cS$, $F_{i+1}(\cS) \subset F_i(\cS)$ for all $i$. Let
\begin{equation}\label{eq:def-F-boundary}
  F(\cS) := \bigcap_{i=1}^\infty F_i(\cS).
\end{equation}
This is a closed subset, and one can check that
$$\closure R(\cS) = R(\cS) \cup (F(\cS)\times \{0\}).$$
We will show that $R(\cS)$ satisfies the following lemma. Since $\cU$ is semicoherent, this implies Lemma~\ref{lem:boundary-arches}.

\begin{lemma}\label{lem:coherent-boundary}
  Let $\cS\subset \cT$ be a semicoherent set such that $v_0\in \cS$. If $\cS$ is finite, then $R(\cS)$ is homeomorphic to a closed disc.

  Otherwise, if $\cS$ is infinite, then $R(\cS)$ is a simply-connected noncompact planar manifold. If $I=(a,b)$ is a connected component of $[0,1]\setminus F(\cS)$, then $R(\cS)$ has a boundary component connecting $(a,0)$ to $(b,0)$, and this component can be parameterized by a map $\alpha=(\alpha_1,\alpha_2)\from [0,1]\to \partial R(\cS)$ such that $\alpha_1$ is nondecreasing and $\alpha_2(x)>0$ for all $x\in (0,1)$.
\end{lemma}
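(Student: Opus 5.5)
The plan is to describe $R(\cS)$ as the region lying above the graph of a ``height function''. For $s\in[0,1]$ set
\[
h(s)=\inf\{u : (s,u)\in R(\cS)\}.
\]
By semicoherence, $R(\cS)$ is vertically upward closed: if $(s,t)\in R(\cS)$ then $(s,t')\in R(\cS)$ for all $t'\in[t,1]$. So, up to the behaviour on the graph itself, $R(\cS)=\{(s,u): u\ge h(s)\}$.

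Concretely, $h(s)=2^{-i-1}$, where $i$ is the largest generation for which some $v\in\cT^i\cap\cS$ has $s\in J_v$. At points $s$ lying in two adjacent intervals $J_v, J_{v'}$ we take the minimum over both. We set $h(s)=0$ exactly when $s\in F(\cS)$. Since each $F_i(\cS)$ is a finite union of closed intervals, $h$ is a step function on every compact subset of $[0,1]\setminus F(\cS)$. It is lower semicontinuous, and $h(s)\to 0$ as $s$ approaches $F(\cS)$, because $h\le 2^{-i-1}$ on $F_i(\cS)$.

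When $\cS$ is finite, $h$ is a positive step function, and $R(\cS)$ is the closed region between the top edge $u=1$ and the graph of $h$, with vertical segments filled in at the jumps. The boundary is therefore a Jordan curve made of the top edge, the two sides, and a ``staircase''. The region is star-shaped with respect to vertical rays, so it is homeomorphic to a closed disc via a fiberwise linear rescaling $(s,u)\mapsto (s,(u-h(s))/(1-h(s)))$, adjusted at the jumps. The cleanest route is to note that $R(\cS)$ is a finite union of rectangles forming a polygon whose boundary is a simple closed curve, and then to invoke the Schoenflies theorem.

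When $\cS$ is infinite, $R(\cS)=\closure R(\cS)\setminus (F(\cS)\times\{0\})$. Writing it as the increasing union of the closed discs $R(\cS_m)$, where $\cS_m=\cS\cap \cT^{\le m}$, shows that it is simply connected, and each point of $R(\cS)$ has a disc or half-disc neighbourhood inside some $R(\cS_m)$. For a component $I=(a,b)$ of $[0,1]\setminus F(\cS)$, I will parametrize the lower boundary over $I$ as the staircase graph of $h|_I$. Moving left to right, $\alpha_1$ increases along horizontal pieces and stays constant along the vertical jump segments, which keeps $\alpha_1$ nondecreasing. We have $\alpha_2=h>0$ on the interior of $I$. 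At the ends, $h(s)\to 0$ as $s\to a^+$ or $s\to b^-$, so the curve extends continuously to $(a,0)$ and $(b,0)$.

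The main obstacle is this infinite case. Near $a$ and $b$ the staircase may have infinitely many steps, so the arch has infinitely many vertical segments, and I must check that the parametrization is continuous onto $[0,1]$. To do this I will reparametrize by arclength-like weights, for example giving the steps at depth $i$ a parameter interval of size $2^{-i}$. Continuity at the endpoints then follows from the fact that the oscillation of $h$ near $a$ tends to $0$. I also need the arch to avoid all other points of $F(\cS)\times\{0\}$. This holds because $a,b\in F(\cS)$ are the only limit points of $F(\cS)$ approached from inside $I$.
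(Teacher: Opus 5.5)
Your height-function picture works in the finite case and over compact subsets of $[0,1]\setminus F(\cS)$. The step that is supposed to give the arch its endpoints, however, is false: in general $h(s)\not\to 0$ as $s\to a^+$.

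The bound $h\le 2^{-i-1}$ holds only \emph{on} $F_i(\cS)$. A right neighbourhood of $a\in F(\cS)$ can be disjoint from $F_i(\cS)$ for all large $i$. For example, use the binary tree from the proof of Proposition~\ref{prop:rn-curve-area}, with $J_{v_{i,j}}=[j2^{-i},(j+1)2^{-i}]$. Let $\cS$ consist of the root, all dyadic intervals contained in $[0,\tfrac12]$, and the chain $[\tfrac12,1],[\tfrac34,1],[\tfrac78,1],\dots$. This $\cS$ is semicoherent and $F(\cS)=[0,\tfrac12]\cup\{1\}$. On the component $I=(\tfrac12,1)$ we have $h\equiv\tfrac14$ on $(\tfrac12,\tfrac34)$, so your staircase over $I$ tends to $(\tfrac12,\tfrac14)$, not to $(\tfrac12,0)$. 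The actual boundary component also contains the vertical segment $\{\tfrac12\}\times(0,\tfrac14]$, made of the right sides of the $R_v$ with $J_v=[\tfrac12-2^{-i},\tfrac12]$, $i\ge 2$. Your parametrization omits this segment, and the ``oscillation of $h$ tends to $0$'' argument cannot give continuity at $(a,0)$.

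\textbf{Repair.}

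\emph{Step 1 (dichotomy at $a$).} Each $F_i(\cS)$ is a finite union of closed intervals, so for each $i$ it either contains or misses a small right neighbourhood of $a$. Hence either $h(s)\to 0$ as $s\to a^+$, or $h$ equals a positive constant $h(a^+)$ on some interval $(a,a+\epsilon)$.

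\emph{Step 2 (missing segment).} In the second case, $a\in F(\cS)$ gives $\{a\}\times(0,1]\subset R(\cS)$, hence $\{a\}\times(0,h(a^+)]\subset\partial R(\cS)$. You must prepend this segment to the arch. Since $\alpha_1$ is constant on it, monotonicity survives. Do the same at $b$.

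\emph{Step 3 (whole component).} You still need to check that the resulting curve is an entire boundary component. The one-sided limits $h(a^-)$ and $h(a^+)$ cannot both be positive. Otherwise some $F_i(\cS)$ would miss both sides of $a$, although $a$ lies in some nondegenerate $J_v$ with $v\in\cT^i\cap\cS$. So the points $(a,u)$ with $u>h(a^+)$ are interior, and the boundary turns at $(a,h(a^+))$.

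For $\cS=\cU_P$, the case actually used, every $F_i(\cU_P)$ contains a neighbourhood of each $t_j$, so $h\to 0$ there. Your argument does not prove this, though, and the lemma is stated for general semicoherent $\cS$.

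The paper avoids the issue altogether. It starts from the boundary component through a point above $I$ and shows $\gamma_1$ is monotone because the boundary meets each vertical line in an interval. It then identifies the ends as the only possible limits in $F(\cS)\times\{0\}$, so vertical end segments are included automatically.
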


\begin{proof}
  The rectangles $R_v$ cover $[0,1] \times (0,1]$, overlapping only at their boundaries. Indeed, they form the $2$--cells of a cellulation of $[0,1] \times (0,1]$, which we denote by $\tau$. The edges of $\tau$ are each the left, right, or top edge of some $R_v$; since a vertex $v$ may have multiple children, the bottom edge of an $R_v$ may be subdivided into multiple edges.
  
  Then $R(\cS)$ is a union of $2$--cells of $\tau$. Let $C = (\partial R(\cS)) \cap ([0,1] \times (0,1])$ be the relative boundary of $R(\cS)$, which is a union of edges of $\tau$. Suppose that $p\in C$ is a vertex of $\tau$. Up to four rectangles $R_v$ intersect at $p$, so either two or four edges of $C$ meet at $p$. The only way that four edges of $C$ can meet at $p$ is if $R(\cS)$ forms a checkerboard pattern around $p$, with two diagonally opposite rectangles in $R(\cS)$ and the other two not in $R(\cS)$. Since $\cS$ is semicoherent, this is impossible. Therefore, $R(\cS)$ is a planar manifold with boundary. By the remarks before the lemma, $R(\cS)$ is simply connected.

  If $\cS$ is finite, then $R(\cS)$ is a simply-connected compact planar manifold, so it is homeomorphic to a closed disc. This proves the first part of the lemma.

  Suppose that $\cS$ is infinite. Then $F(\cS)$ is nonempty, so $R(\cS)$ is noncompact. A simply-connected planar manifold with an $S^1$ boundary component is a closed disc and thus compact, so every boundary component of $R(\cS)$ is homeomorphic to $\R$.

  Let $I=(a,b)$ be a connected component of $[0,1]\setminus F(\cS)$; this implies $a,b\in F(\cS)$. For $s\in [0,1]$, let $I_s = \{t : (s,t)\in R(\cS)\}$; this is an interval containing $1$. Let $s_0\in (a,b)$ and let $t_0 = \inf I_{s_0}$. Since $s_0\not \in F(\cS)$, we have $t_0>0$ and $(s_0,t_0)\in \partial R(\cS)$. Let $\gamma=(\gamma_1,\gamma_2) \from\R\to \partial R(\cS)$ parameterize the boundary component of $R(\cS)$ containing $(s_0,t_0)$.

  We claim that $\gamma(x)$ converges to $(a,0)$ and $(b,0)$ as $x$ goes to $\infty$ or $-\infty$. Since
  $$R(\cS) \supset ([a,b]\times [\tfrac{1}{2},1]) \cup (\{a,b\}\times (0,1]),$$
  we have $\gamma(\R) \subset [a,b] \times [0,\frac{1}{2}]$ and thus $\gamma_1(\R)\subset [a,b]$.

  By the semicoherence of $\cS$, if $t<1$ and $(s,t)\in \inter R(\cS)$, then $(s,t')\in \inter R(\cS)$ for all $t'\in [t,1)$. If $t<1$ and
  $$(s,t)\in \inter (\R^2\setminus R(\cS)),$$
  then $(s,t')\in \inter (\R^2\setminus R(\cS))$ for all $t' \in [0,t]$. Therefore, for any $s\in [0,1]$, the intersection $\gamma(\R)\cap \{s\}\times \R$ is either empty or an interval. That is, $\gamma$ crosses $\{s\}\times \R$ at most once. It follows that $\gamma_1$ is either nondecreasing or nonincreasing.  After possibly reversing the parameterization, suppose that $\gamma_1$ is nondecreasing. 

  Since $\tau$ is locally finite, $\gamma(x)$ leaves every compact subset of $[0,1]\times (0,1]$ as $x\to \pm \infty$, so
  $$\lim_{x\to -\infty} \gamma_2(x)=\lim_{x\to \infty} \gamma_2(x)=0.$$
  Since $\gamma_1$ is monotone, $\lim_{x\to -\infty} \gamma(x)$ and $\lim_{x\to \infty} \gamma(x)$ exist and lie in
  $$([0,1]\times \{0\}) \cap \closure R(\cS) = F(\cS)\times \{0\}.$$
  The only two such points with $s$--coordinates between $a$ and $b$ are $(a,0)$ and $(b,0)$, so $\lim_{x\to -\infty} \gamma(x) = (a,0)$ and $\lim_{x\to \infty} \gamma(x) = (b,0)$. We obtain the desired $\alpha$ by letting $\alpha|_{(0,1)}$ be a parametrization of $\gamma$ and extending to $[0,1]$ by continuity.
\end{proof} 

Now we prove Lemma~\ref{lem:arch-lengths}. We recall that for a partition $P=\{t_0,\dots,t_k\}$, we let $\cU_P = \{v\in \cT : J_{v}\cap P\ne \emptyset\}$ and let $A_j$ be the arch of $\partial R(\cU_P)$ from $(t_{j-1},0)$ to $(t_j,0)$. We take $u_j$ to be a parameterization of $A_j$ with $u_j(0) = (t_{j-1},0)$ and $u_j(1) = (t_{j},0)$ and we let $a_j = G\circ u_j$.

\begin{proof}[Proof of Lemma~\ref{lem:arch-lengths}]
  Let $\cW_j$ be as \eqref{eq:def-cwj}.
  We claim that the $\cW_j$'s satisfy the lemma.
  
  We first show \eqref{eq:boundary-graph}. Suppose that $p = (s,t)\in A_j\setminus \{(t_{j-1},0), (t_{j},0)\}$. Since $p \in \partial R(\cU_P)$, there are $v\in \cU_P$ and $w\not \in \cU_P$ such that $p\in \partial R_v\cap \partial R_w$. We claim that $v\in \cW_j$. 

  Our choice of $v$ and $w$ implies that $s\in J_v\cap J_w$ and $|\gen(v) - \gen(w)| \le 1$. By Lemma~\ref{lem:coherent-boundary}, $s\in [t_{j-1},t_j]$ and $t>0$. Thus $J_v$ intersects $[t_{j-1},t_j]$. Since $v\in\cU_P$, we have $J_v\not\subset (t_{j-1},t_j)$, so $J_v$ contains $t_{j-1}$ or $t_j$.

  To bound $\diam I_v$, note that $J_w$ intersects $[t_{j-1},t_j]$ but not $P$. Thus $I_w\subset \zeta([t_{j-1},t_j])$, and by \eqref{eq:patch-diam}, 
  $$\diam I_v \le 2\mu^2 \diam I_w \le 2 \mu^2 \diam \zeta([t_{j-1},t_j]),$$
  so $v\in \cW_j$. Since $p\in\partial R_v$, this proves \eqref{eq:boundary-graph}.

  Before proving \eqref{eq:alpha-length}, we prove the bounds on $\cW_j$. We first prove part \ref{it:W-multiplicity}. Let $v\in \cT$. If $v\in \cW_j$, then $\{t_{j-1},t_j\}\cap J_v\ne \emptyset$ and $$\diam I_v \le 2 \mu^2 \diam \zeta([t_{j-1},t_j]).$$

  If $\{t_{j-1},t_j\}\cap J_v\ne \emptyset$, then $[t_{j-1},t_j] \subset J_v$ or $[t_{j-1},t_j]$ contains one of the endpoints of $J_v$. There are at most four $j$'s such that $[t_{j-1},t_j]$ contains one of the endpoints of $J_v$, so it is enough to bound the cardinality of
  $$N_v = \{j : [t_{j-1},t_j] \subset J_v\text{ and }\diam \zeta([t_{j-1},t_j]) \ge (2\mu^2)^{-1}\diam I_v\}.$$

  For any $j\in N_v$, we have $$\diam \zeta([t_{j-1},t_j]) \ge (2 \mu^2)^{-1}\diam I_v,$$ so by
  Lemma~\ref{lem:chains}, there is an $N>0$ (depending on $\mu$, $\lambda$, and $n$) such that $|N_v| < N$ for all $v$. This proves part \ref{it:W-multiplicity}.

  For part \ref{it:arch-count}, note that for any $i\ge 0$, there are at most two $v\in \cT^i$ such that $t_{j-1}\in J_v$ (depending on whether $t_{j-1}$ is one of the endpoints of $J_v$) and at most two $v\in \cT^i$ such that $t_{j}\in J_v$. Therefore, $|\cW_j \cap \cT^i|\le 4$.

  Finally, we use these bounds to prove \eqref{eq:alpha-length}. By \eqref{eq:boundary-graph} and \eqref{eq:ell-theta-v}, we have
  $$\ell(a_j) \stackrel{\eqref{eq:boundary-graph}}{\le} \sum_{w\in \cW_j} \ell(\theta_w) \stackrel{\eqref{eq:ell-theta-v}}{\lesssim} \sum_{w\in \cW_j} \delta_\Lambda(w).$$
  This proves the first part of \eqref{eq:alpha-length}.

  For the second part, let $n_j = \min_{w\in \cW_j} \gen(w)$. Let $w\in \cT^{n_j}\cap \cW_j$. Then
  $$2^{-n_j} \diam K = 2^{-\gen(w)} \diam K \stackrel{\eqref{eq:patch-diam}}{\le} \mu \diam I_w \stackrel{\eqref{eq:def-cwj}}{\le} 2\mu^3 \diam \zeta([t_{j-1},t_j]).$$
  Since $|\cW_j \cap \cT^i| \le 4$ for all $i$,
  \[
  \begin{aligned}
    \sum_{w\in \cW_j} \delta_\Lambda(w) &\leq \sum_{i=n_j}^\infty |\cW_j \cap \cT^i| 2^{-i}\diam(K)\\ 
    &\lesssim 2^{-n_j}\diam(K)
    \lesssim \diam \zeta([t_{j-1},t_j])\lesssim d(\zeta(t_{j-1}),\zeta(t_j)),
  \end{aligned}
\]
where the last inequality follows from Lemma~\ref{lem:vertical-diameter-v2}.
  \end{proof}

\subsection{Proof of Lemma~\ref{lem:lim-dyadic}}\label{sec:proof-dyadic}
Lemma~\ref{lem:lim-dyadic} is a consequence of the following lemma. As in \eqref{eqn:RelationPhiPAlphaP}, let $\phi_P = G|_{\partial R(\cU_P)}$.
\begin{lemma}\label{lem:area-phi-P}
  For any partition $P$, $S(\phi_P) = \sum_{v\in \cU_P} S(\theta_v)$.
\end{lemma}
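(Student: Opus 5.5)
The plan is to approximate $R(\cU_P)$ by the finite regions $R(\cU_P\cap \cT^{<i})$ and pass to the limit, using the absolute convergence of $\sum_v S(\theta_v)$ from Lemma~\ref{lem:sgi-converge}.

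\emph{Finite truncations.} For $i\ge 1$ let $\cS_i=\cU_P\cap\cT^{<i}$. This set is semicoherent, contains $v_0$, and is finite. By Lemma~\ref{lem:coherent-boundary}, $R(\cS_i)$ is homeomorphic to a closed disc, and it is a finite union of the rectangles $R_v$, which overlap only along edges. As chains, $\fcl{R(\cS_i)}=\sum_{v\in\cS_i}\fcl{R_v}$. Since $G$ is continuous and affine in $t$ on each rectangle (so piecewise Lipschitz on compact pieces away from $u=0$), pushforward commutes with boundary, and
\[
\partial G_\sharp\fcl{R(\cS_i)}=\sum_{v\in\cS_i}\fcl{\theta_v}.
\]
Let $\phi_{P,i}=G|_{\partial R(\cS_i)}$. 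Linearity of $S$ on Lipschitz $1$--chains then gives $S(\phi_{P,i})=\sum_{v\in\cS_i}S(\theta_v)$. This is the same argument as for $h_i$ in Lemma~\ref{lem:sgi-converge}.

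\emph{Passing to the limit.} The right-hand side tends to $\sum_{v\in\cU_P}S(\theta_v)$, because that series converges absolutely. For the left-hand side, note that the boundary of $R(\cS_i)$ agrees with $\partial R(\cU_P)$ in the region $u\ge 2^{-i+1}$. Near each partition point $t_j$, the curve $\partial R(\cU_P)$ is replaced by a horizontal cut at height $2^{-i}$ through the rectangles $R_w$ with $w\in \cU_P\cap\cT^{i-1}$ and $t_j\in J_w$; there are at most two such $w$ for each $j$. So $\phi_{P,i}$ differs from $\phi_P$ only by finitely many small loops near the $k+1$ points $\gamma(t_j)$. More precisely, I will write $\fcl{\phi_P}-\fcl{\phi_{P,i}}$ as a sum over $j$ of closed curves $c_{j,i}$. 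Each $c_{j,i}$ consists of the pieces of the arches $A_j,A_{j+1}$ (and of the side edges if $t_j\in\{0,1\}$) lying below height $2^{-i+1}$ near $t_j$, closed up by the $g_{i-1}$-segments of the cut. By the proof of Lemma~\ref{lem:arch-lengths}, each such arch piece lies in $\bigcup\partial R_w$ over $w\in\cW_j$ with $\gen(w)\ge i-1$, so its length is $\lesssim\sum_{\gen(w)\ge i-1}\delta_\Lambda(w)\lesssim 2^{-i}\diam K$, using the at-most-four-per-generation bound. The cut segments have length $\lesssim\delta_\Lambda(w)\lesssim 2^{-i}\diam K$. Hence $\ell(c_{j,i})\lesssim 2^{-i}\diam K$, and by Lemma~\ref{lem:symplectic-area-props-v2}(3), $|S(c_{j,i})|\lesssim 4^{-i}(\diam K)^2$. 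Summing over the $k+1$ indices $j$ (with $P$ fixed) gives $S(\phi_{P,i})\to S(\phi_P)$.

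\emph{Main obstacle.} The delicate step is rigorously justifying this decomposition, since $\phi_P$ is only Lipschitz after reparameterization and the arches are infinite concatenations. I would handle it as follows. Parameterize each arch by the reparameterization from Lemma~\ref{lem:symplectic-area-props-v2}(1). Then use part (2) to split $S(\phi_P)$ into the part above height $2^{-i+1}$, which is shared with $\phi_{P,i}$, and the tails near each $t_j$. The tails have length tending to zero, which is what the estimate above controls. Alternatively, I would apply Lemma~\ref{lem:symplectic-area-props-v2}(4) directly: $\phi_{P,i}\to\phi_P$ uniformly after a common reparameterization, with uniformly bounded lengths by Lemma~\ref{lem:arch-lengths}. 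That yields $S(\phi_P)=\lim_i S(\phi_{P,i})=\sum_{v\in\cU_P}S(\theta_v)$ immediately.
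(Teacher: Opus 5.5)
Your proposal is correct and follows essentially the same route as the paper. Both arguments truncate to the finite semicoherent set $\cU_P\cap\cT^{\le i}$, use Lemma~\ref{lem:coherent-boundary} and additivity of $S$ to get the identity on that disc, and pass to the limit by writing $\fcl{\phi_P}$ minus the truncated boundary as closed curves whose total length tends to zero. The paper bounds that total length directly by the tail $\sum_{v\in\cU_P\cap\cT^{>i}}\ell(\theta_v)$, which is finite since $|\cU_P\cap\cT^i|\le 2(k+1)$. This sidesteps your per-$t_j$ loop decomposition, which only holds once $i$ is large enough that no single $J_w$ at the cut level contains two partition points. It also avoids your indexing slip: the cut at height $2^{-i}$ is traced by $g_i$, not $g_{i-1}$.
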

  
\begin{proof}
  Let $P=(t_0,\dots, t_k)$ be a partition. For any $i\ge 0$, let $\cU_P^{\le i } = \cU_P\cap \cT^{\le i}$ and $\cU_P^{> i } = \cU_P\cap \cT^{> i}$. Let $D_i = R(\cU_P^{\le i})$; note that
  $$D_i = R(\cU_P) \cap \big([0,1]\times [2^{-i-1},1]\big).$$
  Let $\eta_{i} = G|_{\partial D_i}$. 
  Since $\cU_P^{\le i}$ is finite, Lemma~\ref{lem:coherent-boundary} implies that
  $D_i$ is homeomorphic to a closed disc and the additivity of $S$ implies
  \begin{equation}\label{eq:eta-i-sum}
    S(\eta_i) = \sum_{v\in \cU_P^{\le i}} S(\theta_v).
  \end{equation}
  
  Since $D_i \subset R(\cU_P)$ and 
  $$R(\cU_P)\setminus D_i = R(\cU_P) \cap ([0,1]\times [0,2^{-i-1})),$$
  the difference $\fcl{\phi_P} - \fcl{\eta_i}$ has support
  $$
  \supp(\fcl{\phi_P} - \fcl{\eta_i}) \subset G(P\times \{0\}) \cup \bigcup_{v\in \cU_P^{>i}} G(\partial R_v).
  $$
  In fact, $\fcl{\phi_P} - \fcl{\eta_i}$ can be written as a sum of closed curves with total length $L_i$, where
  $$L_i \le \sum_{v\in \cU_P^{>i}} \ell(\theta_v).
  $$

  By Lemma~\ref{lem:sgi-converge},
  \begin{equation}\label{eq:area-phi-P-sum}
    \sum_{v\in \cU_P} \ell(\theta_v) \lesssim \sum_{v\in \cU_P} 2^{-\gen(v)} \diam(K) \le \sum_{i=0}^\infty 2^{-i} \diam(K) |\cU_P\cap \cT^i|.
  \end{equation}
    
  For each $i$, $|\cU_P\cap \cT^i|\le 2(k+1)$, so $\sum_{v\in \cU_P} \ell(\theta_v) < \infty$. It follows that $\lim_{i\to \infty} L_i = 0$. By Lemma~\ref{lem:symplectic-area-props-v2}, $|S(\phi_P) - S(\eta_i)| \lesssim L_i^2$, so by \eqref{eq:eta-i-sum}, 
  $$S(\phi_P) = \lim_{i\to\infty} S(\eta_i) = \lim_{i\to \infty} \sum_{v\in \cU_P^{\le i}} S(\theta_v).$$
  By Lemma~\ref{lem:sgi-converge}, $\sum_{v\in \cT} S(\theta_v)$ converges absolutely, so
  $$S(\phi_P) = \sum_{v\in \cU_P} S(\theta_v),$$
  as desired.
\end{proof}

\begin{proof}[Proof of Lemma~\ref{lem:lim-dyadic}] 
  Let $P=(t_0,\dots,t_k)$ be a partition. By Lemma~\ref{lem:area-phi-P}, $S(\phi_P) = \sum_{v\in \cU_P} S(\theta_v)$. For any $v\in \cT$, if $\mesh P < \ell(J_v)$, then $P\cap J_v\ne \emptyset$, so $v\in \cU_P$. It follows that $\cU_P\to \cT$ as $\mesh P \to 0$. Therefore, since $\sum_{v\in \cT} S(\theta_v)$ converges absolutely,
  $$\lim_{\mesh P \to 0} S(\phi_P) = \lim_{\mesh P\to 0}\sum_{v\in \cU_P} S(\theta_v) = \sum_{v\in \cT} S(\theta_v) = \Phi,$$
  as desired.

  Finally, notice that by Lemma~\ref{lem:area-phi-P} and Lemma \ref{lem:sgi-converge},
  \[
  |S(\phi_P)|\le \sum_{v\in \cU_P} |S(\theta_v)| \leq \sum_{v\in \cT} |S(\theta_v)|\lesssim \sigma(\Lambda),
  \]
  as desired.
\end{proof}

\subsection{Proof of Lemma~\ref{lem:arch-area}}
Recall that for each $j$, $a_j$ is the segment of $\alpha_P$ from $\gamma(t_{j-1})$ to $\gamma(t_j)$, see Lemma~\ref{lem:arch-lengths}. Let $\Delta z_j = z(\zeta(t_{j-1})^{-1} \zeta(t_j))$. We claim that for any $\epsilon>0$, there is a $\rho>0$ such that if $\mesh P<\rho$, then
  $$
  \sum_j \ell(a_j)^2 \le \epsilon + \epsilon \sum_j \Delta z_j.
  $$

  Let $m>0$ be a large integer and let $\delta > 0$ be a small number, both to be chosen later. Let $\eta = 2 \mu^2$. Let $i_0>0$ be such that
  $$\sum_{v\in \cT^{> i_0}}\delta_\Lambda(v)^2 \le \delta.$$
  Since $\sigma(\Lambda)<\infty$, such an $i_0$ exists.
  Let $\rho>0$ be such that
  $$\eta \diam \zeta([a,b]) < \mu^{-1} 2^{-i_0} \diam K$$
  for all $a,b\in [0,1]$ such that $|a-b|<\rho$.

  Suppose that $\mesh P < \rho$.
  Let $\cW_j\subset \cT$ be as in Lemma~\ref{lem:arch-lengths} and let $n_j = \min_{w\in \cW_j} \gen(w)$ so that $\cW_j \subset \cT^{\ge n_j}$. By our choice of $\eta$, we have $\diam I_w \le \eta \diam \zeta([t_{j-1},t_j])$ for all $w\in \cW_j$. If $w\in \cW_j\cap \cT^{n_j}$, then 
  $$\mu^{-1}2^{-n_j} \diam K \stackrel{\eqref{eq:patch-diam}}{\le} \diam I_w \le \eta \diam \zeta([t_{j-1},t_j]) < \mu^{-1} 2^{-i_0}\diam K,$$
  so $n_j > i_0$. Furthermore, by this inequality and Lemma~\ref{lem:vertical-diameter-v2},
  \begin{equation}\label{eq:nj-size-bound}
    \mu^{-1}2^{-n_j} \diam K \lesssim \diam \zeta([t_{j-1},t_j]) \lesssim \sqrt{\Delta z_j}.
  \end{equation}
  
  By \eqref{eq:alpha-length},
  \[
  \begin{aligned}
    \ell(a_j)^2 &\lesssim \left(\sum_{w\in \cW_j} \delta_\Lambda(w)\right)^2 \\ 
    &\lesssim \left(\sum_{w\in \cW_j(<n_j + m)}\delta_\Lambda(w)\right)^2 + \left(\sum_{w\in \cW_j(\ge n_j + m)}\delta_\Lambda(w)\right)^2,
\end{aligned}
  \]
  where $\cW_j(< k) = \cW_j\cap \cT^{< k}$ and $\cW_j(\ge k) = \cW_j\cap \cT^{\ge k}$.
  We bound these terms separately. On one hand, by Lemma~\ref{lem:arch-lengths}.(2), we have $|\cW_j\cap\cT^k|\le 4$ for all $k$, so $|\cW_j(<n_j + m)| \le 4m$, and
  $$\left(\sum_{w\in \cW_j(<n_j + m)}\delta_\Lambda(w)\right)^2 \le 4m \sum_{w\in \cW_j(<n_j + m)}\delta_\Lambda(w)^2,$$
  by Cauchy--Schwarz. On the other hand, by Lemmas~\ref{lem:sgi-converge} and \ref{lem:arch-lengths}.(2),
  $$
  \begin{aligned}
  \sum_{w\in \cW_j(\ge n_j + m)}\delta_\Lambda(w) &\stackrel{\eqref{eq:ell-theta-v}}{\lesssim} \sum_{k=n_j+m}^\infty 4 \cdot 2^{-k}\diam(K) \\
  & \le 8 \cdot 2^{-m} 2^{-n_j}\diam(K) 
\\ &\stackrel{\eqref{eq:nj-size-bound}} 
  {\lesssim} 2^{-m} \sqrt{\Delta z_j}.
  \end{aligned}
  $$
  That is,
  $$\ell(a_j)^2\lesssim 4m \sum_{w\in \cW_j(<n_j + m)}\delta_\Lambda(w)^2 + 2^{-2m} \Delta z_j,$$
  and
  $$\sum_j \ell(a_j)^2\lesssim 4m \sum_j \sum_{w\in \cW_j(<n_j + m)}\delta_\Lambda(w)^2 + 2^{-2m}\sum_j \Delta z_j.$$
  
  By Lemma~\ref{lem:arch-lengths}.(1), each $v\in \cT$ appears in at most $c$ of the $\cW_j$'s, so
  $$4m \sum_j \sum_{w\in \cW_j(<n_j + m)}\delta_\Lambda(w)^2 \le 4cm \sum_{w\in \bigcup_j \cW_j}\delta_\Lambda(w)^2.$$
  Since $\cW_j \subset \cT^{>i_0}$ for all $j$,
  $$4m \sum_j \sum_{w\in \cW_j \cap \cT^{<n_j + m}}\delta_\Lambda(w)^2 \le 4cm \sum_{w\in \cT^{>i_0}}\delta_\Lambda(w)^2 \le 4cm\delta.$$

  Therefore, 
  $$\sum_j \ell(a_j)^2\lesssim 4cm\delta + 2^{-2m} \sum_j \Delta z_j,$$
  and if $2^{-2m}$ and $m \delta$ are sufficiently small, then 
  $$\sum_j \ell(a_j)^2 \le \epsilon + \epsilon  \sum_j \Delta z_j,$$
  as desired.

\subsection{Proof of Lemma~\ref{lem:z-sum}}
In this section, we show that $\sum_j \Delta z_j$ is uniformly bounded as $\mesh(P)\to 0$. That is, there is some $c>0$ such that for any sufficiently fine partition $P$, $\sum_j \Delta z_j < c$. Recall that $\gamma_P$ is the piecewise-linear approximation of $\gamma$ corresponding to $P$,
 $$\psi = (G\circ \overline{(1,0),(1,1)}) \diamond (G\circ \overline{(0,1),(0,0)}),$$
 $\alpha_P = a_1\diamond \dots \diamond a_k$, and $\phi_P = \alpha_P \diamond \psi.$

  By Lemma~\ref{lem:arch-area} and Lemma \ref{lem:symplectic-area-props-v2}.(3), if $P$ is sufficiently fine, then
  \begin{equation}\label{eq:half-bound}
    \sum_j |S(\hat{a}_j)| \le \sum_j \ell(a_j)^2 \le 1 + \frac{1}{2} \sum_j \Delta z_j.
  \end{equation}
  Then, by \eqref{eq:alpha-P-gamma-P},
  \begin{equation}\label{eq:half-gamma-P}
    |S(\gamma_P)| = \bigg|S(\alpha_P)-\sum_j S(\hat{a}_j)\bigg| \le |S(\alpha_P)| + \frac{1}{2} \sum_j \Delta z_j +  1.
  \end{equation}
  
  We can write
  $$\zeta(1) = \zeta(0)\cdot \left(\zeta(t_0)^{-1}\zeta(t_1)\right)\dots \left(\zeta(t_{k-1})^{-1} \zeta(t_{k})\right),$$
  so by Lemma~\ref{lem:discrete-area-z} and \eqref{eq:half-gamma-P},
  \begin{equation}\label{eq:zj-first}
    \sum_j \Delta z_j  = z(\zeta(1)) - z(\zeta(0)) - S(\gamma_P) \le z(\zeta(1)) - z(\zeta(0)) + |S(\alpha_P)| + \frac{1}{2} \sum_j \Delta z_j + 1
  \end{equation}
  and thus
  $$\sum_j \Delta z_j \le 2(z(\zeta(1)) - z(\zeta(0))) + 2 |S(\alpha_P)| + 2.$$

  By Lemma~\ref{lem:lim-dyadic}, there
  is a $c_0>0$ depending only on $K$ such that $|S(\phi_P)|< c_0$ for all $P$ and thus $|S(\alpha_P)| = |S(\phi_P) - S(\psi)| < c_0 + |S(\psi)|$.
  Then 
  $$\sum_j \Delta z_j < 2(z(\zeta(1)) - z(\zeta(0))) + 2 c_0 + 2 |S(\psi)| + 2,$$
  as desired. 

\appendix

\section{Proofs of lemmas from Section~\ref{sec:CoAreaC1H}}\label{app:vertical-curves}

In this appendix, we give the proofs of some lemmas that were stated in Section~\ref{sec:CoAreaC1H}.

\begin{proof}[Proof of Lemma~\ref{lem:bilipschitz-affine-v2}]
  We identify an affine map $\alpha\from\R^{2n}\to\R^{2n}$ with
$\alpha\circ\pi\from\HH\to\R^{2n}$ when no confusion arises. For $x\in \HH$, let
  $$\tilde f(x)=f(x)-\pi(x)-(f(p)-\pi(p)).$$
  Then $\tilde{f}(p) = 0$ and $\|D_{\mathrm{H}}\tilde{f}_q\|_{\mathrm{Op}}<c$ for all $q\in 5D$. Furthermore, 
  $$\alpha_{\tilde{f},D} = \alpha_{f,D} - \alpha_{\pi,D} - f(p) + \pi(p) = \alpha_{f,D} - \id_{\R^{2n}} - f(p) + \pi(p),$$
  so to prove \eqref{eq:ControlAlpha2}, it suffices to show that for any $x\in D$, $|\alpha_{\tilde{f},D}(\pi(x))| \lesssim cR$.
  
  By Lemma~\ref{lem:ControlF}, $|\tilde{f}(x)| = |\tilde{f}(x) - \tilde{f}(p)| \lesssim cR$ for all $x\in D$. 
  The $L_\infty(D)$ norm and the $\widehat{L}_2(D)$ norm are equivalent on $\Aff$ (with a constant depending on $n$), so 
  $$\|\alpha_{\tilde{f},D}\circ \pi\|_{L_\infty(D)} \approx  \|\alpha_{\tilde{f},D} \circ \pi\|_{\widehat{L}_2(D)} \le \|\tilde{f} \|_{\widehat{L}_2(D)} \lesssim cR,$$
  as desired.
  
  Next, we show \eqref{eqnBilipControl}. Since $\alpha_{\tilde{f},D}$ is affine, there are $M\in \R^{2n\times 2n}$ and $b\in \R^{2n}$ such that $\alpha_{\tilde{f},D}(v) = Mv + b$ for all $v\in \R^{2n}$. Then
  $$\alpha_{f,D}(v) = \alpha_{\tilde{f},D}(v) + v + f(p) - \pi(p) = (M + \id_{\R^{2n}})v + b_0$$
  for $b_0 = b + f(p) - \pi(p)$.
  
  We can decompose $\Aff = \Lin \oplus \Con$, where $\Lin$ and $\Con$ are the subspaces of linear functions and constant functions, respectively. By the symmetry of $D$, $\Lin$ is orthogonal to $\Con$ in $L_2(D)$. Then $M\circ \pi$ is the orthogonal projection of $\alpha_{\tilde{f},D}$ to $\Lin$, so $M\circ \pi$ is the orthogonal projection of $\tilde{f}$ to $\Lin$. Therefore,
  $$\|M\circ \pi\|_{\widehat{L}_2(D)} \le \|\tilde{f} \|_{\widehat{L}_2(D)} \lesssim cR.$$
  If $c$ is small enough that $\|M\|_{\Op} < \frac{1}{2}$, then
  $$|\alpha_{f,D}(v) - \alpha_{f,D}(w)| = |(M+\id_{\R^{2n}})(v-w)| \in \left[\frac{1}{2} |v-w|, \frac{3}{2} |v-w|\right],$$
  as desired.
\end{proof}

\begin{proof}[Proof of Lemma~\ref{lem:patchwork-exist}]  
  For $g,h\in K$ with $g\prec h$, let $[g,h]_K = \{ p \in K : g \preceq p \preceq h\}$.
  
  By Lemma~\ref{lem:compact-intersections-v2} there is a $C=C(\lambda)>1$ such that if $I\subset K$ is a segment with endpoints $q$ and $q'$, then $\diam I \le C d(q,q')$. By Lemma~\ref{lem:Biholder}, there are $\theta<1$ and $N$ depending on $\lambda$ and $n$ such that for any $g,h\in K$ with $g\prec h$ there is a sequence $g=q_0\prec q_1\prec \ldots\prec q_k=h$ such that $k\le N$ and
  \begin{equation}\label{eq:good-subdiv}
    \theta (2C)^{-1}d(g,h) \le d(q_i,q_{i+1})\leq (2C)^{-1} d(g,h)
  \end{equation}
  for all $i$. We call such a sequence a \emph{good subdivision} of $[g,h]_K$.  

  Let 
  \begin{equation}\label{eq:def-mu}
    \mu:=\max\left\{\sqrt{2} C \theta^{-\frac{1}{2}},N\right\}.
  \end{equation}
    
  We iteratively construct $(\cT, \{I_v\}_{v\in \cT})$ as follows. To start, let $v_0$ be the root of $\cT$ and let $I_{v_0} = K$. Suppose by induction that we have constructed the first $i$ generations of $\cT$ and the corresponding $I_v$'s so that Definition~\ref{def:patchwork} is satisfied. Let $v\in \cT^i$ and suppose that $I_v = [g,h]_K$. Then
  \begin{equation}\label{eq:patch-diamNew}
    \diam I_v\in [\mu^{-1} 2^{-i}\diam K, \mu 2^{-i}\diam K].
  \end{equation}
  If $\diam I_v \le \mu 2^{-i-1}\diam K$, we add a single child of $v$ to $\cT$, which we call $v'$, and set $I_{v'} = I_v$. This satisfies Definition~\ref{def:patchwork}.

  Otherwise, we have
  \begin{equation}\label{eq:Iv-bound}
    \mu 2^{-i-1}\diam K < \diam I_v \le \mu 2^{-i}\diam K.
  \end{equation}
  Let $(q_0,\dots, q_k)$ be a good subdivision of $[g,h]_K$, so that the $q_i$'s satisfy \eqref{eq:good-subdiv}.
  We add $k$ children of $v$ to $\cT$, called $c_0,\dots, c_{k-1}$,
  and for each $j=0,\dots,k-1$, we let $I_{c_j} = [q_{j},q_{j+1}]_K$.

  On one hand, our choice of $C$ implies that for all $j$,
  $$\diam I_{c_j} \leq C d(q_j,q_{j+1})\stackrel{\eqref{eq:good-subdiv}}{\le} \frac{1}{2} d(g,h) \le \frac{1}{2} \diam I_v.$$
  On the other hand, 
  $$\diam I_{c_j} \geq d(q_j,q_{j+1})\stackrel{\eqref{eq:good-subdiv}}{\ge} \frac{\theta}{2C} d(g,h)\geq \frac{\theta}{2C^2} \diam I_v \ge \mu^{-2} \diam I_v.$$
  Thus, by \eqref{eq:Iv-bound}, 
  \[
  \begin{aligned}
  \mu^{-1} 2^{-i-1}\diam K &\stackrel{\eqref{eq:Iv-bound}}{\le} \mu^{-2} \diam I_v \le \diam I_{c_j} \\
  &\le \frac{1}{2} \diam I_v \stackrel{\eqref{eq:Iv-bound}}{\le} \mu 2^{-i-1}\diam K,
  \end{aligned}
  \]
  so the $I_{c_j}$'s satisfy \eqref{eq:patch-diam}. The rest of the conditions in Definition~\ref{def:patchwork} are easy to check.
\end{proof}

\section{Signed area of Hölder curves in \texorpdfstring{$\R^{2n}$}{R\^{}2n}}\label{app:rn-curve-area}
\subsection{An area formula for Hölder curves}
In this section, we show how to adapt the proof of Proposition \ref{prop:vertical-curve-area} to prove a similar result for curves in $\R^{2n}$. When $\gamma\from[0,1]\to\mathbb R^{2n}$ is an $\alpha$--H\"older curve with $\alpha>1/2$, the symplectic area of $\gamma$ can be defined by using Young's integral. The following proposition extends this definition of symplectic area to $\frac{1}{2}$--Hölder curves.

\begin{prop}\label{prop:rn-curve-area}
  Let $L>0$. Let $\gamma\from [0,1] \to \R^{2n}$ be a curve such that
  \begin{equation}\label{eq:rn-gamma-holder}
    |\gamma(s) - \gamma(t)| \le L \sqrt{|s-t|},
  \end{equation}
  for all $s,t\in [0,1]$. For $i\ge 0$ and $j=0,\dots, 2^i$, let $\Lambda_{i,j} \in \R^{2n}$ be such that
  \begin{equation}\label{eqn:Lambdaij}
  |\Lambda_{i,j} - \gamma(j2^{-i})| \leq L 2^{-\frac{i}{2}}.
  \end{equation}
  For each $i$, let $h_i$ be the curve
  \begin{equation}\label{eqn:girn}
  h_i =\overline{\Lambda_{i,0},\dots, \Lambda_{i,2^i}},
  \end{equation}
  that connects $\Lambda_{i,0},\dots, \Lambda_{i,2^i}$ by straight line segments. Let
  $$\delta_{i,j} = \diam(\{\Lambda_{i,j}, \Lambda_{i,j+1}, \Lambda_{i+1,2j}, \Lambda_{i+1,2j+1}, \Lambda_{i+1,2j+2}\}).$$
  If
  \begin{equation}\label{eqn:ConditionSigmaRn}
  \sigma := \sum_{i=0}^\infty \sum_{j=0}^{2^i-1} \delta_{i,j}^2 < \infty,
  \end{equation}
  then $S(\hat{\gamma})$ exists, $S(\hat{\gamma}) = \lim_iS(\hat{h}_i)$, and 
  \begin{equation}\label{eq:sgi-bound-rn}
    |S(\hat{\gamma})| \lesssim \sigma \max\left\{1, \log \frac{L^2}{\sigma}\right\}.
  \end{equation}
  Here $\hat\gamma,\hat h_i$ are the closed curves obtained by joining the endpoints of $\gamma,h_i$ with straight line segments. If $h_i(0)=h_i(1)$ for all $i$, or if $h_i(0) = \gamma(0)$ and $h_i(1) = \gamma(1)$ for all $i$, then 
  $|S(\hat{\gamma})| \lesssim \sigma$.
\end{prop}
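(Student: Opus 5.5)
We will adapt the proof of Proposition~\ref{prop:vertical-curve-area}, replacing the dyadic patchwork by the standard dyadic tree on $[0,1]$. Let $\cT$ be the binary tree whose vertices of generation $i$ are the dyadic intervals $J_{i,j}=[j2^{-i},(j+1)2^{-i}]$, so that every vertex has exactly two children and $\mu=2$. The role of $\diam K$ is played by $L$: by \eqref{eq:rn-gamma-holder}, $\diam\gamma(J_{i,j})\le L2^{-i/2}$. Accordingly, all scale bounds of the form $2^{-i}\diam K$ become $L2^{-i/2}$.

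We reparametrize $h_i$ so that $h_i(j2^{-i})=\Lambda_{i,j}$ and $h_i$ is affine on each $J_{i,j}$. We then define $G\from[0,1]^2\to\R^{2n}$ exactly as in \eqref{eq:def-G}, interpolating between $h_i$ and $h_{i+1}$ on the strip $u\in[2^{-i-1},2^{-i}]$, and set $G(t,0)=\gamma(t)$.

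The analogue of Lemma~\ref{lem:nbhd-diam} gives $|h_i(t)-\gamma(t)|\lesssim L2^{-i/2}$, so $G$ is continuous. In this setting the boundary curve $\theta_{i,j}=G|_{\partial R_{i,j}}$ of $R_{i,j}=J_{i,j}\times[2^{-i-1},2^{-i}]$ is precisely the polygon through $\Lambda_{i,j},\Lambda_{i+1,2j},\Lambda_{i+1,2j+1},\Lambda_{i+1,2j+2},\Lambda_{i,j+1}$. Hence $\ell(\theta_{i,j})\lesssim\delta_{i,j}$ and $|S(\theta_{i,j})|\lesssim\delta_{i,j}^2$ by Lemma~\ref{lem:symplectic-area-props-v2}(3), so $\Phi=\sum S(\theta_{i,j})$ converges absolutely with $|\Phi|\lesssim\sigma$. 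The telescoping argument of Lemma~\ref{lem:sgi-converge} then gives $\lim_i S(h_i)=\Phi-S(\psi)$, where $\psi$ runs along the two vertical sides of $[0,1]^2$.

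Next we take a partition $P$ and run the arch construction. Lemma~\ref{lem:coherent-boundary} is purely combinatorial and applies verbatim, and Lemma~\ref{lem:area-phi-P} and Lemma~\ref{lem:lim-dyadic} go through unchanged. For Lemma~\ref{lem:arch-lengths} we define $\cW_j$ as the set of dyadic intervals meeting $\{t_{j-1},t_j\}$ whose length is at most $4(t_j-t_{j-1})$; this replaces the diameter condition, and its multiplicity and per-generation counts are now trivial. The bound \eqref{eq:alpha-length} becomes
\[
\ell(a_j)\lesssim\sum_{w\in\cW_j}\delta(w)\lesssim L\sqrt{t_j-t_{j-1}}.
\]
The proof of Lemma~\ref{lem:arch-area} then yields $\sum_j\ell(a_j)^2\le\epsilon+\epsilon L^2\sum_j(t_j-t_{j-1})=\epsilon(1+L^2)$ for fine $P$.

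This is the step where the Euclidean setting is easier: the quantity $\sum_j\Delta z_j$ is replaced by $L^2\sum_j(t_j-t_{j-1})=L^2$, so Lemma~\ref{lem:z-sum} is not needed at all. Consequently $\sum_j|S(\hat a_j)|\to0$, and by \eqref{eq:alpha-P-gamma-P} we get $S(\gamma)=\lim S(\gamma_P)=\Phi-S(\psi)=\lim S(h_i)$. Adding the closing segments gives $S(\hat\gamma)=\Phi-S(\hat\psi)=\lim S(\hat h_i)$; the last equality uses $h_i(0)\to\gamma(0)$ and $h_i(1)\to\gamma(1)$, together with continuity of the area of the closing segments.

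For \eqref{eq:sgi-bound-rn} we bound $\ell(\psi)^2$ as in the proof of Proposition~\ref{prop:vertical-curve-area}. At each endpoint $t\in\{0,1\}$ the pointwise increments satisfy $|h_{i+1}(t)-h_i(t)|\le\min\{\delta_{i,j_t},\,CL2^{-i/2}\}$. We split the sum at $i_0\approx\log_2(L^2/\sigma)$ and apply Cauchy--Schwarz to the first $i_0$ terms, which gives $\ell(\psi)^2\lesssim i_0\sigma+L^22^{-i_0}\lesssim\sigma\max\{1,\log(L^2/\sigma)\}$.

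In the special cases the logarithm disappears. If $h_i(0)=h_i(1)$ for all $i$, the two sides of $\psi$ coincide and $S(\psi)=0$. If instead $h_i(0)=\gamma(0)$ and $h_i(1)=\gamma(1)$, then $\psi$ is constant on each side, so $\ell(\psi)=0$. In either case $|S(\hat\gamma)|=|\Phi|\lesssim\sigma$.

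The main obstacle is checking that the arch-length bound \eqref{eq:alpha-length}, with its Hölder scaling $L\sqrt{t_j-t_{j-1}}$, transfers correctly. The point is that the sizes of the intervals in $\cW_j$ must be compared through parameter length rather than through diameters, and the geometric decay in $\sum_{w\in\cW_j}\delta(w)$ must come from the crude bound $\delta_{i,j}\lesssim L2^{-i/2}$.
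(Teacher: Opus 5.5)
Your outline follows the paper's proof step for step: the binary tree, the same interpolation $G$, the five-vertex polygons $\theta_{v_{i,j}}$, the arch construction with $\cW_j$ defined through parameter length, dropping Lemma~\ref{lem:z-sum}, and the Cauchy--Schwarz split for $\ell(\psi)$. There is, however, one concrete error in how you define $\psi$.

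In Section~\ref{sec:area-formula} the top edge of $[0,1]^2$ could be omitted because $g_0$ is constant. Here $h_0=\overline{\Lambda_{0,0},\Lambda_{0,1}}$ is in general not constant, so $G$ is not constant on $[0,1]\times\{1\}$. Telescoping over $\partial\big([0,1]\times[2^{-i},1]\big)$ gives $\sum_{v\in\cT^{<i}}S(\theta_v)=S(h_i)+S(\psi_i)$ only if $\psi_i$ also contains the reversed top edge. This is why the paper takes
$$\psi=(G\circ\overline{(1,0),(1,1)})\diamond\overline{h_0(1),h_0(0)}\diamond(G\circ\overline{(0,1),(0,0)}),$$
and the same correction is needed in $\phi_P$ for Lemma~\ref{lem:lim-dyadic}. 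With only the two vertical sides, $\psi$ is not a connected curve, so $\hat\psi$ is undefined. Moreover, your identity $\lim_i S(h_i)=\Phi-S(\psi)$ is then off by $S(\overline{h_0(1),h_0(0)})=\tfrac12\omega(\Lambda_{0,1},\Lambda_{0,0})$. For example, for Lipschitz $\gamma$ with $\Lambda_{i,j}=\gamma(j2^{-i})$ your version gives $S(\gamma)=\Phi$, whereas in fact $\Phi=S(\hat\gamma)$.

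The fix costs nothing quantitatively: $|h_0(1)-h_0(0)|\le\delta_{0,0}\le\sigma^{1/2}$, which is absorbed into your bound on $\ell(\psi)^2$. Your second special case does need to be redone. If $h_i(0)=\gamma(0)$ and $h_i(1)=\gamma(1)$ for all $i$, then $\psi=\overline{\gamma(1),\gamma(0)}$, which does not have length $0$. Instead, $\hat\psi$ retraces a single segment, so $S(\hat\psi)=0$; alternatively, as in the paper, $\ell(\psi)^2\le\delta_{0,0}^2\le\sigma$. The first special case is unaffected, since there the top edge is degenerate. With this correction your argument is the paper's.
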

When $\Lambda_{i,j}=\gamma(j2^{-i})$, this implies Theorem~\ref{thm:ExistenceAreaCurves}. 

This lets us use the argument of Proposition~\ref{prop:vertical-curve-area} to prove Proposition~\ref{prop:rn-curve-area}. There are two main changes. First, we define $G$ so that it interpolates between the $h_i$'s rather than the $g_i$'s constructed in Section~\ref{sec:area-formula}. Second, the $\frac{1}{2}$--Hölder condition on $\gamma$ lets us avoid some of the complications of the proof, particularly around Lemma~\ref{lem:arch-area} and Lemma~\ref{lem:z-sum}.

We start by establishing some notation. We parameterize $h_i$ so that $h_i(j2^{-i}) = \Lambda_{i,j}$. For any $i\ge 0$ and any $(t, u) \in [0,1]\times [2^{-i-1}, 2^{-i}]$, we let
\begin{equation*}
  G(t, u) = h_{i+1}(t) + \frac{u - 2^{-i-1}}{2^{-i} - 2^{-i-1}} (h_i(t) - h_{i+1}(t))
\end{equation*}
and $G(t,0)=\gamma(t)$ for all $t$, as in \eqref{eq:def-G}.

This is a slightly different map than the one used in Section~\ref{sec:area-formula}, but we can still use $G$ to break the $h_i$'s into a sum of loops. Let $\cT$ be the infinite rooted binary tree. We label the vertices by $v_{i,j}$, $i\ge 0$, $0\le j < 2^i$, so that $v_{0,0}$ is the root of $\cT$ and $\cC(v_{i,j}) = \{v_{i+1,2j},v_{i+1,2j+1}\}$ for all $i$ and $j$. Let $J_{v_{i,j}} = [j2^{-i}, (j+1)2^{-i}]$. For every $i\ge 0$ and $v\in \cT^i$, we let $R_v=J_v\times [2^{-i-1}, 2^{-i}]$ as in \eqref{eq:def-rv} and let $\theta_v := G|_{\partial R_v}$.

As in Section~\ref{sec:area-formula}, each curve $\theta_{v_{i,j}}$ is a polygon, and the total area of the $\theta_{v_{i,j}}$'s is bounded. In fact, 
$$\theta_{v_{i,j}} = \overline{\Lambda_{i,j},\Lambda_{i,j+1}, \Lambda_{i+1,2j+2}, \Lambda_{i+1,2j+1}, \Lambda_{i+1,2j}},$$
so $\ell(\theta_{v_{i,j}}) \le 5 \delta_{i,j}$ and $|S(\theta_{v_{i,j}})| \lesssim \delta_{i,j}^2$, so $\sum_{v\in \cT} S(\theta_v)$ converges absolutely. This lets us use the methods of Section~\ref{sec:area-formula} to prove Proposition~\ref{prop:rn-curve-area}.

\begin{proof}[Proof of Proposition~\ref{prop:rn-curve-area}]
  For all $i$ and $j$, let $\delta_\Lambda(v_{i,j}) = \delta_{i,j}$. The argument that proves Lemma~\ref{lem:nbhd-diam} implies
  \begin{equation}\label{eqn:EstimateDeltaLambdarn}
    \delta_\Lambda(v) \lesssim L 2^{-\frac{i}{2}}
  \end{equation}
  for all $v\in \cT^i$ and   
  \begin{equation}\label{eqn:Gtu}
    |G(t,u)-\gamma(t)|\lesssim L \sqrt{u}
  \end{equation}
  for all $t$ and $u$.  These bounds differ slightly from \eqref{eq:delta-lambda-bound} and \eqref{eq:G-gamma-dist} because those inequalities deal with patchworks whose level--$i$ pieces have diameter on the order of $2^{-i}$, whereas in this situation, $\gamma(J_v)$ can have diameter of order $2^{-\frac{\gen(v)}{2}}$. These estimates will still suffice for the argument.

  Let $\Phi = \sum_{v\in\cT} S(\theta_v)$. As noted above, this sum converges absolutely, so the proof of Lemma~\ref{lem:sgi-converge} shows that
  \[
    \lim_{i\to\infty} S(h_i)=\Phi-S(\psi),
  \]
  where
  $$\psi = (G\circ \overline{(1,0),(1,1)}) \diamond \overline{h_0(1), h_0(0)}\diamond (G\circ \overline{(0,1),(0,0)})$$
  traces out three sides of $G([0,1]^2)$.
  (This is slightly different from \eqref{eq:def-psi} because $h_0$ is not necessarily constant.)
  It remains to show that $S(\gamma) = \lim_{i\to \infty} S(h_i)$.

  For any partition $P=\{t_0,\dots,t_k\}$ of $[0,1]$, we define $\gamma_P$, $\cU_P$, and $R(\cU_P)$ as in Section~\ref{sec:area-formula}.
  Lemma~\ref{lem:boundary-arches} holds as before, so $\partial R(\cU_P)$ contains arches $A_j$ connecting $(t_{j-1},0)$ and $(t_j,0)$. We let $u_j$ parameterize $A_j$, let $a_j = G\circ u_j$, and let $\alpha_P = a_1\diamond \dots \diamond a_k$. Then, as in \eqref{eq:vca-pf-s-gamma}, 
  \begin{equation}\label{eq:s-gamma-s-alpha-rn}
    S(\gamma_P) = S(\alpha_P) - \sum_j S(\hat{a}_j),
  \end{equation}
  so it suffices to estimate $S(\alpha_P)$ and the $\ell(a_j)$'s.

  The arguments in Section~\ref{sec:proof-arch-lengths} show that if
  \begin{equation}\label{eq:def-cwjrn}
    \cW_j := \{w\in \cT : J_w\cap \{t_{j-1},t_j\} \ne \emptyset\, \text{ and}\, \diam J_w \leq 2|t_j-t_{j-1}|\},
  \end{equation}
  then \eqref{eq:boundary-graph} holds and 
  \begin{equation}\label{eq:alpha-length-rn}
    \ell(a_j) \lesssim \sum_{w\in \cW_j} \delta_\Lambda(w) \lesssim \sqrt{t_{j}- t_{j-1}}
  \end{equation}
  for all $j$, as in Lemma~\ref{lem:arch-lengths}. Furthermore, parts (\ref{it:W-multiplicity}) and (\ref{it:arch-count}) of Lemma~\ref{lem:arch-lengths} hold verbatim.

  Likewise, the proof of Lemma~\ref{lem:lim-dyadic} (see Section~\ref{sec:proof-dyadic}) still holds, using the inequality $\ell(\theta_v)\lesssim 2^{-\frac{\gen(v)}{2}}$ rather than $\ell(\theta_v)\lesssim 2^{-\gen(v)}$ in \eqref{eq:area-phi-P-sum}. That is,
  \begin{equation}\label{eq:lim-dyadic-conclusion}
    \lim_{\mesh(P)\to 0} S(\alpha_P) = \lim_{\mesh(P)\to 0} (S(\phi_P) - S(\psi)) = \Phi - S(\psi).
  \end{equation}
  
  Furthermore, we can use the argument of Lemma~\ref{lem:arch-area} to prove that if $\mesh(P)$ is sufficiently small, then
  \begin{equation}\label{eqn:ajepsilon}
    \sum_j \ell(a_j)^2\leq \epsilon.
  \end{equation}

  We proceed as follows. Let $\delta > 0$ and $m\in \mathbb{N}$ be numbers to be chosen later. Let $i_0>0$ be such that
  $$
  \sum_{v\in \cT^{\ge i_0}}\delta_\Lambda(v)^2 \le \delta.
  $$
  Since $\sigma<\infty$, such an $i_0$ exists.
  Let $\rho:=2^{-i_0-2}$.

  Suppose that $\mesh P < \rho$.
  Let $\cW_j\subset \cT$ be as in \eqref{eq:def-cwjrn} and let $n_j = \min_{w\in \cW_j} \gen(w)$. For any $w\in \cW_j$,
  \begin{equation}\label{eqn:SqrtEstimate}
  2^{-n_j}=\diam J_w \leq 2|t_j-t_{j-1}|\leq 2\mesh P < 2^{-i_0},
  \end{equation}
  so $n_j > i_0$. Thus, by \eqref{eq:alpha-length-rn},
  \[
  \begin{aligned}
    \ell(a_j)^2 \lesssim \left(\sum_{w\in \cW_j} \delta_\Lambda(w)\right)^2  
    \lesssim \left(\sum_{w\in \cW_j(<n_j + m)}\delta_\Lambda(w)\right)^2 + \left(\sum_{w\in \cW_j(\ge n_j + m)}\delta_\Lambda(w)\right)^2,
\end{aligned}
  \]
  where $\cW_j(< k) := \cW_j\cap \cT^{< k}$ and $\cW_j(\ge k) := \cW_j\cap \cT^{\ge k}$.

  We bound these terms separately. On one hand, as in Lemma~\ref{lem:arch-lengths}.(\ref{it:arch-count}), we have $|\cW_j\cap\cT^k|\le 4$ for all $k$, so $|\cW_j(<n_j + m)| \le 4m$. Therefore,
  $$
  \left(\sum_{w\in \cW_j(<n_j + m)}\delta_\Lambda(w)\right)^2 \le 4m \sum_{w\in \cW_j(<n_j + m)}\delta_\Lambda(w)^2,
  $$
  by Cauchy--Schwarz. Likewise,
  \begin{multline*}
    \sum_{w\in \cW_j(\ge n_j + m)}\delta_\Lambda(w)
    \stackrel{\eqref{eqn:EstimateDeltaLambdarn}}{\lesssim} L \sum_{k=n_j+m}^\infty |\cW_j\cap\cT^k| 2^{-\frac{k}{2}} \\
    \lesssim L \cdot 2^{-\frac{m}{2}} 2^{-\frac{n_j}{2}} \stackrel{\eqref{eqn:SqrtEstimate}}{\lesssim} L2^{-\frac{m}{2}}\sqrt{|t_j-t_{j-1}|}.
  \end{multline*}
  That is,
  $$\ell(a_j)^2\lesssim 4m \sum_{w\in \cW_j(<n_j + m)}\delta_\Lambda(w)^2 + L^22^{-m} |t_j-t_{j-1}|,$$
  and
  $$\sum_j \ell(a_j)^2\lesssim 4m \sum_j \sum_{w\in \cW_j(<n_j + m)}\delta_\Lambda(w)^2 + L^22^{-m}.$$

  Note that $\bigcup_j \cW_j \subset \cT^{>i_0}$ and that each $v\in \cT$ appears in at most $c$ of the $\cW_j$'s (as in Lemma~\ref{lem:arch-lengths}.(\ref{it:W-multiplicity})). Therefore, 
  $$
  \sum_j \sum_{w\in \cW_j(<n_j + m)}\delta_\Lambda(w)^2 \le \sum_{w\in \bigcup_j \cW_j}\delta_\Lambda(w)^2 \le c \sum_{w\in \cT^{>i_0}}\delta_\Lambda(w)^2 \le c\delta,
  $$
  and
  $$\sum_j \ell(a_j)^2\lesssim 4cm\delta + L^22^{-m}.$$
  If we choose $m$ large enough and $\delta$ small enough, this implies \eqref{eqn:ajepsilon}. Thus, by Lemma~\ref{lem:symplectic-area-props-v2}.(3), 
  \begin{equation}\label{eq:s-hat-aj-rn}
    \lim_{\mesh P \to 0}\sum_j S(\hat{a}_j) = 0.
  \end{equation}

  Finally,
  \begin{equation}\label{eq:sgamma-rn}
    S(\gamma) = \lim_{\mesh P \to 0} S(\gamma_P) \stackrel{\eqref{eq:s-gamma-s-alpha-rn}}{=} \lim_{\mesh P \to 0} S(\alpha_P) - \sum_j S(\hat{a}_j)  \stackrel{\eqref{eq:lim-dyadic-conclusion} \wedge \eqref{eq:s-hat-aj-rn}}{=} \Phi - S(\psi).
  \end{equation}

  As in \eqref{eq:s-gamma-s-psi}, we have $S(\hat\gamma)=\Phi-S(\hat\psi)$ and $|\Phi|\lesssim\sigma$. It remains to show that 
  \begin{equation}\label{eq:rn-target}
    S(\hat{\psi}) \lesssim \ell(\psi)^2 \lesssim \sigma\max\left\{1,\log\frac{L^2}{\sigma}\right\}.
  \end{equation}
  This follows from essentially the same argument as in the proof of Proposition~\ref{prop:vertical-curve-area}, but there are a few changes; we briefly sketch the argument here.

  For $t=0,1$, let $\psi_t = G\circ \overline{(t,1),(t,0)}$ and let $\tilde{\psi}_1$ be the reverse of $\psi_1$, so that $\psi = \tilde{\psi}_1 \diamond \overline{h_0(1), h_0(0)}\diamond \psi_0$. 
  Then $\psi_t$ consists of segments from $h_i(t)$ to $h_{i+1}(t)$. By \eqref{eqn:Gtu}, we have $|h_{i+1}(t)-h_i(t)|\lesssim L2^{-\frac{i}{2}}$, and by \eqref{eq:rn-gamma-holder} and \eqref{eqn:Lambdaij}, we have $|h_0(0)-h_0(1)| \lesssim L$, so
  \begin{equation}\label{eq:ell-psi-rn}
    \ell(\psi) \lesssim |h_0(0)-h_0(1)| + \sum_{i=0}^\infty L2^{-\frac{i}{2}} \lesssim L.
  \end{equation}

  If $\sigma > \frac{1}{16} L^2$, then $\ell(\psi)^2 \lesssim L^2 \lesssim \sigma$, implying \eqref{eq:rn-target}. Otherwise, we let
  $$i_0=\left\lfloor \log_2 \frac{L^2}{\sigma}\right\rfloor;$$
  note that $i_0 \ge 4$.  Then $|h_0(0)-h_0(1)| \lesssim \delta_{\Lambda}(v_{0,0})$, $|h_{i+1}(0)-h_i(0)|\le \delta_{\Lambda}(v_{i,0})$, and $|h_{i+1}(1)-h_i(1)|\le \delta_{\Lambda}(v_{i,2^i-1})$, so we can sharpen \eqref{eq:ell-psi-rn} to
  \begin{multline*}
    \ell(\psi) \lesssim \sum_{i=0}^{i_0-1} \left(\delta_\Lambda(v_{i,0}) + \delta_\Lambda(v_{i,2^i})\right) + \sum_{i=i_0}^{\infty} L2^{-\frac{i}{2}}\\
    \lesssim \sum_{i=0}^{i_0-1} \left(\delta_\Lambda(v_{i,0}) + \delta_\Lambda(v_{i,2^i})\right) + L2^{-\frac{i_0}{2}}.
  \end{multline*}
  By Cauchy--Schwarz,
  $$\ell(\psi)^2 \lesssim i_0 \bigg(\sum_{i=0}^{i_0-1} \left(\delta_\Lambda(v_{i,0}) + \delta_\Lambda(v_{i,2^i})\right)^2 + L^22^{-i_0}\bigg) \lesssim
  i_0 (\sigma + L^22^{-i_0}) \lesssim i_0 \sigma,$$
  as desired.

  Finally, if $h_i(0)=h_i(1)$ for all $i$, then $\psi_0 = \psi_1$, and $S(\psi) = S(\tilde{\psi}_1 \diamond \psi_0) = 0$. If $h_i(0) = \gamma(0)$ and $h_i(1) = \gamma(1)$ for all $i$, then $\psi = \overline{\gamma(1),\gamma(0)}$ and $\ell(\psi)^2 \le \delta_\Lambda(v_{0,0})^2 \le \sigma$. In either case, $S(\hat{\psi})\lesssim \sigma$, so \eqref{eq:sgamma-rn} implies that
  $$|S(\hat\gamma)| \le |\Phi| + |S(\hat\psi)| \lesssim \sigma,$$
  as desired.
\end{proof}

We also note the following consequence of this argument for later use.
\begin{lemma}\label{lem:sq-partition}
  Suppose that $\gamma\from [0,1]\to \R^{2n}$ is a curve satisfying the hypothesis of  Proposition~\ref{prop:rn-curve-area}. Then for any $\epsilon>0$, there is a $\delta>0$ such that if $P=\{t_0,\dots, t_k\}$ is a partition of $[0,1]$ with $\mesh P < \delta$, then
  $$\sum_{j=0}^{k-1} |\gamma(t_j)-\gamma(t_{j+1})|^2\leq \epsilon.$$
\end{lemma}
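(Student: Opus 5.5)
The plan is to observe that $|\gamma(t_j)-\gamma(t_{j+1})|$ is bounded by the length of the arch $a_{j+1}$ constructed in the proof of Proposition~\ref{prop:rn-curve-area}. Then the statement follows immediately from \eqref{eqn:ajepsilon}, which was established there for every sufficiently fine partition.

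Concretely, fix $\epsilon>0$ and let $P=\{t_0,\dots,t_k\}$ be a partition of $[0,1]$. Build $\cU_P$, the region $R(\cU_P)$, the arches $A_j$ given by Lemma~\ref{lem:boundary-arches}, their parameterizations $u_j$, and the curves $a_j=G\circ u_j$, exactly as in the proof of Proposition~\ref{prop:rn-curve-area}. The endpoint identities $u_j(0)=(t_{j-1},0)$, $u_j(1)=(t_j,0)$ and $G(t,0)=\gamma(t)$ show that $a_j$ is a curve from $\gamma(t_{j-1})$ to $\gamma(t_j)$. It is continuous by \eqref{eqn:Gtu} together with the piecewise-linear definition of $G$ away from $u=0$. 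Since the chord is no longer than any curve joining its endpoints,
\[
|\gamma(t_{j-1})-\gamma(t_j)|\le \ell(a_j).
\]
The length $\ell(a_j)$ is finite by \eqref{eq:alpha-length-rn}.

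Next I sum the squares and reuse the estimate proved in that argument. There, $\delta>0$ and $m\in\mathbb N$ were chosen depending only on $\epsilon$, $L$, the multiplicity constant $c$, and the tail sums of $\sigma$. This produced a threshold $\rho=2^{-i_0-2}$ such that $\mesh P<\rho$ implies $\sum_j \ell(a_j)^2\le\epsilon$; that is \eqref{eqn:ajepsilon}. Taking $\delta$ in the lemma equal to this $\rho$ gives
\[
\sum_{j=0}^{k-1}|\gamma(t_j)-\gamma(t_{j+1})|^2\le\sum_j\ell(a_j)^2\le\epsilon,
\]
which is the claim.

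The only point that needs checking is that the threshold in \eqref{eqn:ajepsilon} is uniform over all partitions with small mesh. It must not depend on $P$ beyond the condition $\mesh P<\rho$. This holds because $i_0$ is fixed by the tail condition on $\sigma$, and the bound $4cm\delta+L^2 2^{-m}$ involves no other data from $P$. Uniformity is therefore not a real obstacle; I only need to state it explicitly.
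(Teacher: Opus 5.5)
Your proposal is correct and is essentially the paper's own argument: the paper proves the lemma in one line as a direct consequence of \eqref{eqn:ajepsilon}, implicitly using $|\gamma(t_{j-1})-\gamma(t_j)|\le \ell(a_j)$ because $a_j$ joins these two points. Your remarks on the continuity of $a_j$ and on the uniformity of the threshold $\rho=2^{-i_0-2}$ simply make explicit what the paper leaves implicit.
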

\begin{proof}
  This is a direct consequence of \eqref{eqn:ajepsilon}.
\end{proof}

\subsection{Examples of Hölder curves without signed area}\label{sec:examples}

In this section, we study how the signed area of a curve is related to the signed area of its dyadic approximations. For a curve $f\from [0,1]\to \R^n$ and $k \ge 0$, let $\cD_kf\from [0,1]\to \R^n$ be the $k$th dyadic approximation of $f$. That is, $\cD_k f$ is the piecewise-linear function such that $\cD_k f(j2^{-k}) = f(j 2^{-k})$ for all $j$ and $\cD_k f$ is affine on $[j2^{-k}, (j+1)2^{-k}]$ for all $j$.

Recall that for any partition $P=\{t_1,\dots, t_k\}$, 
$$A_P(\gamma) = \frac{1}{2}\sum_{i=1}^{k-1} (\gamma_1(t_i) \gamma_2(t_{i+1}) - \gamma_2(t_i) \gamma_1(t_{i+1}))$$
and $A(\gamma) = \lim_{\mesh(P)\to 0} A_P(\gamma)$. In particular, if $\cP_k$ is the $k$th dyadic partition of $[0,1]$, then $A_{\cP_k}(\gamma) = A(\cD_k\gamma)$.

When $\gamma$ is an $\alpha$--Hölder curve and $\alpha > \frac{1}{2}$, $A(\gamma)$ exists; in fact, if $P$ and $P'$ are two partitions, then
\begin{equation}\label{eq:uniform-area}
  |A_P(\gamma) - A_{P'}(\gamma)| \lesssim \max(\mesh(P),\mesh(P'))^{2\alpha - 1},
\end{equation}
with constant depending on the $\alpha$--Hölder constant of $\gamma$. 
This is a standard consequence of Young’s theory of integration \cite{YoungIntegral}.

We construct two examples. Our first example shows that \eqref{eq:uniform-area} fails when $\alpha=\frac{1}{2}$. In fact, we construct a curve $\gamma\from [0,1]\to \R^2$ such that $A(\cD_k\gamma) = 0$ for all $k$, but $A(\gamma)$ does not exist.

To explain our second example, let $\delta_{i,j}(\gamma)$ and $\sigma(\gamma)$ be as in Theorem~\ref{thm:ExistenceAreaCurves}. Then the region between $\cD_i\gamma$ and $\cD_{i+1}\gamma$ consists of $2^i$ triangles of diameters $\delta_{i,0}(\gamma),\dots \delta_{i,2^{i}-1}(\gamma)$, so
$$|A(\cD_i\gamma) - A(\cD_{i+1}\gamma)| \le \sum_j \delta_{i,j}(\gamma)^2$$
and
$$|A(\cD_{k+1}\gamma)| \le \sum_{i=0}^k \sum_{j=0}^{2^i-1} \delta_{i,j}(\gamma)^2.$$
That is, $A(\cD_{k+1}\gamma)$ is a sum of areas of triangles. If $\sigma(\gamma) < \infty$, then the corresponding infinite sum converges absolutely.

It is natural to ask whether this absolute convergence condition (without the $\frac{1}{2}$--Hölder condition) implies $A(\gamma) = \lim_k A(\cD_{k+1}\gamma)$. We will show that it does not, by constructing an example of a $(\frac{1}{2}-\epsilon)$--Hölder curve $\lambda\from [0,1]\to \R^2$ such that $\sigma(\lambda) < \infty$, but $A(\lambda)$ does not exist.

Both of these examples are based on the following construction. For $i\ge 0$, let
\begin{equation}\label{eq:def-ab}
  a_i = \frac{1}{3}(1 - 4^{-i})\qquad b_i = \frac{1}{3}(1 + 2\cdot 4^{-i}),
\end{equation}
so that $(a_i)_i = 0, \frac{1}{4}, \frac{5}{16},\dots$ and $(b_i)_i = 1, \frac{1}{2}, \frac{3}{8}, \dots$ are sequences of dyadic rationals converging to $\frac{1}{3}$. Let $\alpha\from [0,\frac{1}{3}) \cup (\frac{1}{3}, 1] \to \R$ be the piecewise-linear function such that $\alpha(a_i) = \alpha(b_i) = i$ and $\alpha$ is affine on $[a_i, a_{i+1}]$ and $[b_{i+1},b_i]$ for all $i$. 

\begin{lemma}\label{lem:cdf}
  For each $i$, the graph of $\cD_{2i}\alpha$ is the piecewise-linear curve
  \begin{equation}\label{eq:cDf-even}
    \{y = \cD_{2i}\alpha(x)\} = \overline{(a_0,0),\dots, (a_i,i),(b_i,i),\dots, (b_0,0)},
  \end{equation}
  i.e., $\cD_{2i}\alpha = \min\{\alpha,i\}$,
  and the graph of $\cD_{2i+1}\alpha $ is the piecewise-linear curve
  \begin{equation}\label{eq:cDf-odd}
    \{y = \cD_{2i+1}\alpha(x)\} = \overline{(a_0,0),\dots, (a_i,i),(b_{i+1},i+1),\dots, (b_0,0)},
  \end{equation}
  where $\overline{p_1,\dots,p_k}$ is the curve that connects $p_1,\dots,p_k$ by line segments. In particular, $\|(\cD_{n}\alpha)'\|\le 2^{n}$ for all $n$.
\end{lemma}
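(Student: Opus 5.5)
The plan is to compute the dyadic grid points $\{j2^{-n}\}$ that interact with the sequences $(a_k)$ and $(b_k)$, and to check that $\alpha$ is affine between consecutive such points except in a single interval containing $\frac13$. First record the dyadic structure: $a_k = \frac{1}{3}(1-4^{-k}) = \sum_{m=1}^{k} 4^{-m}$ has denominator $4^k = 2^{2k}$, and $b_k = a_k + 4^{-k}$ also lies in $2^{-2k}\Z$. Moreover $a_{k+1}-a_k = 4^{-k-1}$ and $b_k - b_{k+1} = 2\cdot 4^{-k-1}$.

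For $n=2i$, I would argue as follows. All of $a_0,\dots,a_i,b_i,\dots,b_0$ lie in $2^{-2i}\Z$, and $\alpha$ is affine between consecutive ones. Hence on each such segment $\cD_{2i}\alpha$ agrees with $\alpha$: a dyadic piecewise-linear interpolation of an affine function whose breakpoints are grid points reproduces it. The only interval left is $[a_i,b_i]$, which has length $4^{-i}=2^{-2i}$ and is therefore a single grid cell. So $\cD_{2i}\alpha$ is the chord from $(a_i,i)$ to $(b_i,i)$, which is the constant $i$. This gives \eqref{eq:cDf-even}. Since $\alpha\ge i$ on $(a_i,b_i)$ minus the point $\frac13$, we also get $\cD_{2i}\alpha=\min\{\alpha,i\}$.

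For $n=2i+1$, the points $a_0,\dots,a_i$ and $b_{i+1},\dots,b_0$ lie in $2^{-2i-1}\Z$, because $b_{i+1}=a_{i+1}+4^{-i-1}=a_i+2\cdot 4^{-i-1}=a_i+2^{-2i-1}$. The remaining gap $[a_i,b_{i+1}]$ therefore has length exactly $2^{-2i-1}$, one grid cell, and $\cD_{2i+1}\alpha$ is the chord from $(a_i,i)$ to $(b_{i+1},i+1)$. Outside this cell the same argument as before applies, and this yields \eqref{eq:cDf-odd}. The one point needing care is that $a_{i+1}$ is not a $2^{-2i-1}$ grid point, so the segment of $\alpha$ through $a_{i+1}$ is lost. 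This is harmless because $a_{i+1}$ lies inside the exceptional cell.

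The derivative bound comes from comparing each piece's rise of $1$ over its run. For the affine pieces of $\alpha$ that survive, the slope is $1/4^{-k-1}$ on $[a_k,a_{k+1}]$ and $1/(2\cdot4^{-k-1})$ on $[b_{k+1},b_k]$. In $\cD_{2i}$ the surviving pieces have $k\le i-1$, so the slope is at most $4^{i}=2^{2i}$. In $\cD_{2i+1}$ the surviving pieces have $k\le i-1$ on the $a$-side and $k\le i$ on the $b$-side, giving slopes at most $4^i$ and $2\cdot 4^{i}=2^{2i+1}$. The exceptional chord has rise $1$ over length $2^{-2i-1}$, so its slope is $2^{2i+1}$, and the constant chord in the even case has slope $0$. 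Every slope is therefore at most $2^n$.

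The only genuine obstacle is the bookkeeping in the odd case: checking exactly which $a_k,b_k$ are grid points of $2^{-2i-1}\Z$, namely $a_k,b_k$ for $k\le i$ together with $b_{i+1}$. This follows from the explicit denominators recorded at the start.
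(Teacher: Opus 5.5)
Your proposal is correct and follows essentially the same argument as the paper. The paper likewise identifies which $a_k$ and $b_k$ lie in $2^{-n}\Z$ (treating both parities at once via $k=\lfloor n/2\rfloor$, $l=\lceil n/2\rceil$), notes that the leftover interval $[a_k,b_l]$ is a single grid cell of length $2^{-n}$, and uses that $\cD_n$ reproduces $\alpha$ on affine pieces with grid endpoints; for the derivative bound it simply observes that every piece has rise at most $1$ over a run of at least $2^{-n}$, rather than computing the slopes explicitly as you do.
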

\begin{proof}
  Note that if $f$ is affine on $[a,b]$ and $a,b \in 2^{-k}\Z$ for some $k \ge 0$, then $\cD_k f|_{[a,b]} = f|_{[a,b]}$.
  One can check that for $i\ge 0$, 
  $$\{a_0,a_1,\dots\} \cap 2^{-i}\Z = \{a_0,\dots,a_{\lfloor\frac{i}{2}\rfloor}\}$$
  and 
  $$\{b_0,b_1,\dots\} \cap 2^{-i}\Z =    \{b_0,\dots,b_{\lceil\frac{i}{2}\rceil}\}.$$

  Let $i\ge 0$, let $k = \lfloor\frac{i}{2}\rfloor$, and let $l = \lceil\frac{i}{2}\rceil$. Then
  $$\{a_0,a_1,\dots\}\cup \{b_0,b_1,\dots\} \cap 2^{-i}\Z = \{a_0,\dots, a_k, b_0,\dots, b_l\}$$
  and $b_l - a_k = 2^{-i}$. Therefore, $\cD_i\alpha$ agrees with $\alpha$ on $[0, a_k]$ and $[b_l, 1]$ and $\cD_i\alpha$ is affine on $[a_k,b_l]$. All of the differences $|\alpha(a_j)-\alpha(a_{j+1})|$, $|\alpha(b_j)-\alpha(b_{j+1})|$, and $|\alpha(b_l)-\alpha(a_{k})|$ are at most $1$, so $\|(\cD_i\alpha)'\|_{\infty}\le 2^{i}$, as desired.
\end{proof}

If $Q = \{t_0,\dots, t_q\}$ is a partition with $$a = t_0 < t_1 <\dots  < t_q = b$$ and $f\from [a,b]\to \R^n$ is a function which is affine on each interval $[t_i,t_{i+1}]$, we say that $f$ is \emph{$Q$--PL}. If $f$ is defined on $[0,1]$ and is affine on each interval $[j2^{-i},(j+1)2^{-i}]$, we say that $f$ is \emph{$2^{-i}$--PL}. In particular, for any curve $f\from [0,1]\to \R^n$, the approximation $\cD_if$ is $2^{-i}$--PL.

Let $\beta\from [0,\infty)\to \R^2$ be the map such that $\beta|_{[0,4]}$ is a unit-speed parameterization of the boundary of the unit square and $\beta(0)=\beta(4) = \beta(t) = (0,0)$ for $t \ge 4$. For an integer $k > 0$, let $\theta_k \from [0,1]\to \R^2$, $\theta_k(t) = \beta(k^{-1} \alpha(t))$. Since $\beta$ is constant on $[4,\infty)$, 
\begin{equation}\label{eq:thetak}
  \theta_k(t) = \beta(k^{-1} \alpha(t)) = \beta(k^{-1} \min\{4k,\alpha(t)\}) = \beta(k^{-1} \cD_{8k}\alpha(t)).
\end{equation}
That is, $\theta_k(t)$ traces the unit square forward as $t$ ranges from $0$ to $\frac{1}{3}$, then backward for $t$ from $\frac{1}{3}$ to $1$.

One can check that $\theta_k$ is $2^{-8k}$--PL; in fact it is $Q_k$--PL, where
$$Q_k = \{a_0,\dots, a_{4k}, b_{4k},\dots, b_0\}.$$
If we apply the argument in Lemma~\ref{lem:cdf} to $\theta_k$, we find that
\begin{equation}\label{eq:cdtheta}
  \cD_i \theta_k(t) = \beta(k^{-1} \cD_{\min\{8k,i\}}\alpha(t))
\end{equation}
for all $i$ and $t$. That is, when $i < 8k$, $\cD_i \theta_k$ traces part of the unit square forward, then backward. 

Our two examples will both come from the following construction.
Let $(r_i)_i$ and $(k_i)_i$ be sequences of integers such that $r_0 \ge 0$, $k_i \ge 1$, and $r_{i+1} \ge r_i + 16 k_i + 2$ for all $i$. We define a sequence of $2^{-r_i}$--PL curves $\gamma_i\from [0,1]\to \R^2$ as follows.
Let $\gamma_0$ be the constant curve $0$. Suppose that $i\ge 0$ and that $\gamma_i$ is a $2^{-r_i}$--PL curve. For every 
$j=0,\dots, 2^{r_i}-1$, let $c_{i,j} = j 2^{-r_i}$ and let $m_{i,j} = \frac{c_{i,j}+ c_{i,j+1}}{2}$. If $t\in [c_{i,j},c_{i,j+1}]$, we let
\begin{equation}\label{eqn:defgammai}
\gamma_{i+1}(t) =
\begin{cases}
  \gamma_i(c_{i,j}) + 2^{-\frac{r_i}{2}} \theta_{k_i}((t-c_{i,j}) 2^{r_i + 1}) & t \in [c_{i,j}, m_{i,j}] \\
  \gamma_i(c_{i,j} + 2(t-m_{i,j})) & t \in [m_{i,j}, c_{i,j+1}].
\end{cases}
\end{equation}
Then $\gamma_{i+1}$ is $2^{-r_i-1}2^{-8k_i}$--PL and thus $2^{-r_{i+1}}$--PL. Hence, $\gamma_i(t) = \gamma_{i+1}(t)$ for all $t\in 2^{-r_i}\Z$.

\begin{lemma}\label{lem:gamma-i-no-area}
  The curves $\gamma_i$ converge uniformly to a curve $\gamma$ such that $A(\cD_k\gamma) = 0$ for all $k$, but $A(\gamma)$ does not exist.

  If there is a $c>0$ such that $k_i < c$ for all $i$, then $\gamma$ is $\frac{1}{2}$--Hölder. For any sequence $(k_i)_i$ and any $0<\epsilon<\frac{1}{2}$, there is a sequence $(r_i)_i$ such that $\gamma$ is $(\frac{1}{2}-\epsilon)$--Hölder.
\end{lemma}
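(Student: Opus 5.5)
The plan is to control everything through the dyadic grids $2^{-r_i}\Z$. By construction, $\gamma_l$ and $\gamma_{l+1}$ agree on $2^{-r_l}\Z$, so $\gamma$ agrees with $\gamma_{i+1}$ on $2^{-r_{i+1}}\Z$. The whole argument rests on one computational fact: $\cD_m\theta_k$ is a path that goes forward along part of the unit square and then retraces itself backward, by \eqref{eq:cdtheta}. Two consequences follow. First, $\cD_m\theta_k$ has signed area $0$. Second, for any $p\in\R^2$, the translate $p+2^{-r/2}\cD_m\theta_k$ also has zero area. Indeed, the cross term $\frac12\omega(p,\cdot)$ only sees the net displacement $\theta_k(1)-\theta_k(0)=0$, by Lemma~\ref{lem:discrete-area-z} and the additivity in Lemma~\ref{lem:symplectic-area-props-v2}.

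\emph{Uniform convergence and Hölder bounds.} On $[c_{i,j},c_{i,j+1}]$ the curve $\gamma_i$ is affine. The first half of the new curve stays in a square of side $2^{-r_i/2}$ based at $\gamma_i(c_{i,j})$. The second half reparameterizes this affine segment. Hence $\|\gamma_{i+1}-\gamma_i\|_\infty\lesssim 2^{-r_i/2}+\max_j|\gamma_i(c_{i,j+1})-\gamma_i(c_{i,j})|$. The segment increments of $\gamma_i$ at scale $2^{-r_i}$ are at most $2^{-r_{i-1}/2}\cdot 2^{r_{i-1}+1}\cdot 2^{8k_{i-1}}k_{i-1}^{-1}\cdot 2^{-r_i}$, using Lemma~\ref{lem:cdf} and \eqref{eq:thetak}. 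Since $r_i\ge r_{i-1}+16k_{i-1}+2$, this increment is $\le 2^{-r_i/2}$, so the sum of the sup-norm differences converges. For Hölder continuity, take $|s-t|\approx 2^{-m}$ with $r_i\le m<r_{i+1}$. Levels $l>i$ contribute $\lesssim 2^{-r_{l}/2}\le 2^{-m/2}$. Level $i$ contributes at most $\min\{2^{-r_i/2},\,2^{-r_i/2}2^{r_i+1}2^{8k_i}k_i^{-1}2^{-m}\}$. This is $\lesssim_c 2^{-m/2}$ when the $k_i$ are bounded. If the $k_i$ are unbounded, choosing $r_{i+1}$ much larger than $r_i+16k_i$ makes the loss $2^{8k_i}$ at most $2^{\epsilon m}$ for all relevant $m$, which gives $(\frac12-\epsilon)$--Hölder.

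\emph{$A(\cD_k\gamma)=0$.} Fix $k$ and pick $i$ with $r_i\le k<r_{i+1}$. Then $\cD_k\gamma=\cD_k\gamma_{i+1}$. On each first half-interval, $\cD_k\gamma_{i+1}$ equals $\gamma_i(c_{i,j})+2^{-r_i/2}\cD_{k-r_i-1}\theta_{k_i}$, which is a closed loop of zero area by the observation above. On the second half-intervals it traces the affine pieces of $\gamma_i$. So $\cD_k\gamma_{i+1}$ is the curve $\cD_{r_i}\gamma_i=\cD_{r_i}\gamma$ with zero-area closed loops inserted at grid points. By Lemma~\ref{lem:symplectic-area-props-v2}(2),(5) its area equals $A(\cD_{r_i}\gamma)$, which vanishes by induction on $i$. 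For $k<r_0$ the curve is constant.

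\emph{Nonexistence of $A(\gamma)$.} This is the main obstacle. The difficulty is to build partitions with small mesh whose areas stay bounded away from $0$, whereas dyadic partitions give $0$. I would take $P_i$ to be the dyadic grid of level $r_{i+1}$, with, in each first half-interval $[c_{i,j},m_{i,j}]$, all points on the ``$b$-side'' (the images of $b_{4k_i},\dots,b_1$) removed. These removed points are dyadic of level $\le r_{i+1}$. Because the $a$-side points $a_0,\dots,a_{4k_i}$ are kept, the sampled path traverses the full square boundary forward, at scale $2^{-r_i/2}$ and area $2^{-r_i}$. It then jumps straight back to the basepoint, and $\gamma_{i+1}$ near $1/3$ of the loop equals $\beta(4)=0$. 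Compared with $A(\cD_{r_{i+1}}\gamma)=0$, removing the backward retrace changes the area by exactly the area of the forward square in each of the $2^{r_i}$ intervals, up to sign. Hence $|A_{P_i}(\gamma)|\approx 2^{r_i}\cdot2^{-r_i}=1$. The finer structure is unaffected because it is sampled on the full grid of level $r_{i+1}$, so the same zero-area argument as above applies there. The largest gap in $P_i$ is $\lesssim 2^{-r_i}$, so $\mesh(P_i)\to0$. Together with the dyadic partitions, which have mesh tending to $0$ and area $0$, this shows that $A(\gamma)$ does not exist. I expect the delicate point to be verifying precisely that deleting those points changes $A_P$ by exactly the square areas, i.e.\ bookkeeping the cross terms with Lemma~\ref{lem:discrete-area-z}.
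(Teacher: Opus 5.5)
Your treatment of uniform convergence and of $A(\cD_k\gamma)=0$ is the paper's argument. Your idea for nonexistence, sampling each inserted square forward but not its backward retrace, is also the paper's. However, the partition as you specify it does not work. Since $r_{i+1}\ge r_i+16k_i+2$, the grid $2^{-r_{i+1}}\Z$, pulled back to the parameter of $\theta_{k_i}$, has spacing at most $2^{-16k_i-1}$. Each $[b_{l+1},b_l]$ with $l<4k_i$ has length $\frac12 4^{-l}\ge 2^{-8k_i+1}$, so the $b$-side contains many grid points besides the images of $b_{4k_i},\dots,b_1$.

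If you delete only those $4k_i$ points, as your parenthetical says, the remaining points still sample the backward leg, on which $\theta_{k_i}$ is affine between consecutive $b_l$'s. Only the corners $\beta(1),\beta(2),\beta(3)$ get shaved, by a distance $\lesssim 2^{-10k_i}$. Each sampled loop then has area negligible compared with $2^{-r_i}$, and $A_{P_i}(\gamma)$ is close to $0$ rather than $\pm 1$. You must delete every grid point strictly between the image of $a_{4k_i}$ and $m_{i,j}$. Then the sampled polygon is exactly $\gamma_i$ with $2^{r_i}$ squares of side $2^{-r_i/2}$ inserted.

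The paper gets this more economically with $P_i=\bigcup_j\{c_{i,j},a_{i,j,1},\dots,a_{i,j,4},c_{i,j+1}\}$. Here $a_{i,j,1},\dots,a_{i,j,4}\in 2^{-r_{i+1}}\Z$ are the times at which the forward leg hits the corners of the square. With this sparse partition, the bookkeeping you flag as delicate is immediate: $\gamma_{P_i}$ is visibly $\gamma_i$ with the squares inserted, so $A_{P_i}(\gamma)=A(\gamma_i)+2^{r_i}\cdot 2^{-r_i}=1$.

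Second, in the unbounded case your growth condition is on the wrong index. At scales $m$ just above $r_i$, the loss $2^{8k_i}$ is genuinely present. The images of $a_{3k_i}$ and $a_{4k_i}$ in $[c_{i,j},m_{i,j}]$ lie in $2^{-r_{i+1}}\Z$, so $\gamma=\gamma_{i+1}$ there. They are at distance $\approx 2^{-r_i-6k_i}$, and $\gamma$ differs by $2^{-r_i/2}$ between them. The $(\frac12-\epsilon)$--Hölder quotient is therefore about $2^{3k_i(1-2\epsilon)-\epsilon r_i}$.

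Making $r_{i+1}$ much larger than $r_i+16k_i$ does not help. You need $r_i$ itself large compared with $k_i/\epsilon$, which is what the paper imposes: $r_i\ge 8\epsilon^{-1}k_i$. This is compatible with $r_{i+1}\ge r_i+16k_i+2$ because $(k_i)$ is given in advance.

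Finally, your increment bound for $\gamma_i$ only covers the pieces coming from $\theta_{k_{i-1}}$. The second-half pieces are rescaled pieces of $\gamma_{i-1}$, so you need an induction such as the paper's $\|\gamma_i'\|_\infty\le 2^{r_i/2}$. That induction also supplies the derivative bound you use in the Hölder estimate.
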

\begin{proof} 
  First, we bound $\gamma_i'$ and show that the $\gamma_i$'s converge. Suppose by induction that $i\ge 0$ and $\|\gamma_{i}'\|_\infty\le 2^{\frac{r_{i}}{2}}$. Since $\gamma_0$ is constant, this holds for $i=0$. From the definition, 
  $$\|\gamma_{i+1}'\|_\infty \le \max \left\{2 \|\gamma_{i}'\|_\infty, 2^{\frac{r_i}{2} + 1} \|\theta_{k_i}'\|_\infty \right\}$$
  for all $i$. By \eqref{eq:thetak}, $\theta_{k_i}(t) = \beta(k_i^{-1} \cD_{8k_i}\alpha(t))$, so by Lemma~\ref{lem:cdf},
  $$\|\theta_{k_i}'\|_\infty \le k_i^{-1} \|(\cD_{8k_i}\alpha)'\|_\infty\le 2^{8k_i}.$$

  Therefore, 
  \begin{equation}\label{eq:gamma-deriv}
    \|\gamma_{i+1}'\|_\infty \le \max\{2^{\frac{r_i}{2}+1}, 2^{\frac{r_i}{2} + 1 + 8k_i}\} \le 2^{\frac{r_{i+1}}{2}},
  \end{equation}
  and by induction, $\|\gamma_{i}'\|_\infty\le 2^{\frac{r_{i}}{2}}$ for all $i\ge 0$.
  Therefore, for any $i\ge 0$ and any $0\le j < 2^{r_i}$,
  $$\diam \gamma_i([c_{i,j}, c_{i,j+1}]) \le 2^{-\frac{r_i}{2}}.$$

  The image $\gamma_{i+1}([c_{i,j}, c_{i,j+1}])$ consists of $\gamma_i([c_{i,j}, c_{i,j+1}])$ plus a square of side length $2^{-\frac{r_i}{2}}$, so it is contained in a ball of radius $2\cdot 2^{-\frac{r_i}{2}}$ around $\gamma_i(c_{i,j})$. Therefore, 
  $$\|\gamma_i - \gamma_{i+1}\|_\infty \le 3\cdot 2^{-\frac{r_i}{2}}.$$
  Since $r_{j+1} \ge 16 + r_j$ for all $j$, it follows that $\gamma_i$ converges uniformly to a function $\gamma$, and 
  \begin{equation}\label{eq:gamma-convergence}
    \|\gamma_i - \gamma\|_\infty \le \sum_{j=i}^\infty 3\cdot 2^{-\frac{r_j}{2}} \le 4\cdot 2^{-\frac{r_i}{2}}.
  \end{equation}

  Furthermore, we can estimate $A(\cD_k\gamma)$ and $A(\gamma)$. For all $i$, $j$, and $l$ such that $l\ge i$, we have $\gamma_{l+1}(c_{i,j}) = \gamma_{l}(c_{i,j})$, so $\gamma(c_{i,j}) = \gamma_i(c_{i,j})$. Since $\gamma_i$ is $2^{-r_i}$--PL, we have $\cD_{r_i}\gamma = \cD_{r_i}\gamma_i = \gamma_i$.

  We suppose that $r_i < k \le r_{i+1}$. Then $\cD_k \gamma = \cD_{k} \gamma_{i+1}$. The definition of $\gamma_{i+1}$ breaks $[0,1]$ into $2^{r_i + 1}$ segments, each of the form $[c_{i,j}, m_{i,j}]$ or $[m_{i,j}, c_{i,j+1}]$ for some $j$.

  On these segments, $\cD_{k}\gamma$ alternates between tracing part of a square forward and backward and tracing part of $\gamma_i$.
  That is, for $t\in [c_{i,j}, m_{i,j}]$, we have
  \begin{equation}\label{eq:dkgamma-formula}
    \cD_{k}\gamma(t) = \cD_{k}\gamma_{i+1}(t) = \gamma_i(c_{i,j}) + 2^{-\frac{r_i}{2}} \cD_{k - r_i - 1}\theta_{k_i}((t-c_{i,j}) 2^{r_i + 1}).
  \end{equation}
  By \eqref{eq:cdtheta}, $\cD_k\gamma|_{[c_{i,j}, m_{i,j}]}$ traces part of a square forward, then backward. By \eqref{eqn:defgammai}, $\cD_k\gamma|_{[m_{i,j}, c_{i,j+1}]}$ is the segment $\overline{\gamma_i(c_{i,j}), \gamma_i(c_{i,j+1})}$.

  The segments of $\cD_k\gamma$ that trace squares forward and backward do not contribute to the signed area, so $A(\cD_{k}\gamma) = A(\gamma_{i})$ for $r_i < k \le r_{i+1}$. Setting $k=r_{i+1}$, we have
  $$A(\gamma_{i+1}) = A(\cD_{r_{i+1}}\gamma) = A(\gamma_i).$$
  Therefore, by induction, $A(\gamma_i) = 0$ for all $i$ and $A(\cD_{k}\gamma) = 0$ for all $k$.

  It remains to show that $A(\gamma)$ does not exist. For each $i$ and $j$, let $t_{i,j} = c_{i,j} + \frac{1}{6} 2^{-r_i}$ be the point one-third of the way from $c_{i,j}$ to $m_{i,j}$. Then $\cD_{r_{i+1}} \gamma = \gamma_{i+1}$ traces out a square forward on $[c_{i,j}, t_{i,j}]$ and backward on $[t_{i,j}, m_{i,j}]$. There are points
  $$c_{i,j} < a_{i,j,1} < \dots < a_{i,j,4} < t_{i,j}$$
  such that $a_{i,j,k} \in 2^{-r_{i+1}} \Z$ and $\gamma_{i+1}(a_{i,j,1}), \dots, \gamma_{i+1}(a_{i,j,4})$ are the four vertices of the square, with $\gamma_{i+1}(a_{i,j,4}) = \gamma(c_{i,j})$. Since $\gamma$ agrees with $\gamma_{i+1}$ on $2^{-r_{i+1}} \Z$,
  $$\overline{\gamma(c_{i,j}), \gamma(a_{i,j,1}), \dots, \gamma(a_{i,j,4}), \gamma(c_{i,j+1})}$$
  consists of a square of side length $2^{-\frac{r_i}{2}}$ and a short segment of $\gamma_i$.

  Let
  $$P_i = \bigcup_{j=0}^{2^{r_i}-1} \{c_{i,j}, a_{i,j,1}, \dots, a_{i,j,4},c_{i,j+1}\}.$$
  Then $\gamma_{P_i}$ consists of $\gamma_i$ with $2^{r_i}$ squares of side length $2^{-\frac{r_i}{2}}$ inserted, and
  $$A_{P_i}(\gamma) = A(\gamma_i) + 2^{r_i} (2^{-\frac{r_i}{2}})^2 = 1.$$
  Since $\mesh P_i \to 0$ as $i\to \infty$, this means that $A(\gamma) = \lim_{\mesh P\to 0} A_P(\gamma)$ does not exist.

  Finally, we prove Hölder bounds on $\gamma$. We first consider the case that there is a $c>0$ such that $k_i< c$ for all $i$. By \eqref{eq:gamma-deriv},
  \begin{equation}\label{eq:gamma-deriv-c}
    \|\gamma_{i+1}'\|_\infty \lesssim 2^{8c} 2^{\frac{r_i}{2}}\lesssim_c 2^{\frac{r_i}{2}}.
  \end{equation}
  Let $s,t\in [0,1]$ with $s\ne t$. If $|s-t| > 2^{-r_0}$, then $\gamma_0(s) = \gamma_0(t) = 0$ and, by \eqref{eq:gamma-convergence},
  $$|\gamma(s) - \gamma(t)| \le |\gamma(s) - \gamma_{0}(s)| + |\gamma(t) - \gamma_{0}(t)| \le 8 \cdot 2^{-\frac{r_0}{2}} \lesssim \sqrt{|s-t|}.$$
  Otherwise, let $i$ be the unique integer such that $2^{-r_{i+1}} < |s-t| \le 2^{-r_i}$. Then by \eqref{eq:gamma-convergence} and \eqref{eq:gamma-deriv-c},
  \begin{align*}
    |\gamma(s) - \gamma(t)| 
    &\le |\gamma(s) - \gamma_{i+1}(s)| + |\gamma_{i+1}(s) - \gamma_{i+1}(t)| + |\gamma(t) - \gamma_{i+1}(t)|\\ 
    &\lesssim_c 4\cdot 2^{-\frac{r_{i+1}}{2}} + |s-t| 2^{\frac{r_{i}}{2}} + 4\cdot 2^{-\frac{r_{i+1}}{2}}.
  \end{align*}
  By our choice of $i$, $2^{-\frac{r_{i+1}}{2}} \le \sqrt{|s-t|}$ and $2^{\frac{r_{i}}{2}} \le |s-t|^{-\frac{1}{2}}.$ Therefore, $|\gamma(s) - \gamma(t)| \lesssim_c \sqrt{|s-t|}$, and $\gamma$ is $\frac{1}{2}$--Hölder.

  Next, we consider the case that $k_i$ is unbounded. Let $0 < \epsilon <\frac{1}{2}$ and suppose that
  $r_i \ge 8\epsilon^{-1}k_i$
  for all $i> 0$. We claim that $\gamma$ is $(\frac{1}{2}-\epsilon)$--Hölder.
  Let $s,t\in [0,1]$ with $s\ne t$. As above, we may suppose that $|s-t| \le 2^{-r_0}$. Let $i$ be the unique integer such that $2^{-r_{i+1}} < |s-t| \le 2^{-r_i}$. Then by \eqref{eq:gamma-deriv} and \eqref{eq:gamma-convergence},
  \begin{align*}
    |\gamma(s) - \gamma(t)| 
    &\le |\gamma(s) - \gamma_{i+1}(s)| + |\gamma_{i+1}(s) - \gamma_{i+1}(t)| + |\gamma(t) - \gamma_{i+1}(t)|\\ 
    &\lesssim 4\cdot 2^{-\frac{r_{i+1}}{2}} + |s-t| 2^{\frac{r_i}{2} + 1 + 8k_i} + 4\cdot 2^{-\frac{r_{i+1}}{2}}.
  \end{align*}
  By our choice of $i$, 
  $$2^{-\frac{r_{i+1}}{2}} \le \sqrt{|s-t|}\le |s-t|^{\frac{1}{2} - \epsilon}.$$
  Since $2^{r_i}\le |s-t|^{-1}$, 
  $$2^{\frac{r_i}{2} + 1 + 8k_i} \le 2\cdot 2^{\frac{r_i}{2} + \epsilon r_i} \le 2 \cdot |s-t|^{-\frac{1}{2} - \epsilon}.$$
  Therefore,
  $$|\gamma(s) - \gamma(t)| \le 8 |s-t|^{\frac{1}{2} - \epsilon} + 2 |s-t| \cdot |s-t|^{-\frac{1}{2} - \epsilon} \lesssim |s-t|^{\frac{1}{2} - \epsilon},$$
  and $\gamma$ is $(\frac{1}{2}-\epsilon)$--Hölder.
\end{proof}

It remains to bound $\sigma(\gamma)$. We will prove the following.
\begin{lemma}\label{lem:sigma-bound}
  Suppose that $k_i \ge i$ for all $i$ and $\sum_i \frac{1}{k_i}<\infty$. Then $\gamma$ satisfies
  $$\sigma(\gamma) \lesssim \sum_i \frac{1}{k_i}.$$
\end{lemma}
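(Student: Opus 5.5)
The plan is to compute $\sigma(\gamma)$ level by level, using the fact that $\gamma$ agrees with the piecewise-linear curve $\gamma_{i+1}$ on $2^{-r_{i+1}}\Z$. For a dyadic level $l$ with $r_i\le l<r_{i+1}$, the three points $\gamma(j2^{-l}),\gamma((2j+1)2^{-l-1}),\gamma((j+1)2^{-l})$ all lie in $2^{-r_{i+1}}\Z$. Hence $\delta_{l,j}(\gamma)=\delta_{l,j}(\gamma_{i+1})$. Levels $l<r_0$ contribute nothing, since $\gamma=\gamma_0=0$ on $2^{-r_0}\Z$. So
$$\sigma(\gamma)=\sum_{i\ge 0}\ \sum_{l=r_i}^{r_{i+1}-1}\ \sum_j \delta_{l,j}(\gamma_{i+1})^2 .$$
I would then split the triples at level $l$ into two kinds. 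A triple is \emph{collinear} if its dyadic interval $[j2^{-l},(j+1)2^{-l}]$ lies inside one affine piece of $\gamma_{i+1}$. Otherwise it is a \emph{breakpoint} triple.

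For collinear triples, $\delta_{l,j}$ is the length of the image subsegment. I would use an energy bookkeeping for this. Suppose a segment of length $s$ is traced affinely on a dyadic interval of length $2^{-l_0}$. Then its dyadic subdivisions at levels $l_0+m$ contribute $\sum_m 2^m (s2^{-m})^2\le 2s^2$. The $[m_{i,j},c_{i,j+1}]$ halves of $\gamma_{i+1}$ are time-compressed copies of $\gamma_i$. Compression by $2$ maps dyadic intervals to dyadic intervals, so a segment created at stage $m$ is never counted more than once along this geometric tail, even though it persists in all later $\gamma_i$. Hence the collinear contribution is $\lesssim \sum_m E_m$, where $E_m$ is the sum of squared lengths of the new segments introduced at stage $m$.

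These new segments come from the $2^{r_m}$ rescaled copies of $\theta_{k_m}$. By \eqref{eq:thetak} and Lemma~\ref{lem:cdf}, each copy consists of $8k_m$ steps. Each step moves along a side of the square of side $2^{-r_m/2}$, by a fraction $1/k_m$ of it, plus the single jump across $[a_k,b_l]$, which has the same size. So
$$E_m\lesssim 2^{r_m}\cdot 8k_m\cdot \bigl(2^{-r_m/2}k_m^{-1}\bigr)^2\lesssim k_m^{-1}.$$

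For breakpoint triples I would argue as follows. At each level $l\in[r_i,r_{i+1})$, the breakpoint triples that are not yet inside a single $\alpha$-step occur only in three places: near $\tfrac13$ (the interval $[a_k,b_l]$ of Lemma~\ref{lem:cdf}), near the corners of $\beta$, and near the junctions $c_{i,j},m_{i,j}$. There are $O(1)$ such triples per copy of $\theta_{k_i}$, i.e.\ per $j\in\{0,\dots,2^{r_i}-1\}$. Each has diameter $\lesssim 2^{-r_i/2}k_i^{-1}$ times the number of $\alpha$-steps it spans. At the junction $m_{i,j}$ the square has already closed up at $\gamma_i(c_{i,j})$, which is where the $\gamma_i$-copy starts. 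So there the diameter is controlled by one square step plus a piece of $\gamma_i$ of length $2^{-l}\|\gamma_{i+1}'\|$. Once $l>r_i+1+8k_i$, every triple lies within one $\alpha$-step and falls under the collinear case. Thus breakpoint triples occur only on $O(k_i)$ levels, each contributing $\lesssim 2^{r_i}(2^{-r_i/2}k_i^{-1})^2=k_i^{-1}$, for a total of $O(1)$ per stage. This is not yet summable. So the main obstacle is to sharpen the count: a triple near $\tfrac13$ at level $r_i+1+q$ spans only $O(1)$ steps, but the relevant coarse levels $q\lesssim 1$ can span up to $\sim k_i$ steps. I would try to show that at level $r_i+1+q$ the three points $\gamma(a_{\lfloor q/2\rfloor}),\gamma(\cdot),\gamma(b_{\lceil q/2\rceil})$ differ by $O(1)$ $\alpha$-steps, using $b_l-a_k=2^{-i}$ from the proof of Lemma~\ref{lem:cdf}. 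That would give diameter $\lesssim 2^{-r_i/2}k_i^{-1}$ at \emph{every} level, with only $O(1)$ such triples per copy per level. The $8k_i$ levels then give $\lesssim 2^{r_i}\cdot k_i\cdot 2^{-r_i}k_i^{-2}=k_i^{-1}$.

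The remaining junction terms, from the pieces of $\gamma_i$ adjacent to $c_{i,j}$ and $m_{i,j}$, are bounded by $2^{-2l}\|\gamma_{i+1}'\|_\infty^2$ per triple. I would sum these geometrically in $l\ge r_i$ using \eqref{eq:gamma-deriv}. If they produce a stray term like $2^{-c\,i}$ per stage, I would invoke the hypothesis $k_i\ge i$ to absorb it into $k_i^{-1}$, or absorb it using $r_{i+1}\ge r_i+16k_i+2$. Adding the collinear and breakpoint contributions over all stages then gives $\sigma(\gamma)\lesssim\sum_i k_i^{-1}$.
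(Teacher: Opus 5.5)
Your stage-by-stage decomposition and your treatment of the $\theta$-copies are sound. This is a different route from the paper, which bounds $\sigma(\theta_k)\le 128/k$ using the self-similarity $(\cD_{2i}\alpha)_{[\frac14,\frac12]}=1+\cD_{2i-2}\alpha$ and Lemma~\ref{lem:partial-lip}.

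What you call the main obstacle is in fact immediate. The midpoint of $[a_{\lfloor q/2\rfloor},b_{\lceil q/2\rceil}]$ is $b_{q/2+1}$ for $q$ even and $a_{(q+1)/2}$ for $q$ odd. So at every level the three $\alpha$-values lie in $\{\lfloor q/2\rfloor,\lfloor q/2\rfloor+1\}$, and no coarse level spans $\sim k_i$ steps. Since $a_p,b_p\in 2^{-q}\Z$ once $q\ge 2p$, there are also no extra triples at the corners of $\beta$. (Each such level contributes $2^{r_i}(2^{-r_i/2}k_i^{-1})^2=k_i^{-2}$, not $k_i^{-1}$.)

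The genuine gap is the junction term. Since $c_{i,j},m_{i,j}\in 2^{-r_i-1}\Z$, no dyadic interval of level $\ge r_i+1$ straddles a junction. The only junction triples are therefore the level-$r_i$ intervals $[c_{i,j},c_{i,j+1}]$. Because $\gamma(m_{i,j})=\gamma(c_{i,j})$, their diameters are $|\gamma_i(c_{i,j+1})-\gamma_i(c_{i,j})|$.

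A sup-norm slope bound cannot make their total summable. By \eqref{eq:gamma-deriv} at index $i-1$, $\|\gamma_i'\|_\infty\le 2^{r_{i-1}/2+1+8k_{i-1}}$. Hence $\sum_j|\gamma_i(c_{i,j+1})-\gamma_i(c_{i,j})|^2\le 2^{-r_i}\|\gamma_i'\|_\infty^2\le 2^{r_{i-1}+2+16k_{i-1}-r_i}$. The hypothesis $r_i\ge r_{i-1}+16k_{i-1}+2$ only makes this $\le 1$, so you get $O(1)$ per stage, which is not summable. With $\|\gamma_{i+1}'\|_\infty$, as you wrote it, the bound is of order $2^{16k_i}$. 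No stray $2^{-ci}$ appears, and neither $k_i\ge i$ nor the spacing of the $r_i$ helps. The steep parts of $\gamma_i$ occupy a set of tiny measure, so $L^\infty$ is the wrong norm here.

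You need an energy bound for this term. The paper gets it from the recursion $\sigma(\gamma_{i+1})\le\sigma(\gamma_i)+\sum_{J\in\cQ^{r_i}}\delta_J(\gamma_i)^2+\sigma(\theta_{k_i})$, whose middle term is exactly your junction term. Because $\gamma_i$ is PL at scale $2^{-r_{i-1}-1-8k_{i-1}}$, Lemma~\ref{lem:PL-sigma} bounds that term by $2^{-8k_{i-1}-1}\sigma(\gamma_i)$, and induction closes the argument.

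Within your own framework the simplest repair is to note that $\gamma_i$ is affine on each $[c_{i,j},c_{i,j+1}]$. These junction triples are then exactly the level-$r_i$ terms of the dyadic tails of older segments. Compression by $2$ shifts levels by one, so the compressed copies in $\gamma_{i+1}$ count precisely the levels from $r_i$ on of those tails, in $\gamma_i$'s dyadic structure. Each tail level is therefore counted at most twice, and the junctions cost only a factor $2$. Your claim that a segment is ``never counted more than once'' fails exactly at these junction levels, so this bookkeeping must be written out explicitly.

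Alternatively, keep the factor $k^{-1}$ in $\|\theta_k'\|_\infty\le k^{-1}2^{8k}$, which \eqref{eq:gamma-deriv} discards. This gives $\|\gamma_i'\|_\infty\le 2^{r_{i-1}/2+1+8k_{i-1}}/k_{i-1}$ and hence a junction total of at most $k_{i-1}^{-2}$ per stage.
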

We first establish some notation. Let $\cQ$ be the set of dyadic intervals in $[0,1]$ and let $\cQ^i$ be the set of dyadic intervals of length $2^{-i}$. Let $\cQ^{< i} = \bigcup_{j< i} \cQ^j$.

For $f\from [0,1]\to \R^n$ and $[a,b]\in \cQ$, let
$$
\delta_{[a,b]}(f) := \diam \left\{f(a), f(\tfrac{a+b}{2}), f(b)\right\}.
$$
Then 
$$\sigma(f)=\sum_{I \in \cQ} \delta_{I}(f)^2.$$

For $[a,b]\in \cQ$, let $f_{[a,b]}\from [0,1]\to \R^n$,
$f_{[a,b]}(t) = f(a + t(b-a))$, so that $f_{[a,b]}$ is a reparameterization of $f|_{[a,b]}$. Then for any $I\in \cQ$, 
\begin{equation}\label{eq:sigma-f-I}
  \sigma(f_{I}) = \sum_{\substack{J\in \cQ \\ J \subset I}} \delta_J(f)^2.
\end{equation}
Before proving Lemma~\ref{lem:sigma-bound}, we prove the following lemmas.
\begin{lemma}\label{lem:partial-lip}
  Let $f\from [0,1]\to \R^n$ be a Lipschitz curve. Then
  $$\sigma(f)\le 2\Lip(f)^2,$$
  and for any $I=[a,b]\in \cQ$, 
  $$\sigma(f) \le 2\Lip(f|_{[0,1]\setminus (a,b)})^2 + \sigma(f_I).$$
\end{lemma}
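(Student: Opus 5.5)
The plan is to bound each $\delta_{[a,b]}(f)$ directly by the Lipschitz constant and then sum level by level. If $[a,b]\in\cQ^i$, then the three points $f(a), f(\tfrac{a+b}{2}), f(b)$ all lie in $f([a,b])$. Any two of these parameters are at distance at most $b-a = 2^{-i}$. Hence
\[
\delta_{[a,b]}(f)\le \Lip(f|_{[a,b]})\,2^{-i}.
\]
Summing over the $2^i$ intervals of $\cQ^i$ gives $\sum_{J\in\cQ^i}\delta_J(f)^2\le 2^i\cdot\Lip(f)^2 2^{-2i}=\Lip(f)^2 2^{-i}$. Summing over $i\ge 0$ then gives $\sigma(f)\le 2\Lip(f)^2$, which is the first claim.

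For the second claim, fix $I=[a,b]\in\cQ$ and split $\cQ$ into three families:
\begin{itemize}
\item[(i)] dyadic $J\subset I$;
\item[(ii)] dyadic $J$ with $\inter J\cap \inter I=\emptyset$;
\item[(iii)] dyadic $J\supsetneq I$.
\end{itemize}
Since dyadic intervals are nested or have disjoint interiors, these exhaust $\cQ$. By \eqref{eq:sigma-f-I}, the family (i) contributes exactly $\sigma(f_I)$.

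For (ii), we have $J\subset[0,1]\setminus(a,b)$. All three evaluation points of $J$ lie in $J$, where $f$ is $\Lip(f|_{[0,1]\setminus(a,b)})$--Lipschitz. So the per-interval bound above holds with $L:=\Lip(f|_{[0,1]\setminus(a,b)})$ in place of $\Lip(f)$.

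For (iii), the key observation is that for $J=[c,e]\supsetneq I$, the points $c$, $\tfrac{c+e}{2}$ and $e$ are dyadic endpoints of the halves of $J$. One of those halves contains $I$, so none of these three points lies in the open interval $(a,b)$. We still need $|f(p)-f(q)|\le L|p-q|$ for such points $p,q$. This is where care is needed: $L$ is the Lipschitz constant of the restriction to the set $[0,1]\setminus(a,b)$, and the interpretation is that the restriction is Lipschitz as a map on that set, with distances measured in $[0,1]$. With that reading, the bound holds for any $p,q\notin(a,b)$, even if the segment between them crosses $(a,b)$. Hence $\delta_J(f)\le L\,|J|$ for every $J$ in (iii) as well.

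The main point to verify is the counting in the final sum. Families (ii) and (iii) together contain, at each level $i$, at most $2^i$ intervals, so
\[
\sum_{J\in(\mathrm{ii})\cup(\mathrm{iii})}\delta_J(f)^2\le \sum_i 2^i L^2 2^{-2i}=2L^2.
\]
Adding the contribution $\sigma(f_I)$ from (i) yields the second inequality.
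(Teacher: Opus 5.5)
Your proof is correct and follows essentially the same argument as the paper: bound $\delta_J(f)\le \Lip(f)\,\ell(J)$, sum the geometric series over levels, and for the second inequality note that for every dyadic $J\not\subset I$ the endpoints and midpoint of $J$ lie outside $(a,b)$, so $\delta_J(f)\le L\,\ell(J)$. Your split of the intervals $J\not\subset I$ into those with interiors disjoint from $I$ and those strictly containing $I$ simply spells out the justification for this observation, which the paper states in one line.
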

\begin{proof}
  First, if $f$ is Lipschitz, then $\delta_I(f) \le \ell(I) \Lip(f)$ for all $I$, where $\ell(I)$ is the length of $I$. Therefore, 
  $$\sigma(f) = \sum_{I \in \cQ} \delta_{I}(f)^2 \le \sum_{i=0}^\infty (2^{-i}\Lip(f))^2 2^i = \Lip(f)^2 \sum_{i=0}^\infty 2^{-i} = 2\Lip(f)^2.$$

  Suppose that $I = [a,b] \in \cQ$ and let $L=\Lip(f|_{[0,1]\setminus (a,b)})$. If $J\in \cQ$ and $J\not \subset I$, then both endpoints of $J$ and the midpoint of $J$ lie in $[0,1]\setminus (a,b)$, so $\delta_J(f) \le L \ell(J)$. Therefore,
  $$\sigma(f) = \sum_{\substack{J \in \cQ \\ J \not \subset I}} \delta_{J}(f)^2 + \sum_{\substack{J \in \cQ \\ J \subset I}} \delta_{J}(f)^2 \le \sum_{i=0}^\infty (2^{-i}L)^2 2^i + \sigma(f_I) = 2L^2 + \sigma(f_I),$$
  as desired.
\end{proof}

\begin{lemma}\label{lem:PL-sigma}
  If $f$ is $2^{-i}$--PL and $k\ge i$, then
  $$\sum_{I\in \cQ^{k}} \delta_I(f)^2 \le 2^{i-k} \sigma(f).$$
\end{lemma}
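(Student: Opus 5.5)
The plan is a direct scaling computation: because $f$ is affine on every interval of $\cQ^i$, the contribution of level $k$ can be compared interval by interval with the contribution of level $i$, and the latter is part of $\sigma(f)$.

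First, fix $I=[a,b]\in\cQ^k$ with $k\ge i$. Since dyadic intervals are nested, there is a unique $J=[a_J,b_J]\in\cQ^i$ with $I\subset J$, and $f$ is affine on $J$ because $f$ is $2^{-i}$--PL.
\begin{itemize}
\item The midpoint $f(\tfrac{a+b}{2})$ lies on the segment from $f(a)$ to $f(b)$, so $\delta_I(f)=|f(b)-f(a)|$.
\item Affineness on $J$ gives $|f(b)-f(a)| = \frac{b-a}{b_J-a_J}|f(b_J)-f(a_J)| = 2^{i-k}|f(b_J)-f(a_J)|$.
\item Since $f(a_J)$ and $f(b_J)$ both belong to the set whose diameter defines $\delta_J(f)$, we have $|f(b_J)-f(a_J)|\le \delta_J(f)$.
\end{itemize}
Together these give $\delta_I(f)\le 2^{i-k}\delta_J(f)$.

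Second, group the sum over $\cQ^k$ according to the ancestor $J\in\cQ^i$. Each $J\in\cQ^i$ contains exactly $2^{k-i}$ intervals of $\cQ^k$, so
$$\sum_{I\in\cQ^k}\delta_I(f)^2 \le \sum_{J\in\cQ^i} 2^{k-i}\cdot 2^{2(i-k)}\delta_J(f)^2 = 2^{i-k}\sum_{J\in\cQ^i}\delta_J(f)^2.$$
Finally, $\sum_{J\in\cQ^i}\delta_J(f)^2 \le \sigma(f)$ because it is one level of the nonnegative series defining $\sigma(f)$. This proves the claim.

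There is no real obstacle here. The only point needing care is the first step, namely that $\delta_I$ reduces to the endpoint displacement exactly because $f$ is affine on $I$, which holds only since $k\ge i$.
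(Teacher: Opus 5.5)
Your proof is correct and is essentially the paper's argument: the paper also groups the level-$k$ dyadic intervals under their level-$i$ ancestor and uses affineness to get $\sum_{I\in\cQ^k}\delta_I(f)^2 = 2^{i-k}\sum_{J\in\cQ^i}\delta_J(f)^2\le 2^{i-k}\sigma(f)$. The only difference is that the paper writes the middle step as an equality, since your bound $|f(b_J)-f(a_J)|\le\delta_J(f)$ is an equality when $f$ is affine on $J$.
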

\begin{proof}
  For $k\ge 0$, let $\sigma_k(f) = \sum_{I\in \cQ^{k}} \delta_I(f)^2$. Then, for $k \ge i$,
  $$\sigma_k(f) = \sum_{I\in \cQ^i} \sigma_{k-i}(f_{I}).$$
  Since $f$ is $2^{-i}$--PL, $f_{I}$ is affine for every $I\in \cQ^i$, so
  $$
  \sigma_{k-i}(f_{I}) = 2^{k-i} \big(|f_{I}(1)-f_{I}(0)|2^{i-k}\big)^2 = |f_{I}(1)-f_{I}(0)|^2 2^{i-k} = 2^{i-k} \sigma_{0}(f_{I}).
  $$
  Therefore,
  $$\sigma_k(f) = \sum_{I\in\cQ^i} 2^{i-k} \sigma_{0}(f_{I}) = 2^{i-k}\sigma_i(f) \le 2^{i-k}\sigma(f),$$
  as desired.
\end{proof}

\begin{proof}[Proof of Lemma~\ref{lem:sigma-bound}]
  We first calculate $\sigma(\theta_k)$. Let $f_k(t) = k^{-1} \cD_{8k}\alpha(t)$ so that $\theta_k = \beta\circ f_k$, see \eqref{eq:thetak}. Since $\beta$ is unit-speed, we have
  \begin{equation}\label{eq:sigma-theta-k-alpha}
    \sigma(\theta_k) \le \sigma(f_k) = k^{-2} \sigma(\cD_{8k}\alpha).
  \end{equation}

  We bound $\cD_{8k}\alpha$ inductively. The function $\alpha$ is self-similar in the sense that $\alpha_{[\frac{1}{4},\frac{1}{2}]} = \alpha + 1$. By Lemma~\ref{lem:cdf}, for any $i\ge 0$, 
  \begin{multline*}
    (\cD_{2i}\alpha)_{[\frac{1}{4},\frac{1}{2}]} 
    = (\min\{\alpha,i\})_{[\frac{1}{4},\frac{1}{2}]} 
    = \min\{\alpha + 1,i\} \\ 
    = 1 + \min\{\alpha,i-1\} 
    = 1 + \cD_{2i-2}\alpha.
  \end{multline*}

  Since $\cD_{0}\alpha=0$, we have $\sigma(\cD_{0}\alpha) = 0$. For $i>0$,
  $$\Lip(\cD_{2i} \alpha|_{[0,1]\setminus (\frac{1}{4},\frac{1}{2})}) = \Lip(\alpha|_{[0,1]\setminus (\frac{1}{4},\frac{1}{2})}) = 4,$$
  so by Lemma~\ref{lem:partial-lip},
  $$\sigma(\cD_{2i}\alpha) \le 2\cdot 4^2 + \sigma(\cD_{2i-2}\alpha) = 32 + \sigma(\cD_{2i-2}\alpha).$$
  By induction, $\sigma(\cD_{2i}\alpha) \le 32i$. Therefore, by \eqref{eq:sigma-theta-k-alpha},
  \begin{equation}\label{eq:sigma-theta-k}
    \sigma(\theta_k)\le k^{-2} \sigma(\cD_{8k}\alpha) \le k^{-2}\cdot 128k \approx k^{-1}.
  \end{equation}

  Now we consider the $\gamma_i$'s. Since $\gamma_0$ is constant, $\sigma(\gamma_0)=0$. We claim that
  \begin{equation}\label{eq:sigma-gamma-induct}
    \sigma(\gamma_{i+1}) \le (1+2^{-8(i-1)})\sigma(\gamma_i) + 128k_i^{-1}
  \end{equation}
  for all $i\ge 0$.

  Note that for any $j\ge 0$ and any $f\from [0,1]\to \R^n$, \eqref{eq:sigma-f-I} implies that
  \begin{equation}\label{eq:sigma-f-part}
    \sigma(f) = \sum_{I\in \cQ^{<j}} \delta_I(f)^2 + \sum_{J\in \cQ^j} \sigma(f_J).
  \end{equation}
  We take $f=\gamma_{i+1}$ and recall that $\gamma_i(t) = \gamma_{i+1}(t)$ for all $t\in 2^{-r_i}\Z$, so that $\delta_I(\gamma_{i}) = \delta_I(\gamma_{i+1})$. Then 
  \begin{equation}\label{eq:sigma-gamma-i+1}
    \sigma(\gamma_{i+1}) = \sum_{I\in \cQ^{< r_i}} \delta_I(\gamma_{i})^2 + \sum_{j=0}^{2^{r_i}-1} \sigma(g_j)
  \end{equation}
  where $g_j = (\gamma_{i+1})_{[c_{i,j}, c_{i,j+1}]}$ and $c_{i,j} = j2^{-r_i}$. Let $I_j = [c_{i,j}, c_{i,j+1}]$.
  
  Using \eqref{eqn:defgammai}, for each $j$,
  $$(g_j)_{[0,\frac{1}{2}]}(t) = \gamma_i(c_{i,j}) + 2^{-\frac{r_i}{2}} \theta_{k_i}(t)$$
  and
  $$(g_j)_{[\frac{1}{2},1]}(t) = (\gamma_i)_{I_j}(t).$$
  Therefore, by \eqref{eq:sigma-f-part},
  $$
  \begin{aligned}
    \sigma(g_j) &= \delta_{[0,1]}(g_j)^2 + \sigma((g_j)_{[0,\frac{1}{2}]}) +  \sigma((g_j)_{[\frac{1}{2},1]}) \\
  &= |g_j(0) - g_j(1)|^2 + \sigma((\gamma_i)_{I_j}) + 2^{-r_i}\sigma(\theta_{k_i}).
  \end{aligned}
  $$
  Furthermore,
  $$|g_j(0) - g_j(1)|^2 = |\gamma_i(c_{i,j}) - \gamma_i(c_{i,j+1})|^2 = \delta_{I_j}(\gamma_i)^2.$$
  Thus, $\sigma(g_j) = \delta_{I_j}(\gamma_i)^2 + \sigma((\gamma_i)_{I_j}) + 2^{-r_i}\sigma(\theta_{k_i})$. By \eqref{eq:sigma-gamma-i+1}, 
  \begin{align*}
    \sigma(\gamma_{i+1})
    & = \sum_{J\in \cQ^{< r_i}} \delta_J(\gamma_{i})^2 + \sum_{J\in \cQ^{r_i}} \left(\delta_J(\gamma_i)^2 + \sigma((\gamma_i)_{J})\right) + \sigma(\theta_{k_i}) \\
    & \stackrel{\eqref{eq:sigma-f-part}}{=} \sigma(\gamma_{i}) + \sum_{J\in \cQ^{r_i}} \delta_J(\gamma_i)^2 + \sigma(\theta_{k_i}).
  \end{align*}
  As we noted before Lemma~\ref{lem:gamma-i-no-area}, if $i>0$, then $\gamma_i$ is $2^{-r_{i-1}-1}2^{-8k_{i-1}}$--PL. We chose $r_{i} \ge r_{i-1} + 16k_{i-1} + 2$ and $k_i \ge i$, so by Lemma~\ref{lem:PL-sigma},
  $$\sum_{J\in \cQ^{r_i}} \delta_J(\gamma_i)^2 \le 2^{r_{i-1} - r_i  + 8k_{i-1} + 1}\sigma(\gamma_{i}) \le 2^{-8(i-1)} \sigma(\gamma_{i}).
  $$
  By \eqref{eq:sigma-theta-k}, $\sigma(\theta_{k_i}) \le 128 k_i^{-1}$, so 
  $$\sigma(\gamma_{i+1}) \le (1+2^{-8(i-1)})\sigma(\gamma_i) + 128k_i^{-1}.$$
  By induction on $i$, starting with the case $\sigma(\gamma_0) = 0$,
  $$
  \sigma(\gamma_i) \le 128 \left(\sum_{j=0}^{i-1} k_j^{-1}\right) \left(\prod_{j=0}^{i-1} (1+2^{-8(j-1)})\right) \lesssim \sum_{j=0}^{i-1} \frac{1}{k_j} < \infty.$$
  Let $C = \sup_i \sigma(\gamma_i) < \infty$.
  Since $\gamma(t) = \gamma_i(t)$ for $t\in 2^{-r_i}\Z$,
  $$\sigma(\gamma) = \sup_i \sum_{J\in \cQ^{<r_i}} \delta_J(\gamma)^2 = \sup_i \sum_{J\in \cQ^{<r_i}} \delta_J(\gamma_i)^2 \le \sup_i \sigma(\gamma_i)\le C,$$
  as desired.
\end{proof}

\subsection{Signed area and winding number}\label{rem:H20}
Let $\gamma\from[0,1]\to\mathbb R^2 \cong \mathbb{C}$ be a closed continuous curve and suppose that 
$z\notin \gamma([0,1])$. We define the \emph{winding number} of $\gamma$ around $z$ as
\[
\Wind(\gamma,z)
:=\frac{1}{2\pi}\big(\theta_z(1)-\theta_z(0)\big),
\]
where $\theta_z\from[0,1]\to\mathbb R$ is any continuous function such that
$$e^{i\theta_z(t)}=\frac{\gamma(t)-z}{|\gamma(t)-z|}.$$
Then $\Wind(\gamma,z)$ is an integer which is independent of the choice of $\theta_z$. If $\gamma=(x(t),y(t))\from[0,1]\to\mathbb R^2$ is Lipschitz continuous, then 
$\Wind(\gamma,\cdot)\in L_1(\mathbb R^2)$ and 
\begin{equation}\label{eqn:AreaLipschitz}
\int_{\mathbb R^2} \Wind(\gamma,z)\ud z
= A(\gamma) =
\frac12 \int_0^1 \left(x(t)y'(t)-y(t)x'(t)\right)\ud t.
\end{equation}

In this section, we will show that \eqref{eqn:AreaLipschitz} holds for curves that satisfy the conditions in Theorem~\ref{thm:ExistenceAreaCurves}. We prove the following.  

\begin{prop}\label{prop:winding}
    Let $\gamma\from[0,1]\to\mathbb R^2$ be a closed curve satisfying the assumptions of Theorem~\ref{thm:ExistenceAreaCurves}. Then $\mathcal{H}^2(\gamma([0,1]))=0$,  $\Wind(\gamma,\cdot)\in L_1(\mathbb R^2)$, and 
    \begin{equation}\label{eq:wind-integral}
      \int_{\mathbb R^2}\Wind(\gamma,z)\ud z = A(\gamma).
    \end{equation}
      
\end{prop}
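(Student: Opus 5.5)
We approximate $\gamma$ by its dyadic polygonal approximations $\gamma_i=\cD_i\gamma$. These are closed Lipschitz curves, since $\gamma(0)=\gamma(1)$, so \eqref{eqn:AreaLipschitz} applies to each of them. The plan is to show that $\Wind(\gamma_i,\cdot)\to\Wind(\gamma,\cdot)$ in $L_1(\R^2)$. Together with $A(\gamma_i)\to A(\gamma)$ from Theorem~\ref{thm:ExistenceAreaCurves} (i.e.\ Proposition~\ref{prop:rn-curve-area} with $\Lambda_{i,j}=\gamma(j2^{-i})$), this gives \eqref{eq:wind-integral}.

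\emph{Step 1: $\mathcal{H}^2(\gamma([0,1]))=0$.} Apply Lemma~\ref{lem:sq-partition} to the dyadic partition $P_i$ of mesh $2^{-i}$. It gives $\sum_j|\gamma(j2^{-i})-\gamma((j+1)2^{-i})|^2\to0$. This alone does not control $\diam\gamma([j2^{-i},(j+1)2^{-i}])$, so we use the arch construction instead. In the proof of Proposition~\ref{prop:rn-curve-area}, $\gamma([t_{j-1},t_j])$ is covered by the union of the sets $G(\partial R_w)$ for $w\in\cW_j$, together with their limit points. Hence
\[
\diam\gamma([t_{j-1},t_j])\lesssim\sum_{w\in\cW_j}\delta_\Lambda(w)\lesssim \ell(a_j)\text{-type bound}.
\]
The same splitting argument that proves \eqref{eqn:ajepsilon} then gives
\[
\sum_j\Big(\sum_{w\in\cW_j}\delta_\Lambda(w)\Big)^2\le\epsilon
\]
for fine partitions. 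So $\gamma([0,1])$ is covered by sets $D_j$ with $\sum_j(\diam D_j)^2\to0$, and therefore $\mathcal{H}^2(\gamma([0,1]))=0$.

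\emph{Step 2: the pointwise defect.} Decompose $\gamma_i-\gamma_{i+1}$ as a sum of triangles $T_{i,j}$ with vertices $\gamma(j2^{-i}),\gamma((2j+1)2^{-i-1}),\gamma((j+1)2^{-i})$. For $z$ off all the curves, winding number is additive under concatenation of chains, so
\[
\Wind(\gamma_i,z)-\Wind(\gamma_{i+1},z)=\sum_j\Wind(\partial T_{i,j},z).
\]
Each term has absolute value at most $1$ and is supported in $T_{i,j}$, which has area $\lesssim\delta_{i,j}^2$. Hence $\|\Wind(\gamma_i)-\Wind(\gamma_{i+1})\|_{L_1}\lesssim\sum_j\delta_{i,j}^2$. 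Summing over $i$ and using $\sigma(\gamma)<\infty$ shows that $\Wind(\gamma_i,\cdot)$ is Cauchy in $L_1$ and converges to some $W\in L_1$. Its integral is $\lim_i A(\gamma_i)=A(\gamma)$.

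\emph{Step 3 (main obstacle): identify $W$ with $\Wind(\gamma,\cdot)$ a.e.} Fix $z\notin\gamma([0,1])$; by Step 1 this holds for a.e.\ $z$. Then $r=\operatorname{dist}(z,\gamma)>0$. By \eqref{eqn:Gtu}, $\|\gamma_i-\gamma\|_\infty\lesssim L2^{-i/2}<r$ for large $i$, so $\gamma_i$ is homotopic to $\gamma$ in $\R^2\setminus\{z\}$ via the straight-line homotopy. Hence $\Wind(\gamma_i,z)=\Wind(\gamma,z)$ eventually, i.e.\ $\Wind(\gamma_i,\cdot)\to\Wind(\gamma,\cdot)$ pointwise a.e. Since these functions also converge in $L_1$ to $W$, we get $W=\Wind(\gamma,\cdot)$ a.e. This gives $\Wind(\gamma,\cdot)\in L_1$ and \eqref{eq:wind-integral}.

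The delicate point is therefore Step 1: the null set where winding is undefined must have measure zero. This is exactly where the refined bound \eqref{eqn:ajepsilon}, rather than the Hölder bound, is needed.
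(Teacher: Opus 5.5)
Your Steps 2 and 3 are sound. The telescoping bound $\|\Wind(\gamma_i,\cdot)-\Wind(\gamma_{i+1},\cdot)\|_{L_1}\lesssim\sum_j\delta_{i,j}^2$, combined with a.e.\ pointwise convergence off $\gamma([0,1])$, is a harmless variant of the paper's dominated convergence argument with majorant $\sum_{k,j}\chi_{T_{k,j}}$.

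The gap is in Step 1. You claim that $\gamma([t_{j-1},t_j])$ is covered by $\bigcup_{w\in\cW_j}G(\partial R_w)$ together with its limit points; this is false. The rectangles $R_w$ with $w\in\cW_j$ are the ones bordering the arch $A_j$. The only points of $[0,1]\times\{0\}$ they accumulate at are $(t_{j-1},0)$ and $(t_j,0)$. For $t\in(t_{j-1},t_j)$, the value $\gamma(t)=\lim_{u\to0}G(t,u)$ is governed by the rectangles \emph{under} the arch, i.e.\ those with $J_w\subset(t_{j-1},t_j)$, and none of these belongs to $\cW_j$. The arch controls only the displacement $|\gamma(t_j)-\gamma(t_{j-1})|\le\ell(a_j)$, not the diameter.

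Here is a concrete counterexample. Take $P=\{0,1\}$ and let $\gamma$ be piecewise linear, equal to $0$ outside $(\tfrac14,\tfrac12)$, with $\gamma(\tfrac38)=(h,0)$. Every $w\in\cW_1$ is a leftmost or rightmost dyadic interval, and each such $\delta_\Lambda(w)$ vanishes. Yet $\diam\gamma([0,1])=h$. Placing a rescaled copy of the bump in $(2^{-m-2},2^{-m-1})$, with $0$ and $2^{-m}$ consecutive in $P$, shows the same failure for arbitrarily fine partitions. So your per-interval bound $\diam\gamma([t_{j-1},t_j])\lesssim\sum_{w\in\cW_j}\delta_\Lambda(w)$ is false, and the cover with $\sum_j(\diam D_j)^2\to0$ is not established.

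What you missed is that Lemma~\ref{lem:sq-partition} holds for \emph{every} partition of mesh $<\delta$, not only dyadic ones. So you should refine rather than abandon it. Given $P=\{t_0,\dots,t_k\}$ with $\mesh P<\delta$, insert arbitrary points $s_j\le s_j'$ in each $[t_{j-1},t_j]$. The refined partition still has mesh $<\delta$, and $s_j,s_j'$ are consecutive in it, so $\sum_j|\gamma(s_j')-\gamma(s_j)|^2<\epsilon$. The choices in different intervals are independent, so taking the supremum over them gives $\sum_j\diam(\gamma([t_{j-1},t_j]))^2\le\epsilon$, and hence $\mathcal{H}^2(\gamma([0,1]))=0$. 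This is how the paper argues. With this replacement for Step 1, the rest of your proof goes through.
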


\begin{proof}
  The fact that $\mathcal{H}^2(\gamma([0,1]))=0$ will follow from Lemma~\ref{lem:sq-partition}. Let $\epsilon>0$. By Lemma~\ref{lem:sq-partition}, there is a $\delta>0$ such that for every partition $P=\{t_0,\dots, t_k\}$ with $\mesh(P)<\delta$,
  \[\sum_{j=0}^{k-1}|\gamma(t_{j+1})-\gamma(t_j)|^2 <\epsilon.\]
  Let $P=\{t_0,\dots, t_k\}$ be such a partition. For each $j$, choose an interval $[a_j,b_j]\subset [t_j,t_{j+1}]$. Then
  $$P'=\{a_0,\dots, a_{k-1}\}\cup \{b_0,\dots, b_{k-1}\}\cup P$$
  satisfies $\mesh P' < \delta$. Let $t'_0,\dots,t'_{k'}$ be the elements of $P'$ in order; then
  \[\sum_{j=0}^{k-1}|\gamma(b_j)-\gamma(a_j)|^2 \le \sum_{j=0}^{k'-1}|\gamma(t'_{j+1})-\gamma(t'_j)|^2 <\epsilon.\]
  Since this is true for all choices of $a_j$ and $b_j$, we have
  \[\sum_{j=0}^{k-1}\diam(\gamma([t_j,t_{j+1}]))^2 < \epsilon\]
  for any partition with $\mesh(P)<\delta$. Therefore, $\mathcal{H}^2(\gamma([0,1]))=0$.

  It follows that $\Wind(\gamma,z)$ is defined for almost every $z\in \R^2.$ We claim that it satisfies \eqref{eq:wind-integral}. Let $\gamma_i$ be the piecewise affine function defined in the statement of Theorem~\ref{thm:ExistenceAreaCurves}. Since the $\gamma_i$'s converge uniformly to $\gamma$, if $z\notin \gamma([0,1])$, then $\Wind(\gamma_i,z)\to \Wind(\gamma,z)$ as $i\to\infty$. That is,
  \begin{equation}\label{eqn:aeconvergence}
    \Wind(\gamma_i,z)\to_{i\to\infty} \Wind(\gamma,z), \qquad \text{for $\mathcal{H}^2$--a.e. $z\in\mathbb R^2$}.
  \end{equation}

  We will prove \eqref{eq:wind-integral} using dominated convergence, but we must first bound $\sup_{i\geq 0}|\Wind(\gamma_{i},z)|$. For $i\geq 0$, and $0\leq j\leq 2^i-1$, let $T_{i,j}$ be the (possibly degenerate) closed triangle with vertices $\gamma(j2^{-i})$, $\gamma((j+1)2^{-i})$, and $\gamma((2j+1)2^{-i-1})$. One can check that for every $i\geq 0$, and every $z\in\mathbb R^2\setminus(\gamma_i([0,1])\cup\gamma_{i+1}([0,1]))$
  \[
    |\Wind(\gamma_{i+1},z)-\Wind(\gamma_i,z)|\leq \sum_{j=0}^{2^i-1}\chi_{T_{i,j}}(z).
  \]
  Furthermore, since $\gamma_0$ is constant, we have $\Wind(\gamma_0,\cdot)=0$ for every $z\in \mathbb R^2\setminus \gamma_0([0,1])$, and thus
  \begin{equation}\label{eqn:DomConT}
    \sup_{i\geq 0}|\Wind(\gamma_{i},z)|\leq \sum_{k=0}^{\infty}\sum_{j=0}^{2^k-1}\chi_{T_{k,j}}(z):=\mathcal{T}(z),
  \end{equation}
  for every $z\in\mathbb R^2\setminus\cup_i\gamma_i([0,1])$. By \eqref{eqn:CondSigmaIntro} and the Euclidean isoperimetric inequality, $\mathcal{T}\in L_1(\mathbb R^2)$. 

  Hence, using Theorem~\ref{thm:ExistenceAreaCurves} and dominated convergence, we conclude
  \begin{multline*}
    A(\gamma)  =\lim_{i\to\infty}A(\gamma_i) \stackrel{\eqref{eqn:AreaLipschitz}}{=} \lim_{i\to\infty} \int_{\mathbb R^2}\Wind(\gamma_i,z)\ud z \\
    = \int_{\mathbb R^2}\lim_{i\to\infty}\Wind(\gamma_i,z)\ud z = \int_{\mathbb R^2}\Wind(\gamma,z)\ud z,
    \end{multline*}
  as desired.
\end{proof}

\subsection{Other approaches to signed area}\label{sec:Probability}
While Section~\ref{sec:intro-area-rn} gives one way to define the signed area of a curve in $\R^2$, it is not the only way. The problem of defining integrals of the form $\int F(\gamma) \ud \gamma$ when $\gamma$ is a curve with low regularity often occurs in probabilistic contexts, especially in the case that $\gamma$ is a trajectory of a Brownian motion.

Theorem~\ref{thm:ExistenceAreaCurves} does not cover the case of Brownian motion; indeed, a trajectory $\omega$ is almost surely $\alpha$--H\"older continuous for every $\alpha<\frac{1}{2}$, but almost surely not $\frac{1}{2}$--H\"older continuous. In fact,  $A(\omega)$ is almost surely undefined. If $P_i$ is the $i$th dyadic partition of $[0,1]$, the region between $\omega_{P_i}$ and $\omega_{P_{i+1}}$ consists of $2^i$ triangles. We call the $j$th triangle $\Delta_{i,j}$; it has vertices $\omega(2j2^{-i-1}), \omega((2j+1)2^{-i-1}), \omega((2j+2)2^{-i-1})$ and its expected area is $E[\cH^2(\Delta_{i,j})] \approx 2^{-i}$. The orientation of $\Delta_{i,j}$ is independent of $\cH^2(\Delta_{i,j})$, and if
$$P'_i=P_i\cup \{(2j+1)2^{-i-1} : \Delta_{i,j} \text{ is positively oriented}\},$$
then
$$E[A_{P'_i}(\omega) - A_{P_i}(\omega)] = \frac{1}{2} \sum_j E[\cH^2(\Delta_{i,j})] \approx 1.$$
It follows that $\lim_{\mesh P\to 0} A_P(\omega)$ almost surely diverges. 

The key point here is that $A(\omega)$ is only defined if $A_P(\omega)$ converges in a strong sense. Weaker versions of convergence can still produce a limit. In particular, It\^o showed that for any deterministic sequence of partitions $\{\Pi^n\}$ of $[0,1]$ with $\mathrm{mesh}(\Pi^n)\to 0$, the stochastic Riemann sums
\[
\sum_{t_i\in \Pi^n} \omega^1(t_i) \bigl(\omega^2(t_{i+1})-\omega^2(t_i)\bigr) - \omega^2(t_i) \bigl(\omega^1(t_{i+1})-\omega^1(t_i)\bigr)
\]
converge in probability (or in $L_2$) to the It\^o integral 
\[
\mathcal{B} := \int_0^1 B^1_t \ud B^2_t - B^2_t \ud B^1_t.
\]
The theory of rough paths extends the notion of signed area to an even broader collection of paths, see for instance \cite{FrizHairer}. In this light, Theorem~\ref{thm:ExistenceAreaCurves} can be seen as an attempt to characterize the curves for which the signed area exists in the strongest sense.

\printbibliography[title={References}] 
\end{document}